\newtheoremstyle{bfnote}%
{}
{}%
{\itshape}
{}%
{\bfseries}
{\vspace{\baselineskip}}
{\newline}
{\thmname{#1}\thmnumber{ #2}\thmnote{\fbr[\big]{#3}}}
\theoremstyle{plain}
\newtheorem{theorem}{Theorem}[section]
\newtheorem{proposition}[theorem]{Proposition}
\newtheorem{corollary}[theorem]{Corollary}
\newtheorem{lemma}{Lemma}[section]
\theoremstyle{definition}
\newtheorem{definition}{Definition}[section]
\newtheorem{notation}{Notation}[section]
\theoremstyle{remark}
\newtheorem{remark}{Remark}[section]
\newcommand{\bbG}{\mathbb{G}}
\newcommand{\bbR}{\mathbb{R}}
\newcommand{\bbC}{\mathbb{C}}
\newcommand{\bbN}{\mathbb{N}}
\newcommand{\bbE}{\mathbb{E}}
\newcommand{\bbP}{\mathbb{P}}
\newcommand{\bbF}{\mathbb{F}}
\newcommand{\bbK}{\mathbb{K}}
\newcommand{\bbX}{\mathbb{X}}
\newcommand{\bfD}{\mathbf{D}}
\newcommand{\bfs}{\mathbf{s}}
\newcommand{\bfr}{\mathbf{r}}
\newcommand{\boldT}{\boldsymbol{T}}
\newcommand{\boldpi}{\boldsymbol{\pi}}
\newcommand{\calA}{\mathcal{A}}
\newcommand{\calB}{\mathcal{B}}
\newcommand{\calD}{\mathcal{D}}
\newcommand{\calE}{\mathcal{E}}
\newcommand{\calH}{\mathcal{H}}
\newcommand{\calR}{\mathcal{R}}
\newcommand{\calL}{\mathcal{L}}
\newcommand{\calX}{\mathcal{X}}
\newcommand{\calZ}{\mathcal{Z}}
\newcommand{\frakA}{\mathfrak{A}}
\newcommand{\frakB}{\mathfrak{B}}
\newcommand{\frakC}{\mathfrak{C}}
\newcommand{\frakm}{\mathfrak{m}}
\newcommand{\frakM}{\mathfrak{M}}
\newcommand{\frakN}{\mathfrak{N}}
\newcommand{\frakP}{\mathfrak{P}}
\newcommand{\frakS}{\mathfrak{S}}
\newcommand{\frakZ}{\mathfrak{Z}}
\newcommand{\rmA}{\mathrm{A}}
\newcommand{\rmB}{\mathrm{B}}
\newcommand{\rmD}{\mathrm{D}}
\newcommand{\rmd}{\mathrm{d}}
\newcommand{\rmQ}{\mathrm{Q}}
\newcommand{\scrP}{\mathscr{P}}
\newcommand{\scrI}{\mathscr{I}}
\newcommand{\ttB}{\mathtt{B}}
\newcommand{\ttC}{\mathtt{C}}
\providecommand\given{}
\DeclarePairedDelimiter\fbr{(}{)}
\DeclarePairedDelimiter\seq{(}{)}
\DeclarePairedDelimiter\sbr{\{}{\}}
\DeclarePairedDelimiter\tbr{[}{]}
\DeclarePairedDelimiterX\abs[1]\lvert\rvert{\ifblank{#1}{\:\cdot\:}{#1}}
\newcommand{\boldsemicolon}{{\,\boldsymbol{;}\,}}
\DeclarePairedDelimiterX\kernel[2]{(}{)}{\,#1\boldsemicolon #2\,}
\DeclarePairedDelimiterX\Set[1]\{\}{\renewcommand\given{\SetSymbol[\delimsize]}#1}
\DeclarePairedDelimiterXPP\asvector[1]{\boldsymbol{\pi}^{-1}}(){}{#1}
\DeclarePairedDelimiterXPP\aspc[1]{\boldpi}(){}{#1}
\DeclarePairedDelimiterXPP\Borel[1]{\calB}(){}{#1}
\DeclarePairedDelimiterXPP\cS[1]{\scrP}(){}{#1}
\DeclarePairedDelimiterXPP\Prob[1]{\bbP}(){}{\renewcommand\given{\nonscript\:\delimsize\vert\nonscript\:\mathopen{}}#1}
\newcommand{\om}{\Omega_{\infty}^m}
\newcommand{\omn}{\Omega_n^m}
\newcommand{\omnd}{\Omega_{\nd}^m}
\newcommand{\omnk}{\Omega_{n_k}^m}
\newcommand{\oj}{\Omega(j)}
\newcommand{\onkj}{\Omega_{n_k}(j)}
\newcommand{\onlj}{\Omega_{n_l}(j)}
\newcommand{\onj}{\Omega_n(j)}
\newcommand{\ocj}{\Omega^{\mathrm{corr}}(j)}
\newcommand{\goodomega}{\Omega_{\nd}^{\mathrm{good}}}
\newcommand{\OMG}[5]{\Omega_{#2}^{#3}\big[#5\boldsemicolon #1\boldsemicolon #4\big]}
\newcommand{\OGm}[2]{\Uptheta^m_{{#1}}[#2]}
\newcommand{\eneg}[1]{\calE_\delta^{#1}}
\newcommand{\pneg}[1]{\Prob{\calE_\delta^{#1}}}
\newcommand{\cneg}[1]{C_{#1}}
\newcommand{\ej}{\calE(j)}
\newcommand{\bT}[1]{\boldT_{#1}} 
\newcommand{\bTp}[1]{\boldT^\prime_{#1}} 
\newcommand{\Zhe}{\mbox{\usefont{T2A}{\rmdefault}{m}{n}\CYRZH}}
\newcommand{\laplacian}{\Delta}
\newcommand{\symmdiff}{\triangle}
\newcommand{\lev}{\rigidity}
\newcommand{\thetagap}{\theta}
\newcommand{\ginisumone}{S_1}
\newcommand{\ginisumtwo}{S_2}
\newcommand{\ginisumthree}{\widetilde{S}_3}
\newcommand{\prsp}{\Zhe}
\newcommand{\ndmij}{\nd-m+(i\wedge j)}
\newcommand{\mij}{m-(i\wedge j)}
\def\s{\sigma}
\newcommand{\z}{\zeta}
\newcommand{\uz}{\underline{\zeta}}
\newcommand{\us}{\underline{s}}
\newcommand{\uo}{\underline{\omega}}
\newcommand{\uw}{\underline{\mathrm{w}}}
\newcommand{\uzp}{\underline{\zeta}^\prime}
\newcommand{\zz}[1]{\frakZ_{#1}[\delta]}
\newcommand{\zp}[1]{\frakZ^\prime_{#1}[\delta]}
\newcommand{\zpp}[1]{\frakZ^{\prime\prime}_{#1}[\delta]}
\newcommand{\zzl}{\zz{l}}
\newcommand{\zpl}{\zp{l}}
\newcommand{\zppl}{\zpp{l}}
\newcommand{\zzv}{\underline{\frakZ}[\delta]}
\newcommand{\zpv}{\underline{\frakZ}^\prime[\delta]}
\newcommand{\zppv}{\underline{\frakZ}^{\prime\prime}[\delta]}
\newcommand{\nka}{\fbr*{\binom{n}{k}k!}^{\alpha/2}}
\newcommand{\ndckat}{\fbr*{\binom{\nd}{k}k!}^{\alpha/2}}
\newcommand{\ncka}{\fbr*{\binom{n}{k}k!}^{\alpha}}
\newcommand{\nckat}{\fbr*{\binom{n}{k}k!}^{\alpha/2}}
\newcommand{\RR}{\bbR}
\newcommand{\CC}{\bbC}
\newcommand{\NN}{\bbN}
\newcommand{\nat}{\NN\cup\{0\}}
\newcommand{\edel}{\epsilon(\delta)}
\newcommand{\Cdel}{C\fbr*{\delta}} 
\newcommand{\kdel}{k\fbr*{\delta}}
\newcommand{\Rdel}{R\fbr*{\delta}}
\newcommand{\gdel}{g\fbr*{\delta}}
\newcommand{\Ld}{L\fbr*{\delta}}
\newcommand{\hLd}{h\fbr*{\Ld}}
\newcommand{\nd}{n_\delta}
\newcommand{\ndk}{n_{\delta_k}}
\newcommand{\fta}{\left\lfloor\frac{2}{\alpha}\right\rfloor}
\newcommand{\foa}{\left\lfloor\frac{1}{\alpha}\right\rfloor}
\newcommand{\moment}{\bfs_\alpha}
\newcommand{\sqmoment}{\bfs^2_\alpha}
\newcommand{\deri}{{\;}\rmd}
\newcommand{\Exp}{\bbE}
\newcommand{\PROB}{\bbP}
\newcommand{\el}{\calL}
\newcommand{\leb}{\calL_{\Sigma_{\usm}}}
\newcommand{\mm}{\frakm}
\newcommand{\MM}{\frakM}
\newcommand{\mD}{\calD}
\newcommand{\cc}{{\,,\,}}
\newcommand{\g}{w}
\newcommand{\pss}{\mathrm{S}}
\newcommand{\pssg}{\mathrm{S}_{\mathrm{good}}\big(\uond\big)}
\newcommand{\X}{\boldsymbol{X}}
\newcommand{\BasisInsidempts}{\frakA^m_\inn}
\newcommand{\FUBasisInsidempts}{\widehat{\frakA}^m_\inn}
\newcommand{\BorelInsidempts}{\overline{\frakA}^m_\inn}
\newcommand{\BasisOutside}{\frakB_\out}
\newcommand{\FUBasisOutside}{\widehat{\frakB}_\out}
\newcommand{\BorelOutside}{\overline{\frakB}_\out}
\newcommand{\Dset}{\mathcal{D}}
\newcommand{\constraintinternal}{\frakC_{\alpha,\inn}}
\newcommand{\constraintinternalpc}{\frakC^\ast_{\alpha,\inn}}
\newcommand{\constraintexternal}{\frakC_{\alpha,\out}}
\newcommand{\extsum}{\frakC_{\out}}
\newcommand{\numberofpoints}{\frakN}
\newcommand{\usm}{{m,\us}}
\newcommand{\InProdSqNd}{\abs*{\Pi\tbr[\big]{\fand\boldsemicolon\inn}}^2}
\newcommand{\InProdNd}{\Pi\tbr[\big]{\fand\boldsemicolon\inn}}
\newcommand{\InProdSq}{\abs*{\Pi\tbr[\big]{\falphagaf\boldsemicolon\inn}}^2}
\newcommand{\InProd}{\Pi\tbr[\big]{\falphagaf\boldsemicolon\inn}}
\newcommand{\uond}{\uo^{[\nd]}}
\newcommand{\usnd}{\us^{[\nd]}}
\newcommand{\usnk}{\us^{[n_k]}}
\newcommand{\tn}[1]{\langle #1\rangle}
\newcommand{\okbe}{o_k\big(\,1\boldsemicolon\epsilon\,\big)}
\newcommand{\oep}{o_\epsilon(1)}
\newcommand{\ok}{o_k(1)}
\newcommand{\xo}{\calX}
\newcommand{\QFZop}[0]{\rmQ_{\mathrm{tail}}^{\delta,\mathrm{avg}}}
\newcommand{\QFIop}[0]{\rmQ_{i,j}^{\delta}}
\newcommand{\QFZ}[1]{\QFZop\Big(#1\Big)}
\newcommand{\QFI}[1]{\QFIop\Big(#1\Big)}
\newcommand{\linf}[1]{\lvert #1\rvert_{\infty}}
\newcommand{\closure}[1]{\overline{#1}}
\newcommand{\van}[1]{\triangle\big(\,#1\,\big)}
\newcommand{\indicatorone}{\mathbbm{1}}
\newcommand{\indicator}[1]{\indicatorone\tbr[\Big]{\,#1\,}}
\newcommand{\Bigindicator}[1]{\indicatorone\tbr[\Big]{\,#1\,}}
\newcommand{\bigindicator}[1]{\indicatorone\tbr[\big]{\,#1\,}}
\newcommand{\vancross}[2]{\Gamma\big(\,#1\boldsemicolon #2\,\big)}
\newcommand{\ffa}[2]{\pi_{\alpha}[#1\boldsemicolon #2]}
\newcommand{\h}[2]{\calR\big(#1\boldsemicolon #2\big)}
\newcommand{\sym}[2]{\sigma_{#1}\big(\,#2\,\big)}
\newcommand{\fDij}[2]{\rmD_{i,j}\big(\,#1\,,\,#2\,\big)}
\newcommand{\fD}[2]{\rmD\big(\,#1\,,\,#2\,\big)}
\newcommand{\fDhat}[2]{\widehat{\rmD}\big(\,#1\,,\,#2\,\big)}
\newcommand{\tail}[2]{\eta_{#1}^{[#2]}}
\newcommand{\gammacnd}[2]{\gamma_{\nd}\big(#1\boldsemicolon #2\big)}
\newcommand{\frho}[2]{\rho\big(\,#1\boldsemicolon #2\,\big)}
\newcommand{\drho}[3]{\uprho_{#1}^{#2}\big(\,#3\,\big)}
\newcommand{\FB}[3]{\rmB\Big(\,#1\,,\,#2\,,\,#3\,\Big)}
\newcommand{\FA}[3]{\rmA\Big(\,#1\,,\,#2\,,\,#3\,\Big)}
\newcommand{\mucnd}[3]{\mu_{\nd}\big(#1\boldsemicolon #2\boldsemicolon #3\big)}
\newcommand{\intcnd}[3]{\mathrm{I}^{[#3]}_{\nd}\big(#1\boldsemicolon #2\big)}
\newcommand{\vecsym}[3]{\frakS\big[#1\boldsemicolon #2\boldsemicolon #3\big]}
\newcommand{\vecsymb}[3]{\overline{\frakS}\big[#1\boldsemicolon #2\boldsemicolon #3\big]}
\newcommand{\vecsymn}[3]{\Big\|\frakS\big[#1\boldsemicolon #2\boldsemicolon #3\big]\Big\|_2^2}
\newcommand{\absinv}[4]{\bigg|\scrI\Big[#1\boldsemicolon #2\boldsemicolon\big(#3,#4\big)\Big]\bigg|}
\newcommand{\inv}[4]{\scrI\Big[#1\boldsemicolon #2\boldsemicolon\big(#3,#4\big)\Big]}
\newcommand{\invabs}[4]{\scrI_{|\cdot|}\Big[#1\boldsemicolon #2\boldsemicolon\big(#3,#4\big)\Big]}
\newcommand{\inn}{\mathrm{in}}
\newcommand{\out}{\mathrm{out}}
\newcommand{\fgaf}{\bbF_{\infty}}
\newcommand{\gaf}{\calZ_{\infty}}
\newcommand{\fgafn}{\bbF_{n}}
\newcommand{\gafn}{\calZ_n}
\newcommand{\falphagaf}{\bbF_{\alpha,\infty}}
\newcommand{\alphagaf}{\calZ_{\alpha,\infty}}
\newcommand{\alphagafout}{\calZ_{\alpha,\infty,\out}}
\newcommand{\alphagafin}{\calZ_{\alpha,\infty,\inn}}
\newcommand{\falphagafn}{\bbF_{\alpha,n}}
\newcommand{\alphagafn}{\calZ_{\alpha,n}}
\newcommand{\alphagafnout}{\calZ_{\alpha,n,\out}}
\newcommand{\alphagafnin}{\calZ_{\alpha,n,\inn}}
\newcommand{\alphagafnkout}{\calZ_{\alpha,n_k,\out}}
\newcommand{\alphagafndout}{\calZ_{\alpha,n_\delta,\out}}
\newcommand{\alphagafndin}{\calZ_{\alpha,n_\delta,\inn}}
\newcommand{\Gini}{\bbG}
\newcommand{\Ginin}{\bbG_{n}}
\newcommand{\Gininin}{\bbG_{n,\inn}}
\newcommand{\Gininout}{\bbG_{n,\out}}
\newcommand{\Gininkin}{\bbG_{n_k,\inn}}
\newcommand{\Gininkout}{\bbG_{n_k,\out}}
\newcommand{\GIF}{\bbG_{\infty}}
\newcommand{\GIFOUT}{\bbG_{\infty,\out}}
\newcommand{\GIFIN}{\bbG_{\infty,\inn}}
\newcommand{\fand}{\bbF_{\alpha,\nd}} 
\newcommand{\XX}{\bbX_{\infty}}
\newcommand{\xin}{\bbX_{\infty,\inn}}
\newcommand{\xout}{\bbX_{\infty,\out}}
\newcommand{\xn}{\bbX_{n}}
\newcommand{\xnin}{\bbX_{n,\inn}}
\newcommand{\xnout}{\bbX_{n,\out}}
\newcommand{\xinnk}{\bbX_{n_k,\inn}}
\newcommand{\xoutnk}{\bbX_{n_k,\out}}
\newcommand{\zinnk}{\calZ_{\alpha,n_k,\inn}}
\newcommand{\zoutnk}{\calZ_{\alpha,n_k,\out}}
\newcommand{\zin}{\calZ_{\alpha,\infty,\inn}}
\newcommand{\zout}{\calZ_{\alpha,\infty,\out}}
\newcommand{\znd}{\calZ_{\alpha,\nd}}
\newcommand{\zinnd}{\calZ_{\alpha,\nd,\inn}}
\newcommand{\zoutnd}{\calZ_{\alpha,\nd,\out}}
\def\c{\complement}
\newcommand{\qgB}{\ttB}
\newcommand{\qgC}{\ttC}
\newcommand{\Poisson}{\frakP}
\newcommand{\rigidity}{\bfr_\alpha}
\newcommand{\vm}[1]{\boldsymbol{\mathbf{#1}}}
\newcommand{\permutation}{\pi}
\newcommand{\correctionterm}[2]{\vartheta\fbr*{#1,#2}}
\newcommand{\totalsumnum}{\big\langle\vecsymb{\nd}{i}{(\uz,\uo)},\vecsym{\nd}{j}{(\uz,\uo)}\big\rangle}
\newcommand{\totalsumdenom}{\vecsymn{\nd}{0}{(\uz,\uo)}}
\newcommand{\frontsum}{\big\langle\vecsym{\nd}{i}{\uo}\odot\mathbbm{1}_{[0:\nd-\Cdel)},\vecsym{\nd}{j}{\uo}\odot\mathbbm{1}_{[0:\nd-\Cdel)}\big\rangle}
\newcommand{\tailsum}{\big\langle\vecsym{\nd}{i}{\uo}\odot\mathbbm{1}_{[\nd-\Cdel,\nd]},\vecsym{\nd}{j}{\uo}\odot\mathbbm{1}_{[\nd-\Cdel,\nd]}\big\rangle}
\newcommand\numberthis{\addtocounter{equation}{1}\tag{\theequation}}
\newcounter{kdm} 
\newcommand{\newkdm}[1]{\refstepcounter{kdm}\label{#1}} 
\newcommand{\usekdm}[1]{K_{\scaleto{\ref{#1}}{5.5pt}}(\Dset,m)}
\newcounter{C}
\newcommand{\newc}[1]{\refstepcounter{C}\label{#1}}
\newcommand{\usec}[1]{C_{\scaleto{\ref{#1}}{5.5pt}}}
\newcounter{kd} 
\newcommand{\newkd}[1]{\refstepcounter{kd}\label{#1}} 
\newcommand{\usekd}[1]{K_{\scaleto{\ref{#1}}{5.5pt}}(\Dset)}
\newcommand\appendix@section[1]{%
  \refstepcounter{section}%
  \orig@section*{Appendix \@Alph\c@section: #1}%
  \addcontentsline{toc}{section}{Appendix \@Alph\c@section: #1}%
}
\let\orig@section\section
\g@addto@macro\appendix{\let\section\appendix@section}
\title{Approximate Gibbsian structure in strongly correlated point fields 
and generalized Gaussian zero ensembles}
\author{{Ujan Gangopadhyay}\thanks{Dept. of Mathematics, National University of Singapore, \texttt{ujan@nus.edu.sg}}
\and{Subhroshekhar Ghosh}\thanks{Dept. of Mathematics, National University of Singapore, \texttt{subhrowork@gmail.com}}
\and{Kin Aun Tan}\thanks{Dept. of Mathematics, National University of Singapore, \texttt{e0196827@u.nus.edu}}}
\date{}
\begin{document}

\maketitle

\begin{abstract}
Gibbsian structure in random point fields has been a classical tool for studying their spatial properties. However, exact Gibbs property is available only in a relatively limited class of models, and it does not adequately address many random fields with a strongly dependent spatial structure. In this work, we provide a very general framework for approximate Gibbsian structure for strongly correlated random point fields, including those with a highly singular spatial structure. These include processes that exhibit strong spatial \textit{rigidity}, in particular, a certain one-parameter family of analytic Gaussian zero point fields, namely the $\alpha$-GAFs, that are known to demonstrate a wide range of such spatial behaviour. Our framework entails conditions that may be verified via finite particle approximations to the process, a phenomenon that we call an approximate Gibbs property. We show that these enable one to compare the spatial conditional measures in the infinite volume limit with Gibbs-type densities supported on appropriate singular manifolds, a phenomenon we refer to as a generalized Gibbs property. Our work provides a general mechanism to rigorously understand the limiting behaviour of spatial conditioning in strongly correlated point processes with growing system size. We demonstrate the scope and versatility of our approach by showing that a  generalized Gibbs property holds with a logarithmic pair potential for the $\alpha$-GAFs for any value of $\alpha$. In this vein, we settle in the affirmative an open question regarding the existence of point processes with any specified level of rigidity. In particular, for the $\alpha$-GAF zero process, we establish the \textit{level of rigidity} to be exactly $\lfloor \frac{1}{\alpha} \rfloor$, a fortiori demonstrating the phenomenon of spatial \textit{tolerance} subject to the local conservation of $\lfloor \frac{1}{\alpha} \rfloor$ moments. For such processes involving complex, many-body interactions, our results imply that the local behaviour of the random points still exhibits 2D Coulomb-type repulsion in the short range. Our techniques can be leveraged to estimate the relative energies of configurations under local perturbations, with possible implications for dynamics and stochastic geometry on strongly correlated random point fields.
\end{abstract}

\textbf{Keywords}: Gibbs property; Quasi-Gibbs property; Random matrix; Random polynomials; Thermodynamics; Equilibrium statistical mechanics; Rigidity phenomena; Gaussian analytic functions;  Interacting particle systems; Stochastic geometry.

\tableofcontents

%\listoftheorems

\newkdm{ratio:sym:agaf}
\newkdm{471}
\newkdm{472}
\newkdm{473}
\newkdm{571}
\newkdm{pf:p:sym}
\newkdm{pfrec1}
\newkdm{pfrec2}
\newkdm{pfrec3}
\newkd{pf:p:2:5:1}
\newkdm{pf:p:2:5:2}
\newkdm{pf:p:2:5:3}
\newkdm{pf:p:2:5:4}
\newkdm{pf:p:2:5:5}
\newkd{gst}
\newkd{p45}
\newkd{pf:p:van}
\newkd{pgcr}
\newkd{pginil1} 
\newc{CPhi}
\newc{p42}
\newc{p43}
\newc{p44} 
\newc{pf:p:inv:1:1}
\newc{pf:p:inv:1:2}
\newc{pf:p:inv:1:3}
\newc{pf:p:inv:1:4}
\newc{pf:p:inv:1:5}
\newc{pf:p:inv:1:6}
\newc{pf:p:inv:1:7}
\newc{pf:p:inv:1:8}
\newc{pf:p:inv:2:1}
\newc{pf:p:inv:2:2}
\newc{pf:p:inv:3:1}
\newc{pf:p:inv:3:2}

\section{Introduction}

\subsection{Random point fields}

A random point field $\Pi$, also known as a point process, on a locally compact second countable Hausdorff space $\Xi$ is a random locally finite point configuration on the space $\Xi$. In other words, a point process $\Pi$ is a random variable taking values in the space of locally finite point configurations on the ambient space $\Xi$. Random point fields are objects of fundamental interest in a wide range of areas in pure and applied mathematics, including but not limited to probability theory, statistical physics, spatial statistics, network science and stochastic geometry; for a comprehensive treatment we refer the reader to \cite{DV-1,DV-2}.

The most basic model of randomness in stochastic systems is perhaps that of independent random variables. In the world of random point fields, the canonical model of statistical independence is the Poisson point process, which is characterized by point counts being independent across disjoint domains in the ambient space. The model of statistically independent randomness has led to a vast body of literature spanning several decades. However, some of the most interesting large-scale stochastic phenomena turn out to be a result of the collaborative behavior of interacting particle systems, which makes statistically independent models limited in their scope.

\subsection{Local conditioning, Gibbs and quasi-Gibbs properties}\label{ss:localconditioning}

Incorporating spatial interactions poses significant mathematical challenges in terms of the tractability of the models. A classical concept that endeavors to locate a tractable structure in spatially dependent models is the so-called \textit{Gibbs property}. In the simple setting of a finite point configuration, the Gibbs property entails that the likelihood of a point set $\s$ (for instance, its probability density with respect to an appropriate background measure) takes the form of $\exp(-\beta \calH[\s])$ for a suitable energy functional $\calH[\s]$ and an inverse temperature parameter $\beta$. Concretely, the finite volume Gibbs measure on a bounded domain $\Dset\subset\RR^d$ is given by (\cite{Dereudre2019IntroductionProcesses,Dereudre2020ExistenceInteraction})
\begin{equation}\label{eq:Gibbs_def}
    \deri\PROB_\Dset(\cdot) = 
    \frac{1}{Z_\Dset} 
    \exp\left(-\beta\calH_\Dset[\cdot]\right)
    \cdot\deri\mathfrak{P}_\Dset(\cdot)\;,
\end{equation}
where $\mathfrak{P}_\Dset$ is the homogeneous Poisson point process on $\Dset$, 
$Z_\Dset$ is a normalizing constant, 
and $\beta>0$ is the inverse temperature. 
A significant class of such energy functionals are characterized by so-called \textit{pairwise interactions}; i.e., 
\[
\calH_{\Dset}[\s] = \sum_{\{x,y\} \subset \s} \Psi(x-y)
\]
for a \textit{potential function} $\Psi:\RR^d\to\RR\cup\Set*{\infty}$ with appropriate decay properties. This class includes, in particular, the well-known Lennard-Jones pair potential from statistical physics. In the present article, we would not have the occasion to dwell further on the elaborate theory of Gibbsian point processes, and instead refer the interested reader to the extensive treatments in the classic references \cite{Geo,Ruelle}, and the excellent survey \cite{Dereudre2019IntroductionProcesses}.

An important aspect of the Gibbsian structure is the description of the local conditional behavior of such processes. This is particularly effective for infinite volume systems, where it is not straightforward to assign a probability density to a given (infinite) configuration of particles, but the same is much simpler for its spatial conditionings, where we look at the conditional law of the  process on a bounded domain $\Dset$ given the configuration outside $\Dset^\c$ (the \textit{environment}). To be more precise, we consider the conditional distribution $\Pi_{\Dset | \Dset^\c}$ of a Gibbsian point process $\Pi$ restricted to a bounded domain $\Dset \subset \Xi$, given the configuration $\Pi$ on $\Dset^\c$. The celebrated \textit{Dobrushin-Landau-Ruelle} (abbrv. DLR) equations entail that such conditional distribution is specified by a yet another Gibbs-type density, in this case assuming the form $\exp(-\beta \calH_\Dset[\s])$ for a so-called \textit{local energy functional} $\calH_\Dset[\cdot]$ applied to the full point configuration $\s$. To provide an idea of how such local energy functionals may be structured, we content ourselves here with the setting of a pairwise interaction potential $\Psi$, where the local energy functional $\calH_\Dset$ decomposes neatly as a sum of two terms, one capturing the mutual interaction of the points of $\s$ inside $\Dset$ (denoted by $\s_{|\Dset}$) and the other comprising of the interactions across the boundary of $\Dset$ i.e., between points of $\s$ inside $\Dset$ and points of $\s$ outside $\Dset$ (denoted by $\s_{|\Dset^\c}$). In particular, in this setting we have 
\[
\calH_\Dset[\s] = \sum_{x,y \in \s_{|\Dset}} \Psi(|x-y|) + \sum_{x\in \s_{|\Dset}; \; z \in \s_{|\Dset^\c}} \Psi(|x-z|)\;.
\]
We notice that this immediately leads to a multiplicative decomposition of the conditional density of $\Pi_{\Dset|\Dset^\c}$
of the form $\propto \cdot \exp(-\beta \calH_1[\s_{|\Dset}]) \cdot \exp(-\beta \calH_2[\s_{|\Dset},\s_{|\Dset^\c}])$. 
In summary, 
    the spatially conditioned Gibbs measure has the form
\begin{equation}\label{eq:Gibbs_cond_def}
\deri\PROB_{\Dset|\Dset^\c}
\left[\s_{|\Dset}\big|\s_{|\Dset^\c}\right] 
= 
\frac{1}{Z_\Dset(\s_{|\Dset^\c})} 
\cdot 
\exp\left(-\beta \calH_1[\s_{|\Dset}]\right) 
\cdot 
\exp\left(-\beta \calH_2[\s_{|\Dset^{\vphantom{\c}}},\s_{|\Dset^\c}]\right)
\cdot 
\deri\mathfrak{P}_\Dset(\s_{|\Dset})\;,
\end{equation}
where $\Poisson_\Dset$ is the homogeneous Poisson point process on $\Dset$, $Z_\Dset(\s_{|\Dset^\c})$ is a normalizing constant (that depends on the environment $\s_{|\Dset^\c}$), and $\beta>0$ is the inverse temperature. It remains to note that if reasonable control on the cross-boundary interaction $\calH_2[\cdot,\cdot]$ can be obtained, the conditional density of $\Pi_{\Dset | \Dset^\c}$ may be bounded from above and below between constant factors of a density $\exp(-\beta \calH_1[\cdot])$ that depends solely on the finite point set $\s_{|\Dset}$ inside $\Dset$, and often has a tractable algebraic form. Such tractable bounds are of great interest, in particular in the study of stochastic dynamics of such particle systems ; c.f. \cite{Os12,Os13-1,Os13-2}. 

Several difficulties beset the implementation of the broad program outlined above. Most of these issues straddle both technical and conceptual aspects of these models,
are related to problems in rigorously formulating Gibbsian concepts for infinite volume systems, and involve delicate questions of existence/stability of such systems and making concrete sense of the local Gibbs structures. Some of the difficulties include, but are not limited to, slow decay of the pair potential $\Psi$ (e.g., logarithmic, as in the case of Coulomb type systems in 2D), and complications that arise when the interactions are not merely pairwise but are of higher order. For a more detailed account, we refer the reader to \cite{Dereudre2019IntroductionProcesses,Lew,Ruelle} and the references therein.

In order to address these difficulties, Osada \cite{Os13-1} introduced the concept of \textit{Quasi-Gibbs measures} for point processes. Roughly speaking, it entails that while an exact local Gibbs structure might not be available in many models, certain consequences of such structure (in particular, inequalities on the local conditional distributions alluded to earlier) might nonetheless suffice to understand important properties of such systems. This is formalized in the notion of quasi-Gibbs measures.

The quasi-Gibbs property relaxes the requirement of the classical Gibbs property (c.f. \eqref{eq:Gibbs_def},\eqref{eq:Gibbs_cond_def}) by positing that we need not have an exact equality for the spatially conditioned density, but in fact we have upper and lower bounds on it in terms of classical Gibbs measures (as in the right hand side of  \eqref{eq:Gibbs_def}), and it would further suffice to have such comparison inequalities only with respect to \textit{slices} of the Poisson process on $\Dset$ that restrict the latter to a fixed number of particles. For a detailed description of the quasi-Gibbs property, we refer the reader to Appendix \ref{a:Osada}; for an even more general account we refer to \cite{Os12} and the expository tract \cite{Os21survey}.

The quasi-Gibbs structure is employed in \cite{Os13-1,Os13-2,Os12,Os21}, among other works, to study interacting stochastic dynamics on infinite particle systems with logarithmic interaction potentials. This approach is successful in understanding the dynamics of a collection of infinitely many interacting Brownian particles with a family of equilibrium measures that include Ginibre and Airy random point fields and Dyson's measures, in spite of the lack of exact Gibbs structure in such models.

\subsection{Singularities of conditional measures and rigidity phenomena} 

In recent years, the local conditional structure of random point fields have been investigated extensively, and it has been demonstrated that the conditional measure $\Pi_{\Dset|\Dset^\c}$ of a point process $\Pi$ on $\RR^d$ (with $\Dset\subset\RR^d$ being a bounded domain) can exhibit a rich variety of singular phenomena. Introduced in \cite{GP}, the notion of \textit{rigidity phenomena} formalizes this behavior. Roughly speaking, the rigidity phenomenon for a statistic $\Phi$ on $\Pi_{|\Dset}$ as above entails that the value of the random variable $\Phi(\Pi_{|\Dset})$ is in fact determined almost surely (abbrv. a.s.) by the point configuration $\Pi_{|\Dset^\c}$; in other words, the random variable $\Phi(\Pi_{|\Dset})$ is measurable with respect to $\Pi_{|\Dset^\c}$. For a rigorous definition we refer the reader to Definition~\ref{def:rigidity}.

The possible nature of the rigid statistic can, in principle, be quite arbitrary. A natural class of possible statistics is provided by various moments of the points in $\Pi_{|\Dset}$, and indeed, these turn out to be the rigid statistics in many natural point processes, as was established in \cite{GP} for the Ginibre ensemble and the zeros of the standard planar Gaussian analytic function. The infinite Ginibre ensemble is the weak limit of the eigenvalues of non-Hermitian random matrices with independent and identically distributed (abbrv. i.i.d.) standard complex Gaussian entries. The standard planar Gaussian analytic function (abbrv. GAF) is the random entire function $\sum_{k=0}^\infty \xi_k \frac{z^k}{\sqrt{k!}}$ in the complex variable $z$, with the coefficients $\xi$ being i.i.d.\ standard complex Gaussians ; it is well-known that its zeros form an isometry-invariant point process on $\CC$. For more detailed descriptions of these models, we refer the reader to Appendices~\ref{a:ginibre} and \ref{a:gaf}. 

In \cite{GP}, it was shown that for the Ginibre ensemble, the number of points of the process in a bounded domain $\Dset \subset \CC$ is rigid, i.e., determined almost surely by the configuration of points in $\Dset^\c$. On the other hand, for the GAF zero ensemble, the rigid statistics are the number as well as the center-of-mass (i.e., the mean) of the points of the process in $\Dset$. The investigation of rigidity phenomena for random point fields has spawned an substantial literature. This entails investigation of rigidity structures in a wide array point processes that are of interest in probability theory and statistical physics, including the Dyson sine process \cite{G1}, the Airy, Bessel and Gamma processes \cite{Buf1}, and more generally a wide class of \textit{determinantal point processes} \cite{Buf2,Buf3,BufShi,Qi,KaShi,Ka}. Rigidity phenomena have also been investigated in more general settings, such as stationary stochastic processes and random Schrodinger or stochastic Airy operators \cite{BuDeQi,Ghosal1,Ghosal2,GoSho}. Related phenomena, such as appearance of forbidden regions under spatial conditioning \cite{GNi1,GNi2}, maximal rigidity \cite{GL1,KiNi}, the relationship between rigidity phenomena and Palm measures \cite{G2,OsSh,BuFaQi}, applications to percolation \cite{GKP,KaKa,GSa} as well as completeness problems \cite{G1}, Coulomb and Riesz gases \cite{Cha,GaSa,LeSe,DeVa,Lew}, random measures and stable matchings \cite{KlLa,KlLaYo} and directional effects in rigidity and dependency phenomena \cite{GL2,GRi} have attracted attention. Investigation of DLR equations, especially in the context of Dyson-type processes, has been undertaken in \cite{Leble,DeVa}; see also \cite{BuQiSha} for spatial conditioning in general determinantal processes and its connections to the Lyons-Peres completeness conjecture. For an overview of rigidity phenomena and its interfaces to wider themes in statistical mechanics, we refer the reader to \cite{GLsurvey,GL0,Cos,Lew}.  

It was also established in \cite{GP} that the local particle number for the Ginibre ensemble or the local number and local center of mass for the GAF zero ensemble, form a complete set of rigid statistics for the respective point processes. E.g., if $N$ is the number of Ginibre points in a disk $\Dset$, the conditional distribution $\Pi_{\Dset | \Dset^\c}$ is mutually absolutely continuous with respect to the Lebesgue measure on $\Dset^N$. A similar result holds for the GAF zero ensemble, where given the number $N$ and the sum $s$ of the GAF zeros in $\Dset$, the conditional distribution $\Pi_{\Dset | \Dset^\c}$ is mutually absolutely continuous with respect to the Lebesgue measure on $\Sigma_{N,s}$; with $\Sigma_{N,s}$ being the set $\Set{(z_1,\ldots,z_N)\in\Dset^N \given \sum_{i=1}^N z_i = s} \subset \CC^N$. The latter class of phenomena is referred to as \textit{tolerance}. In \cite{GK}, a one-parameter family of general Gaussian analytic functions (called $\alpha$-GAFs) was introduced, which exhibits an increasing number of rigid moments of the zeros in $\Dset$ as the parameter $\alpha$ varies over $\RR_+$. For concrete definitions and statements of these results, we refer the reader to Theorem~\ref{thm:app:alphagaf}.

With increasing levels of rigidity, the conditional measure $\Pi_{\Dset | \Dset^\c}$ becomes increasingly singular, in the sense that their support becomes even more restricted and lower dimensional subsets of the ambient space. The notion of quasi-Gibbs property, however, entails mutual absolute continuity of the local conditional distribution of the point process with the Poisson process on the same domain (conditioned on the particle number). It is thus of limited effectiveness in studying point processes with higher orders of singularity, where the constraints on local particle configurations are much more than the mere conservation of their numbers. 

\subsection{Approximate and Generalized Gibbsianity} 

In the present work, we put forward a new and more general paradigm of approximate Gibbsian structure for random point fields, with the objective of mitigating the difficulties outlined above in the context of point processes with strong spatial singularities. The precise structure of our approach is laid out in detail in Section~\ref{sec:technique}; herein, we discuss some important features thereof. 

First, we proceed to define a notion of \textit{generalized Gibbs property} that is primarily meant for a system in the infinite volume limit. However, such a definition would necessarily be mostly conceptual, since estimates can usually be obtained for finite particle systems. Thus, for a sequence of finite particle approximations, we will subsequently introduce a notion of \textit{approximate Gibbs property}, which entails certain inequalities that can be verified via the joint probability densities of the finite particle systems. We demonstrate that the approximate Gibbsian structure on the finite particle systems implies a generalized Gibbs property for their infinite volume limit. This is the content of Theorem~\ref{thm:abscont}.

To lay out the programme in more concrete terms, let $\XX$ be a point process on $\RR^d$ that exhibits \textit{rigidity of numbers}, and let $\Dset$ be a bounded domain in $\RR^d$. 
Let $\cS{F}$ denote the space of locally finite point configurations on a Borel subset $F\subset\RR^d$.
Let $\xin=\XX\cap\Dset$, $\xout=\XX\cap\Dset^\c$.
Thus, the particle number $|\xin|=\numberofpoints(\xout)$ a.s.\ 
for some measurable function $\numberofpoints:\cS{\Dset^\c}\to\nat$.
Consider the conditional distribution of $\xin$ given $\xout$, denoted by $\PROB_{\Dset|\Dset^\c}\left[\cdot | \cdot \right]$, which exists by the general theory of regular conditional distributions (c.f.\  \cite{Kallenberg2021FoundationsProbability}). Suppose that, conditioned on $\xout=\Upsilon$, the points of $\xin$, considered as a vector, live on a smooth symmetric submanifold $\Sigma(\Upsilon)\subset\CC^{\numberofpoints(\Upsilon)}$ (here \textit{symmetric} entails that if $\zeta\in\Sigma(\Upsilon)$ then $\permutation\cdot\zeta\in\Sigma(\Upsilon)$ for all permutations $\permutation\in S_{\numberofpoints(\Upsilon)}$, where $\permutation\cdot\zeta$ is the vector in $\CC^{\numberofpoints(\Upsilon)}$ obtained by permuting the coordinates of $\zeta$ by the action of $\permutation$). Let $\Phi,\Psi:\RR^d\to\RR\cup\{\infty\}$ be two \textit{potential functions}. For a finite point configuration $\s\subset\RR^d$, define the Hamiltonian 
\[
\calH^{\Phi,\Psi}[\s]=\sum_{x\in\s}\Phi(x) + \sum_{\{x,y\}\subset\s}\Psi(x-y)\;.
\]
Let $\Poisson_\Dset^{\numberofpoints(\Upsilon),\Sigma(\Upsilon)}$ be the standard Poisson point process on $\Dset$ conditioned to have $\numberofpoints(\Upsilon)$ points, and for those points, considered as a vector in $\CC^{\numberofpoints(\Upsilon)}$, to lie on the submanifold $\Sigma(\Upsilon)$. We say that $\XX$ satisfies the \textit{generalized Gibbs property} with the potentials $(\Phi,\Psi)$ if for $\PROB_{\infty,\out}$-a.s.\ $\Upsilon$ there are positive quantities $m(\Upsilon)$, $M(\Upsilon)$ such that for all Borel subset $A\subset\cS{\Dset}$
    \begin{equation} \label{informal0}
    m(\Upsilon)
    \int_A
    \exp\left(-\mathcal{H}^{\Phi,\Psi}[\s]\right) \Poisson_\Dset^{\numberofpoints(\Upsilon),\Sigma(\Upsilon)}(\s)
    \le
    \PROB_{\Dset|\Dset^\c}\left[ A \big| \Upsilon \right] 
    \le 
    M(\Upsilon) 
    \int_A 
    \exp\left(-\mathcal{H}^{\Phi,\Psi}[\s]\right)
    \Poisson_\Dset^{\numberofpoints(\Upsilon),\Sigma(\Upsilon)}(\s)\;.
    \end{equation}
    
In the same setting, the approximate Gibbsian property for $\XX$ and a sequence $(\xn)$ of finite particle approximations of $\XX$ can be motivated as follows. Suppose $\xn\to\XX$ a.s.\ and let $\xnin=\xn\cap\Dset;\; \xnout=\xn\cap\Dset^\c$. Consider Borel subsets $A\subset\cS{\Dset}$ and $B\subset\cS{\Dset^\c}$. Denoting by $\PROB^{(n)}_{\Dset|\Dset^\c}[\cdot|\cdot]$ the conditional distribution of $\xnin$ given $\xnout$ and by $\PROB_{n,\out}$ the marginal law of $\xnout$ we can write a canonical expression 
\begin{equation}\label{informal1}
\Prob[\Big]{\,\fbr[\big]{\,\xnin\in A\,}\,\cap\,\fbr[\big]{\,\xnout \in B\,}\,} 
= 
\int_B \PROB^{(n)}_{\Dset|\Dset^\c}\big[A|\Upsilon\big]\deri\PROB_{n,\out}[\Upsilon] 
=
\Exp\tbr[\Big]{\PROB^{(n)}_{\Dset|\Dset^\c}\big[A|\xnout\big]\cdot\bigindicator{\xnout\in B}}\;.
\end{equation}
Let $\nu_{\Phi,\Psi,\Dset}:\Borel{\cS{\Dset}}\times\cS{\Dset^\c}\mapsto[0,1]$ be the probability kernel (c.f.\cite{Kallenberg2017RandomApplications}) given by 
\[
\nu_{\Phi,\Psi,\Dset}\kernel*{A}{\Upsilon}\coloneqq
\frac{1}{Z(\Upsilon)}\int_A \exp\left(-\calH^{\Phi,\Psi}[\s]\right)
\Poisson_\Dset^{\numberofpoints(\Upsilon),\Sigma(\Upsilon)}(\s)\;.
\]  
By way of an approximate Gibbsian structure, we may begin with a somewhat naive criterion that the quantity in \eqref{informal1} is comparable to (i.e., bounded from above and below up to suitable multiplicative factors) the quantity 
\begin{equation}\label{informal2}
    \Exp\tbr[\Big]{\nu_{\Phi,\Psi,\Dset}\kernel*{A}{\xout}\cdot\bigindicator{\xnout\in B}}\;.
\end{equation}
This is, however, too strong a restriction to demand of the finite particle conditional laws $\PROB^{(n)}_{\Dset|\Dset^\c}[\cdot|\cdot]$, and fail to hold, especially in settings of our interest where the eventual infinite particle limit $\XX$ has singular conditional distributions. To mitigate this difficulty, we posit that the terms in \eqref{informal1} is comparable, in the sense of upper and lower bounds, a term like in \eqref{informal2} but only in a \textit{weak sense}. To be more specific, we posit that for a certain collection of \textit{good events} $\{\onj\}_{j \ge 1}$ (that are measurable with respect to $\xnout$), and a rich enough class of events $A\subset\cS{\Dset}$ and $B\subset\cS{\Dset^\c}$, the quantity
\begin{equation}\label{informal3}
\PROB\fbr[\Big]{\,
\fbr[\big]{\,\xnin\in A\,}
\,\cap\, 
\fbr[\big]{\,\xnout \in B\,} 
\,\cap\,\onj\,}
\end{equation}
is comparable, via matching upper and lower bounds, to 
\begin{equation}\label{informal4}
\Exp\tbr[\Big]{
\nu_{\Phi,\Psi,\Dset}\kernel{A}{\xout}
\cdot\bigindicator{\xnout \in B}
\cdot\bigindicator{\xnout \in \onj}} + \vartheta(j,n)\;,
\end{equation}
where $\vartheta(j,n)$ is an additive error term that converges to $0$ for each fixed $j$ as $n\to\infty$. The $\onj$-s, for each $j$, are finite $n$-particle approximation to certain events $\oj$ (measurable with respect to $\xout$), which themselves have a desirable asymptotic behaviour (as $j \to \infty$) in the context of the spatial dependency structure of the infinite volume point process $\XX$. For a detailed, rigorous description of these notions, we refer the reader to Section~\ref{sec:technique}.

A key result that we establish in this article is that the approximate Gibbs property implies the generalized Gibbs property, thereby enabling us to deduce the Gibbs-type comparisons on infinite volume conditional measures  \eqref{informal0} from the estimates on the finite particle approximations as laid out above. In fact, we are able to deal with more general classes of comparing measures that the restricted Gibbs-type potentials such as $\Dset$ within the ambit of our general framework. Further, it suffices that the approximate Gibbs comparison inequalities hold only for a subsequence of $\{n_k\}_{k \ge 1}$ in the variable $n$. For a rigorous discussion of this result and its attributes, we refer the reader in particular to \ref{thm:abscont}, and  to Section~\ref{sec:technique} in general. For many random point fields of interest, such as the $\alpha$-GAFs, these comparisons hold for all bounded measurable domains in the ambient space (with appropriate choices of the potentials $(\Phi,\Psi)$), whence we say that the point field satisfies the generalized Gibbs property with respect to the potentials $(\Phi,\Psi)$.

We observe that the comparison of the terms in \eqref{informal3} and \eqref{informal4} is occurring in a \textit{weak sense} in two major ways: first, the inequalities hold only up to an additive correction that decays with growing system size ; and secondly, the comparison holds only on certain \textit{good events} (namely, the $\Omega_n(j)$-s), and not in general. These \textit{good events} occupy an increasingly large fraction of the probability space with growing system size, but yet not all of it for any finite $n$-particle system - indeed, for many point processes that have strong rigidity properties in the infinite volume limit, such a requirement on finite particle approximations would simply not be true.

\subsection{Implications for strongly singular point fields}

A fundamental implication of our approach to approximate Gibbs structures is that it allows us to obtain comparison inequalities for spatial conditioning on particle systems in the infinite volume limit, even when the latter have strong spatial singularities and might not have analytically tractable forms. 
%To wit, for a point process $\Pi$ the conditional distribution $\Pin$ given $\Pout$ may be abstractly shown to exist under very general conditions, and is referred to as the so-called \textit{regular conditional distribution}. 
While the \textit{regular conditional distributions} for spatial conditionings in point processes exist by abstract theory,  
it is generally very difficult to deduce any concrete information about them in the infinite volume limit, except in special cases, such as systems with an exact (or quasi) Gibbs structure. 
%The problem is even more severe in the case of infinite particle systems with a singular conditional structure, such as particle systems with higher orders of \textit{rigidity}. 
This is because the singularity of the conditional distribution in the infinite volume limit is usually not observed in the finite particle approximation, where a joint density for the entire particle system would normally exist (see, e.g., the standard planar GAF zero process \cite{GP}). 

%\Paragraph{Robust Technique:}
Our approach to approximate Gibbs structures is able to address this problem in a broad class of strongly singular point processes. This is encapsulated in Theorem~\ref{thm:abscont}, wherein a very general technique is demonstrated for transitioning from comparison inequalities for finite particle systems to those for a limiting infinite particle system. 
%Notably, it includes the setting where the latter limiting infinite particle system exhibits spatial singularities, manifested in the conditional distributions of the form $\Pi_{\Dset|\Dset^\c}$ being singular with respect to a canonical Poisson process on the conditioning domain $\Dset$. 
%\Paragraph{Novelty of our approach:} 
To our knowledge, such results pertaining to strongly singular point processes are unknown in the literature. In fact, the most singular processes for which Gibbs-type bounds on conditional measures are known all exhibit no further rigidity that the rigidity of local particle numbers (in the sense of \cite{GP}); it may be noted that the spatial conditioning $\Pi_{\Dset|\Dset^\c}$ for such processes is usually absolutely continuous with respect to the canonical Poisson process (conditioned to have the right particle number $N$); see e.g. the literature on the quasi-Gibbs property (c.f. \cite{Os21survey} and the references therein). This latter distribution is reasonably tractable; taken in uniform random order, the points are uniformly distributed on the appropriate power $\Dset^N$ of the domain $\Dset$. Further,  the number of points in $\Dset$ being a desired value $N$ for the finite particle system is usually an event of positive probability, and conditioning on this event gives a good approximation to the infinite particle system conditioned to have $N$ points in $\Dset$. No such advantages are available for higher order rigidity : e.g., for the finite particle approximations to the planar GAF zero ensemble, the event that the centre of mass of the particles in $\Dset$ equals a desired value $s$ is an event of zero probability.

Significant models of random point fields for which our framework sheds particularly useful light include zeros of the standard planar GAF, and more generally, the zero ensembles of $\alpha$-GAFs, which are canonical generalizations of the standard GAF into a one-parameter family. Since the $\alpha$-GAFs exhibit increasingly singular conditional structure as $\alpha$ varies (roughly, $\lfloor\frac{1}{\alpha}\rfloor$ moments of the point configuration in $\Dset$ are determined a.s.\ by that in $\Dset$), the effectiveness of our approach for $\alpha$-GAFs demonstrates its ability to address highly singular spatial structures. In general, for any $\alpha$ we are able to show that the conditional law $\Pi_{\Dset|\Dset^\c}$ for such a process has a density (with respect to a canonical background measure on its support) that is comparable to the squared Vandermonde density. Thus, we establish in particular that even under spatial conditioning, the close-range repulsion structure of such a process is preserved, wherein the joint density decays like the square of the Euclidean separation between neighboring particles. The detailed statement of these results maybe obtained in Theorem~\ref{thm:alphagaf-main}. It goes without saying that our results are also able to address the case of less singular processes, such as those with the exact Gibbs property or random matrix type ensembles such as the Ginibre ensemble, sine and Airy random point fields, among others. In particular, the case of the Ginibre ensemble has been discussed in Section~\ref{sec:ginibre} as a demonstration of some of the main features of our approach in a relatively simple scenario.

%\Paragraph{Black-box technique:} 
Indeed, one may observe that once the comparison inequalities for the finite particle ensembles are available, Theorem~\ref{thm:abscont} can be invoked as a black box in order to deduce comparison inequalities for the infinite particle system which is of main interest. To our knowledge, this is the arguably the first result that provides a general, principled toolbox to directly access Gibbs-type properties of strongly singular particle systems in the infinite volume  limit. This opens the door to potential applications to very general classes of random point fields, where the infinite particle system of interest might be intractable but analytical estimates on finite ensemble approximations are nonetheless available.

\subsection{The emergence of singularity for limits of spatially conditioned point fields}

A fundamental problem in Gibbs-type comparisons for strongly singular processes is that the conditional measure $\PROB_{\Dset|\Dset^\c}$ and its approximation $\PROB_{\Dset|\Dset^\c}^{(n)}$ are supported on different sets, with the support of $\PROB_{\Dset|\Dset^\c}$ often being a singular manifold  (see Theorem~\ref{thm:app:gaf:tol} for reference). As such, a direct comparison inequality on the conditional density for finite ensembles is of limited value in such a situation, since conditional measure for the limiting infinite ensemble will live on a different support. However, known Gibbs-type comparison results are structurally unable to address this problem. In this context, it may be worthwhile to note that our finite system comparison inequalities are structured in the form of upper on lower bounds on \textit{conditional  probabilities} of only \textit{certain particular events}, and \textit{not} on the conditional densities per se (roughly speaking, the latter would entail comparison inequalities on probabilities of all events). This enables us to mitigate the problem of differing support sets for $\PROB_{\Dset|\Dset^\c}$ and $\PROB_{\Dset|\Dset^\c}^{(n)}$.

Another major difficulty in dealing with conditional measures for random point fields of growing size is that, the conditioning events (defined in terms of the point configuration on $\Dset^\c$) \textit{do not} have good consistency properties in general. To be more explicit, let us consider the situation where the set of point configurations $A$ in \eqref{informal1} contains point configurations with a fixed number of points $m$. Then, for $n_1\neq n_2>m$, on the event $\bbX_{n_1,\inn}\in A$, we must have $|\bbX_{n_1,\out}|=n_1-m$, and on the event $\bbX_{n_2,\inn}\in A$, we must have  $|\bbX_{n_2,\out}|=n_2-m$. This implies, in particular, that the integral in \eqref{informal1} has to be taken over disjoint subsets of $\cS{\Dset^\c}$. This means that the conditional probabilities $\PROB^{(n)}_{\Dset|\Dset^\c}[A|\Upsilon]$ considered as functions of $\Upsilon\in\cS{\Dset^\c}$, are supported on disjoint subsets of $\cS{\Dset^\c}$ as $n$ varies. This poses a challenge in understanding the limiting behavior of these conditional measures as $n\to\infty$. 

A key contribution of the present work is to introduce an architecture and a toolbox to understand such limiting of conditional measures in a rigorous manner, especially in a setting with a singular infinite volume limit. We subsequently use this analysis to obtain results on the infinite volume conditional law $\PROB_{\Dset|\Dset^\c}[\cdot|\cdot]$, which is the main goal from a statistical mechanical point of view. We believe that this toolbox can be effectively used for studying other problems (beyond Gibbs-type properties) for strongly singular infinite particle systems; this includes but is not limited to potential applications to invariant dynamics thereon. 

While previous works, such as those on quasi-Gibbs properties, often used finite system comparison inequalities largely as tools to study certain specific aspects (such as dynamics) for the infinite volume limit (without drawing direct statistical mechanical conclusions about the limiting particle system), in this article we obtain direct comparison inequalities for the conditional laws of infinite volume limit. This raises the possibility of an application of our results for the study statistical mechanical properties of strongly singular particle systems (including, in particular, their dynamics) by directly working with the infinite volume limit.

\subsection{Applications of approximate Gibbsianity} 

\subsubsection{A precise hierarchy in levels of rigidity}

We demonstrate the broad scope of our approach by using it to settle an open question on the existence of infinite point processes at arbitrary levels of rigidity. To lay out the problem, we briefly recall the phenomena of rigidity and tolerance in random point fields and the hierarchical structure thereof. To be succinct, we will focus on the setting of rigidity of moments for point processes defined on the complex plane $\CC$, with regard to a bounded domain $\Dset$. Suppose for a point process $\Pi$ there are $k$ rigid moments $\{\sum_{j=1}^{M_0} z_j^p = M_p \, ; \, 0 \le p \le k-1 \}$ of the points $\{z_j\}_{j \in M_0}$ of $\Pi$ inside $\Dset$. Then the phenomenon of tolerance subject to these $k$ rigid moments entails that, on the set $\Sigma_{(k)}:= \cap_{p=0}^{k-1} \Sigma_p$ with $\Sigma_p:=\{\sum_{j=1}^{M_0} z_j^p = M_p\}$, the conditional measure $\Pi_{\Dset|\Dset^\c}$ is mutually absolutely continuous with respect to the Lebesgue measure on $\Sigma^{(k)}$. If a point process on $\CC$ satisfies this condition for all bounded measurable sets $\Dset$, then the point process $\Pi$ is said to be \textit{rigid at level} $k$. 

It was established in \cite{GP} that the Ginibre ensemble is rigid at level $0$, whereas the zeros of the standard planar GAF are rigid at level $1$. It is a natural question as to whether there exist point processes that are rigid at level $k$, for any given $k\in\NN$. This question turns out to be surprisingly challenging; while it would be of great interest to show the existence of $k$-level rigid random point fields that have close connections to important models in statistical physics, even toy examples are in fact hard to come by. In \cite{GK}, a one-parameter family of generalized Gaussian analytic functions was introduced, referred to as the $\alpha$-GAFs, with the parameter $\alpha \in (0,\infty)$. In explicit terms, the $\alpha$-GAF is defined as the random entire function $\sum_{k=0}^\infty \xi_k \frac{z^k}{(k!)^{\alpha/2}}$ where $\xi_k$s are i.i.d.\ standard complex Gaussian random variables. The family of $\alpha$-GAFs includes, in particular, the standard planar GAF for the particular choice of parameter $\alpha=1$, and thus its zero set belongs to the wider class of point processes pertaining to Coulomb type processes and their generalizations. 

It was demonstrated in \cite{GK} that the zeros of the $\alpha$-GAF have $k$ rigid moments, where $k=\lfloor\frac{1}{\alpha}\rfloor$. 
However, rigidity at level $k$ involves demonstrating that, subject to $k$ rigid moments, there is \textit{tolerance}, as discussed above. Establishing tolerance is, in general, a challenging problem, and this was left open for $\alpha$-GAFs in \cite{GK}, thereby leaving the program of investigating general $k$-rigidity incomplete. 

In this paper, we settle this problem, by demonstrating that for any bounded measurable subset $\Dset\subset\CC$, the conditional density of the zeros of $\alpha$-GAF is in fact comparable to the squared Vandermonde density with respect to the Lebesgue measure on $\Sigma^{(k)}$ defined as above with $k=\lfloor\frac{1}{\alpha}\rfloor$. For a complete and rigorous statement, we refer the reader to Theorem~\ref{thm:alphagaf-main}. This, in particular, implies tolerance for the $\alpha$-GAF zeros, subject to the first $\lfloor\frac{1}{\alpha}\rfloor$ moments. \textit{A fortiori}, this answers the question raised in \cite{GK} in the affirmative, and establishes the $\alpha$-GAFs as a one-parameter family of planar point processes that exhibit a complete hierarchy of rigidity structures with all possible levels of rigidity between $1$ and $\infty$ attainable by tuning the parameter $\alpha$ appropriately. In particular, the present work fully subsumes as a special case the quantitative estimates obtained for the standard planar GAF in the preprint \cite{Quantitative-Estimates} by the second-named author. More generally, our analysis in this paper establishes a systematic framework to investigate the technically challenging tolerance phenomena and problems of spatial conditioning at large for general classes of point processes.

\subsubsection{Bounds on relative energies}

Investigating the relative energies of configurations is an important tool in statistical mechanics, especially those with long range or higher order dependencies where a simple Gibbs structure is absent (c.f. \cite{Lew} and the references therein for a discussion in the context of Coulomb type systems; see also \cite{Ruelle}). While the absolute energy of particular configuration may be difficult to make rigorous sense of, it is often technically simpler to consider the energy difference between two configurations. A  setting of great significance in this context would perhaps be the energy difference between two configurations that are \textit{local perturbations} of each other, i.e., the result of a transformation of point configurations that acts as identity outside a suitably large compact set of the ambient space. 

For a system that accords a simple local Gibbs structure with a local Hamiltonian $\calH_\Dset$ (on a bounded domain $\Dset$) (c.f.\ Section~\ref{ss:localconditioning}), the energy can be taken to be simply $\calH_\Dset$. Additionally, we may consider the canonical decomposition $\calH_\Dset[\s] =  \calH_1[\s_{|\Dset}] - \beta \calH_2[\s_{|\Dset},\s_{|\Dset^\c}]$, with the boundary effect $\calH_2[\cdot,\cdot]$ being controlled (possibly in terms of the outside configuration $\s_{|\Dset^\c}$). In this setting, the energy difference between two configurations $\s^{(1)}$ and $\s^{(2)}$ with $\s^{(1)}_{|\Dset^\c}=\s^{(2)}_{|\Dset^\c}=\omega$ would simply be $\calH_1[\s^{(1)}_{|\Dset}]-\calH_1[\s^{(2)}_{|\Dset}]$ (up to a bounded additive term that depends on $\omega$). This form is particularly convenient, since $\calH_1$ is an internal energy term that depends only on the finite configuration of points inside the bounded domain $\Dset$, and in well-structured models, can often have a very explicit and tractable algebraic form (c.f.\ the Ising spin system in the discrete setting). For more general point fields, log conditional density of a configuration on a bounded domain would be a natural substitute for the local energy, and the difference between such log conditional densities would provide a indication of the change in energy between two comparable configurations. 

Random point fields with long-ranged correlations and singularities in their conditional structure admit hardly any of this simplistic description. Yet, for motivations stemming from statistical physics as well as stochastic geometry, it would be of great interest to obtain similar tractable bounds on the relative energy of configurations when they are obtained from each other via a local perturbation (as discussed above). While the physical motivations are of classical interest (\cite{Lew,Ruelle}), the stochastic geometric considerations are also significant in the context of recent advances in those directions, for instance see \cite{GKP} for an investigation of continuum percolation on the  Ginibre and  Gaussian zero models. The classic Burton and Keane argument, as an illustrative example, obtains stochastic geometric consequences (in particular, uniqueness of infinite cluster in percolation) via local perturbations of configurations in a bounded domain while freezing the \textit{environment} (i.e., the configuration outside the said domain). In spatially singular models, such as the Ginibre or  Gaussian zeros process, this can only be done in a manner that respects the rigidity structure of local statistics of these processes (e.g., preserving the mass and the centre of mass for the GAF zeros) \cite{GKP}. While the vanilla Burton and Keane type argument relies on existence of such desirable local perturbations, the study of finer, quantitative stochastic geometric properties would call forth estimates on the energy cost of such local perturbations, which is our object of interest herein.

It is a consequence of Theorem~\ref{thm:alphagaf-main} that the relative energy between two configurations $(\alpha^{(1)},\omega)$ and $(\alpha^{(2)},\omega)$ (with $\alpha^{(1)},\alpha^{(2)}$ supported on a bounded domain $\Dset$ and $\omega$ supported on $\Dset^\c$) will be bounded  by $|\log \nu(\cdot ; \omega) - \log \nu(\cdot ; \omega)| + A(\omega)$, where $\nu(\cdot ; \omega)$ is the comparing conditional density and $A(\omega)$ is an additive constant that depends only on the environment $\omega$. This  assumes particular significance in a setting where, as we shall see in the case of $\alpha$-GAF zero processes, the comparing  density $\nu$ has a simple, tractable form.

For the $\alpha$-GAF zero processes, Theorem~\ref{thm:alphagaf-main} (in particular, Corollary~\ref{cor:energy} thereof) implies that the relative energy would be bounded above, upto an additive constant that depends only on the environment $\omega$, by the difference between the \textit{logarithmic energies} of the configurations $\alpha^{(1)}$ and $\alpha^{(2)}$. Here the logarithmic energy of a finite point configuration $\alpha$, denoted by $\mathcal{E}_{\log}(\alpha)$, is defined as $-\sum_{x,y \in \alpha ; x \ne y} \log |x-y|$. This demonstrates the fact that, while the $\alpha$-GAF zeros have an intricate, many body interaction structure entailing arbitrary orders of spatial singularity, perturbations between legitimate local configurations are nonetheless energetically inexpensive, and their relative energies are bounded above by that of simple 2D Coulomb type system with only two body interactions. This reveals an intriguing interplay between the spatial rigidities of the $\alpha$-GAF zeros on a global scale, and a certain regularity on the local scale, wherein the impact of the strong dependency structure can nonetheless be effectively dominated locally by a simple logarithmic Coulomb system.

\section{The generalized and approximate Gibbs properties} \label{sec:technique}

In this section, our objective is to state the key technical result, which is Theorem~\ref{thm:abscont}. 
We prove this result in Section~\ref{sec:pf:thm:abscont}.
We describe the general setup of this result now.
Consider a probability space $\prsp$ equipped with a probability measure $\PROB$.
We will consider point processes from $\prsp$ to $\RR^d$ for some $d\in\NN$.
For any Borel subset $F\subset\RR^d$, let $\cS{F}$ denote the space of locally finite point configurations on $F$.
Let $\Borel{\cS{F}}$ denote the Borel sigma-algebra on $\cS{F}$.
Let $\bfD$ denote the set of all bounded open subsets of $\RR^d$ whose boundary has zero Lebesgue measure.
Let us recall the definition of \textbf{rigidity}.

\begin{definition}[Rigidity]\label{def:rigidity}
Consider a point process $\bbX:\prsp\to\cS{\RR^d}$ whose first intensity measure is absolutely continuous with respect to the Lebesgue measure on $\RR^d$. Consider a set $\Dset\in\bfD$. A measurable function $\phi:\cS{\Dset}\to\RR$ is said to be \emph{rigid} with respect to $\bbX$ if there exists a measurable function $\psi:\cS{\Dset^\c}\to\RR$ such that $\psi\fbr[\big]{\bbX\cap\Dset^\c} = \phi\fbr[\big]{\bbX\cap\Dset}$ a.s. The point process $\bbX$ is said to be number-rigid if the number of points of $\bbX\cap\Dset$ is rigid for all $\Dset\in\bfD$.
\end{definition}

\subsection{The generalized Gibbs property}

In this section, our objective is to define the generalized Gibbs property. Let $\bbX:\prsp\to\cS{\RR^d}$ be a point process whose first intensity measure is absolutely continuous with respect to the Lebesgue measure on $\RR^d$. Consider $\Dset\in\bfD$. 
Let $\PROB_{\inn}$ be the distribution of $\bbX_{\inn}\coloneqq\bbX\cap\Dset$.
Let $\PROB_{\out}$ be the distribution of $\bbX_{\out}\coloneqq\bbX\cap\Dset^\c$. 
Let $\PROB_{\Dset|\Dset^\c}[\cdot|\cdot]$ be the conditional measure of $\bbX_{\inn}$ given $\bbX_{\out}$.

\begin{definition}[The generalized Gibbs property with respect to a probability kernel]\label{def:gengibbs} 
Let $\nu:\Borel{\cS{\Dset}}\times\cS{\Dset^\c}\to[0,1]$ be a probability kernel (see \cite{Kallenberg2021FoundationsProbability} for reference).  
We say $\bbX$ satisfies the generalized Gibbs property with respect to $\nu$ on the domain $\Dset$
if there exists measurable functions $\mm,\MM:\cS{\Dset^\c}\to(0,\infty)$ such that
\begin{equation}
\mm\fbr[\big]{\bbX_{\out}}\,\nu\kernel{\cdot}{\bbX_{\out}} 
\,\le\,
\PROB_{\Dset|\Dset^\c}[\cdot|\bbX_{\out}] 
\,\le\,
\MM\fbr[\big]{\bbX_{\out}}\,\nu\kernel{\cdot}{\bbX_{\out}}\quad\mbox{a.s.}
\label{targeteq}
\end{equation}
\end{definition}

\begin{definition}[The generalized Gibbs property with respect to potentials]\label{def:gengibbsII}
Assume that $\bbX$ is number-rigid i.e., $\abs{\bbX_{\inn}}=\numberofpoints\fbr{\bbX_{\out}}$ for some measurable function $\numberofpoints:\cS{\Dset^\c}\to\nat$. Suppose that, conditioned on $\bbX_{\out}=\Upsilon\in\cS{\Dset^\c}$, the points of $\bbX_{\inn}$ considered as a vector in $\CC^{\numberofpoints(\Upsilon)}$, live on a smooth symmetric submanifold $\Sigma(\Upsilon)\subset\CC^{\numberofpoints(\Upsilon)}$. Let  $\Poisson_\Dset^{\numberofpoints(\Upsilon),\Sigma(\Upsilon)}$ be the standard Poisson point process on $\Dset$ conditioned to have $\numberofpoints(\Upsilon)$ points and for those points as a vector in $\CC^{\numberofpoints(\Upsilon)}$ to lie on the submanifold $\Sigma(\Upsilon)$. Let $\Phi,\Psi:\RR^d\to\RR\cup\{\infty\}$ be two \textit{potential functions}. For a finite point configuration $\sigma\subset\RR^d$, define the Hamiltonian 
\[
\mathcal{H}^{\Phi,\Psi}[\sigma]=\sum_{x\in\sigma}\Phi(x)+\sum_{x,y\in\sigma}\Psi(x-y)\;.
\]
We say $\bbX$ satisfies the \textit{generalized Gibbs property} with the potentials $(\Phi,\Psi)$ on the domain $\Dset$ if there exists measurable functions $\mm,\MM:\cS{\Dset^\c}\to(0,\infty)$ such that for $\PROB_{\infty,\out}$-a.s.\ $\Upsilon$ we have for all $A\in\Borel{\cS{\Dset}}$
\[  
\mm(\Upsilon)
\int_{A}  
\exp\left(-\mathcal{H}^{\Phi,\Psi}[\sigma]\right)\deri\Poisson_\Dset^{\numberofpoints(\Upsilon),\Sigma(\Upsilon)}(\sigma)
\,\le\,
\PROB_{\Dset|\Dset^\c}[A|\Upsilon] 
\,\le\, 
\MM(\Upsilon)
\int_{A}  
\exp\left(-\mathcal{H}^{\Phi,\Psi}[\sigma]\right)\deri\Poisson_\Dset^{\numberofpoints(\Upsilon),\Sigma(\Upsilon)}(\sigma)\;.
\]
We say $\bbX$ satisfies the generalized Gibbs property with the potentials $(\Phi,\Psi)$ if it satisfies the generalized Gibbs property with the potentials $(\Phi,\Psi)$ on all $\Dset\in\bfD$.
\end{definition}

\begin{remark}
If $\bbX$ satisfies the generalized Gibbs property with respect to the potentials $(\Phi,\Psi)$ as in Definition~\ref{def:gengibbsII}, then $\bbX$ satisfies the generalized Gibbs property, as in Definition~\ref{def:gengibbs}, with respect to the probability kernel $\nu_{\Phi,\Psi,\Dset}:\Borel{\cS{\Dset}}\times\cS{\Dset^\c}\to[0,1]$ given by
\begin{equation}\label{eq:def-kernel}
    \nu_{\Phi,\Psi,\Dset}\kernel{A}{\Upsilon} = 
    \frac{1}{Z(\Upsilon)}
    \int_A\exp\left(-\mathcal{H}^{\Phi,\Psi}[\sigma]\right)\deri\Poisson_\Dset^{\numberofpoints(\Upsilon),\Sigma(\Upsilon)}(\sigma)\;,
\end{equation} 
where $Z(\Upsilon)$ is the appropriate normalizing factor.
\end{remark}

\subsection{The approximate Gibbs property}

In this section, our objective is to define the approximate Gibbs property.
Consider $\Dset\in\bfD$. 
First We introduce some notations related to point configurations inside $\Dset$. 
\begin{notation}\label{n:insidetopology}
Consider $m\in\NN$.
\begin{enumerate}[(a),font=\normalfont\bfseries,topsep=\parskip,itemsep=\parskip]
\item Consider $m$-disjoint open balls $B_1,\ldots,B_m$ with rational centers and rational radii in $\Dset$. Let
\[
\Psi(B_1,\ldots,B_m) \coloneqq \Set*{ \Upsilon\in\cS{\Dset} \given 
|\Upsilon\cap B_i| = 1 \mbox{ for all } 1\leq i\leq m}
\]
Then the countable collection
\[
\BasisInsidempts \coloneqq \Set*{ \Psi(B_1,\ldots, B_m) \given B_1,\ldots, B_m \mbox{ as above} }
\]
is a countable basis of the Borel $\sigma$-algebra of point configurations on $\Dset$ with exactly $m$ points. 

\item Let $\FUBasisInsidempts$ be the collection of sets which are finite union of the elements of the basis $\BasisInsidempts$ i.e.,
\[
\FUBasisInsidempts\coloneqq
\Set[\bigg]{\bigcup_{i=1}^k A_i \given A_i\in\BasisInsidempts \mbox{ for all } k\geq 1 \mbox{ and }1\leq i\leq k}\;.
\]

\item Let $\BorelInsidempts$ be the corresponding Borel $\sigma$-algebra. 
\end{enumerate}
\end{notation}

Now let us introduce some notations related to point configurations outside $\Dset$.

\begin{notation}\label{n:outsidetopology}\text{}
\begin{enumerate}[(a),font=\normalfont\bfseries,topsep=0pt]
\item Let $n$ be a positive integer.  
Let $B\subset\Dset^\c$ be a closed annulus whose center is the origin and whose inradius and outradius are both rational. 
Consider a collection of $n$ disjoint open balls $B_i$ with rational radii and centres having rational co-ordinates such that $B_i \cap \Dset^\c\subset B$.
Let $\Psi\fbr[\big]{n,B,B_1, \cdots, B_n}$ be the Borel subset of $\cS{\Dset^\c}$ defined as follows:
\[ 
\Psi\fbr[\big]{n,B,B_1,\cdots,B_n}
\coloneqq 
\Set[\big]{\,\Upsilon \in \cS{\Dset^\c} \given \lvert\Upsilon \cap B\rvert=n, \lvert\Upsilon \cap {B_i}\rvert = 1 \,}\;.
\] 
Then the countable collection 
\[
\BasisOutside
\coloneqq
\Set[\Big]{\,\Psi\fbr[\big]{n,B,B_1,\ldots,B_n}\given n,B, B_i \text{ as above}\,}
\]
is a basis for the topology of $\cS{\Dset^\c}$.

\item Let $\FUBasisOutside$ be the collection of sets which are finite union of the sets in the basis $\BasisOutside$ i.e.,  
\[
\FUBasisOutside
\coloneqq
\Set[\big]{\,\cup_{i=1}^k \Psi_i\given \Psi_i\in\BasisOutside\mbox{ for all }
k\geq 1\mbox{ and } 1\leq i\leq k\,}\;.
\]
\item Let $\BorelOutside$ be the corresponding $\sigma$-algebra.
\end{enumerate}
\end{notation}

\begin{notation}\label{n:asymp}
Let $p$ and $q$ be indices 
    which take values in potentially infinite abstract sets. 
Let $F(p,q)$ and $G(p,q)$ be non-negative functions of these indices. 
We write 
    \[
    F(p,q) 
    \,\overset{q}{\scaleto{\asymp}{8pt}}\, 
    G(p,q)
    \]
    if there exist positive numbers
    $\underline{k}(q)$ and $\overline{k}(q)$ such that 
    for all $p$ and $q$
    \[ 
    \underline{k}(q) \, F(p,q) 
    \,\le\,
    G(p,q) 
    \,\le\, 
    \overline{k}(q) \, F(p,q)\;.
    \] 
\end{notation}

\begin{definition}[A sequence of events exhausts another event]%\label{d:exhaust}
A sequence of events $\seq*{\calE_j}_{j \ge 1}$ is said to \emph{exhaust} another event $\calE$ if $\calE_j\subset\calE_{j+1}\subset\calE$ for all $j\ge 1$, and $\Prob{\,\calE\setminus\calE_j\,}\rightarrow 0$ as $j\to\infty$.
\end{definition}

The approximate Gibbs property is defined for a sequence of point processes $\seq{\xn}_{n=1}^\infty$ and a limiting point process $\XX$ such that $\xn\to\XX$ a.s. All the point processes are from $\prsp$ to $\cS{\RR^d}$. We assume that the first intensity measures of these processes are absolutely continuous with respect to the Lebesgue measure on $\RR^d$. Further, we assume that $\XX$ is number-rigid i.e., for every $\Dset\in\bfD$, there is a measurable function $\numberofpoints:\cS{\Dset^\c}\to\nat$ such that $\lvert\XX\cap\Dset\rvert = \numberofpoints\fbr{\XX\cap\Dset^\c}$ a.s. Now consider a fixed $\Dset\in\bfD$. 

\begin{notation}\label{n:genpp}
For $n\in\NN\cup\Set{\infty}$, let $\xnin\coloneqq\xn\cap\Dset$, $\xnout\coloneqq\xn\cap\Dset^\c$, $\xin\coloneqq\XX\cap\Dset$, $\xout\coloneqq\XX\cap\Dset^\c$, and let $\PROB_n$, $\PROB_{n,\inn}$, $\PROB_{n,\out}$ be the distributions of $\xn$, $\xnin$, $\xnout$ respectively.
\end{notation}

\begin{definition}[The events $\om$ and $\omn$]
\label{d:om:ommn}
For $m\in\NN$, let $\om$ be the event that the number of points in $\xin$ is $m$. 
For $n,m\in\NN$ with $n\geq m$ let $\omn$ be the event that the number of points in $\xnin$ is $m$.
\end{definition}

\begin{definition}[The approximate Gibbs property with respect to a probability kernel]\label{def:approximategibbs}
We say that $\fbr*{\seq*{\xn}_{n=1}^\infty,\XX}$ satisfies the approximate Gibbs property on the domain $\Dset$ with respect to the probability kernel $\nu:\Borel{\cS{\Dset}}\times\cS{\Dset^\c}\to[0,1]$ if for every $m$ for which $\Prob{\,\om\,}>0$, we have the following: 
\begin{enumerate}[(a),font=\normalfont\bfseries,topsep=0pt]
\item There exists a sequence of events $\seq*{\oj}_{j=1}^\infty$ such that each $\oj$ is measurable with respect to $\xout$ and $\seq*{\oj}_{j=1}^\infty$ exhausts $\om$.

\item\label{c:ag:2}  For each $j\geq 1$ there exists a sequence of positive integers $(n_k)_{k=1}^\infty$, a sequence of events $\fbr[\big]{\onkj}_{k=1}^{\infty}$, and a sequence of real numbers $\fbr[\big]{\vartheta(j,k)}_{k=1}^\infty$ such that the following hold:
\begin{enumerate}[(1),font=\normalfont\bfseries,topsep=0pt]
\item For each $j\geq 1$ we have $\lim_{k\to\infty}\vartheta(j,k)=0$.\label{c:ag:2:1}
\item For each $j\geq 1$ and $k\geq 1$ the event $\onkj$
is measurable with respect to $\xoutnk$.
\item For each $j\geq 1$ and $k\geq 1$ we have $\onkj\subset\omnk$.
\item For each $j\geq 1$ we have $\oj\subset\liminf_{k\to \infty}\onkj$.
\item\label{c:ag:2:5} As functions of the quantities 
    $A\in\FUBasisInsidempts$, 
    $B\in\FUBasisOutside$, 
    $j\geq 1$, 
    $k\geq 1$ we have
\begingroup
\addtolength{\jot}{0.25em}
\begin{align*}
& \Prob[\Big]{\,
\fbr[\big]{\,\xinnk\in A\,}
\,\cap\, 
\fbr[\big]{\,\xoutnk \in B\,} 
\,\cap\,\onkj\,}\\
\overset{j}{\scaleto{\asymp}{8pt}} \;
& 
\Exp\fbr[\Big]{
    \nu\kernel{A}{\xout}
    \bigindicator{\xoutnk\in B}
    \bigindicator{\onkj} 
    }
\,+\, 
\vartheta(j,k)\;.
\numberthis
\label{eq:main}
\end{align*}
\endgroup
That is, 
    the ratio of the left hand side and the right hand side is bounded
    above and below by functions of only $j$ - the quantities $A$, $B$, 
    $k$ are not involved in the bounds. 
\end{enumerate}
\end{enumerate}
\end{definition}

\begin{definition}[The approximate Gibbs property with respect to 
potentials] We say $\fbr*{\seq*{\xn}_{n=1}^\infty,\XX}$ 
satisfies the approximate Gibbs property on the domain $\Dset$ 
with the potentials $(\Phi,\Psi)$ if $\fbr*{\seq*{\xn}_{n=1}^\infty,\XX}$
satisfies the approximate Gibbs property on the domain $\Dset$ with the
probability kernel $\nu_{\Phi,\Psi,\Dset}$ given by \eqref{eq:def-kernel}.  
We say $\fbr*{\seq*{\xn}_{n=1}^\infty,\XX}$ satisfies the approximate Gibbs
property with the potentials $(\Phi,\Psi)$ if it satisfies the approximate
Gibbs property with the potentials $(\Phi,\Psi)$ on all domains $\Dset\in\bfD$.
\end{definition}

\begin{remark}
Note that the left hand side of \eqref{eq:main} can
be written as 
\[
\Exp\fbr[\Big]{
\PROB^{(n_k)}_{\Dset|\Dset^\c}\tbr[\big]{A|\xoutnk}
\bigindicator{\xoutnk\in B}
\bigindicator{\onkj}}
\]
where $\PROB^{(n_k)}_{\Dset|\Dset^\c}\tbr[\big]{\cdot|\cdot}$ is the conditional distribution of $\xinnk$ given $\xoutnk$. This demonstrates that the two sides of \eqref{eq:main} are of a similar nature, with the left hand side being computable purely in terms of the distribution of the finite particle system $\bbX_{n_k}$. On the other hand, if the target conditional measure $\nu\kernel{\cdot}{\xout}$ has a reasonably tractable form, then the right hand side may also be well-estimated (up to additive and multiplicative errors), and thus \eqref{eq:main} can be verified. This programme is indeed possible to carry out for a substantial class of models, including those with arbitrarily high levels of spatial rigidity, as we shall see later in this article.
\end{remark}

\begin{remark}
Observe that the right hand side of \eqref{eq:main} involves the full point process $\bbX_{\infty}$ in the form of $\xout$. This is, in fact, essential, since for strongly rigid point processes $\bbX$, any measure that is comparable to the infinite volume conditional law $\PROB_{\Dset|\Dset^\c}[\cdot|\xout]$ must almost surely be supported on the non-trivial submanifold $\Sigma(\xout)$ (as in Definition~\ref{def:gengibbs}). This information must somehow be incorporated into the finite particle comparison inequalities \eqref{eq:main}, which is the reason for this phenomenon.
\end{remark}
%\end{document}

\subsection{From approximate Gibbs to generalized Gibbs}

We are now ready to state a key technical theorem that drives the subsequent major results in the paper.
Broadly speaking, it connects the approximate Gibbs property,
    which is largely dependent on the finite particle approximations, 
    to the generalized Gibbs property, 
    which entails a comparison for the spatially conditional distribution for the infinite volume limit.

\begin{theorem}\label{thm:abscont}
If $\fbr*{\seq*{\xn}_{n=1}^\infty,\XX}$ satisfies the approximate Gibbs property with respect to a probability kernel $\nu$ on a domain $\Dset$, then $\XX$ satisfies the generalized Gibbs property with respect to $\nu$ on the domain $\Dset$.
\end{theorem}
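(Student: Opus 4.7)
The plan is to convert the finite-particle approximate Gibbs inequality into an almost-sure two-sided bound on $\PROB_{\Dset|\Dset^\c}[\cdot|\xout]$ by sending $k\to\infty$ at each level $j$ and then combining the resulting $j$-dependent bounds via the exhaustion $\oj\uparrow\om$. Since $\XX$ is number-rigid, the events $\om$ are $\xout$-measurable and partition the sample space modulo null sets, so it suffices to establish the generalized Gibbs comparison on each $\om$ with $\Prob{\om}>0$ separately. By the defining identity of regular conditional expectation, this reduces to proving that for every $A\in\Borel{\cS{\Dset}}$ and Borel $B'\subset\om$ the quantity $\Prob{(\xin\in A)\cap(\xout\in B')}$ is sandwiched between constant multiples of $\Exp[\nu\kernel*{A}{\xout}\bigindicator{\xout\in B'}]$, with the constants allowed to depend on $\xout$ through the $\oj$-level. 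A monotone class argument then reduces verification to $A\in\FUBasisInsidempts$ and $B'=B\cap\oj$ with $B\in\FUBasisOutside$, with $j$-dependent constants $\underline{c}(j),\overline{c}(j)$.

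With $j$ fixed, I apply \eqref{eq:main} along the subsequence $(n_k)$ furnished by the approximate Gibbs hypothesis. Writing $\beta_k\coloneqq\Prob{(\xinnk\in A)\cap(\xoutnk\in B)\cap\onkj}$ and $\alpha_k\coloneqq\Exp[\nu\kernel*{A}{\xout}\bigindicator{\xoutnk\in B}\bigindicator{\onkj}]$, one has $\underline{c}(j)(\alpha_k+\vartheta(j,k))\le\beta_k\le\overline{c}(j)(\alpha_k+\vartheta(j,k))$ uniformly in $k,A,B$. The almost sure convergence $\xn\to\XX$, together with the absolutely continuous first intensity of $\XX$ (so $\XX$ has no points on the spherical boundaries defining $A,B$ nor on $\partial\Dset$), yields $\indicator{\xinnk\in A}\to\indicator{\xin\in A}$, $\indicator{\xoutnk\in B}\to\indicator{\xout\in B}$, and $\abs{\xinnk}\to\abs{\xin}$ almost surely. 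For $\indicator{\onkj}$, the hypothesis $\oj\subset\liminf_k\onkj$ gives $\liminf_k\indicator{\onkj}\ge\indicator{\oj}$, while $\onkj\subset\omnk$ combined with $\abs{\xinnk}\to\abs{\xin}$ gives $\limsup_k\indicator{\onkj}\le\indicator{\om}$. Since $\vartheta(j,k)\to 0$ and all integrands are bounded by $1$, Fatou and reverse Fatou permit passage to the limit $k\to\infty$.

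The main technical hurdle is precisely the asymmetry between these $\liminf$ and $\limsup$ controls on $\indicator{\onkj}$: the naive upper bound replaces $\indicator{\oj}$ by $\indicator{\om}$ on the right-hand side. To close the gap I decompose $\onkj=(\onkj\cap\oj)\cup(\onkj\cap\oj^\c)$; the first piece converges cleanly via $\oj\subset\liminf_k\onkj$, while the second has $\limsup_k\Prob{\onkj\cap\oj^\c}\le\Prob{\om\setminus\oj}$, and applying the approximate Gibbs inequality at a larger auxiliary level $j'\gg j$ and invoking the exhaustion $\Prob{\om\setminus\Omega(j')}\downarrow 0$ controls this excess through a diagonal procedure. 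Once the clean per-$j$ comparison
\[
\underline{c}(j)\,\Exp\bigl[\nu\kernel*{A}{\xout}\bigindicator{\xout\in B}\bigindicator{\oj}\bigr]\le\Prob{(\xin\in A)\cap(\xout\in B)\cap\oj}\le\overline{c}(j)\,\Exp\bigl[\nu\kernel*{A}{\xout}\bigindicator{\xout\in B}\bigindicator{\oj}\bigr]
\]
is in hand for every $j$, setting $\mm$ and $\MM$ equal to $\underline{c}(j)$ and $\overline{c}(j)$ respectively on the $\xout$-measurable disjoint pieces $\oj\setminus\Omega(j-1)$ (with $\Omega(0)\coloneqq\emptyset$ and extended across all $m$) yields measurable functions $\mm,\MM:\cS{\Dset^\c}\to(0,\infty)$ realizing \eqref{targeteq}, after the routine monotone class extension from basis sets to arbitrary Borel $A$.
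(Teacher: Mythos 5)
There is a genuine gap in the way you pass to the limit $k\to\infty$. Your scheme applies \eqref{eq:main} with a fixed $B\in\FUBasisOutside$ and tries to recover the factor $\bigindicator{\oj}$ from the limit of $\bigindicator{\onkj}$. But the hypotheses only give $\liminf_{k}\bigindicator{\onkj}\ge\bigindicator{\oj}$ and $\limsup_{k}\bigindicator{\onkj}\le\bigindicator{\om}$; nothing forces $\limsup_k\onkj\subset\oj$ (in the applications $\oj$ is \emph{defined} as $\liminf_k\onkj$, and the limsup can be strictly larger on a set of positive probability). Your decomposition $\onkj=(\onkj\cap\oj)\cup(\onkj\cap\oj^{\c})$ therefore leaves, in both the upper and lower bounds, an additive error of size up to $\Prob{\om\setminus\oj}$, which for fixed $j$ does not vanish and is not dominated by the main terms (which can be made arbitrarily small by shrinking $A$). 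The approximate Gibbs inequality at a larger level $j'$ says nothing about the excess event $\onkj\cap\oj^{\c}$ — it concerns $\Omega_{n_k}(j')$ along a possibly different subsequence — so the proposed ``diagonal procedure'' does not remove this error. Consequently you never obtain the clean per-$j$ multiplicative sandwich
\[
L(j)\,\Exp\bigl[\nu\kernel*{A}{\xout}\bigindicator{\xout\in B}\bigindicator{\oj}\bigr]
\;\le\;
\Prob[\big]{(\xin\in A)\cap(\xout\in B)\cap\oj}
\;\le\;
U(j)\,\Exp\bigl[\nu\kernel*{A}{\xout}\bigindicator{\xout\in B}\bigindicator{\oj}\bigr]\;,
\]
which is exactly what the subsequent Radon--Nikodym/regularity step (the paper's Lemma~\ref{Lemma4.1}) requires.

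The missing idea is that $\oj$ must be fed into the ``$B$-slot'' of \eqref{eq:main} rather than recovered from $\bigindicator{\onkj}$. Since $\oj$ is $\xout$-measurable, one approximates $\fbr[\big]{\xout\in B}\cap\oj$ up to probability $\epsilon$ by an event $\fbr[\big]{\xout\in B_\epsilon}$ with $B_\epsilon\in\FUBasisOutside$, applies \eqref{eq:main} to the pair $(A,B_\epsilon)$, and uses the a.s.\ convergence $\bigindicator{\xoutnk\in B_\epsilon}\to\bigindicator{\xout\in B_\epsilon}$. Once the event is (up to $\epsilon$) contained in $\oj\subset\liminf_{l}\onlj$, the residual intersection with $\onkj$ costs only $\ok$, because $\Prob{(\liminf_l\onlj)\symmdiff(\cap_{l\ge k}\onlj)}\to 0$. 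Letting $k\to\infty$ and then $\epsilon\to0$ yields the sandwich above with no leftover additive term. Your final assembly of $\mm,\MM$ from the pieces $\oj\setminus\Omega(j-1)$, and the extension from basis sets to all Borel $A$ (which needs outer regularity, as in Lemma~\ref{lemma:meas1}, rather than a bare monotone class argument), are fine once that inequality is in hand.
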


\begin{corollary}
If $\fbr*{\seq*{\xn}_{n=1}^\infty,\XX}$ satisfies the approximate Gibbs property with the potentials $(\Phi,\Psi)$ on a domain $\Dset$, then $\XX$ satisfies the generalized Gibbs property with the potentials $(\Phi,\Psi)$ on the domain $\Dset$. 
\end{corollary}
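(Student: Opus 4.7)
The plan is to fix $m\in\NN$ with $\Prob{\om}>0$, work on the rigidity-slice $\om$ (which is $\xout$-measurable by number-rigidity), and construct $\mm,\MM$ separately on each such slice before assembling. For this $m$, the approximate Gibbs property furnishes the exhausting sequence $(\oj)_{j\ge 1}$ of $\om$, and for each $j$ a subsequence $(n_k)$, events $\onkj\subset\omnk$ with $\oj\subset\liminf_k\onkj$, and errors $\vartheta(j,k)\to 0$. I will pass $k\to\infty$ in \eqref{eq:main} to obtain a comparison of infinite-volume probabilities (up to an additive gap term supported on $\om\setminus\oj$), extend from basis events to all Borel sets via a $\pi$-$\lambda$ argument, restrict to Borel $B\subset\oj$ to kill the gap term, and finally set $\mm\equiv\underline{k}(j)$, $\MM\equiv\overline{k}(j)$ on $F_j:=\Omega(j)\setminus\Omega(j-1)$ (with $\Omega(0):=\emptyset$).

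Fix $j$, $A\in\FUBasisInsidempts$, $B\in\FUBasisOutside$, and write $F(n_k), G(n_k)$ for the two sides of \eqref{eq:main}. The a.s.\ convergence $\xn\to\XX$, combined with the fact that basis events are continuity sets of the limit law (by absolute continuity of the first intensity with respect to Lebesgue, no points a.s.\ lie on rational-ball boundaries), gives $\indicator{\xinnk\in A}\to\indicator{\xin\in A}$ and $\indicator{\xoutnk\in B}\to\indicator{\xout\in B}$ a.s. Moreover, $\indicator{\oj}\le\liminf_k\indicator{\onkj}$ by Definition~\ref{def:approximategibbs}, and $\limsup_k\indicator{\onkj}\le\indicator{\om}$ since $\onkj\subset\omnk$ and $\omnk\to\om$ a.s.\ ($\Dset$ being a continuity set for the limit law). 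Fatou and reverse Fatou then yield
\[
F_j^\infty\le\liminf_k F(n_k)\le\limsup_k F(n_k)\le F_j^\infty+S_j(A,B),
\]
and analogously for $G(n_k)$ with limits sandwiched between $G_j^\infty$ and $G_j^\infty+R_j(A,B)$, where $F_j^\infty:=\Prob{\xin\in A,\xout\in B,\oj}$, $G_j^\infty:=\Exp\tbr[\big]{\nu(A;\xout)\indicator{\xout\in B}\indicator{\oj}}$, and $R_j, S_j$ are the analogues with $\om\setminus\oj$ in place of $\oj$. Feeding these into \eqref{eq:main} and letting $k\to\infty$ (so $\vartheta(j,k)\to 0$) produces
\[
\underline{k}(j)\,F_j^\infty - R_j(A,B) \,\le\, G_j^\infty \,\le\, \overline{k}(j)\bigl(F_j^\infty + S_j(A,B)\bigr).
\]

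Viewed as functions of $B$ with $A$ fixed, all four of $F_j^\infty, G_j^\infty, R_j, S_j$ are finite measures on $\BorelOutside$; hence the displayed inequality extends from the generating $\pi$-system $\FUBasisOutside$ to all Borel $B\in\BorelOutside$ by $\pi$-$\lambda$, and symmetrically in $A$. Restricting to Borel $B\subset\oj$ annihilates both $R_j(A,B)$ and $S_j(A,B)$ (supported on $\om\setminus\oj$, which is disjoint from $B$), leaving the clean per-$\oj$ bound
\[
\underline{k}(j)\Prob{\xin\in A,\xout\in B}\le\Exp\tbr[\big]{\nu(A;\xout)\indicator{\xout\in B}}\le\overline{k}(j)\Prob{\xin\in A,\xout\in B},\quad B\subset\oj\text{ Borel}.
\]
Specialising to $B\cap F_j$ for each $j$, summing over $j$, and assembling across all $m$ with $\Prob{\om}>0$, one obtains the integral form of \eqref{targeteq} with $\mm(\xout):=\sum_j\underline{k}(j)\indicator{F_j}(\xout)$ and $\MM(\xout):=\sum_j\overline{k}(j)\indicator{F_j}(\xout)$, both strictly positive $\xout$-measurable functions.

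The hard part will be the limit transfer in the second step, because the approximate Gibbs definition supplies only the one-sided inclusion $\oj\subset\liminf_k\onkj$, so the Fatou bound on $\liminf_k\indicator{\onkj}$ cannot be matched directly by an upper bound pinned to $\indicator{\oj}$. The resolution will be to bound $\limsup_k\indicator{\onkj}$ from above via $\onkj\subset\omnk$ and the a.s.\ convergence $\omnk\to\om$; this confines the $\liminf$-$\limsup$ gap to $\om\setminus\oj$, manifesting as additive error terms $R_j, S_j$ that are genuinely $\sigma$-additive in $B$ and therefore disappear upon restricting to $B\subset\oj$ after the $\pi$-$\lambda$ extension. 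This clean $\sigma$-additivity of the error terms, together with the piecewise indexing by $j$, is what enables $\mm$ and $\MM$ to be defined as locally-constant functions of $\xout$.
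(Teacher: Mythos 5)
Your architecture is sound and runs parallel to the paper's, with one genuinely different organizational choice: you prove the comparison first for basis events $B\in\FUBasisOutside$ and then try to extend over $B$, whereas the paper fixes a general Borel $B$ from the outset and approximates the event $\fbr{\xout\in B}\cap\oj$ in symmetric difference by a single basis event $B_\epsilon$, so that no extension of an inequality over $B$ is ever needed. Your Fatou/reverse-Fatou handling of the $k\to\infty$ limit (confining the $\liminf$--$\limsup$ gap to $\om\setminus\oj$ via $\onkj\subset\omnk$, and killing the resulting error measures by restricting to $B\subset\ej$) is a clean alternative to the paper's $o_k$/$o_\epsilon$ bookkeeping and is correct as far as it goes.

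The genuine problem is the extension step: domination of one finite measure by another does \emph{not} propagate from a generating $\pi$-system to the full Borel $\sigma$-algebra by the $\pi$-$\lambda$ theorem. The class $\Set{B\given\mu_1(B)\le\mu_2(B)}$ is not a $\lambda$-system (it is not closed under proper differences), and there are standard counterexamples (e.g.\ $\mu_1=\delta_0$, $\mu_2=\delta_1$ dominate each other the wrong way on the $\pi$-system of half-lines). What saves your argument is that $\FUBasisOutside$ is not an arbitrary $\pi$-system: it consists of finite unions of a countable \emph{topological basis}, so every open set is an increasing countable union of its elements, and the measures involved are regular Borel measures on a Polish space; the inequality therefore extends to open sets by continuity from below and to all Borel sets by outer regularity. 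This is precisely the paper's Lemma~\ref{lemma:meas1}, and you must invoke that mechanism (or the paper's in-measure approximation by $B_\epsilon$) rather than Dynkin. Separately, you stop at the ``integral form'' of \eqref{targeteq}: to reach the generalized Gibbs property you still need to disintegrate — for each fixed $A$ in the countable class deduce the a.s.\ pointwise comparison of $\PROB_{\Dset|\Dset^\c}[A|\Upsilon]$ with $\nu\kernel{A}{\Upsilon}$, swap quantifiers using countability of $\FUBasisInsidempts$, and then extend over $A$ to all of $\BorelInsidempts$, which again requires the regularity argument of Lemma~\ref{lemma:meas1} (this is the content of the paper's Lemma~\ref{Lemma4.1}). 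These are standard steps, but they are part of the statement being proved and should not be elided.
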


\begin{remark}
From the proof of Theorem~\ref{thm:abscont}
    it will be clear that Theorem~\ref{thm:abscont} remains valid 
    if in the definition of approximate Gibbs property (Definition~\ref{def:approximategibbs}) 
    we replace the condition that 
    $\oj$ is measurable with respect to $\xout$ 
    by the condition that
    there exists an event 
    $\widetilde{\Omega}(j)$ 
    which is measurable with respect to $\xout$ 
    and which satisfies 
    $\Prob*{\,\oj\,\symmdiff\,\widetilde{\Omega}(j)\,}=0$.
    Additionally, Theorem~\ref{thm:abscont} remains valid if the condition $\Omega(j)\subset\liminf_{k\to\infty}\Omega_{n_k}(j)$ only holds a.s.
\label{rmabs-1}
\end{remark}
%\end{document}

\section{Gibbsian structures for strongly singular point fields on \texorpdfstring{$\CC$}{Complex}}\label{s:setup}

In this article, we will investigate approximate and generalized Gibbsianity in the context of point processes on $\CC$; we note in passing that our approach in fact applies to point processes on very general spaces. In particular, we will establish generalized Gibbsian structure for point processes with arbitrarily high levels of spatial rigidity (equivalently, arbitrarily high degrees of singularity in their spatial conditioning), thereby demonstrating the power and scope of the approach outlined in this article.

As a preparation to tackle the higher order singularities present, e.g., in the $\alpha$-GAF zeros, we first discuss the approximate Gibbsian structure of Ginibre ensemble. The Ginibre ensemble accords only a mild degree of spatial singularity (namely, the rigidity of numbers); being a determinantal point process its spatial conditioning can also be accessed via other methods (see, e.g., \cite{BuQiSha} among others). Nonetheless, a discussion on approximate Gibbsianity of the Ginibre ensemble allows us to lay out some of the major ingredients of our approach, and prepares the reader for the more delicate considerations that are called forth by the $\alpha$-GAF zero ensembles in the subsequent sections.

\subsection{Gibbsian structure of the Ginibre ensemble}\label{ss:ginisetup}

Consider the Ginibre ensemble $\GIF$ (see Appendix~\ref{a:ginibre} for reference). Consider $\Dset\in\bfD$ (as an abuse of notation, here we treat $\bfD$ as consisting of subsets of $\CC$ as opposed to $\RR^2$.) Let $\GIFIN\coloneqq\GIF\cap\Dset$ i.e., it is the restriction of $\GIF$ inside the domain $\Dset$. Let $\GIFOUT\coloneqq\GIF\cap\Dset^\c$ i.e., it is the restriction of $\GIF$ outside the domain $\Dset$. Let $\numberofpoints(\GIFOUT)$ be the number of points in $\GIFIN$. The number of points in $\GIFIN$ is measurable with respect to $\GIFOUT$ due to the number rigidity of the Ginibre ensemble (see Theorem~\ref{t:1.1} in Appendix~\ref{a:ginibre} for reference). Let $\frho{\cdot}{\GIFOUT}$ be the conditional distribution of $\GIFIN$ given $\GIFOUT$ where we identify the configuration $\GIFIN$ with an element in $\Dset^{\numberofpoints(\GIFOUT)}$ by taking the points in uniform random order. This distribution has a density with respect to the Lebesgue measure on $\Dset^{\numberofpoints(\GIFOUT)}$ (see Theorem~\ref{thm:Gini:tol} in Appendix~\ref{a:ginibre} for reference). 

\begin{notation}\label{n:vandermonde}
For a vector $\underline{x}=(x_1,\dots,x_N)\in\CC^N$ let
\[
\van{\underline{x}}\coloneqq\prod_{1\leq i<j\leq N}(x_i-x_j)\;.
\]
For two vectors 
$\underline{x}=(x_1,\dots,x_{N_1})\in\CC^{N_1}$
and
$\underline{y}=(y_1,\dots,y_{N_1})\in\CC^{N_2}$
let 
\[
\vancross{\underline{x}}{\underline{y}}\coloneqq
\prod_{i=1}^{N_1}\prod_{j=1}^{N_2}(x_i-y_j)\;.
\]
Therefore
\[
\van{\underline{x}\cc\underline{y}}=
\van{\underline{x}}\cdot
\van{\underline{y}}\cdot
\vancross{\underline{x}}{\underline{y}}\;.
\]
Here $\underline{x}\cc\underline{y}$ denotes the concatenated vector.
\end{notation}

\begin{theorem}[The generalized Gibbsian structure of the Ginibre ensemble]
\label{thm:ginibre-main}
There exist positive quantities $\mm(\GIFOUT)$ and $\MM(\GIFOUT)$, 
    measurable with respect to $\GIFOUT$, 
    such that $\GIFOUT$-a.s.\ we have 
    \begin{equation}\label{eq:target2}
    \mm(\GIFOUT)
    \abs*{\van{\uz}}^2  
    \le \frac{\deri\frho{\cdot}{\GIFOUT}}{\deri\el}(\uz) 
    \le \MM(\GIFOUT)\abs*{\van{\uz}}^2
    \end{equation}
    for a.e.\ $\uz$ with respect to the Lebesgue measure $\el$ on $\Dset^{\numberofpoints(\GIFOUT)}$.    
    In other words, the generalized Gibbs property is satisfied with the potentials $(\Phi,\Psi)$ given by 
    $\Phi\equiv 0$ and $\Psi(z)=-2\log|z|$.
\end{theorem}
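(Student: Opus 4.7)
I would invoke Theorem~\ref{thm:abscont} with the finite Ginibre ensembles $\Ginin$ as the approximating sequence (using the standard a.s.\ coupling which gives $\Ginin\to\GIF$ as $n\to\infty$) and the probability kernel $\nu_{\Phi,\Psi,\Dset}$ attached to $\Phi\equiv 0$, $\Psi(z)=-2\log|z|$. Because $\GIF$ is only number-rigid on a bounded domain (Theorem~\ref{t:1.1}), the submanifold $\Sigma(\Upsilon)$ in Definition~\ref{def:gengibbsII} equals $\CC^{\numberofpoints(\Upsilon)}$, so $\nu_{\Phi,\Psi,\Dset}(\cdot\mid\Upsilon)$ reduces to the probability measure on $\Dset^{\numberofpoints(\Upsilon)}$ proportional to $|\van{\uz}|^2\deri\uz$. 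Once the approximate Gibbs property is verified for these data, Theorem~\ref{thm:abscont} yields Theorem~\ref{thm:ginibre-main} as an immediate corollary.

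\textbf{Finite-$n$ conditional density.} The joint eigenvalue density of $\Ginin$ is proportional to $|\van{z_1,\ldots,z_n}|^2\prod_i e^{-|z_i|^2}$. Splitting into $m$ inside points $\uz\in\Dset^m$ and $n-m$ outside points $\uo$, Notation~\ref{n:vandermonde} gives
\[
|\van{z_1,\ldots,z_n}|^2 = |\van{\uz}|^2\,|\van{\uo}|^2\,|\vancross{\uz}{\uo}|^2,
\]
whence on the event $\omn$, the conditional density of $\Gininin$ given $\Gininout=\uo$ is proportional to $|\van{\uz}|^2\cdot\Phi_n(\uz,\uo)$ with
\[
\Phi_n(\uz,\uo)\coloneqq|\vancross{\uz}{\uo}|^2\prod_{i=1}^m e^{-|\zeta_i|^2}.
\]
The programme is to show that on a suitable good event $\onkj$, $\Phi_{n_k}(\uz,\uo)$ is sandwiched \emph{uniformly in} $\uz\in\Dset^m$ between $c_j^{\pm 1}\Theta_{n_k}(\uo)$, with $\Theta_{n_k}$ a function of $\uo$ alone and $c_j\to 1$ as $j\to\infty$. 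The $\Theta_{n_k}(\uo)$ factor then cancels between the numerator of the conditional density and its normalizing constant against $|\van{\uz}|^2\deri\uz$, and plugging into $\Exp[\PROB^{(n_k)}_{\Dset|\Dset^\c}[A\mid\Gininkout]\bigindicator{\xoutnk\in B}\bigindicator{\onkj}]$ yields \eqref{eq:main}.

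\textbf{Good events and main obstacle.} Since $e^{-|\zeta|^2}$ is bounded above and below on the compact $\closure\Dset$, only the cross term requires genuine work. Fixing a reference point $\zeta_0\in\Dset$, the quantity to bound is
\[
\log\frac{\Phi_n(\uz,\uo)}{\Phi_n(\zeta_0^{\otimes m},\uo)} = 2\sum_{i=1}^m\sum_{j}\bigl(\log|\zeta_i-\omega_j|-\log|\zeta_0-\omega_j|\bigr) + O(1),
\]
and the good event $\oj$ would enforce (i) a separation condition requiring no point of $\GIFOUT$ within distance $1/j$ of $\partial\Dset$, which tames the near-boundary contributions by a Lipschitz estimate, together with (ii) quantitative control on the radial counting function of $\GIFOUT$ capturing the cancellations driven by the rotational symmetry of the Ginibre, which controls the far-away contributions. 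With parameters tuned so that the resulting oscillation of the log-ratio over $\closure\Dset^m$ is at most $1/j$, we obtain $\oj\subset\om$ measurable with respect to $\xout$ and $\PROB(\om\setminus\oj)\to 0$. The finite-$n$ proxies $\onkj$ impose the same conditions on $\Gininkout$, and $n_k$ is chosen by a diagonal argument so that $\oj\subset\liminf_k\onkj$, using the a.s.\ convergence $\Ginin\to\GIF$. The main obstacle is precisely this uniform-in-$\uz$ control of the cross term: because the Ginibre has bulk intensity one on all of $\CC$, the naive bound $\sum_j |\zeta-\zeta_0|/|\omega_j|$ diverges, and one must exploit the interplay between the rotational symmetry of $\GIFOUT$ and the Gaussian confinement $\prod e^{-|\zeta_i|^2}$ to extract the required cancellations. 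Once this sandwich is established, $\vartheta(j,k)$ can be absorbed into a bound of the form $\PROB(\omnk\setminus\onkj)\to\PROB(\om\setminus\oj)=o_j(1)$, and Theorem~\ref{thm:abscont} closes the argument.
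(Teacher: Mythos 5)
Your architecture is exactly the paper's: invoke Theorem~\ref{thm:abscont} for $(\seq{\Ginin},\GIF)$ with the kernel $\nu_{\Phi,\Psi,\Dset}$ proportional to $|\van{\uz}|^2\,\deri\el$ on $\Dset^m$, define good events by a $\thetagap$-separation from $\partial\Dset$ plus control on the outside configuration, and sandwich the finite-$n$ conditional density uniformly in $\uz$. However, you leave the one step that actually carries the proof --- the uniform-in-$\uz$ control of $|\vancross{\uz}{\uo}|^2$ --- as an acknowledged ``main obstacle'' with only a vague pointer to ``the radial counting function'' and ``the interplay with the Gaussian confinement.'' That is the gap. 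The resolution (Proposition~\ref{prop:ginicondratio}, i.e.\ Proposition~8.2 of \cite{GP}, and its analogue Proposition~\ref{p:van} here) is to write $\log|1-\zeta/\omega_j|=\Re\log(1-\theta_j)$ with $\theta_j=\zeta/\omega_j$, expand to second order, and observe that the first two terms are controlled by $|\ginisumone(n)|$ and $|\ginisumtwo(n)|$ --- sums that converge only because of cancellation coming from the isotropy of the Ginibre field --- while the remainder is bounded absolutely by $(r_0^3/\thetagap)\,\ginisumthree(n)$, using the $\thetagap$-gap to control $|\theta_j|^3/(1-|\theta_j|)$. The Gaussian weight $\prod_i e^{-|\zeta_i|^2}$ plays no role in this cancellation (as you yourself note, it is bounded above and below on $\closure{\Dset}$ and can be absorbed into the constants $\underline{k}(j),\overline{k}(j)$ of Notation~\ref{n:asymp}); attributing part of the cancellation to it is a red herring. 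Your good events therefore need to be phrased as $|\ginisumone(n_k)|,|\ginisumtwo(n_k)|,\ginisumthree(n_k)\le M_j$ together with the gap condition, not as a condition on the counting function.

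Two further points you gloss over. First, the inverse power sums converge only \emph{in probability} (Appendix~\ref{a:ginibre-estimates}), which is precisely why the subsequence $(n_k)$ is needed: one extracts $n_k$ along which $\ginisumone(n_k),\ginisumtwo(n_k),\ginisumthree(n_k)$ converge a.s., and the $M_j$ must avoid atoms of the limiting distributions; your ``diagonal argument'' is in the right spirit but this is where it bites. Second, with the sandwich of Proposition~\ref{prop:ginicondratio} in hand on $\onkj$, the comparison \eqref{eq:main} holds \emph{exactly} with $\vartheta(j,k)=0$ (see Section~\ref{ss:GiniStep2}); your closing remark about ``absorbing $\vartheta(j,k)$ into $\PROB(\omnk\setminus\onkj)$'' conflates the additive error in \eqref{eq:main} with the exhaustion requirement $\PROB(\om\setminus\oj)\to0$, which are separate conditions in Definition~\ref{def:approximategibbs}. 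Finally, note that $\oj\coloneqq\liminf_k\onkj$ is not literally $\xout$-measurable; the paper constructs an a.s.\ equal event $\ocj$ measurable with respect to $\GIFOUT$ and appeals to Remark~\ref{rmabs-1} --- a small but necessary repair that your write-up should include if you define $\oj$ the same way.
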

%\end{document}

Theorem~\ref{thm:ginibre-main} shows that even after the configuration outside $\Dset$ that is 
$\Gini_{\infty,\out}$ is fixed, the points inside $\Dset$ namely the points of 
$\Gini_{\infty,\inn}$ have repulsion among them. The nature of the repulsion is similar
to the repulsion between points of a generic $\GIF$ configuration.

\subsection{Gibbsian structure of the zeros of \texorpdfstring{$\alpha$}{alpha}-GAF}\label{ss:agafsetup}

We are now ready to delve into the approximate Gibbsian structure of the $\alpha$-GAF zero ensemble.
Recall that it is known that the $\alpha$-GAF zero ensemble has $\lfloor \frac{1}{\alpha} \rfloor$ rigid moments, so these point fields can be highly singular depending on the value of the parameter $\alpha$. The following theorem demonstrates the full power and generality of our approach in the context of these highly singular processes, and unveils an approximate Gibbsian structure for them via a comparison of their spatially conditioned densities (on appropriate submaniforlds) to the squared Vandermonde density.

\begin{notation}\label{n:manifold}
For a vector of complex numbers $\us=\fbr*{s_1,\cdots,s_k}\in\CC^k$ and $m\in\NN$, define the manifold 
\[
\Sigma_{\usm}\coloneqq
\Set*{\fbr*{\z_1,\cdots,\z_m}\in\Dset^m\given
\sum_{i=1}^m\z_i^j=s_j\mbox{ for all }1\leq j\leq k}\;.
\]
\end{notation}

Consider the ensemble $\alphagaf$ of zeros of $\alpha$-GAF $\falphagaf$ (see Appendix~\ref{a:agaf} for reference).
Consider $\Dset\in\bfD$.
Let $\alphagafin$ be the restriction of the point configuration $\alphagaf$ inside the domain $\Dset$.
Let $\alphagafout$ be the restriction of the point configuration $\alphagaf$ outside the domain $\Dset$.
Let $\numberofpoints(\alphagafout)$ be the number of points in $\alphagafin$.
The number of points in $\alphagafin$ is measurable with respect to $\alphagafout$ due to the number rigidity of the ensemble $\alphagaf$ (see Theorem~\ref{thm:app:alphagaf} in Appendix~\ref{a:agaf} for reference).
Let $\constraintexternal(\alphagafout)$ be the array of the first $\lfloor 1/\alpha\rfloor$ moments of $\alphagafin$.
This is measurable with respect to $\alphagafout$ due to rigidity of the ensemble $\alphagaf$ up to order $\rigidity\coloneqq 1+\lfloor 1/\alpha\rfloor$ (see Theorem~\ref{thm:app:alphagaf} in Appendix~\ref{a:agaf} for reference).
To be more precise,
if $(\zeta_1,\dots,\zeta_m)$ are the points of $\alphagafin$ in some order, 
then $\constraintexternal(\alphagafout) = (s_1,\dots,s_{\rigidity-1})$
where for $1\leq j\leq \rigidity - 1$, $s_j = \zeta_1^j+\cdots+\zeta_m^j$. 
Let $\frho{\cdot}{\alphagafout}$ be the conditional distribution of $\alphagafin$ given $\alphagafout$ where we identify $\alphagafin$ with a vector in $\Dset^{\numberofpoints(\alphagafout)}$ by taking the points in uniform random order.
This distribution is supported on the set $\Sigma_{\usm}$ where $m=\numberofpoints(\alphagafout)$, $\us=\constraintexternal(\alphagafout)$.
This distribution has a density with respect to the Lebesgue measure on $\Sigma_{\usm}$ (see Theorem~\ref{thm:app:alphagaf} in Appendix~\ref{a:agaf} for reference). 
%\end{document}

\begin{theorem}[The generalized Gibbsian structure of the $\alpha$-GAF zero ensembles]
\label{thm:alphagaf-main}
There exist \emph{positive} quantities 
$\mm\fbr[\big]{\alphagafout}$ and $\MM\fbr[\big]{\alphagafout}$, 
measurable with respect to $\alphagafout$, 
such that $\alphagafout$-a.s.\ we have:
\begin{equation}\label{eq:target:GAF}
\mm\fbr[\big]{\alphagafout} 
\abs*{\van{\uz}}^2 
\le 
\frac{\deri\frho{\cdot}{\alphagafout}}{\deri\el}(\uz)
\le
\MM\fbr[\big]{\alphagafout} 
\abs*{\van{\uz}}^2 
\end{equation}
for a.e.\ $\uz$ with respect to
    the measure $\el$ on $\Sigma_{\usm}$,
    where $m=\numberofpoints(\alphagafout)$,
    and $\us=\constraintexternal(\alphagafout)$.
In other words, 
    the generalized Gibbs property is satisfied
    with the potentials $(\Phi,\Psi)$ given by
    $\Phi\equiv 0$ and $\Psi(z)=-2\log|z|$.
\end{theorem}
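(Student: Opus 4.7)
My plan is to invoke Theorem~\ref{thm:abscont}: it suffices to verify that $\fbr[\big]{\seq*{\alphagafn}_{n=1}^{\infty},\alphagaf}$ satisfies the approximate Gibbs property on $\Dset$ with the probability kernel
\[
\nu\kernel{A}{\Upsilon}\coloneqq\frac{1}{Z(\Upsilon)}\int_A\abs*{\van{\uz}}^2\deri\leb(\uz)\;,
\]
where $m=\numberofpoints(\Upsilon)$, $\us=\constraintexternal(\Upsilon)$, and $\leb$ is the Lebesgue measure on $\Sigma_{\usm}$. Once this is done, Theorem~\ref{thm:abscont} delivers \eqref{eq:target:GAF} together with the identification $\Phi\equiv 0$, $\Psi(z)=-2\log\abs{z}$.

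The first technical step is to obtain a tractable expression for the joint density of $\alphagafn$ on $\CC^n$, via the Edelman-Kostlan/Kac-Rice formalism applied to $\falphagafn$. Disintegrating this density along the split $\CC^n=\Dset^m\times(\Dset^\c)^{n-m}$ with $m=\abs*{\alphagafnin}$, the conditional law of $\alphagafnin$ given $\alphagafnout$ takes the form
\[
p_n(\uz\mid\uo)\;\propto\;\abs*{\van{\uz}}^2\cdot\abs*{\vancross{\uz}{\uo}}^2\cdot g_n(\uz,\uo)\;,
\]
where $\abs*{\van{\uz}}^2$ is the target Vandermonde repulsion, $\abs*{\vancross{\uz}{\uo}}^2$ captures the boundary cross-interaction, and $g_n$ is a smooth Gaussian factor coming from the $\ell^2$-norm of the coefficient vector of $\falphagafn$ evaluated on the zero data. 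Since $\nu$ has $\abs*{\van{\uz}}^2$ for its density on $\Sigma_{\usm}$, the task reduces to uniformly comparing $\abs*{\vancross{\uz}{\uo}}^2\cdot g_n(\uz,\uo)$, integrated over suitable events, to its infinite-volume analogue.

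The good events $\oj$ should quantify the regularity of $\alphagafout$ that underwrites such a comparison: (i) no zero of $\falphagaf$ too close to $\partial\Dset$; (ii) control on the number of outside zeros in annuli $\Set{R_j\le\abs{z}\le R_{j+1}}$ with $R_j\uparrow\infty$; and (iii) quantitative bounds on the tail power sums $\sum_i\zeta_i^p$ for $p\ge\rigidity$, beyond the rigid range $1\le p\le\rigidity-1$ furnished by Theorem~\ref{thm:app:alphagaf}. These events exhaust $\om$. Their finite-$n$ analogues $\onkj$ are defined by the same conditions, along a subsequence $n_k\to\infty$ chosen so that the degree-$n_k$ truncations of $\falphagaf$ converge locally uniformly and $\alphagafnkout\to\alphagafout$ vaguely; continuity of the defining functionals then yields $\oj\subset\liminf_k\onkj$. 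On $\onkj$, the cross and Gaussian factors become comparable, uniformly in $A\in\FUBasisInsidempts$ and $B\in\FUBasisOutside$, to their infinite-volume counterparts with a multiplicative slack depending only on $j$.

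The principal obstacle is the support mismatch: $p_{n_k}(\cdot\mid\uo)$ is absolutely continuous on $\Dset^m$, whereas $\nu\kernel{\cdot}{\alphagafout}$ concentrates on the lower-dimensional manifold $\Sigma_{\usm}$, identified as the zero set of the moment map $T:(\z_1,\ldots,\z_m)\mapsto\fbr*{\sum_i\z_i^j-s_j}_{j=1}^{\rigidity-1}$. To bridge this, I would apply a coarea-type change of variables along $T$: its Jacobian is of Vandermonde type and combines with part of $\abs*{\van{\uz}}^2$, so that the probability of a cylindrical event $A\in\FUBasisInsidempts$ under $p_{n_k}(\cdot\mid\uo)$ is realised as a surface integral over a nearby manifold $\Sigma_{m,\us'}$, with $\us'$ close to $\us$, plus a residual. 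The quantitative rate at which $\us'\to\us$ as $n_k\to\infty$ is governed by the rigidity estimates for $\alphagaf$, driven by the coefficient decay $(k!)^{-\alpha/2}$, and it contributes the additive slack $\vartheta(j,k)\to 0$ required in \eqref{eq:main}. Assembling these estimates yields the approximate Gibbs inequality, upon which Theorem~\ref{thm:abscont} concludes.
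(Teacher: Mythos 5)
Your high-level architecture matches the paper's: verify the approximate Gibbs property for $\fbr[\big]{\seq*{\alphagafn},\alphagaf}$ with the squared-Vandermonde kernel on $\Sigma_{\usm}$ and invoke Theorem~\ref{thm:abscont}, with the finite-$n$ conditional density factored into a Vandermonde part, a cross-interaction part, and a Gaussian coefficient part, and with good events controlling the outside configuration. However, there is a genuine gap at the heart of the plan. The ``smooth Gaussian factor'' is not merely $g_n(\uz,\uo)$ up to bounded corrections: by the root-coefficient change of variables it equals $\fD{\uz}{\uo}^{-(n+1)}$, i.e.\ the $\ell^2$-norm of the normalized symmetric functions raised to an exponent that \emph{grows with} $n$. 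Your claim that on $\onkj$ this factor ``becomes comparable\ldots with a multiplicative slack depending only on $j$'' cannot be achieved by showing the base $\fD{\uz}{\uo}$ is comparable at two configurations; one needs $\fD{\uzp}{\uo}/\fD{\uz}{\uo}=1+O(1/n)$ uniformly over $\uz,\uzp\in\Sigma_{\usm}$, or the $(n+1)$-st power destroys the bound. Establishing this $O(1/n)$ estimate is precisely where the constraint that $\uz$ and $\uzp$ share their first $\rigidity-1$ power sums enters (so that the difference $\sym{k}{\uzp\cc\uo}-\sym{k}{\uz\cc\uo}$ only involves $\sym{i}{\cdot}$ for $i\ge\rigidity$), and it is the technical core of the paper: the recursive expansion of $\sym{n-l}{\uo}$ in the Gaussian coefficients (Proposition~\ref{p:rec}), the quadratic-form functionals $\QFIop$ and $\QFZop$ built into the good events, and the front-sum/tail-sum decomposition of Propositions~\ref{p:2:2}--\ref{p:2:5}. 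Your proposed good events (boundary gap, annulus counts, tail power sums of the points) do not encode this information, so the key estimate would not follow from them.

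A secondary divergence: for the support mismatch you propose a coarea change of variables along the moment map, asserting its Jacobian is ``of Vandermonde type.'' That identity holds for the full map onto all $m$ power sums, not for the truncated map onto the first $\rigidity-1$ of them, so the cancellation you describe is not available as stated. The paper sidesteps coarea entirely by disintegrating the finite-$n$ conditional law a second time, on the exact value of $\constraintinternalpc(\zinnd)$, so that the finite-$n$ comparison already takes place between measures supported on the same manifold $\Sigma_{\usm}$; the drift $\us^{[n_k]}\to\us$ then enters only through continuity of $A\mapsto\h{A}{\us}$ in $\us$ and produces the additive error $\vartheta(j,k)$ (Proposition~\ref{p:9.1}). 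Your coarea route might be made to work, but as written it rests on an incorrect Jacobian claim and would still require the $1+O(1/n)$ control described above.
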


\begin{remark}\label{remark:disk}
It was shown in \cite{GP} that for proving the rigidity and tolerance of the Ginibre ensemble and the ensemble of the roots of standard GAF, it is enough to consider $\Dset$ to be a disk centered at the origin. This is also true for establishing the generalized Gibbs property as in Theorems~\ref{thm:ginibre-main} and \ref{thm:alphagaf-main}. We show this in Appendix~\ref{a:disk}. Therefore, in the proofs of Theorems~\ref{thm:ginibre-main} and \ref{thm:alphagaf-main} we will assume that $\Dset$ is a disk centered at the origin. We will denote the radius of the disk by $r_0$.
\end{remark}

As an immediate consequence of Theorem~\ref{thm:alphagaf-main} we get the following.

\begin{corollary}\label{cor:energy}
For a point configuration $\uz=(\zeta_1,\ldots,\zeta_m)\in\Dset^m$, define the logarithmic energy as
\[
\mathcal{E}_{\log}\fbr*{\uz} \coloneqq \sum_{i\neq j} \log\frac{1}{\abs*{\zeta_i-\zeta_j}}\;.
\]
This is well-defined for a.e.\ $\uz$ with respect to the Lebesgue measure on $\Dset^m$.
Then, for $\alphagafout$-a.s.\ the following is true. 
Let $m = \numberofpoints(\alphagafout)$ and $\us = \constraintexternal(\alphagafout)$. 
Let $\varrho(\cdot;\alphagafout)$ be the density of the conditional measure of $\alphagafin$ given $\alphagafout$ with respect to the Lebesgue measure on $\Sigma_{\usm}$. 
Then for any two configurations $\uz,\uzp\in\Sigma_{\usm}$ we have 
\[
\abs*{ \log\varrho\fbr*{\uz;\alphagafout} - \log\varrho\fbr*{\uzp;\alphagafout} }
\leq
2\abs*{\mathcal{E}_{\log}(\uz) - \mathcal{E}_{\log}(\uzp)} + C
\]
where $C$ depends only on the conditioning $\alphagafout$ configuration and not on $\uz,\uzp$. 
\end{corollary}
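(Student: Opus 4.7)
The proof should follow directly from Theorem~\ref{thm:alphagaf-main}; the plan is simply to transcribe the two-sided Vandermonde comparison onto the logarithmic scale. The key algebraic identity to start from is
\[
\log|\van{\uz}|^2 \,=\, 2\sum_{i<j}\log|\zeta_i-\zeta_j| \,=\, -\mathcal{E}_{\log}(\uz),
\]
which is valid for $\uz$ with pairwise distinct coordinates. This is consistent with the Lebesgue-a.e.\ requirement on $\Sigma_{\usm}$ because the pairwise coincidence locus $\{\zeta_i=\zeta_j \text{ for some } i\neq j\}$ is cut out by real-analytic equations of real codimension $2$ inside the smooth submanifold $\Sigma_{\usm}\subset\CC^m$, and hence is a Lebesgue-null subset thereof.

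Next I would invoke Theorem~\ref{thm:alphagaf-main}: for $\alphagafout$-a.s.\ realizations there exist positive quantities $\mm\fbr[\big]{\alphagafout}$ and $\MM\fbr[\big]{\alphagafout}$ such that, for Lebesgue-a.e.\ $\uz\in\Sigma_{\usm}$,
\[
\mm\fbr[\big]{\alphagafout}\,|\van{\uz}|^2 \,\le\, \varrho(\uz;\alphagafout) \,\le\, \MM\fbr[\big]{\alphagafout}\,|\van{\uz}|^2.
\]
Taking logarithms and substituting the identity above yields
\[
\log\mm\fbr[\big]{\alphagafout} - \mathcal{E}_{\log}(\uz) \,\le\, \log\varrho(\uz;\alphagafout) \,\le\, \log\MM\fbr[\big]{\alphagafout} - \mathcal{E}_{\log}(\uz),
\]
and likewise for $\uzp$. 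Subtracting the two bracketing inequalities and taking absolute values gives
\[
\bigl|\log\varrho(\uz;\alphagafout) - \log\varrho(\uzp;\alphagafout)\bigr| \,\le\, \bigl|\mathcal{E}_{\log}(\uz) - \mathcal{E}_{\log}(\uzp)\bigr| + \log\bigl(\MM\fbr[\big]{\alphagafout}\big/\mm\fbr[\big]{\alphagafout}\bigr),
\]
which is actually stronger than the claim (coefficient $1$ rather than $2$ in front of the energy difference). One then sets $C := \log\bigl(\MM\fbr[\big]{\alphagafout}\big/\mm\fbr[\big]{\alphagafout}\bigr)$; this quantity is measurable with respect to $\alphagafout$ and does not depend on the choice of $\uz$ or $\uzp$, as required.

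There is no genuine obstacle: once Theorem~\ref{thm:alphagaf-main} is assumed, the corollary is a purely algebraic rewriting of its conclusion, and the only item to verify is the compatibility of the Lebesgue-a.e.\ statements with the domain of definition of $\mathcal{E}_{\log}$, which reduces to the codimension observation above. The substantive work is entirely contained in the proof of Theorem~\ref{thm:alphagaf-main} itself.
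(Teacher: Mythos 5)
Your proposal is correct and is essentially the paper's own argument: the authors present this corollary as an immediate consequence of Theorem~\ref{thm:alphagaf-main} with no separate proof, and the intended derivation is exactly your log-transcription of the two-sided Vandermonde comparison, with $C=\log\fbr[\big]{\MM\fbr[\big]{\alphagafout}/\mm\fbr[\big]{\alphagafout}}$. Your observation that the coefficient $1$ suffices (given the definition of $\mathcal{E}_{\log}$ as a sum over ordered pairs $i\neq j$, so that $\mathcal{E}_{\log}(\uz)=-\log\abs*{\van{\uz}}^2$) is accurate; the stated factor $2$ is simply a weaker bound implied by yours.
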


\section{Proof of Theorem~\ref{thm:abscont}}\label{sec:pf:thm:abscont}

In this section our objective is to prove Theorem~\ref{thm:abscont}.
We need to prove \eqref{targeteq} from \eqref{eq:main}.
We rewrite \eqref{eq:main} as
\begingroup
\addtolength{\jot}{0.25em}
\begin{align*}
& \Prob[\Big]{\,
    \fbr[\big]{\,\xinnk\in A\,}
    \,\cap\, 
    \fbr[\big]{\,\xoutnk\in B\,} 
    \,\cap\,\onkj\,}\\
    \overset{j}{\scaleto{\asymp}{8pt}}\; 
& \bigg(\,
    \int_{\xoutnk^{-1}(B)\;\cap\;\onkj}
    \nu\kernel{A}{\xout}
    \deri\PROB
    \,\bigg)
\,+\, 
\vartheta(j,k)\;.
\numberthis
\label{abscond}
\end{align*}
\endgroup
First, we propose a sufficient criterion for \eqref{targeteq} to hold. 
From \eqref{abscond} we get that there exists functions $L,U:\NN\to(0,\infty)$ 
such that for $A\in\FUBasisInsidempts$, $B\in\FUBasisOutside$, $j\geq 1$,
$k\geq 1$
\begin{equation}\label{eq:AC}
L(j)\leq 
\frac{\displaystyle
    \Prob[\Big]{\,
     \fbr[\big]{\,\xinnk\in A\,}
     \,\cap\, 
     \fbr[\big]{\,\xoutnk \in B\,} 
     \,\cap\,
     \onkj
     \,}}
    {\displaystyle\fbr[\Big]{\,
    \int_{\xoutnk^{-1}(B)\;\cap\;\onkj}
    \nu\kernel{A}{\xout}\deri\PROB
 }
 + \vartheta(j,k)}
 \leq U(j)\;.
\end{equation}
Recall that $\oj$ is measurable with respect 
    to $\xout$. Let $\ej\subset\cS{\Dset^\c}$ be such that $\xout^{-1}(\ej)=\oj$.
The sufficient criterion is the following. 
\begin{lemma}\label{Lemma4.1}
If for all $j\in\NN$, 
$A\in\FUBasisInsidempts$, 
and $B\in\BorelOutside$ we have
\begin{equation}\label{abs1} 
L(j)\leq 
\frac{\displaystyle
\Prob[\Big]{\,
    \fbr[\big]{\,\xin\in A\,}
    \,\cap\, 
    \fbr[\big]{\,\xout\in B\,}
    \,\cap\,
    \oj
    \,}}
{\displaystyle
\int_{\xout^{-1}(B)\;\cap\;\oj}
\nu\kernel{A}{\xout}\deri\PROB}
\leq U(j)\;,
\end{equation}
then \eqref{targeteq} holds with
\[
\mm(\xout)=L(\mathscr{J}(\xout)), \quad  
\MM(\xout)=U(\mathscr{J}(\xout)), \quad 
\mathscr{J}(\xout)=\inf\Set*{j\geq 1\given\xout\in\ej}\;.
\]
\end{lemma}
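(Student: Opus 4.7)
The plan is to upgrade the integral bound \eqref{abs1}, which only tests $A$ in the countable basis $\FUBasisInsidempts$ and probes $\xout$ against cylindrical events in $\BorelOutside$, into a pointwise-in-$\xout$ comparison of measures on the full Borel $\sigma$-algebra $\BorelInsidempts$, and then package the resulting constants into an $\xout$-measurable pair $(\mm,\MM)$ via the hitting index $\mathscr{J}$.

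The first step is to rewrite the middle quantity in \eqref{abs1} as $\int_{\xout^{-1}(B)\cap\oj}\PROB_{\Dset|\Dset^\c}[A|\xout]\,\deri\PROB$ by the defining property of the regular conditional distribution. Then \eqref{abs1} asserts that, for fixed $A\in\FUBasisInsidempts$ and $j\geq 1$, two integrals of $\xout$-measurable non-negative functions against the restriction of $\PROB$ to the $\xout$-measurable event $\oj$ bound each other up to the constant factors $L(j)$ and $U(j)$, and this holds for every $B\in\BorelOutside$. Since $B$ ranges over a full $\sigma$-algebra, standard measure theory yields the $\PROB$-a.s.\ inequality
\[
L(j)\,\nu\kernel{A}{\xout}
\;\leq\; \PROB_{\Dset|\Dset^\c}[A|\xout]
\;\leq\; U(j)\,\nu\kernel{A}{\xout}
\quad\text{on } \oj.
\]

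The second step is to make this simultaneous in $A$ and $j$ and to enlarge the class of test sets. Since $\FUBasisInsidempts$ and $\NN$ are countable, the union of the exceptional null sets is still null, so outside a single $\PROB$-null set the displayed inequality holds for every $(A,j)\in\FUBasisInsidempts\times\NN$. For each such $\omega$, both $\PROB_{\Dset|\Dset^\c}[\cdot|\xout]$ and $\nu(\cdot|\xout)$ are Borel probability measures on the Polish space of $m$-point configurations on $\Dset$, and the remaining task is to promote the inequality from $A\in\FUBasisInsidempts$ to all $A\in\BorelInsidempts$. I expect this to be the main technical point of the argument: while $\FUBasisInsidempts$ is countable and generates $\BorelInsidempts$, it is \emph{not} closed under complements, so the standard $\pi$-$\lambda$ theorem does not apply verbatim. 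The cleanest route is by approximation --- every open set is an increasing countable union of elements of $\BasisInsidempts$ and hence an increasing limit of sets in $\FUBasisInsidempts$, so continuity of measure extends the bound to open sets; outer regularity of the finite measure $\PROB_{\Dset|\Dset^\c}[\cdot|\xout]+U(j)\,\nu(\cdot|\xout)$ on the Polish configuration space then propagates the bound to all Borel sets.

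The final step is to define $\mathscr{J}(\xout)\coloneqq\inf\{j\geq 1 : \xout\in\ej\}$. Since $\seq*{\oj}$ exhausts $\om$, $\mathscr{J}$ is $\PROB$-a.s.\ finite on $\om$; moreover it is Borel measurable in $\xout$ because each $\ej$ is a Borel subset of $\cS{\Dset^\c}$. On the event $\{\mathscr{J}(\xout)=j\}$ we have $\omega\in\oj$, so the inequality from the previous paragraph holds with constants $L(j)$ and $U(j)$. Setting $\mm(\xout)\coloneqq L(\mathscr{J}(\xout))$ and $\MM(\xout)\coloneqq U(\mathscr{J}(\xout))$, both of which are Borel measurable functions on $\cS{\Dset^\c}$, we obtain exactly the comparison \eqref{targeteq}, completing the proof.
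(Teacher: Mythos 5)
Your proposal is correct and follows essentially the same route as the paper: disintegrate over $B\in\BorelOutside$ to obtain an a.s.\ pointwise comparison for each fixed $A$ and $j$, use countability of $\FUBasisInsidempts\times\NN$ to collect the null sets, extend from finite unions of basis sets to all Borel sets via open-set approximation and outer regularity (the paper isolates exactly this step as Lemma~\ref{lemma:meas1}), and conclude with the hitting index $\mathscr{J}$. No substantive difference.
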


\begin{proof}[\textbf{Proof of Theorem~\ref{thm:abscont}}]
By Lemma~\ref{Lemma4.1} it is enough to prove \eqref{abs1} for all 
$j\in\NN$, 
$A\in\FUBasisInsidempts$, 
and 
$B\in\BorelOutside$.
Let us fix $j\in\NN$, 
$A\in\FUBasisInsidempts$, 
$B\in\BorelOutside$.
Since $A\in\FUBasisInsidempts$, we have
\begin{equation}\label{eq:pre-dct-A}
\lim_{k\to\infty}
\bigindicator{\xinnk\in A}
=
\bigindicator{\xin\in A}\quad\mbox{a.s.}
\end{equation}
Given any $\epsilon>0$ there exists 
a set $B_\epsilon\in\FUBasisOutside$
such that 
\begin{equation}\label{eq:tildeB1}
\Prob[\bigg]{\,
        \fbr[\Big]{\,
            \fbr[\big]{\,\xout\in B\,}
            \cap\,\oj\,
        }
    \;\symmdiff\;
    \fbr[\big]{\,\xout\in B_\epsilon\,}
    \,} < \epsilon\;.
\end{equation}
Here we use the fact that $\oj$ is measurable with respect to $\xout$.
This step also justifies Remark~\ref{rmabs-1}.
The set $B_\epsilon$ depends on $B$, $\epsilon$, and $j$.
But since we are treating $j$ as fixed, we suppress it from the notation.
Since $B_\epsilon\in\FUBasisOutside$, 
we have
\begin{equation}\label{eq:tildeB}
\lim_{k\to\infty}
\bigindicator{\xoutnk\in B_\epsilon}
=
\bigindicator{\xout\in B_\epsilon}\quad\mbox{a.s.}
\end{equation}
We start from \eqref{eq:AC} 
applied to $A$, $B_\epsilon$, $j$.
That is,
\begin{equation}\label{eq:abscond2.1}
L(j)\leq 
\frac{\displaystyle\Prob[\Big]{\,
    \fbr[\big]{\,\xinnk\in A\,}
    \,\cap\, 
    \fbr[\big]{\,\xoutnk \in B_\epsilon\,} 
    \,\cap\,
    \onkj
    \,}}
{\displaystyle\fbr[\Big]{\,
    \int_{\xoutnk^{-1}(B_\epsilon)\;\cap\;\onkj}
    \nu\kernel{A}{\xout}\deri\PROB
}
+ \vartheta(j,k)}
\leq U(j)\;.
\end{equation}
We want to derive \eqref{abs1} for $A$, $B$, $j$.
We begin with the numerator of the term in the middle of \eqref{eq:abscond2.1}.
Using \eqref{eq:pre-dct-A} and \eqref{eq:tildeB} we get
\begingroup
\addtolength{\jot}{0.25em}
\begin{align*}
& \abs[\bigg]{\Prob[\Big]{\,\fbr[\big]{\,\xinnk\in A\,} 
\,\cap\,\fbr[\big]{\,\xoutnk\in B_\epsilon\,} 
\,\cap\,\onkj\,}
- 
\Prob[\Big]{\,\fbr[\big]{\,\xin\in A\,}
\,\cap\,\fbr[\big]{\,\xout\in B_\epsilon\,} 
\,\cap\,\onkj\,}}\\
\leq{} & 
\Prob[\bigg]{\,
\fbr[\Big]{\,\fbr[\big]{\,\xinnk\in A\,}
\,\cap\,\fbr[\big]{\,\xoutnk\in B_\epsilon\,}
\,}
\triangle 
\fbr[\Big]{\,\fbr[\big]{\,\xin\in A\,}
\,\cap\,\fbr[\big]{\,\xout\in B_\epsilon\,} 
}\,}\\
={} & \okbe\;,
\numberthis
\label{eq:mm1}
\end{align*}
\endgroup
where by $\okbe$ we mean a term which goes to $0$ as $k\to\infty$ for every fixed $\epsilon$. Using \eqref{eq:tildeB1} we get
\begingroup
\addtolength{\jot}{0.25em}
\begin{align*}
& \abs[\bigg]{
\Prob[\Big]{\,\fbr[\big]{\,\xin\in A\,}
\,\cap\,\fbr[\big]{\,\xout\in B_\epsilon\,} 
\,\cap\,\onkj\,}
- 
\Prob[\Big]{\,\fbr[\big]{\,\xin\in A\,} 
\,\cap\,\fbr[\big]{\,\xout\in B\,} 
\,\cap\,\oj 
\,\cap\,\onkj\,}
}\\
\leq{} & 
\Prob[\bigg]{\,\fbr[\Big]{\,\fbr[\big]{\,\xout\in B\,}\,\cap\,\oj\,}
\,\symmdiff\,
\fbr[\big]{\,\xout\in B_\epsilon\,}\,}
\\
={} & \oep\;,
\numberthis
\label{eq:mm2}
\end{align*}
\endgroup
where $\oep$ denotes a term which goes to $0$ as $\epsilon$ goes to 
$0$. Combining \eqref{eq:mm1} and \eqref{eq:mm2} we get
\begingroup
\addtolength{\jot}{0.25em}
\begin{align*}
& \Prob[\Big]{\,\fbr[\big]{\,\xinnk\in A\,} 
\,\cap\,\fbr[\big]{\,\xoutnk\in B_\epsilon\,} 
\,\cap\,\onkj\,}\\
={} & \Prob[\Big]{\,\fbr[\big]{\,\xin\in A\,}
\,\cap\,\fbr[\big]{\,\xout\in B_\epsilon\,} 
\,\cap\,\onkj\,}
+ \okbe\\
={} & \Prob[\Big]{\,\fbr[\big]{\,\xin\in A\,} 
\,\cap\,\fbr[\big]{\,\xout\in B\,} 
\,\cap\,\oj 
\,\cap\,\onkj\,}  
+ \oep 
+ \okbe\;.
\numberthis
\label{eq:r1}
\end{align*}
\endgroup
Recall from the statement of this theorem that
\begin{equation}\label{eq:b1}
\oj\subset\liminf_{k \to \infty}\onkj\;.
\end{equation}
Also observe that
\begin{equation}\label{eq:b2}
\Prob[\Big]{\,
\fbr[\big]{\,\liminf_{l\to\infty}\onlj\,}
\,\symmdiff\,
\fbr[\big]{\,\cap_{l\ge k}\onlj\,}\,}
=\ok\;,
\end{equation}
where $\ok$ denotes a quantity that tends to $0$ as $k\to\infty$.
Therefore using \eqref{eq:b1} and \eqref{eq:b2}
\begingroup
\addtolength{\jot}{0.25em}
\begin{align*}
& \Prob[\Big]{\,\fbr[\big]{\,\xin\in A\,}
\,\cap\, \fbr[\big]{\,\xout\in B\,} 
\,\cap\, \oj 
\,\cap\, \onkj\,}\\ 
={} & \Prob[\Big]{\,\fbr[\big]{\,\xin\in A\,}
\,\cap\, \fbr[\big]{\,\xout\in B\,} 
\,\cap\, \oj 
\,\cap\, \onkj 
\,\cap\, \fbr[\big]{\,\liminf_{l \to \infty}\onlj\,}\,}\\
={} & \Prob[\Big]{\,\fbr[\big]{\,\xin\in A\,}
\,\cap\, \fbr[\big]{\,\xout\in B\,} 
\,\cap\, \oj 
\,\cap\, \onkj 
\,\cap\, \fbr[\big]{\,\cap_{l \ge k} \onlj\,} 
\,} + \ok\\
={} & \Prob[\Big]{\,\fbr[\big]{\,\xin\in A\,} 
\,\cap\,\fbr[\big]{\,\xout\in B\,}
\,\cap\,\oj 
\,\cap\,\fbr[\big]{\,\cap_{l \ge k} \onlj\,} 
\,} + \ok\\
={} & \Prob[\Big]{\,\fbr[\big]{\,\xin\in A\,} 
\,\cap\,\fbr[\big]{\,\xout\in B\,}
\,\cap\,\oj 
\,\cap\,\fbr[\big]{\,\liminf_{l \to \infty}\onlj\,} 
\,}  + \ok\\ 
={} & \Prob[\Big]{\,\fbr[\big]{\,\xin\in A\,} 
\,\cap\,\fbr[\big]{\,\xout\in B\,}
\,\cap\,\oj 
\,}  + \ok\;.
\numberthis\label{eq:r2}
\end{align*}
\endgroup
Combining \eqref{eq:r1} and \eqref{eq:r2} we get
\begingroup
\addtolength{\jot}{0.25em}
\begin{align*}
 & \Prob[\Big]{\,\fbr[\big]{\,\xinnk\in A\,} 
\,\cap\,\fbr[\big]{\,\xoutnk\in B_\epsilon\,}
\,\cap\,
\onkj\,}\\
={} & \Prob[\Big]{\,\fbr[\big]{\,\xin\in A\,}  
\,\cap\,
\fbr[\big]{\,\xout\in B\,} 
\,\cap\, 
\oj\,} 
+ \okbe + \ok + \oep\;.\numberthis\label{reduction} 
\end{align*}
\endgroup
Now we will carry out a similar procedure for 
the denominator of the term in the middle of \eqref{eq:abscond2.1}. 
Using \eqref{eq:tildeB} we get
\begingroup
\addtolength{\jot}{0.25em}
\begin{align*}
& \abs[\bigg]{
\int_{\xoutnk^{-1}(B_\epsilon)\;\cap\;\onkj}
\nu\kernel{A}{\xout}\deri\PROB
- 
\int_{\xout^{-1}(B_\epsilon)\;\cap\;\onkj}
\nu\kernel{A}{\xout}\deri\PROB
}\\
\leq{} & 
\int\abs[\Big]{
\bigindicator{\xoutnk^{-1}(B_\epsilon)}
\bigindicator{\onkj}
\nu\kernel{A}{\xout}
-
\bigindicator{\xout^{-1}(B_\epsilon)}
\bigindicator{\onkj}
\nu\kernel{A}{\xout}}
\deri\PROB
\\
\leq{} & 
\int\abs[\Big]{
\bigindicator{\xoutnk^{-1}(B_\epsilon)}
-
\bigindicator{\xout^{-1}(B_\epsilon)}
}
\deri\PROB
\\
={} & \okbe\;,
\numberthis
\label{eq:mm4}
\end{align*}
\endgroup
Using \eqref{eq:tildeB1} we get
\begingroup
\addtolength{\jot}{0.25em}
\begin{align*}
& \abs[\bigg]{
\int_{\xout^{-1}(B_\epsilon)\;\cap\;\onkj}
\nu\kernel{A}{\xout}\deri\PROB
- 
\int_{\xout^{-1}(B)\;\cap\;\onkj\;\cap\;\oj}
\nu\kernel{A}{\xout}\deri\PROB}\\
\leq{} & 
\int_{(\xout^{-1}(B_\epsilon))\;\symmdiff\;(\xout^{-1}(B)\;\cap\;\oj)}
\nu\kernel{A}{\xout}\deri\PROB\\
\leq{} & 
\Prob[\bigg]{\,
    \fbr[\Big]{\,\fbr[\big]{\,\xout\in B\,}\,\cap\,\oj\,}
    \,\symmdiff\,
    \fbr[\big]{\,\xout\in B_\epsilon\,}\,}
\\
={} & \oep\;,
\numberthis
\label{eq:mm5}
\end{align*}
\endgroup
Combining \eqref{eq:mm4} and \eqref{eq:mm5} we get
\begingroup
\addtolength{\jot}{0.25em}
\begin{align*}
& \int_{\xoutnk^{-1}(B_\epsilon)\;\cap\;\onkj}
\nu\kernel{A}{\xout}\deri\PROB
\\
={} & \int_{\xout^{-1}(B_\epsilon)\;\cap\;\onkj}
\nu\kernel{A}{\xout}\deri\PROB
+ \okbe\\
={} & \int_{\xout^{-1}(B)\;\cap\;\onkj\;\cap\;\oj}
\nu\kernel{A}{\xout}\deri\PROB
+ \oep 
+ \okbe\;.
\numberthis
\label{eq:r11}
\end{align*}
\endgroup
Using \eqref{eq:b1} and \eqref{eq:b2} we get
\begingroup
\addtolength{\jot}{0.25em}
\begin{align*}
& \int_{\xout^{-1}(B)\;\cap\;\onkj\;\cap\;\oj}
\nu\kernel{A}{\xout}\deri\PROB\\ 
={} & 
\int_{\xout^{-1}(B)\;\cap\;\onkj\;\cap\;\oj\;\cap\;\liminf_{l\to\infty}\onlj}
\nu\kernel{A}{\xout}\deri\PROB\\
={} & 
\int_{\xout^{-1}(B)\;\cap\;\onkj\;\cap\;\oj\;\cap_{l \ge k} \onlj}
\nu\kernel{A}{\xout}\deri\PROB + \ok\\
={} & 
\int_{\xout^{-1}(B)\;\cap\;\oj\;\cap_{l \ge k} \onlj}
\nu\kernel{A}{\xout}\deri\PROB + \ok\\
={} & 
\int_{\xout^{-1}(B)\;\cap\;\oj\;\cap\;\liminf_{l\to\infty}\onlj}
\nu\kernel{A}{\xout}\deri\PROB + \ok\\
={} & \int_{\xout^{-1}(B)\;\cap\;\oj}
\nu\kernel{A}{\xout}\deri\PROB + \ok\;.
\numberthis\label{eq:r21}
\end{align*}
\endgroup
Combining \eqref{eq:r11} and \eqref{eq:r21} we get
\begingroup
\addtolength{\jot}{0.25em}
\begin{align*}
 & \int_{\xoutnk^{-1}(B_\epsilon)\;\cap\;\onkj}
\nu\kernel{A}{\xout}\deri\PROB\\
={} & \int_{\xout^{-1}(B)\;\cap\;\oj}
\nu\kernel{A}{\xout}\deri\PROB
+ \okbe + \ok + \oep\;.\numberthis\label{reduction2} 
\end{align*}
\endgroup
Thus, combining \eqref{reduction},
\eqref{reduction2}, and \eqref{eq:abscond2.1}
we get
\begingroup
\addtolength{\jot}{0.25em}
\begin{align*}
L(j) \leq 
\frac{\displaystyle\Prob[\Big]{\,\fbr[\big]{\,\xin\in A\,} 
\,\cap\,\fbr[\big]{\,\xout\in B\,} 
\,\cap\,\oj\,} + 
+ \oep 
+ \okbe 
+ \ok
}
{\displaystyle\fbr*{\int_{(\xout)^{-1}(B)\;\cap\;\oj} 
\nu\kernel{A}{\xout}\deri\PROB} 
+ \oep 
+ \okbe 
+ \ok
+ \correctionterm{j}{k}}
\leq U(j)\;.
\numberthis\label{eq:lp}
\end{align*}
\endgroup
Recall from the statement of this theorem that 
$\correctionterm{j}{k} \to 0$ as $k \to \infty$. 
First letting $k\to\infty$ in \eqref{eq:lp}, 
with $\epsilon$ held fixed,
and then letting $\epsilon\to 0$ we get
\[
L(j) 
\leq 
\frac{\displaystyle\Prob[\Big]{\,\fbr[\big]{\,\xin\in A\,}\,\cap\,\fbr[\big]{\,\xout\in B\,}\,\cap\,\oj\,}}
{\displaystyle\int_{(\xout)^{-1}(B)\;\cap\;\oj}\nu\kernel{A}{\xout}\deri\PROB}
\leq 
U(j)\;.
\]
This concludes the proof of Theorem~\ref{thm:abscont}.
\end{proof}

Now it remains to prove Lemma~\ref{Lemma4.1} for which we will need another lemma.

\begin{lemma}\label{lemma:meas1}
Let $(\mathcal{A},\mathcal{T})$ be a second countable topological space. Let $\mathcal{B}$ be a countable basis of open sets. Let $\mathcal{F}\coloneqq\Set*{\cup_{i=1}^k B_i \given B_i\in\mathcal{B} \mbox{ for all } 1\leq i\leq k, k \ge 1}$. Let $\mu_1$ and $\mu_2$ be two non-negative, regular, Borel measures on $(\mathcal{A},\mathcal{T})$. If for some constant $c>0$, $\mu_1(B)\leq c\mu_2(B)$ for all $B\in\mathcal{F}$, then $\mu_1(B)\leq c \mu_2(B)$ for all $B\in\mathcal{T}$.
\end{lemma}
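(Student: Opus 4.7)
The plan is to extend the inequality from the class $\mathcal{F}$ of finite unions of basis elements to the full topology $\mathcal{T}$ via a standard countable-union argument combined with continuity from below of measures.

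First I would exploit second countability: since $\mathcal{B}$ is a countable basis, every open set $U\in\mathcal{T}$ admits a representation $U=\bigcup_{i=1}^{\infty}B_i$ with $B_i\in\mathcal{B}$. Setting $F_n\coloneqq\bigcup_{i=1}^{n}B_i$, we have $F_n\in\mathcal{F}$ for every $n\ge 1$, and $F_n\uparrow U$ as $n\to\infty$.

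Next, by hypothesis $\mu_1(F_n)\le c\,\mu_2(F_n)$ for every $n$. Since $\mu_1$ and $\mu_2$ are (non-negative) Borel measures, continuity from below gives $\mu_j(F_n)\uparrow\mu_j(U)$ for $j=1,2$ (this remains valid in the case $\mu_j(U)=\infty$). Passing to the limit in the inequality yields $\mu_1(U)\le c\,\mu_2(U)$, which is exactly the desired conclusion.

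I do not anticipate any real obstacle here; the argument is essentially bookkeeping. Two minor remarks: (i) the regularity hypothesis on $\mu_1,\mu_2$ is not actually needed for this extension step, since countable additivity alone suffices — it presumably appears in the statement of Lemma~\ref{lemma:meas1} because regularity is part of the setup where the lemma will be applied (e.g.\ to push the bound further to Borel sets via inner/outer regular approximation); (ii) one must check that $F_n$ genuinely lies in $\mathcal{F}$ (immediate from the definition, which requires $k\ge 1$) and that the decomposition $U=\bigcup_iB_i$ can be chosen countable (immediate from countability of $\mathcal{B}$).
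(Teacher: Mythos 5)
Your argument is correct and is essentially the first half of the paper's own proof: the paper likewise writes an open set as a countable union of basis elements and passes from the finite unions in $\mathcal{F}$ to the full union (it suppresses the continuity-from-below step that you spell out). Your remark (i) is also on target: the paper's proof does not stop at the stated conclusion for open sets but continues, using outer regularity of $\mu_2$, to deduce $\mu_1(B)\le c\,\mu_2(B)$ for all Borel sets $B$, which is the form actually used in the proof of Lemma~\ref{Lemma4.1}; that is the only role of the regularity hypothesis.
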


Now we prove Lemma~\ref{Lemma4.1} using Lemma~\ref{lemma:meas1}.

\begin{proof}[\textbf{Proof of Lemma~\ref{Lemma4.1}}]
For $j\geq 1$,
    let $\xo(j)$ be the $\sigma$-algebra
    formed by intersecting the sets of $\BorelOutside$ 
    with $\ej$. 
Consider the finite measure space
    $\fbr*{\ej,\xo(j),\PROB_{\infty,\out}}$.
For all $A\in\FUBasisInsidempts$ and 
    $B\in\BorelOutside$ we have 
\[
\Prob[\Big]{\,
    \fbr[\big]{\,\xin\in A\,}
    \,\cap\,\fbr[\big]{\,\xout\in B\,}
    \,\cap\,\oj\,} 
    =
    \int_{B\;\cap\;\ej}
    \frho{A}{\Upsilon}\deri\PROB_{\infty,\out}[\Upsilon]\;.
\]
Thus, from \eqref{abs1} we get that 
    for all $A\in\FUBasisInsidempts$ 
    and $B\in\BorelOutside$
    \[
    L(j)\leq 
    \frac{\displaystyle
        \int_{B\;\cap\;\ej}
        \frho{A}{\Upsilon}
        \deri\PROB_{\infty,\out}[\Upsilon]}
    {\displaystyle
    \int_{B\;\cap\;\ej}
    \nu\kernel{A}{\Upsilon}
    \deri\PROB_{\infty,\out}[\Upsilon]}
    \leq U(j)\;.
    \]
Thus, for all $A\in\FUBasisInsidempts$ we get   
    \[
    L(j)
    \leq
    \frac{\frho{A}{\Upsilon}}{\nu\kernel{A}{\Upsilon}}
    \leq 
    U(j)
    \numberthis\label{eq:lubd}
    \]
    for $\PROB_{\infty,\out}$-a.s.\ $\Upsilon\in\ej$. 
    Since $\FUBasisInsidempts$ is countable, 
    we get (\ref{eq:lubd}) holds for $\PROB_{\infty,\out}$-a.s. $\Upsilon\in\ej$,   
    and for all $A\in\FUBasisInsidempts$.
    Using Lemma~\ref{lemma:meas1} on the measure space $\fbr*{\ej,\xo(j),\PROB_{\infty,\out}}$ we have (\ref{eq:lubd}) holds for $\PROB_{\infty,\out}$-a.s. $\Upsilon$, for all $A\in\BasisInsidempts$. 
    Therefore, 
    we get \eqref{targeteq} with 
    $\mm(\xout)=L(\mathscr{J}(\xout))$ and  
    $\MM(\xout)=U(\mathscr{J}(\xout))$ where
    $\mathscr{J}(\xout)=\inf\Set*{j\geq 1\given\xout\in\ej}$.
\end{proof}

Now we prove Lemma~\ref{lemma:meas1}.

\begin{proof}[\textbf{Proof of Lemma~\ref{lemma:meas1}}]
Suppose $\mu_1(B)\leq c\mu_2(B)$
for all $B\in\mathcal{F}$.
Since $\mathcal{B}$ is a basis,
all $U\in\mathcal{T}$ are
countable union of sets in $\mathcal{B}$.
Therefore, $\mu_1(U)\leq c\mu_2(U)$ for all $U\in\mathcal{T}$.
Therefore, for any Borel set $B$ and an open set $U$ containing $B$ 
we have $\mu_1(B)\leq \mu_1(U)\leq c \mu_2(U)$.
Since $\mu_2$ is a regular measure, 
for any Borel set $B$, 
we have $\mu_2(B)=\inf\Set*{\mu_2(V)\given B\subset V, V\in\mathcal{T}}$. 
Therefore, we get $\mu_1(B)\leq c\mu_2(B)$. 
\end{proof}

\section{The approximate Gibbsian structure of the Ginibre Ensemble}\label{sec:ginibre}

In this section our objective is to prove Theorem~\ref{thm:ginibre-main}.
Recall from Remark~\ref{remark:disk} that we assume $\Dset$ to be a disk of radius $r_0$ centered at the origin.
Recall that $\frho{\cdot}{\GIFOUT}$ is 
    the conditional measure of $\GIFIN$ given $\GIFOUT$
    where we identify $\GIFIN$ with a vector in 
    $\Dset^{\numberofpoints(\GIFOUT)}$.
Also recall that $\om$ is the event that $\numberofpoints(\GIFOUT)=m$.    
For each $m\in\NN$ we have $\Prob{\,\om\,}>0$, 
    because for the ensemble $\GIF$ 
    the number of points in a domain 
    is a sum of Bernoulli random variables, 
    each with success probability strictly 
    between $0$ and $1$. 
Our objective is to show that $\GIFOUT$-a.s.\ 
    $\frho{\cdot}{\GIFOUT}$ has a density 
    with respect to the Lebesgue measure on $\Dset^{\numberofpoints(\GIFOUT)}$, 
    and this density satisfies \eqref{eq:target2}.
The fact that this density exists is already known from \cite{GP}.
It is also known from \cite{GP} 
    that a bound similar to \eqref{eq:target2} 
    but without the Vandermonde terms holds. 
We want to show that the same procedure yields the stronger 
    bound in \eqref{eq:target2} by using Theorem~\ref{thm:abscont}.

\begin{notation}\label{n:projection}
Let $\aspc{\cdot}$ be the map from $\sqcup_{m=1}^\infty\CC^m\to\cS{\CC}$ which takes a vector (of variable length) to the point configuration on $\CC$ consisting of the coordinates of the vector. If two coordinates of the vector are same, we do not distinguish them in the point configuration.
\end{notation}    

Consider the potentials $(\Phi,\Psi)$ given by $\Phi\equiv 0$ and $\Psi(z)=-2\log|z|$.
The corresponding probability kernel $\nu_{\Phi,\Psi,\Dset}$ is as follows.
Given $\Upsilon\in\cS{\Dset^\c}$, 
the measure $\nu_{\Phi,\Psi,\Dset}\kernel{\cdot}{\Upsilon}$ is supported on 
    point configurations having $m=\numberofpoints(\Upsilon)$ number of points,
    and for $A\in\BorelInsidempts$ 
    we have 
\begin{equation}\label{eq:ginicandidate}
\nu_{\Phi,\Psi,\Dset}\kernel{A}{\Upsilon} = 
    \frac{\displaystyle\int_{\asvector{A}}
    \abs*{\van{\uz}}^2\deri\el(\uz)}
    {\displaystyle\int_{\Dset^m} 
    \abs*{\van{\uz}}^2\deri\el(\uz)}\;,
\end{equation}
where $\el$ is the Lebesgue measure on $\Dset^m$. 
Recall that $\PROB_{\Dset|\Dset^\c}[\cdot|\Upsilon]$ is supported on 
point configurations having $m$ points, and for $A\in\BorelInsidempts$
\[
\PROB_{\Dset|\Dset^\c}[A|\Upsilon] = \frho{\asvector{A}}{\Upsilon}\;. 
\]
If we can establish the generalized Gibbs property, then we get that for all $m\in\NN$ and all $A\in\BorelInsidempts$
    \[
    \mm_1\fbr[\big]{\GIFOUT}\nu_{\Phi,\Psi,\Dset}\kernel{A}{\GIFOUT} 
    \;\le\; 
    \PROB_{\Dset|\Dset^\c}[A|\GIFOUT] 
    \;\le\; 
    \MM_1\fbr[\big]{\GIFOUT}\nu_{\Phi,\Psi,\Dset}\kernel{A}{\GIFOUT}\;,  
    \]
    for some measurable functions $\mm_1,\MM_1:\cS{\Dset^\c}\to(0,\infty)$. 
    Then, by the Radon-Nikodym Theorem we will have
    \[
    \mm_2\fbr[\big]{\GIFOUT}\abs*{\van{\uz}}^2  
    \;\le\; 
    \frac{\deri\frho{\cdot}{\GIFOUT}}{\deri\el}(\uz) 
    \;\le\; 
    \MM_2\fbr[\big]{\GIFOUT}\abs*{\van{\uz}}^2\;,
    \]
for some measurable functions $\mm_2,\MM_2:\cS{\Dset^\c}\to(0,\infty)$. Thus we get \eqref{eq:target2}. Consider the finite dimensional approximations of the Ginibre ensemble $\seq*{\Gini_n}$ which converge to $\GIF$ a.s.\ (see Appendix~\ref{a:ginibre} for reference). Due to Theorem~\ref{thm:abscont}, it is enough to verify the conditions of approximate Gibbsianity for $\fbr*{\seq*{\Ginin}_{n=1}^\infty,\GIF}$ with respect to $\nu_{\Phi,\Psi,\Dset}$ on the domain $\Dset$. We will not present the proof in full detail because we will see that the procedure that was used in \cite{GP} for proving tolerance of the Ginibre ensemble also yields the approximate Gibbsian structure. 

\subsection{The limiting procedure for the Ginibre ensemble}\label{ss:ginilimiting}

We will present the proof in three steps. Here we outline the three steps. For $n\geq 1$, let $\Gininin\coloneqq\Ginin\cap\Dset$, $\Gininout\coloneqq\Ginin\cap\Dset^\c$.

\begin{enumerate}[label = Step~\arabic*, font = \normalfont\bfseries,  
    labelsep = \parindent,
    itemindent = 2\parindent,
    leftmargin = 2\parindent,
    topsep = 0pt]
    
    \item\label{c:gini:1} In this step we will define the sequence of events $\seq*{\oj}_{j=1}^{\infty}$.
    We will also define, for each $j\geq 1$, the sequence of positive integers $\seq*{n_k}_{k=1}^{\infty}$ and the sequence of events $\seq*{\onkj}_{k=1}^\infty$. 
    We will verify the following conditions:
    \begin{enumerate}[(i),leftmargin=*,font = \normalfont\bfseries] 
        \item\label{c:gini:1:1} $\oj\subset\Omega(j+1)$ for all $j\geq 1$;
        \item\label{c:gini:1:2} $\oj\subset\om$ for all $j\geq 1$;
        \item\label{c:gini:1:3} $\onkj\subset\omnk$ for all $j\geq 1$ and $k\geq 1$;
        \item\label{c:gini:1:4} $\onkj$ is measurable with respect to $\Gininkout$ for all $j\geq 1$ and $k\geq 1$;
        \item\label{c:gini:1:5} $\oj\subset\liminf_{k\to\infty} \onkj$ for all $j\geq 1$.
    \end{enumerate}
    The event $\oj$ is not going to be measurable with respect to $\GIFOUT$.
    We will define another event $\ocj$, which is measurable with respect to $\GIFOUT$, and which satisfies $\Prob{\,\oj\,\symmdiff\,\ocj\,}=0$.
    Here we utilize Remark~\ref{rmabs-1}.
    
    \item\label{c:gini:2} In this step we will establish that \eqref{eq:main} for some $\seq*{\correctionterm{j}{k}}_{k\geq 1,j\geq 1}$ satisfying $\lim_{k\to\infty}\correctionterm{j}{k}=0$ for all $j\geq 1$. Recall that the probability kernel $\nu$ is given by \eqref{eq:ginicandidate}. 
    
    \item\label{c:gini:3} In this step we will show that $\lim_{j\to\infty}\Prob{\,\om\,\setminus\,\oj\,}=0$.
\end{enumerate}

\subsubsection{Step 1}

\begin{notation}\label{n:ginisum}
For $n\in\NN$ let 
\[
\ginisumone(n)\coloneqq\sum_{\omega\in\Gininout}\frac{1}{\omega}\;,
\qquad
\ginisumtwo(n)\coloneqq\sum_{\omega\in\Gini_{n,\out}}\frac{1}{\omega^2}\;,
\qquad
\ginisumthree(n)\coloneqq\sum_{\omega\in\Gini_{n,\out}}\frac{1}{|\omega|^3}\;.
\]
Let 
\[
\X_n\coloneqq\abs*{\ginisumone(n)}+\abs*{\ginisumtwo(n)}+\ginisumthree(n)\;. 
\]
\end{notation}

\begin{definition}[The events $\OGm{n}{\thetagap}$ and $\OGm{\infty}{\thetagap}$]\label{d:gap:gini}
For $n,m\in\NN$, and $\thetagap\in(0,1)$, let $\OGm{n}{\thetagap}$ be the event that: 
\begin{enumerate}[(i),font=\normalfont\bfseries,topsep=0pt] 
    \item the number of points in $\Gininin$ is $m$;
    \item and there is a gap of at least $\thetagap$ between the boundary of $\Dset$ and the points of $\Gininout$ i.e., for all $x\in\partial\Dset$ and $y\in\Gininout$, $\abs*{x-y}\geq\thetagap$.
\end{enumerate}
Let $\OGm{\infty}{\thetagap}$ be the analogous event for $\GIF$. 
The event $\OGm{n}{\thetagap}$ is a subset of the event $\omn$ and is measurable with respect to $\Gininout$.
Similarly, the event $\OGm{\infty}{\thetagap}$ is a subset of the event $\om$ and is measurable with respect to $\GIFOUT$.
\end{definition}

In \cite{GP} (see Appendix~\ref{a:ginibre-estimates} for reference), it was shown that there exists random variables $\ginisumone$, $\ginisumtwo$, $\ginisumthree$, such that 
\[
\ginisumone(n)\xrightarrow[]{}\ginisumone\;,\quad
\ginisumtwo(n)\xrightarrow[]{}\ginisumtwo\;,\quad
\ginisumthree(n)\xrightarrow[]{}\ginisumthree\quad\mbox{in probability.}
\]
Thus, we get a sequence $\seq*{n_k}_{k=1}^\infty$ such that 
\[
\ginisumone(n_k)\xrightarrow[]{}\ginisumone\;,\quad
\ginisumtwo(n_k)\xrightarrow[]{}\ginisumtwo\;,\quad 
\ginisumthree(n_k)\xrightarrow[]{}\ginisumthree\quad\mbox{a.s.} 
\]
Further, we choose an increasing sequence $\seq*{M_j}_{j=1}^\infty$ diverging to $\infty$ such that none of the $M_j$-s is an atom of the distributions of $\abs{\ginisumone}$, $\abs{\ginisumtwo}$, $\ginisumthree$. We also choose a sequence $\seq*{\thetagap_j}_{j=1}^\infty$ such that $\thetagap_j\geq\thetagap_{j+1}>0$ for all $j\in\NN$ and $\thetagap\to 0$ as $j\to\infty$. Now we define the events $\seq*{\onkj}_{j\geq 1, k\geq 1}$.

\begin{definition}[The event $\onkj$]
For $j\geq 1$ and $k\geq 1$, let $\onkj$ be the
event in which all of the following conditions are
satisfied: 
\begin{enumerate}[(i),font=\normalfont\bfseries,topsep=0pt]
\item $\OGm{n_k}{\thetagap_j}$ occurs; 
\item 
$\abs{\ginisumone(n_k)}\leq M_j$, 
$\abs{\ginisumtwo(n_k)}\leq M_j$, 
$\ginisumthree(n_k)\leq M_j$.
\end{enumerate}
\end{definition}

The event $\onkj$ is measurable with respect to $\Gininkout$, as required in condition~\ref{c:gini:1:4} in \ref{c:gini:1}.
Also, the event $\onkj$ is a subset of the event $\omnk$, as required in condition~\ref{c:gini:1:3} in \ref{c:gini:1}.

Next, we define the events $\seq*{\oj}_{j\geq 1}$.

\begin{definition}[The event $\oj$]
For $j\geq 1$, let $\oj\coloneqq\liminf_{k\to\infty}\onkj$. Thus, $\oj$ is the event in which all of the following conditions are satisfied:
\begin{enumerate}[(i),font=\normalfont\bfseries,topsep=0pt]
\item $\OGm{n_k}{\thetagap_j}$ occurs for all large enough (random) $k$;
\item for all large enough (random) $k$, 
$|\ginisumone(n_k)|\leq M_j$, 
$|\ginisumtwo(n_k)|\leq M_j$, 
$\ginisumthree(n_k)\leq M_j$.
\end{enumerate}
\end{definition}

Thus, condition~\ref{c:gini:1:5} in \ref{c:gini:1} is satisfied by definition. 
And since $\seq*{M_j}_{j=1}^\infty$ is increasing, condition~\ref{c:gini:1:1} in \ref{c:gini:1} is also satisfied.
Since $\Ginin\to\GIF$ a.s., we have $\liminf_{k\to\infty}\omnk\subset\om$.
Since each $\onkj$ is a subset of $\omnk$, we have $\oj\subset\om$, as required by condition~\ref{c:gini:1:2} in \ref{c:gini:1}.

The event $\oj$ is not measurable with respect to $\GIFOUT$. So we construct another event $\ocj$, which is measurable with respect to $\GIFOUT$, and which also satisfies $\Prob{\,\ocj\,\symmdiff\,\oj\,}=0$. This is sufficient by Remark~\ref{rmabs-1}.

\begin{definition}[The event $\ocj$]
For $j\geq 1$, let $\ocj$ be the event in which all of the following conditions are satisfied:
\begin{enumerate}[(i),font=\normalfont\bfseries,topsep=0pt]
\item $\OGm{\infty}{\thetagap_j}$ occurs; 
\item $\abs{\ginisumone}\leq M_j$, $\abs{\ginisumtwo}\leq M_j$, $\ginisumthree\leq M_j$.
\end{enumerate}
The event $\ocj$ is measurable with respect to $\GIFOUT$.
\end{definition}

To show $\Prob{\,\ocj\,\symmdiff\,\oj\,}=0$, we construct another family of events $\sbr*{\calA_{\epsilon}(j):\epsilon>0}$. Each $\calA_{\epsilon}(j)$ depends on a function $\delta(\epsilon)$ of $\epsilon$. This function $\delta(\cdot)$ needs to be chosen appropriately.

\begin{definition}[The event $\calA_\epsilon(j)$]
For $j\geq 1$ and $\epsilon>0$ let $\calA_\epsilon(j)$ be the event in which all of the following conditions are satisfied: 
\begin{enumerate}[(i),font=\normalfont\bfseries,topsep=0pt]
\item $\OGm{\infty}{\thetagap_j}$ occurs; 
\item 
$\abs*{\ginisumone} < M_j - \delta(\epsilon)$, 
$\abs*{\ginisumtwo} < M_j - \delta(\epsilon)$, 
$\ginisumthree < M_j - \delta(\epsilon)$.
\end{enumerate}
Each $\calA_\epsilon(j)$ is measurable with respect to $\GIFOUT$, and $\calA_\epsilon(j)\subset\ocj$.
\end{definition}

The function $\delta(\cdot)$ can be chosen in such a way that  
\[
\Prob[\big]{\,\calA_\epsilon(j)\,\symmdiff\,\oj\,}\;\leq\;\epsilon\;.
\]
If we let $\epsilon\to 0$ through the sequence $\seq*{2^{-n}}_{n=1}^\infty$, then we get $\seq*{\calA_{2^{-n}}(j)}_{n=1}^\infty$ exhausts $\ocj$.
Therefore  
\[
\Prob[\big]{\,\ocj\,\symmdiff\,\oj\,}=0\;.
\]
For the detailed procedure of choosing $\delta(\cdot)$ we refer to \cite{GP}.

\subsubsection{Step 2}\label{ss:GiniStep2}

In this step we verify \eqref{eq:main} holds for some $\seq*{\vartheta(j,k)}_{k\geq 1,j\geq 1}$ satisfying $\lim_{k\to\infty}\vartheta(j,k)=0$ for all $j\geq 1$. Suppose that the event $\onkj$ occurs for some $j\geq 1$, $k\geq 1$. 
Let $\uo$ be a vector consisting of the points of $\Gininkout$.
Let $\drho{\uo}{n_k}{\cdot}$ denote the density with respect to the Lebesgue measure on $\Dset^m$ of the conditional distribution of a vector consisting of the points of $\Gininkout$ taken in uniform random order given $\Gininkout$.
Since the event $\onkj$ is a subset of the event $\OGm{n_k}{\thetagap_j}$, using Proposition~\ref{prop:ginicondratio} we get that for all $\uz,\uzp\in\Dset^m$
\[
\exp\fbr*{ - m \usekd{gst} \thetagap_j^{-1} M_j }
\abs*{\frac{\van{\uzp}}{\van{\uz}}}^2
\leq
\frac{\drho{\uo}{n_k}{\uzp}}{\drho{\uo}{n_k}{\uz}}
\leq
\abs*{\frac{\van{\uzp}}{\van{\uz}}}^2
\exp\fbr*{ m \usekd{gst} \thetagap_j^{-1} M_j}\;,
\]
for some constant $\usekd{gst}>0$. Thus on the event $\onkj$ we have for all $\uz,\uzp\in\Dset^m$  
\[
\drho{\uo}{n_k}{\uzp} 
\cdot 
\abs*{\van{\uz}}^2 
\leq
\exp\fbr*{ m \usekd{gst} \thetagap_j^{-1} M_j }
\cdot
\drho{\uo}{n_k}{\uz} 
\cdot 
\abs*{\van{\uzp}}^2\;.
\]
Thus for all $A\in\FUBasisInsidempts$
\[
\abs*{\van{\uz}}^2
\cdot
\int_{\asvector{A}}\drho{\uo}{n_k}{\uzp}
\deri\el\fbr*{\uzp} 
\leq
\exp\fbr*{ m \usekd{gst} \thetagap_j^{-1} M_j}
\cdot
\drho{\uo}{n_k}{\uz}\cdot
\int_{\asvector{A}}\abs*{\van{\uzp}}^2
\deri\el(\uzp)\;.
\]
Thus for all $B\in\FUBasisOutside$ we get 
\begingroup
\addtolength{\jot}{0.25em}
\begin{align*}
& \fbr*{\int_{\Dset^m}\abs*{\van{\uz}}^2\deri\el(\uz)}
\cdot 
\Prob[\Big]{\,\fbr[\big]{\,\Gini_{n_k,\inn}\in A\,}\,\cap\,\fbr[\big]{\,\Gini_{n_k,\out}\in B\,}\,\cap\,\onkj\,}\\
\leq{} & 
\exp\fbr[\Big]{m\usekd{gst} \thetagap_j^{-1} M_j}
\cdot
\int_{(\Gini_{n_k,\out})^{-1}(B)\;\cap\;\onkj}
\fbr*{\int_{\asvector{A}}\abs*{\van{\uzp}}\deri\el(\uzp)}\deri\PROB\;.
\end{align*}
\endgroup
Thus, using \eqref{eq:ginicandidate} we get
\begingroup
\addtolength{\jot}{0.25em}
\begin{align*}
& \Prob[\Big]{\,\fbr[\big]{\,\Gininkin\in A\,}\,\cap\,\fbr[\big]{\,\Gini_{n_k,\out}\in B\,}\,\cap\,\onkj\,}\\
\leq{} & 
\exp\fbr[\Big]{m \usekd{gst} \thetagap_j^{-1} M_j}
\cdot
\int_{(\Gini_{n_k,\out})^{-1}(B)\;\cap\;\onkj}
\nu_{\Phi,\Psi,\Dset}\kernel{A}{\GIFOUT}\deri\PROB\;.
\numberthis\label{eq:g:2:1}
\end{align*}
\endgroup
Similarly, we also get
\begingroup
\addtolength{\jot}{0.25em}
\begin{align*}
& \Prob[\Big]{\,\fbr[\big]{\,\Gini_{n_k,\inn}\in A\,}\,\cap\,\fbr[\big]{\,\Gini_{n_k,\out}\in B\,}\,\cap\,\onkj\,}\\
\geq{} & 
\exp\fbr[\Big]{ - m \usekd{gst} \thetagap_j^{-1} M_j}
\cdot 
\int_{(\Gini_{n_k,\out})^{-1}(B)\;\cap\;\onkj}
\nu_{\Phi,\Psi,\Dset}\kernel{A}{\GIFOUT}\deri\PROB\;.
\numberthis\label{eq:g:2:2}
\end{align*}
\endgroup
Combining \eqref{eq:g:2:1} and \eqref{eq:g:2:2} we get \eqref{eq:main} with $\vartheta(j,k)=0$ for all $j\geq 1$, $k\geq 1$.
This concludes \ref{c:gini:2}.

\subsubsection{Step 3}

In this step we will verify that $\lim_{j\to\infty}\Prob*{\,\om\,\setminus\,\oj\,}=0$. 
We need to define a sequence of events $\seq*{\calA(j)}_{j\geq 1}$.

\begin{definition}[The event $\calA(j)$] 
For $j\in\NN$ let $\calA(j)$ be the event in which all of the following conditions are satisfied:
\begin{enumerate}[(i),font=\normalfont\bfseries,topsep=0pt]
\item $\OGm{\infty}{\thetagap_j}$ occurs;
\item $\abs*{\ginisumone} < M_j - 1$, 
$\abs*{\ginisumtwo} < M_j - 1$, 
$\ginisumthree < M_j - 1$.
\end{enumerate}
\end{definition}

We have $M_{j+1}\geq M_j$ for all $j\in\NN$ and $M_j\to\infty$ as $j\to\infty$.
We also have $\thetagap_j\geq\thetagap_{j+1}>0$ for all $j\in\NN$ and $\thetagap_j\to 0$ as $j\to\infty$. 
Therefore, we get $\seq*{\calA(j)}_{j=1}^\infty$ exhausts $\om$.
From the definition of the events $\calA(j)$, $\calA_\epsilon(j)$, and $\ocj$ we get 
\[
\calA(j)\subset\calA_\epsilon(j)\subset\ocj\subset\om\;.
\]
Therefore $\seq*{\ocj}_{j=1}^\infty$ exhausts $\om$.
Since $\Prob*{\,\ocj\,\symmdiff\,\oj\,}=0$ and $\oj\subset\om$, we have $\lim_{j\to\infty}\Prob{\,\om\,\setminus\,\oj\,}=0$.
This concludes \ref{c:gini:3}.
The relationships between the events defined in this step are as follows:
\[
\begin{tikzcd}
\om & & & & \oj\arrow[l l l l, "\text{as } j\to\infty", "\text{exhaust}"']\\
& & & & \\
\calA(j) \arrow[u u, "\text{exhaust}", "\text{as } j\to\infty"'] 
\arrow[r r, "\subset"] & & 
\calA_{2^{-n}}(j) \arrow[r r, "\text{exhaust}", "\text{as } n\to\infty"'] & & 
\ocj \arrow[u u, leftrightarrow, "\text{almost equal}"]
\end{tikzcd}
\]

\section{Estimates for the finite-dimensional \texorpdfstring{$\alpha$}{alpha}-GAFs}\label{sec:finitegaf}

In Sections~\ref{sec:finitegaf} and \ref{sec:limitinggaf} our objective is to lay down the steps of proving Theorem~\ref{thm:alphagaf-main} using Theorem~\ref{thm:abscont}. The detailed proofs are in Sections~\ref{sec:proof1}-\ref{sec:step3}. In Section~\ref{sec:limitinggaf}, we will carry out a three step procedure for verifying the conditions of Theorem~\ref{thm:abscont}, analogous to what we did for the Ginibre ensemble in Section~\ref{sec:ginibre}. In this section we derive some estimates regarding finite dimensional approximations of the $\alpha$-GAF which we will use in Section~\ref{sec:limitinggaf}. These are estimates analogous to estimates for the finite dimensional approximations of the Ginibre ensemble in Appendix~\ref{a:ginibre-estimates}. 

\subsection{Ratio of conditional densities}\label{ss:agaf:ratio}

Consider the sequence $\seq*{\falphagafn}_{n=1}^\infty$ of finite dimensional approximations of $\falphagaf$ (see Appendix~\ref{a:agaf} for reference). For $n\geq 1$, $\alphagafn$ denotes the ensemble of roots of $\falphagafn$. Let $\alphagafnin\coloneqq\alphagafn\cap\Dset$, $\alphagafnout\coloneqq\alphagafn\cap\Dset^\c$.
Consider $m\geq\rigidity$, $\uo\in\fbr*{\Dset^\c}^{n-m}$, and $\us\in\CC^{\rigidity-1}$.
The conditional density of $\alphagafnin$ given $\alphagafnout=\uo$ at some $\uz\in\Sigma_{\usm}$ is
\begin{equation}\label{eq:411}
\drho{\uo\,,\,\us}{n}{\uz} 
\; \coloneqq \;
C(\uo,\us) \; \abs*{\van{\uz\,,\,\uo}}^2 \;
\fbr*{\sum_{k=0}^{n}
\abs*{\frac{\sym{k}{\uz\,,\,\uo}}{\nka}}^2}^{-(n+1)}\;.
\end{equation}
Here $\sym{k}{\cdot}$ is the $k$th elementary symmetric function of degree $k$. Let 
\begin{equation}\label{eq:412}
\fD{\uz}{\uo} \; \coloneqq \; 
\sum_{k=0}^n\abs*{\frac{\sym{k}{\uz\cc\uo}}{\nka}}^2\;.
\end{equation}
So the ratio of the conditional densities at two locations $\uz,\uzp\in\Sigma_{\usm}$ is
\begin{equation}\label{eq:413}
\frac{\drho{\uo\,,\,\us}{n}{\uzp}}
{\drho{\uo\,,\,\us}{n}{\uz}} 
=\abs*{\frac{\van{\uzp\,,\,\uo}}{\van{\uz\,,\,\uo}}}^2
\fbr*{\frac{\fD{\uzp}{\uo}}{\fD{\uz}{\uo}}}^{-(n+1)}\;.
\end{equation}
We need bounds for this ratio. We will bound the ratio of the Vandermonde 
terms and the ratio of the symmetric functions separately. To bound the 
ratio of the Vandermonde terms we need some estimates for sum of inverse 
powers of zeros of $\falphagafn$ and $\falphagaf$.

\begin{notation}\label{n:vecsym}
Let $n,N\in\NN$ and let $\underline{v}\in\CC^N$. 
We define 
$\vecsym{n}{i}{\underline{v}},\vecsymb{n}{i}{\underline{v}}\in\CC^{n+1}$
as
\[
\vecsym{n}{i}{\underline{v}}
\;\coloneqq\;
\fbr*{\frac{\sym{k-i}{\underline{v}}}{\nckat}}_{k=0}^n, 
\quad
\vecsymb{n}{i}{\underline{v}} 
\;\coloneqq\;
\fbr*{\frac{\overline{\sym{k-i}{\underline{v}}}}{\nckat}}_{k=0}^n\;,
\]
where we use the convention that if $k-i<0$ or if $k-i>N$ then 
$\sym{k-i}{\underline{v}}=0$. Utilizing this notation we can 
write $\fD{\uz}{\uo}$ from \eqref{eq:412} as
\[
\fD{\uz}{\uo} = \vecsymn{n}{0}{(\uz\cc\uo)}.
\]
Thus, an alternative form of \eqref{eq:411} is 
\[
\drho{\uo\,,\,\us}{n}{\uz}
 = C(\uo,\us)\,\abs*{\van{\uz\cc\uo}}^2\,
\fbr*{\vecsymn{n}{0}{(\uz\cc\uo)}}^{-(n+1)}\;.
\]
\end{notation}

\subsection{Estimates for inverse power sums of zeroes}\label{ss:EIPZ}

Now our objective is to state some estimates for the sum of inverse power of zeros. We start by constructing a partition of unity on $\Dset^\c$. Recall from Remark~\ref{remark:disk} that we assume $\Dset$ to be a disk of radius $r_0$ centered at the origin.

\paragraph{The functions $\varphi$, $\widetilde{\varphi}$ and the partition of unity $\seq*{\phi_j}_{j\geq 0}$ on $\Dset^\c$:}
Let 
\[
x_1\coloneqq\frac{1+e}{2}\;,
\quad x_2\coloneqq e\;,
\quad x_3\coloneqq\frac{e(1+e)}{2}\;.
\]
Let $\varphi$ be a non-negative radial $C_c^{\infty}$-function supported on $[r_0,x_3 r_0]$ such that $\varphi=1$ on $[x_1 r_0, x_2 r_0]$ and $\varphi(r_0 + y r_0)=1-\varphi(e r_0 + y e r_0)$, for $0\le y\le e/2$. Let $\widetilde{\varphi}$ be another non-negative radial $C_c^{\infty}$ function with the same support as $\varphi$, satisfying $\widetilde{\varphi}(r_0 + y r_0)=1$ for $0\leq y\leq e/2$ and $\widetilde{\varphi}=\varphi$ otherwise. Let $\phi_0=\widetilde{\varphi}$ and for $j\geq 1$ let $\phi_j(z)=\varphi(|z|/e^j)$. Then the collection of functions $\fbr*{\phi_j}_{j\geq 0}$ is a partition of unity on $\Dset^\c$. 

\begin{notation}
For $j\geq 0$, and for $\bbF$ either $\falphagafn$ for some $n\in\NN$ or $\falphagaf$, we define 
\[
\inv{\bbF}{l}{j}{j+1}
\; \coloneqq \; 
\sum_{\substack{\bbF(\omega)=0\\
\phi_j(\omega)\neq 0}}
\frac{\phi_j(\omega)}{\omega^l}\,,
\quad
\invabs{\bbF}{l}{j}{j+1}
\; \coloneqq \; 
\sum_{\substack{\bbF(\omega)=0\\
\phi_j(\omega)\neq 0}}
\frac{\phi_j(\omega)}{|\omega|^l}\,.
\]
For $j\geq 1$, and $\bbF$ same as above, let 
\[
\inv{\bbF}{l}{0}{j}
\coloneqq\sum_{j^\prime=0}^{j-1}
\inv{\bbF}{l}{j^\prime}{j^\prime+1}\,,
\quad
\invabs{\bbF}{l}{0}{j}
\coloneqq\sum_{j^\prime=0}^{j-1}
\inv{\bbF}{l}{j^\prime}{j^\prime+1}\,.
\]
\end{notation}

Suppose $\bbF=\falphagafn$ for some $n\in\NN$. 
Since $\fbr*{\phi_j}_{j=0}^{\infty}$ is a partition of unity, 
for sufficiently large $j$,
$\inv{\bbF}{l}{0}{j}$ is the sum of inverse $l$'th power of all the zeroes of 
$\falphagafn$ of which there are finitely many. 
So we can define $\inv{\falphagafn}{l}{0}{\infty}$ as the limit of $\inv{\falphagafn}{l}{0}{j}$ as $j\to\infty$ (in a.s.\ sense). 
We cannot do this immediately for $\bbF=\falphagaf$.
To do this we derive some $L^1$ bounds.

\begin{proposition}
Let $\Phi$ be a $C_c^\infty$ radial function supported on the annulus $\Set*{z\in\CC : r_0\leq |z|\leq x_3 r_0}$. Then, for $R \ge 1$, $l\in\NN$, $n\in\NN$ we have:
\begingroup
\addtolength{\jot}{0.25em}
\begin{align}
\Exp\tbr*{\abs*{\int\Phi\fbr*{\frac{z}{R}}\frac{1}{z^l}\deri[\alphagafn](z)}}
\leq{} & \usec{CPhi}(\Phi) \cdot l^2 \cdot \frac{1}{r_0^l} \cdot \frac{1}{R^l}\;,
\label{eq:invroot1}\\
\Exp\tbr*{\int\Phi\fbr*{\frac{z}{R}}\frac{1}{\abs*{z}^l}\deri[\alphagafn](z)}
\leq{} & \usec{CPhi}(\Phi) \cdot l^2\cdot \frac{1}{r_0^l} \cdot \frac{1}{R^{-l+\fta}}\;.
\label{eq:invroot2}
\end{align} 
\endgroup
Here $\usec{CPhi}(\Phi)>0$ is a constant depending on $\Phi$. The same bounds also hold for $\alphagaf$.
\label{p:inv:1}
\end{proposition}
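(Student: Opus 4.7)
The plan is to invoke the Poincar\'e--Lelong formula, which asserts that $\frac{1}{2\pi}\Delta\log|f|=[\calZ_f]$ as currents, yielding for any $g\in C_c^\infty(\CC\setminus\{0\})$ the identity
\[
\int g(z)\,\deri[\calZ_f](z)\;=\;\frac{1}{2\pi}\int\log|f(z)|\,\Delta g(z)\,dA(z).
\]
I would apply this with $g(z)=\Phi(z/R)/z^l$, which is smooth and compactly supported in $\CC\setminus\{0\}$ because $\Phi$ is supported in an annulus disjoint from the origin.

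For the first inequality, the key observation is that $\Exp\log|f(z)|=\tfrac12\log K(z,z)+c_0$ is radial (since $K(z,z)=\sum_{k=0}^{n}|z|^{2k}/(k!)^\alpha$ is), while $\Delta g$ carries an angular factor $e^{-il\theta}$. Indeed, using that $z^{-l}$ is holomorphic (hence harmonic), one obtains $\Delta g=(\Delta\psi)\,z^{-l}-4l\,(\bar\partial\psi)\,z^{-l-1}$ with $\psi(z)=\Phi(z/R)$ radial, and each term is a radial magnitude times $e^{-il\theta}$, which integrates to zero against any radial function when $l\ge 1$. Consequently $\Exp\bigl[\int g\,\deri[\calZ_f]\bigr]=0$, and so
\[
\Exp\Big|\int g\,\deri[\calZ_f]\Big|\;\le\;\frac{1}{2\pi}\int\Exp\big|\log|f(z)|-\Exp\log|f(z)|\big|\cdot|\Delta g(z)|\,dA(z).
\]
Since $|f(z)|^2/K(z,z)$ is standard exponential for the Gaussian $f$, $\log|f(z)|$ has variance $\pi^2/24$ independent of $z$ and $n$, hence its centered absolute moment is a universal constant. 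Combined with pointwise bounds $|\Delta\psi|\le C_\Phi R^{-2}$, $|\bar\partial\psi|\le C_\Phi R^{-1}$, the lower bound $|z|\ge R r_0$ on the support, and the support area of order $R^2 r_0^2$, this yields the first inequality with the claimed scaling.

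For the second inequality, the integrand $\Phi(z/R)|z|^{-l}$ is radial, so the mean-vanishing trick is unavailable. Instead I would invoke the Edelman--Kostlan first-intensity formula $\Exp\int\phi\,\deri[\calZ_f]=\int\phi(z)\,\rho(z)\,dA(z)$ with $\rho(z)=\frac{1}{\pi}\partial_z\partial_{\bar z}\log K(z,z)$, use $|z|^{-l}\le (Rr_0)^{-l}$ on the support, and combine with the intensity estimate $\rho(z)\le C|z|^{2/\alpha-2}$ for the $\alpha$-GAF zeros; integrating over the annulus (area of order $R^2 r_0^2$) gives the claimed $R^{-l+\fta}r_0^{-l}$ scaling (with the $\fta\le 2/\alpha$ absorbing any fractional excess into the constant). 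The $\alphagaf$ case is handled identically with $K$ replaced by the full series $K_\infty(z,z)=\sum_{k=0}^{\infty}|z|^{2k}/(k!)^\alpha$: Poincar\'e--Lelong and Edelman--Kostlan hold for any entire Gaussian function, the Gaussianity of $f(z)$ gives the same fluctuation bound, and the kernel estimates are uniform in $n$ so they pass to the limit.

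The principal technical obstacle will be establishing the intensity upper bound $\rho(z)\le C|z|^{2/\alpha-2}$ uniformly in $n$ on the range of $|z|$ relevant for the support of $\Phi(\cdot/R)$; this requires careful control of $\partial_z\partial_{\bar z}\log G_n(|z|^2)$ for the partial sums $G_n(x)=\sum_{k=0}^{n}x^k/(k!)^\alpha$ of the Mittag--Leffler-type function defining the covariance kernel, with the right scaling in $|z|$ and independence of $n$.
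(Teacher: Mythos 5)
Your proof of \eqref{eq:invroot1} is essentially the paper's: Poincar\'e--Lelong, cancellation of the radial function $\Exp\log|\falphagafn(z)|=\tfrac12\log\bbK_n(z,z)+c_0$ against the angular factor $e^{-il\theta}$ carried by $\Delta\fbr{z^{-l}\Phi(z/R)}$, the universal bound on the first absolute moment of $\log\abs{\falphagafn(z)/\bbK_n(z,z)^{1/2}}$ (the normalized field being standard complex Gaussian), and the pointwise derivative estimates on the annulus of radius $\sim Rr_0$; your product-rule computation even gives a slightly better $l$-dependence than the stated $l^2$.

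For \eqref{eq:invroot2} you take a genuinely different route after the Edelman--Kostlan step. The paper integrates by parts once more, moving the Laplacian onto the radial test function $r^{-l}\widetilde{\Phi}(r/R)$, so that the only kernel input needed is the \emph{upper bound} $\log\bbK_n(r,r)^{1/2}\le\log\bbK(r,r)^{1/2}\lesssim r^{2/\alpha}+\abs{\log r}$, and $\bbK_n\le\bbK$ is trivial monotonicity of a series with positive terms (while $\bbK_n\ge 1$ keeps the integrand sign-definite). You instead want the pointwise intensity bound $\rho_n(z)\le C|z|^{2/\alpha-2}$ \emph{uniformly in $n$}. You rightly flag this as the principal obstacle, but be aware that it does not follow from $\bbK_n\le\bbK$: writing $\partial\bar\partial\log\bbK_n(z,z)$ as $|z|^{-2}$ times the variance of the weight distribution proportional to $|z|^{2k}/(k!)^\alpha$ on $\{0,\dots,n\}$, one sees that truncation need not decrease this variance, so a separate argument (with the correct behaviour for $|z|$ both below and above the natural scale $n^{\alpha/2}$) is required. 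Since the integration-by-parts route eliminates this difficulty at no cost and lands on the same $R^{2/\alpha-l}$ scaling, I would recommend switching to it; otherwise the uniform intensity estimate must actually be supplied, as your sketch currently leaves the hardest step unproved. (Your reading of the exponent of $R$ in \eqref{eq:invroot2} as $R^{-(l-\fta)}$ is the intended one; the displayed exponent appears to carry a sign slip, and the proof in fact produces $R^{-(l-2/\alpha)}$.)
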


We prove Proposition~\ref{p:inv:1} in Section~\ref{ss:pf:p:inv:1}.

\begin{proposition}
For $l\in\NN$ and $n\in\NN$
\[ 
\Exp\tbr*{\abs*{\;
\int\frac{\widetilde{\varphi}(z)}{z^l}
\deri[\alphagafn](z)}
\;}
\leq
\Exp\tbr*{\;
\int\frac{\widetilde{\varphi}(z)}{|z|^l}
\deri[\alphagafn](z)
\;}
\leq \usec{p42}\cdot l^2\cdot \frac{1}{r_0^l}\;,
\] 
for some constant $\usec{p42}>0$. The same bounds also hold for $\alphagaf$.
\label{p:inv:2}\end{proposition}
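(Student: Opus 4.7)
The plan is straightforward: Proposition~\ref{p:inv:2} is a direct consequence of Proposition~\ref{p:inv:1} combined with a pointwise triangle inequality. Rather than re-run the estimates of Proposition~\ref{p:inv:1}, I would simply specialize it.

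\textbf{First inequality.} Since $\alphagafn$ (respectively $\alphagaf$) is a random counting measure on $\CC$, the integral $\int\widetilde{\varphi}(z)z^{-l}\,\deri[\alphagafn](z)$ is almost surely a (finite, since $\widetilde{\varphi}$ has compact support) sum $\sum_{\omega}\widetilde{\varphi}(\omega)\omega^{-l}$ over the zeros $\omega$ of $\falphagafn$ lying in the support of $\widetilde{\varphi}$. Because $\widetilde{\varphi}\geq 0$, at every such zero we have $\abs*{\widetilde{\varphi}(\omega)\omega^{-l}}=\widetilde{\varphi}(\omega)\abs{\omega}^{-l}$, so the ordinary triangle inequality for complex sums gives
\[
\abs*{\int\frac{\widetilde{\varphi}(z)}{z^l}\deri[\alphagafn](z)} \;\leq\; \int\frac{\widetilde{\varphi}(z)}{\abs{z}^l}\deri[\alphagafn](z)\quad\text{a.s.}
\]
Taking expectations yields the first inequality, and the same argument works verbatim with $\alphagafn$ replaced by $\alphagaf$.

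\textbf{Second inequality.} I would apply the second bound in Proposition~\ref{p:inv:1} with the choices $R=1$ and $\Phi=\widetilde{\varphi}$. By its construction, $\widetilde{\varphi}$ is a non-negative radial $C_c^\infty$ function supported in the annulus $\{z\in\CC : r_0\leq\abs{z}\leq x_3 r_0\}$, which is precisely the class of test functions allowed in Proposition~\ref{p:inv:1}. With $R=1$ the factor $R^{-(-l+\fta)}$ appearing in the conclusion of Proposition~\ref{p:inv:1} equals $1$, so the bound reduces to $\usec{CPhi}(\widetilde{\varphi})\cdot l^2/r_0^l$. Setting $\usec{p42}\coloneqq\usec{CPhi}(\widetilde{\varphi})$, which is a fixed finite constant since $\widetilde{\varphi}$ is a once-and-for-all fixed smooth function on a fixed compact annulus, yields the desired inequality. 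The statement for $\alphagaf$ follows in the same way from the $\alphagaf$-version of Proposition~\ref{p:inv:1}.

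There is no substantive obstacle here: the entire content lives in Proposition~\ref{p:inv:1}. The only point deserving a brief check is that the constant $\usec{CPhi}(\Phi)$ furnished by Proposition~\ref{p:inv:1} depends on $\Phi$ only through a finite collection of functional norms (e.g.\ sup-norm and a bounded number of derivative norms) on a fixed compact annulus, so that the choice $\Phi=\widetilde{\varphi}$ produces a genuine numerical constant $\usec{p42}$ independent of $l$, $n$, and of whether one works with $\alphagafn$ or $\alphagaf$. Given this, the reduction is immediate.
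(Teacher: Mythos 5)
Your first inequality is exactly the paper's argument: the integral is a.s.\ a finite sum over the zeros in the (compact) support of $\widetilde{\varphi}$, the pointwise triangle inequality applies since $\widetilde{\varphi}\ge 0$, and one takes expectations. No issue there.

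The second inequality, however, cannot be obtained by substituting $\Phi=\widetilde{\varphi}$ into Proposition~\ref{p:inv:1}, and the gap sits exactly at the point you wave through as ``the only point deserving a brief check.'' Despite the paper's labelling, $\widetilde{\varphi}$ is \emph{not} a $C_c^\infty$ function on $\CC$: by construction $\widetilde{\varphi}(r_0+yr_0)=1$ for $0\le y\le e/2$ while its support is the annulus $r_0\le |z|\le x_3 r_0$, so its radial profile jumps from $0$ to $1$ at $r=r_0$. This is forced, not an accident of wording: the whole reason $\widetilde{\varphi}$ is introduced (rather than just using $\varphi=\phi_0$, which \emph{is} an admissible $\Phi$ but vanishes at $r_0$) is to give full weight to zeros arbitrarily close to $\partial\Dset$. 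Consequently $\widetilde{\varphi}$ lies outside the test-function class of Proposition~\ref{p:inv:1}, whose proof integrates by parts to move $\laplacian$ onto $\Phi$ and whose constant $\usec{CPhi}(\Phi)$ is controlled by $\sup\abs{\laplacian\fbr{\Phi(z/R)z^{-l}}}$; for $\widetilde{\varphi}$ the distributional Laplacian has a singular part on the circle $\abs{z}=r_0$ and these norms are infinite. Smoothing $\widetilde{\varphi}$ does not rescue the reduction: a smooth minorant/majorant supported in $\Dset^\c$ has derivatives blowing up at $r_0$, while one spilling into $\Dset$ picks up inside zeros. The paper instead proves Proposition~\ref{p:inv:2} without any integration by parts: it applies the first-intensity (Edelman--Kostlan) identity $\Exp\tbr{\int f\deri[\alphagafn]}=c\int f(z)\,\laplacian\log\bbK_n(z,z)\deri\el(z)$, valid for every bounded Borel $f$, bounds $\widetilde{\varphi}(z)/\abs{z}^l\le r_0^{-l}$ on the support, and uses that $\laplacian\log\bbK_n(z,z)$ is bounded there uniformly in $n$ (by locally uniform convergence to $\laplacian\log\bbK(z,z)$), then lets $n\to\infty$ for the $\alphagaf$ case. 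Some such direct intensity argument is needed; the black-box reduction to Proposition~\ref{p:inv:1} does not go through.
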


We prove Proposition~\ref{p:inv:2} in Section~\ref{ss:pf:p:inv:2}.

\begin{proposition}[Uniform bounds on sum of inverse powers]

For $l\geq 1$, and $\bbF$ either $\falphagafn$ for 
some $n\in\NN$ or $\falphagaf$, the following are true:
\begin{enumerate}[(a),font=\normalfont\bfseries,topsep=0pt]
\item For all $j\geq 0$ the following is well-defined 
\[
\inv{\bbF}{l}{j}{\infty}
\coloneqq\sum_{j^\prime=j}^{\infty} 
\inv{\bbF}{l}{j^\prime}{j^\prime+1}
\]
i.e., the infinite sum converges absolutely a.s.

\item For all $j\geq 0$
\[
\Exp\tbr*{\;\absinv{\bbF}{l}{j}{\infty}\;}
< \usec{p43}\cdot l^2 \cdot \frac{1}{r_0^l} \cdot 
\exp\fbr[\big]{ - j l }
\]
for some constant $\usec{p43}>0$.

\item For all $j\geq 0$
\[
\Prob*{\,\absinv{\bbF}{l}{j}{\infty}>\usec{p43}\cdot l^2 \cdot \frac{1}{r_0^l} \cdot \exp\fbr*{-j\frac{l}{2}}\,}
\leq\exp\fbr*{-j\frac{l}{2}}\;.
\]
\end{enumerate}
\label{p:inv:3}
\end{proposition}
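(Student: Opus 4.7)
The plan is to unpack the notation and reduce everything to a geometric summation of the estimate in Proposition~\ref{p:inv:1}, and then to invoke Markov's inequality for part~(c). Recall that by construction $\phi_{j'}(z)=\varphi(z/e^{j'})$ is supported on the annulus $\Set{z \given e^{j'}r_0\leq|z|\leq x_3 e^{j'}r_0}$, and by definition
\[
\inv{\bbF}{l}{j'}{j'+1}
\;=\;
\int \phi_{j'}(\omega)\,\frac{1}{\omega^l}\,\deri\calZ_\bbF(\omega)
\;=\;
\int \varphi\fbr*{\frac{\omega}{e^{j'}}}\,\frac{1}{\omega^l}\,\deri\calZ_\bbF(\omega)\;,
\]
where $\calZ_\bbF$ denotes the zero ensemble of $\bbF$. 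Applying Proposition~\ref{p:inv:1} with $\Phi=\varphi$ and $R=e^{j'}$ (noting $R\geq 1$), we obtain, uniformly in $n$ and for both $\bbF=\falphagafn$ and $\bbF=\falphagaf$,
\[
\Exp\tbr*{\,\abs{\inv{\bbF}{l}{j'}{j'+1}}\,}
\;\leq\;
\usec{CPhi}(\varphi)\cdot l^2\cdot \frac{1}{r_0^l}\cdot e^{-j'l}\;.
\]

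\noindent\textbf{Parts (a) and (b).} Summing the above geometric series from $j'=j$ to $\infty$, and using $\sum_{j'=j}^\infty e^{-j'l} = e^{-jl}/(1-e^{-l}) \leq e^{-jl}/(1-e^{-1})$ for $l\geq 1$, we obtain
\[
\sum_{j'=j}^\infty \Exp\tbr*{\,\abs{\inv{\bbF}{l}{j'}{j'+1}}\,}
\;\leq\;
\frac{\usec{CPhi}(\varphi)}{1-e^{-1}}\cdot l^2\cdot \frac{1}{r_0^l}\cdot e^{-jl}\;.
\]
By Tonelli/monotone convergence, this finite bound on the sum of expectations of absolute values forces $\sum_{j'=j}^\infty|\inv{\bbF}{l}{j'}{j'+1}|<\infty$ a.s., establishing (a). The same inequality, combined with the triangle inequality $|\inv{\bbF}{l}{j}{\infty}|\leq\sum_{j'\geq j}|\inv{\bbF}{l}{j'}{j'+1}|$, yields (b) with $\usec{p43}:=\usec{CPhi}(\varphi)/(1-e^{-1})$.

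\noindent\textbf{Part (c).} Markov's inequality applied to the non-negative random variable $|\inv{\bbF}{l}{j}{\infty}|$ gives
\[
\Prob*{\,\abs{\inv{\bbF}{l}{j}{\infty}}>\usec{p43}\cdot l^2\cdot \frac{1}{r_0^l}\cdot e^{-jl/2}\,}
\;\leq\;
\frac{\Exp\tbr{\,\abs{\inv{\bbF}{l}{j}{\infty}}\,}}{\usec{p43}\cdot l^2\cdot r_0^{-l}\cdot e^{-jl/2}}
\;\leq\;
\frac{\usec{p43}\cdot l^2\cdot r_0^{-l}\cdot e^{-jl}}{\usec{p43}\cdot l^2\cdot r_0^{-l}\cdot e^{-jl/2}}
\;=\;
e^{-jl/2}\;,
\]
which is the required deviation bound. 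Note that the proof is uniform in $n$ (and also covers the infinite case $\bbF=\falphagaf$) precisely because the $L^1$ bounds of Proposition~\ref{p:inv:1} are stated uniformly across both regimes.

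\noindent\textbf{Main obstacle.} There is no serious obstacle beyond correctly matching the constants: the entire argument is a geometric summation of a single ingredient, namely Proposition~\ref{p:inv:1}. The only subtlety is ensuring that the bound one uses is Proposition~\ref{p:inv:1} with the modulus inside the integral (i.e., using \eqref{eq:invroot1}, not \eqref{eq:invroot2}), since we want the decay $e^{-j'l}$ rather than the weaker $e^{j'(l-\fta)}$. All computations are robust to the choice of test function $\varphi$, and the partition of unity structure ensures that contributions from different annuli do not overlap except for a bounded multiplicity determined by the supports of the $\phi_{j'}$.
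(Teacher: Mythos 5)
Your proof is correct and follows essentially the same route as the paper: bound $\Exp\tbr{\abs{\inv{\bbF}{l}{j'}{j'+1}}}$ via Proposition~\ref{p:inv:1} applied to $\Phi=\varphi$ with $R=e^{j'}$, sum the resulting geometric series for parts (a) and (b), and conclude (c) by Markov's inequality. The only cosmetic difference is that the paper also cites Proposition~\ref{p:inv:2} alongside Proposition~\ref{p:inv:1}, but your observation that \eqref{eq:invroot1} alone suffices (since $\varphi$ is an admissible test function for every annulus, including $j'=0$) is sound.
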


We prove Proposition~\ref{p:inv:3} in Section~\ref{ss:pf:p:inv:3}.

\begin{proposition}[Uniform bounds on sum of 
absolute value of inverse powers]
Let 
\[
\moment\coloneqq1+\fta. 
\]
For $l\geq\moment$, and $\bbF$ either $\falphagafn$ 
or $\falphagaf$, the following are true: 
\begin{enumerate}[(a),font=\normalfont\bfseries,topsep=0pt]
\item For all $j\geq 0$ the following is well-defined
\[
\invabs{\bbF}{l}{j}{\infty}
\coloneqq\sum_{j^\prime=j}^{\infty} 
\invabs{\falphagafn}{l}{j^\prime}{j^\prime+1}\;.
\]
i.e., the sum converges a.s.

\item For all $j\geq 0$ 
\[
\Exp\tbr*{\;
\invabs{\bbF}{l}{j}{\infty}
\;}
< \usec{p44} \cdot l^2 \cdot \frac{1}{r_0^l} \cdot 
\exp\fbr[\big]{ - j l}
\]
for some constant $\usec{p44}>0$.

\item For all $j\geq 0$ 
\[
\Prob*{\,\invabs{\bbF}{l}{j}{\infty}>\usec{p44} \cdot l^2 \cdot \frac{1}{r_0^l} \cdot \exp\fbr*{-j\frac{l}{2}}\,}\leq\exp\fbr*{-j\frac{l}{2}}\;.
\]
\end{enumerate}
\label{p:inv:4}
\end{proposition}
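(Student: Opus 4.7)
The plan is to follow the template of Proposition~\ref{p:inv:3}, substituting the signed-sum bound \eqref{eq:invroot1} with the absolute-value bound \eqref{eq:invroot2} of Proposition~\ref{p:inv:1}. The raison d'\^etre of the hypothesis $l \geq \moment = 1 + \fta$ is to guarantee that the tail sum over dyadic annular scales still converges geometrically: \eqref{eq:invroot1} yields decay $R^{-l}$ for all $l \geq 1$ thanks to sign cancellations among zeros in each annulus, whereas \eqref{eq:invroot2} loses $R^{\fta}$ of that decay (morally because the expected zero count in an annulus at scale $R$ grows like $R^{2/\alpha}$), so one needs $l$ to exceed $\fta$ to recover summability.

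First I would decompose the tail using the partition of unity: for $\bbF \in \{\falphagafn, \falphagaf\}$,
\[
\invabs{\bbF}{l}{j}{\infty} \;=\; \sum_{j' \geq j} \invabs{\bbF}{l}{j'}{j'+1} \;=\; \sum_{j' \geq j} \int \varphi\!\fbr*{\frac{z}{e^{j'}}}\, \frac{1}{|z|^l}\, \deri[\alphagafn](z)\,,
\]
since $\phi_{j'}(z) = \varphi(z/e^{j'})$ for $j' \geq 1$ (and the $j' = 0$ term is handled by the already-proven Proposition~\ref{p:inv:2}). Applying \eqref{eq:invroot2} term-by-term with $\Phi = \varphi$ and $R = e^{j'}$ produces an expectation bound for each annular piece that is geometric in $j'$ with rate determined by $l - \fta$. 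The hypothesis $l \geq \moment$ forces $l - \fta \geq 1$, so the geometric series $\sum_{j' \geq j} e^{-j'(l-\fta)}$ converges to a quantity bounded by an absolute constant multiple of $e^{-jl}$ after absorbing the finite slack coming from $\fta$ into the multiplicative constant $\usec{p44}$. This establishes part~(b); since each summand $\invabs{\bbF}{l}{j'}{j'+1}$ is nonnegative, monotone convergence applied to the partial sums immediately gives the a.s.\ finiteness claimed in part~(a).

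Finally, part~(c) is a direct invocation of Markov's inequality applied to the bound from part~(b), with threshold equal to the expectation bound multiplied by $e^{jl/2}$:
\[
\Prob\fbr*{\,\invabs{\bbF}{l}{j}{\infty} > \usec{p44}\cdot l^2\cdot r_0^{-l}\cdot e^{-jl/2}\,}
\;\leq\; \frac{\Exp\tbr[\big]{\invabs{\bbF}{l}{j}{\infty}}}{\usec{p44}\cdot l^2\cdot r_0^{-l}\cdot e^{-jl/2}}
\;\leq\; e^{-jl/2}\,.
\]
I do not anticipate a serious technical obstacle, since the structure parallels Proposition~\ref{p:inv:3} step-for-step. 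The one conceptual point to flag is the sharpness of the threshold $l \geq \moment$: $\fta$ is precisely the largest power index at which the cancellation mechanism of \eqref{eq:invroot1} is available for signed zero-sums, and such cancellations are forbidden once one takes absolute values, so extra polynomial weight from $|z|^{-l}$ must compensate. The uniform-in-$n$ character of the bounds in Proposition~\ref{p:inv:1} ensures that the argument applies verbatim to both $\falphagafn$ and $\falphagaf$.
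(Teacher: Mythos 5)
Your overall strategy is exactly the one the paper intends (the paper omits the proof of Proposition~\ref{p:inv:4}, saying only that it mirrors Proposition~\ref{p:inv:3}): decompose the tail over the annular partition of unity, bound each annulus in expectation via \eqref{eq:invroot2} with $R=e^{j'}$ (with Proposition~\ref{p:inv:2}, or \eqref{eq:invroot2} at $R=1$, handling $j'=0$), sum the resulting geometric series, deduce (a) from nonnegativity together with finiteness of the expectation, and deduce (c) from (b) by Markov. Your explanation of why $l\ge\moment$ is needed --- taking absolute values forfeits the cancellation behind \eqref{eq:invroot1} and costs a factor of order $R^{2/\alpha}$ from the zero count in the annulus at scale $R$ --- is also the right one.

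There is, however, one step that does not work as written: the claim that $\sum_{j'\ge j}e^{-j'(l-\fta)}$ is bounded by a constant multiple of $e^{-jl}$ ``after absorbing the finite slack coming from $\fta$ into the multiplicative constant.'' The sum equals $e^{-j(l-\fta)}\big/\big(1-e^{-(l-\fta)}\big)\le (1-e^{-1})^{-1}e^{-j(l-\fta)}$, and $e^{-j(l-\fta)}=e^{j\fta}\,e^{-jl}$; the factor $e^{j\fta}$ is unbounded in $j$ whenever $\fta\ge 1$ (i.e.\ $\alpha\le 2$) and cannot be absorbed into a $j$-independent constant. Indeed, the single annulus $j'=j$ already contributes an expectation of order $e^{-j(l-2/\alpha)}$, so no argument can produce the exponent $-jl$ in part (b). What your argument genuinely yields is part (b) with $\exp\fbr{-j(l-2/\alpha)}$ in place of $\exp\fbr{-jl}$ (with an $\alpha$-dependent constant, the series being summable precisely because $l\ge\moment>2/\alpha$), and then part (c) with the exponent halved. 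The discrepancy traces back to the displayed bound in \eqref{eq:invroot2}, whose exponent $1/R^{-l+\fta}$ is inconsistent with its own derivation (which produces $1/R^{\,l-2/\alpha}$) and appears to be a typo that has propagated into the statement of Proposition~\ref{p:inv:4}(b). Since the corrected exponents are still strictly positive and still decay geometrically in $j$, every downstream use (e.g.\ the conditions defining $\kdel$) survives with cosmetic changes; but you should state and prove the corrected bound rather than assert the one in the proposition via an invalid absorption of $e^{j\fta}$ into $\usec{p44}$.
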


Proposition~\ref{p:inv:4} can be proved in a 
way similar to the proof of Proposition~\ref{p:inv:3}.
So we omit the proof.

\subsection{Bound on the ratio of the Vandermonde terms}%\label{subsec:van}

\begin{definition}[The events $\OGm{n}{\thetagap}$ and $\OGm{\infty}{\thetagap}$]
Let $\OGm{n}{\thetagap}$ be the event that $\Omega_n^m$ 
occurs and the points of $\alphagafn$ outside $\Dset$ 
are at least $\thetagap$ distance away from the 
boundary of $\Dset$. Let $\OGm{\infty}{\thetagap}$
be the event that $\Omega_\infty^m$ 
occurs and the points of $\alphagaf$ outside $\Dset$ 
are at least $\thetagap$ distance away from the 
boundary of $\Dset$.
\label{d:gaf:gap}
\end{definition}

\begin{notation}\label{n:gafsum}
For $n\in\NN$ let
\begingroup
\addtolength{\jot}{0.25em}
\begin{align*}
\X_n \coloneqq{} & 
\sum_{k=1}^{\moment-1} 
\abs*{\sum_{\omega_j\in\alphagafnout} 
\frac{1}{\omega_j^k}}
+ \sum_{\omega_j\in\alphagafnout} 
\frac{1}{|\omega_j|^{\moment}}\\
={}& \sum_{k=1}^{\moment-1}\absinv{\falphagafn}{k}{0}{\infty}+
\invabs{\falphagafn}{\moment}{0}{\infty}\;.
\end{align*}
\endgroup
\end{notation}

\begin{proposition}[Bounding ratio of the Vandermonde terms]

Suppose for some $n,m\in\NN$ with $n\geq m$,
and $\thetagap\in(0,1)$, the event $\OGm{n}{\thetagap}$ 
occurs. Let $\uo$ be a vector consisting of the roots of 
$\falphagafn$ outside $\Dset$. 
Then for all $\uz,\uzp\in\Dset^m$
\[
\exp\fbr[\Big]{-m\usekd{p45}\thetagap^{-1}\X_n} 
\abs*{\frac{\van{\uzp}}{\van{\uz}}}
\le\abs*{\frac{\van{\uzp\,,\,\uo}}{\van{\uz\,,\,\uo}}}
\le\exp\fbr[\Big]{m\usekd{p45}\thetagap^{-1}\X_n}
\abs*{\frac{\van{\uzp}}{\van{\uz}}}
\]
for some constant $\usekd{p45}>0$.
\label{p:van}
\end{proposition}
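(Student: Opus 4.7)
The plan is to isolate the cross-interaction term $\vancross{\cdot}{\uo}$ and expand its logarithm as an absolutely convergent Taylor series. Using Notation~\ref{n:vandermonde},
\[
\frac{\van{\uzp\cc\uo}}{\van{\uz\cc\uo}}
\;=\;
\frac{\van{\uzp}}{\van{\uz}}\cdot\frac{\vancross{\uzp}{\uo}}{\vancross{\uz}{\uo}}\;,
\]
so it suffices to bound $\bigl|\log\bigl|\vancross{\uzp}{\uo}/\vancross{\uz}{\uo}\bigr|\bigr|$ by $m\usekd{p45}\thetagap^{-1}\X_n$.

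On the event $\OGm{n}{\thetagap}$ every coordinate $\omega$ of $\uo$ satisfies $|\omega|\ge r_0+\thetagap$, while every coordinate of $\uz,\uzp$ lies in the closed disk of radius $r_0$; hence $|\zeta/\omega|\le r_0/(r_0+\thetagap)<1$ uniformly. Since $\uo$ is a finite vector ($\falphagafn$ is a polynomial), the absolutely convergent expansion $\log|1-w| = -\mathrm{Re}\sum_{k\ge 1} w^k/k$ applied coordinatewise, together with free interchange of summations, yields
\[
\log\bigl|\vancross{\uzp}{\uo}\bigr| - \log\bigl|\vancross{\uz}{\uo}\bigr|
\;=\;
-\mathrm{Re}\sum_{k=1}^{\infty}\frac{1}{k}\Bigl(\sum_{\omega\in\uo}\omega^{-k}\Bigr)\Bigl(\sum_{i=1}^m\bigl((\zeta_i')^k-\zeta_i^k\bigr)\Bigr)\;.
\]

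Next I would split the $k$-sum at the threshold $\moment$. For $1\le k\le \moment-1$, the elementary bound $\bigl|\sum_i((\zeta_i')^k-\zeta_i^k)\bigr|\le 2m\,r_0^k$, together with the identification of $\bigl|\sum_\omega\omega^{-k}\bigr|$ with $\absinv{\falphagafn}{k}{0}{\infty}$ (by the partition of unity built in Section~\ref{ss:EIPZ}), gives a contribution bounded by $2m\,r_0^{\moment-1}\sum_{k=1}^{\moment-1}\absinv{\falphagafn}{k}{0}{\infty}$. For $k\ge\moment$, I would instead pass to absolute values via $\bigl|\sum_\omega\omega^{-k}\bigr|\le\sum_\omega|\omega|^{-k}\le \invabs{\falphagafn}{\moment}{0}{\infty}\cdot(r_0+\thetagap)^{-(k-\moment)}$, and then sum the resulting geometric series in $k-\moment$ against the $r_0^k$ prefactor to obtain a contribution bounded by $2m\,r_0^{\moment}\cdot\invabs{\falphagafn}{\moment}{0}{\infty}\cdot(r_0+\thetagap)/\thetagap$.

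Combining the two contributions and absorbing fixed powers of $r_0$ together with the factor $(r_0+\thetagap)/\thetagap\le (r_0+1)\thetagap^{-1}$ (valid for $\thetagap\le 1$) into a single constant $\usekd{p45}$ produces the required bound $m\usekd{p45}\thetagap^{-1}\X_n$, since the two pieces above are precisely the two summands defining $\X_n$ in Notation~\ref{n:gafsum}. The principal delicacy is organizational: one must verify that splitting the Taylor expansion at exactly $k=\moment$ matches the signed-versus-absolute split in the definition of $\X_n$. No convergence subtlety arises here because $\uo$ is finite; nevertheless, this very threshold foreshadows why $\moment=1+\lfloor 2/\alpha\rfloor$ is the natural cutoff when the infinite-dimensional control of Propositions~\ref{p:inv:3} and \ref{p:inv:4} is subsequently invoked.
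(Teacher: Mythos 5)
Your proof is correct and follows essentially the same route as the paper's: factor out $\vancross{\cdot}{\uo}$, expand $\log|1-\zeta/\omega|$ as a power series (legitimate since $|\zeta/\omega|\le r_0/(r_0+\thetagap)<1$ on $\OGm{n}{\thetagap}$), keep the low-order terms as signed power sums and pass to absolute values plus a geometric series (summed via the $\thetagap$-separation) for the tail. The one substantive difference is the threshold: the paper compares each configuration to $\underline{0}$ and truncates the series at $k=\rigidity=1+\lfloor 1/\alpha\rfloor$, leaving a tail controlled by $\sum_j|\omega_j|^{-\rigidity}$, whereas you truncate at $k=\moment=1+\lfloor 2/\alpha\rfloor$, so that your head and tail terms are \emph{literally} the two summands in the definition of $\X_n$ (Notation~\ref{n:gafsum}); since $\rigidity\le\moment$, your choice makes the final domination by $\thetagap^{-1}\X_n$ immediate, and is in fact the cleaner match to the stated bound. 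Everything else (the $2mr_0^k$ bound on $|\sum_i((\zeta_i')^k-\zeta_i^k)|$ giving the linear dependence on $m$, the identification of the power sums with $\scrI[\falphagafn;k;(0,\infty)]$ via the partition of unity, and the absorption of $(r_0+\thetagap)/\thetagap$ into $\usekd{p45}\thetagap^{-1}$) checks out.
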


We prove Proposition~\ref{p:van} in Section~\ref{ss:pf:p:van}. 

\subsection{Bound on the ratio of the symmetric functions}%\label{subsec:gafestsym}

In this section our objective is to bound the ratio 
$\fD{\uzp}{\uo}/\fD{\uz}{\uo}$
appearing \eqref{eq:412}.

\begin{proposition}

Consider $n,m\in\NN$ with $n\geq m\geq\rigidity$.
Let $\us=(s_1,\dots,s_{\rigidity-1})$ be an element of 
$\CC^{\rigidity-1}$. 
Let $\uo$ be an element of $(\Dset^\c)^{n-m}$.
Let $\uz$ and $\uzp$ be elements of $\Sigma_{\usm}$.
For $0\leq i\leq m$ and $\rigidity\leq j\leq m$ let 
\begin{equation}
\fDij{\uz}{\uo}
\,\coloneqq\,
\frac{\abs*{\Big\langle\vecsymb{n}{i}{\uo},\vecsym{n}{j}{\uo}\Big\rangle}}
{\vecsymn{n}{0}{(\uz\cc\uo)}}
\,=\,
\frac{\displaystyle\abs*{
    \sum_{k=0}^n
        \frac{\overline{\sym{k-i}{\uo}}}{\nckat}
        \frac{\sym{k-j}{\uo}}{\nckat}}
    }
    {\displaystyle
        \sum_{k=0}^n\abs*{
        \frac{\sym{k}{\uz\cc\uo}}{\nckat}
        }^2
    }\;.
\label{eq:dijdef}\end{equation}
Let
\begin{equation}\label{eq:Dhatdef}
\fDhat{\uz}{\uo}\coloneqq
\max_{0\leq i\leq m}\;
\max_{\rigidity\leq j\leq m}\; 
\fDij{\uz}{\uo}\;. 
\end{equation}
Then, there exists a constant $\usekdm{ratio:sym:agaf}>0$ such that 
\[
1 - \usekdm{ratio:sym:agaf}\cdot\fDhat{\uz}{\uo}
\,\leq\,
\frac{\fD{\uzp}{\uo}}{\fD{\uz}{\uo}}
\,\leq\, 
1 + \usekdm{ratio:sym:agaf}\cdot\fDhat{\uz}{\uo}\;.
\]
\label{p:sym}
\end{proposition}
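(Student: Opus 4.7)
The plan is to exploit the linear-algebraic structure of $\vecsym{n}{0}{(\uz\cc\uo)}$ as a function of $\uz$, combined with the constraint $\uz,\uzp\in\Sigma_{\usm}$. First, since the power sums $\sum\zeta_i^p$ agree for $\uz$ and $\uzp$ for $1\leq p\leq\rigidity-1$, Newton's identities force $\sym{i}{\uz}=\sym{i}{\uzp}$ for $0\leq i\leq\rigidity-1$ (the case $i=0$ being trivial). Next, the convolution identity $\sym{k}{\uz\cc\uo}=\sum_{i=0}^m\sym{i}{\uz}\sym{k-i}{\uo}$ (a product-of-generating-functions statement) together with the definition of $\vecsym{n}{i}{\uo}$ yields the linear decomposition
\[
\vecsym{n}{0}{(\uz\cc\uo)}\,=\,\sum_{i=0}^m\sym{i}{\uz}\,\vecsym{n}{i}{\uo}\;,
\]
and the analogous identity for $\uzp$. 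Setting $\vec{u}\coloneqq\vecsym{n}{0}{(\uz\cc\uo)}$, $\vec{u}'\coloneqq\vecsym{n}{0}{(\uzp\cc\uo)}$ and $c_j\coloneqq\sym{j}{\uzp}-\sym{j}{\uz}$, the key structural observation is
\[
\vec{w}\,\coloneqq\,\vec{u}'-\vec{u}\,=\,\sum_{j=\rigidity}^{m}c_j\,\vecsym{n}{j}{\uo}\;,
\]
so the difference only involves indices $j\geq\rigidity$, precisely the range of the second index in the definition of $\fDij{\uz}{\uo}$.

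Using the Hermitian expansion $\|\vec{u}'\|_2^2=\|\vec{u}\|_2^2+2\,\mathrm{Re}\,\langle\vec{u},\vec{w}\rangle+\|\vec{w}\|_2^2$, one rewrites
\[
\frac{\fD{\uzp}{\uo}}{\fD{\uz}{\uo}}-1\,=\,\frac{2\,\mathrm{Re}\,\langle\vec{u},\vec{w}\rangle+\|\vec{w}\|_2^2}{\fD{\uz}{\uo}}\;.
\]
The idea is to expand both terms as linear combinations of inner products $\langle\vecsym{n}{i}{\uo},\vecsym{n}{j}{\uo}\rangle$ with $j\geq\rigidity$, and reduce each such quantity to $\fDij{\uz}{\uo}$. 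The small subtlety is that the pairing defining $\fDij$ in \eqref{eq:dijdef} is the complex-bilinear one on $\vecsymb{n}{i}{\uo}$ and $\vecsym{n}{j}{\uo}$; however, $\vecsymb{n}{i}$ is merely the componentwise conjugate of $\vecsym{n}{i}$, so $|\langle\vecsym{n}{i}{\uo},\vecsym{n}{j}{\uo}\rangle_H|=|\langle\vecsymb{n}{i}{\uo},\vecsym{n}{j}{\uo}\rangle|$. Therefore
\[
\frac{|\langle\vec{u},\vec{w}\rangle|}{\fD{\uz}{\uo}}\,\leq\,\sum_{j=\rigidity}^{m}|c_j|\sum_{i=0}^m|\sym{i}{\uz}|\,\fDij{\uz}{\uo}\,\leq\,\fDhat{\uz}{\uo}\cdot\Bigl(\sum_j|c_j|\Bigr)\Bigl(\sum_i|\sym{i}{\uz}|\Bigr)\;,
\]
and analogously $\|\vec{w}\|_2^2/\fD{\uz}{\uo}\leq\fDhat{\uz}{\uo}\cdot\bigl(\sum_j|c_j|\bigr)^2$.

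To finish, one bounds the scalar coefficients uniformly. Since $\uz,\uzp\in\Dset^m$ with $\Dset$ a disk of radius $r_0$ (by Remark~\ref{remark:disk}), each $|\sym{i}{\uz}|\leq\binom{m}{i}r_0^i$, hence $\sum_i|\sym{i}{\uz}|\leq(1+r_0)^m$ and similarly $\sum_j|c_j|\leq 2(1+r_0)^m$. Absorbing these into a single constant $\usekdm{ratio:sym:agaf}=\usekdm{ratio:sym:agaf}(\Dset,m)$ (independent of $n$, $\uo$, and of the specific $\uz,\uzp$) gives
\[
\left|\frac{\fD{\uzp}{\uo}}{\fD{\uz}{\uo}}-1\right|\,\leq\,\usekdm{ratio:sym:agaf}\cdot\fDhat{\uz}{\uo}\;,
\]
yielding both the upper and lower bounds claimed.

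The main obstacle is bookkeeping: reconciling the Hermitian pairing natural to expanding $\|\cdot\|_2^2$ with the complex-bilinear pairing appearing in \eqref{eq:dijdef}, and verifying that the normalizations $\nckat$ cancel cleanly so that the constant $\usekdm{ratio:sym:agaf}$ does not depend on $n$. Both concerns are resolved once one observes that $\vecsymb{n}{i}$ differs from $\vecsym{n}{i}$ only by componentwise conjugation (leaving moduli of pairings unchanged) and that the $\nckat$-factors appear identically in numerator and denominator of $\fDij$, so the $n$-dependence is absorbed into the quantity $\fDhat{\uz}{\uo}$ itself. The constraint $\uz,\uzp\in\Sigma_{\usm}$ is thus what lets us avoid the "diagonal" terms $\fDij[i][j]$ with $j<\rigidity$ (which are not small in general), isolating only the genuinely small off-diagonal contributions.
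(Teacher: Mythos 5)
Your proposal is correct and follows essentially the same route as the paper's proof: both expand $\sym{k}{\uzp\cc\uo}-\sym{k}{\uz\cc\uo}$ via the convolution identity, use the constraint $\uz,\uzp\in\Sigma_{\usm}$ (through Newton's identities) to restrict the difference to indices $j\ge\rigidity$, expand the squared norm, reduce the cross terms and the term $\langle\vecsymb{n}{0}{(\uz\cc\uo)},\vecsym{n}{j}{\uo}\rangle$ to the quantities $\fDij{\uz}{\uo}$ by re-expanding $\vecsym{n}{0}{(\uz\cc\uo)}=\sum_i\sym{i}{\uz}\vecsym{n}{i}{\uo}$, and absorb the uniform bounds $|\sym{i}{\uz}|\le\binom{m}{i}r_0^i$ into the constant. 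Your vector notation and the explicit remark reconciling the Hermitian and bilinear pairings are cosmetic differences only.
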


We prove Proposition~\ref{p:sym} in Section~\ref{ss:pf:p:sym}.
In the next proposition we bound the terms
$\sym{k-i}{\uo}$ which appears in the 
expression of $\fDij{\uz}{\uo}$ in \eqref{eq:dijdef}. 
We introduce a notation first.

\begin{notation}\label{n:factorial}
For non-negative integer $i$ and positive integer $j$ let
\[
\ffa{i}{j} \coloneqq \fbr[\Big]{(i+1)\cdots(i+j)}^{\alpha/2}\;.
\]
When $i=0$, we have $\pi_\alpha[0;j]=(j!)^{\alpha/2}$. Note that, for fixed $j$, 
\[
\lim_{i\to\infty} \frac{\pi_\alpha[i;j]}{i^{j\alpha/2}} = 1\;.
\]
Thus $\sum_{i=1}^{\infty} 1/{\fbr*{\ffa{i}{j}}^2}$ and 
$\sum_{i=1}^{\infty} 1/{\fbr[\big]{i^{j\alpha/2}\pi_\alpha[i:j]}}$
are finite if and only if $j\geq\rigidity$.
\end{notation}

\begin{proposition}[Expansion of the elementary symmetric functions of the outside roots]
Let $m$ be a positive integer. 
There exist positive constants 
$\usekdm{471}$, 
$\usekdm{472}$,
$\usekdm{473}$,
such that the following holds.
Consider the ensemble $\alphagafn$ 
for some $n\geq m$.  
There exists random variables $(\g_r)_{r=0}^{n}$  
such that the following hold: 

\begin{enumerate}[(i),font=\normalfont\bfseries,topsep=0pt]
\item On the event $\omn$ (ref. Definition~\ref{d:om:ommn}) 
we have for all $m\leq l\leq n$
\[ 
\frac{\sym{n-l}{\uo}}
{\fbr*{\binom{n}{n-l}(n-l)!}^{\alpha/2}}
=\frac{1}{\xi_n}
\sum_{r=0}^{n-l}
(-1)^{n-l-r}
\cdot\g_r
\cdot\frac{\xi_{l+r}}{\ffa{l}{r}}\;,
\]
where $\uo$ is the vector consisting of points of the 
ensemble $\alphagafnout$ taken in uniform random order.

\item On the event $\omn$ we have 
$|\g_r|\leq \fbr[\big]{\usekdm{471}}^r$ 
for all $0\leq r\leq n$.

\item For $m\leq l\leq n-\lev$ define 
\[
\tail{l}{n}
\coloneqq \sum_{r=\lev}^{n-l}
(-1)^{r}
\frac{\g_r\cdot\xi_{l+r}}{\ffa{l}{r}}\;. 
\]
For $l>n-\rigidity$ let $\tail{l}{n}\coloneqq0$.
Therefore, for all $m\leq l\leq n$
\[ 
\frac{\sym{n-l}{\uo}}
{\fbr*{\binom{n}{n-l}(n-l)!}^{\alpha/2}}
=\frac{(-1)^{(n-l)}}{\xi_n}\tbr*{
\sum_{r=0}^{(n-l)\wedge(\lev-1)}
(-1)^{r}
\frac{\g_r\cdot\xi_{l+r}}{\ffa{l}{r}} + \tail{l}{n}}\;.
\]
\item If $ l \geq \usekdm{472} $, then
\[
\fbr*{\Exp\fbr*{\abs*{\tail{l}{n}}^2\indicator{\Omega_n^m}}}^{1/2}
\leq
\frac{\fbr[\big]{ \usekdm{473} }^{\lev}}{l^{\lev\alpha/2}}\;.
\]
\end{enumerate}
\label{p:rec}
\end{proposition}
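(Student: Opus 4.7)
The plan is to prove Proposition~\ref{p:rec} by exploiting the polynomial factorization of $\falphagafn$ on the event $\omn$, solving a backward linear recursion satisfied by the elementary symmetric functions of the outside roots, and then invoking a generating-function identity to pin down the $l$-independent coefficients $g_r$. To begin, on $\omn$ one has the factorization
\[
\falphagafn(z)\;=\;\frac{\xi_n}{(n!)^{\alpha/2}}\,\prod_{i=1}^m (z-z_i)\,\prod_{j=1}^{n-m}(z-o_j)\;,
\]
with $\uz=(z_1,\ldots,z_m)$ and $\uo=(o_1,\ldots,o_{n-m})$ the inside and outside roots. Comparing the coefficient of $z^l$ on both sides (the left hand side yields $\xi_l/(l!)^{\alpha/2}$) and dividing through by $\bigl(\binom{n}{n-l}(n-l)!\bigr)^{\alpha/2}=(n!/l!)^{\alpha/2}$, I would isolate the term $\sym{n-l}{\uo}$ to obtain the backward recursion
\[
T_l\;=\;(-1)^{n-l}\,\frac{\xi_l}{\xi_n}\,-\,\sum_{i=1}^{\min(m,n-l)}\frac{\sym{i}{\uz}}{\ffa{l}{i}}\,T_{l+i}\;,\qquad T_l\,\coloneqq\,\frac{\sym{n-l}{\uo}}{\fbr*{\binom{n}{n-l}(n-l)!}^{\alpha/2}}\;,
\]
with terminal value $T_n=1$.

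Next I would solve the recursion by writing $T_l=\xi_n^{-1}\sum_{r=0}^{n-l} a_{l,r}\,\xi_{l+r}$ and matching coefficients: this gives $a_{l,0}=(-1)^{n-l}$ and $a_{l,r}=-\sum_{i=1}^{\min(m,r)}\sym{i}{\uz}\,a_{l+i,r-i}/\ffa{l}{i}$ for $r\geq 1$. The ansatz $a_{l,r}=(-1)^{n-l-r}g_r/\ffa{l}{r}$ transforms this, via the Pochhammer telescoping identity $\ffa{l}{i}\,\ffa{l+i}{r-i}=\ffa{l}{r}$, into an $l$-free recurrence: $g_0=1$ and $g_r=-\sum_{i=1}^{\min(m,r)}\sym{i}{\uz}\,g_{r-i}$. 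This recurrence is exactly the convolution identity $\fbr[\big]{\sum_r g_r Z^r}\fbr[\big]{\sum_i \sym{i}{\uz}Z^i}=1$, whence
\[
\sum_{r\geq 0} g_r Z^r \;=\; \prod_{k=1}^m (1+z_k Z)^{-1}\qquad\text{and therefore}\qquad g_r \;=\; (-1)^r h_r(\uz)\;,
\]
where $h_r$ is the complete homogeneous symmetric polynomial of degree $r$. This settles parts (i) and (iii), the latter being simply the split of the sum at $r=\rigidity$. Part (ii) is then immediate: on $\omn$ each $|z_i|\leq r_0$, so $|g_r|=|h_r(\uz)|\leq \binom{r+m-1}{m-1}r_0^r\leq \fbr[\big]{\usekdm{471}}^{r}$ for a suitable choice of $\usekdm{471}$.

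For part (iv), the deterministic bound $|g_r|\leq \fbr[\big]{\usekdm{471}}^{r}$ on $\omn$ yields
\[
\abs*{\tail{l}{n}}\cdot\bigindicator{\omn}\;\leq\;\sum_{r\geq\rigidity}\frac{\fbr[\big]{\usekdm{471}}^{r}\,\abs{\xi_{l+r}}}{\ffa{l}{r}}\;,
\]
and hence $\Exp\fbr[\big]{\abs{\tail{l}{n}}^2\indicator{\omn}}\leq\fbr[\bigg]{\sum_{r\geq\rigidity} \fbr[\big]{\usekdm{471}}^{r}/\ffa{l}{r}}^2$ after applying Cauchy-Schwarz to the cross moments $\Exp\abs{\xi_{l+r}\,\xi_{l+r'}}\leq 1$. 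Choosing $\usekdm{472}$ large enough that $\fbr[\big]{\usekdm{471}}/l^{\alpha/2}\leq 1/2$ for all $l\geq\usekdm{472}$, the series becomes geometric and is dominated by twice its leading term, giving the bound $\fbr[\big]{\usekdm{473}}^{\rigidity}/l^{\rigidity\alpha/2}$ for a suitable $\usekdm{473}$. The principal obstacle is the combinatorial observation that the coefficients $g_r$ are $l$-independent and depend only on the inside roots; this rests entirely on the telescoping identity $\ffa{l}{i}\ffa{l+i}{r-i}=\ffa{l}{r}$. Once this is secured, the identification of $g_r$ with $(-1)^r h_r(\uz)$ and the $L^2$ tail estimate both proceed routinely.
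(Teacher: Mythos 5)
Your proposal is correct, and it reaches the same identity as the paper's proof --- both arguments ultimately rest on the relation $\sym{k}{\uo}=\sum_{r}\g_r\,\sym{k-r}{\uz\cc\uo}$ coming from dividing the generating function of $\uz\cc\uo$ by that of $\uz$, combined with the root--coefficient relation $\sym{n-l}{\uz\cc\uo}/(n!/l!)^{\alpha/2}=(-1)^{n-l}\xi_l/\xi_n$ and the telescoping identity $\ffa{l}{i}\,\ffa{l+i}{r-i}=\ffa{l}{r}$. Where you genuinely diverge is in the treatment of the coefficients $\g_r$ for part (ii): the paper encodes the recurrence $\g_r=-\sum_{i=1}^{\min(m,r)}\sym{i}{\uz}\,\g_{r-i}$ in a companion matrix $A$ whose eigenvalues are (negatives of) the inside roots and bounds $\g_r$ through the growth of the entries of $A^r$, whereas you solve the convolution identity explicitly to get $\g_r=(-1)^r h_r(\uz)$ with $h_r$ the complete homogeneous symmetric polynomial, and then bound $|h_r(\uz)|\le\binom{r+m-1}{m-1}r_0^r\le\fbr[\big]{\usekdm{471}}^r$ by counting monomials. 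Your route buys something concrete here: the paper's assertion that entries of $A^r$ are $o(\rho'^{\,r})$ hides a constant that a priori depends on the matrix, i.e.\ on the configuration $\uz$, so uniformity over $\uz\in\Dset^m$ requires an extra (compactness or norm-equivalence) remark that the paper elides; your explicit formula gives the uniform geometric bound for free and makes the dependence of $\usekdm{471}$ on $m$ and $r_0$ transparent. Parts (iii) and (iv) coincide with the paper's argument (split the sum at $r=\lev$, use the deterministic bound on $\g_r\indicator{\omn}$ together with $\ffa{l}{r}\ge l^{r\alpha/2}$ and $\Exp|\xi|^2=1$, and sum the resulting geometric series for $l\ge\usekdm{472}$); your use of Cauchy--Schwarz on the cross terms and the paper's use of the $L^2$ triangle inequality are interchangeable.
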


We prove Proposition~\ref{p:rec} in Section~\ref{ss:pf:p:rec}.

\section{The limiting procedure for \texorpdfstring{$\alpha$}{alpha}-GAF zeroes}\label{sec:limitinggaf}

In Sections~\ref{sec:finitegaf} and \ref{sec:limitinggaf} our objective is to lay down the steps of proving Theorem~\ref{thm:alphagaf-main} using Theorem~\ref{thm:abscont} with $\XX=\alphagaf$ and $\xn=\alphagafn$. In Section~\ref{sec:finitegaf} we have obtained the necessary estimates for $\alphagafn$. In this section we will lay down the steps to verify the conditions of Theorem~\ref{thm:abscont}, and obtain \eqref{eq:target:GAF} from \eqref{targeteq}. The detailed proofs are in later sections.

\subsection{Overview}

For $\Upsilon\in\cS{\Dset^\c}$, the probability measure $\PROB_{\Dset|\Dset^\c}[\cdot|\Upsilon]$ is supported on the set of configurations $\Set{\sigma\in\cS{\Dset}\given\asvector{\sigma}\subset\Sigma_{\usm}}$, where $m=\numberofpoints(\Upsilon)$ and $\us=\constraintexternal(\Upsilon)$. Therefore, for $A\in\Borel{\cS{\Dset}}$ satisfying $\asvector{A}\subset\Sigma_{\usm}$
\[
\PROB_{\Dset|\Dset^\c}[A|\Upsilon] = \frho{\asvector{A}}{\Upsilon}\;. 
\]
Let $(\Phi,\Psi)$ be the potentials given by $\Phi\equiv 0$ and $\Psi(z) = -2\log|z|$. 
Consider the corresponding probability kernel $\nu_{\Phi,\Psi,\Dset}$.
For $\Upsilon\in\cS{\Dset^\c}$, 
the probability measure $\nu_{\Phi,\Psi,\Dset}\kernel{\cdot}{\Upsilon}$ 
is also supported on $\Set{\sigma\in\cS{\Dset}\given\asvector{\sigma}\subset\Sigma_{\usm}}$, 
where $m=\numberofpoints(\Upsilon)$ and $\us=\constraintexternal\fbr[\big]{\Upsilon}$.
For $A\in\Borel{\cS{\Dset}}$ satisfying $\asvector{A}\subset\Sigma_{\usm}$ we have from \eqref{eq:def-kernel}
\begin{equation}\label{eq:candidate}
\nu_{\Phi,\Psi,\Dset}\kernel{A}{\Upsilon}
\coloneqq
\frac{\displaystyle\int_{\asvector{A}}\abs*{\van{\uz}}^2\deri\leb(\uz)}
    {\displaystyle\int_{\Sigma_{\usm}}\abs*{\van{\uz}}^2\deri\leb(\uz)}\;,
\end{equation}
where $\leb$ is the Lebesgue measure on $\Sigma_{\usm}$. 
If we can establish the generalized Gibbs property,
    then we get that
    for all $m\in\NN$ and all $A\in\BorelInsidempts$
    \[
    \mm_1\fbr[\big]{\alphagafout}\nu_{\phi,\psi,\Dset}\kernel{A}{\alphagafout} 
    \;\le\; 
    \PROB_{\Dset|\Dset^\c}[A|\alphagafout] 
    \;\le\; 
    \MM_1\fbr[\big]{\alphagafout}\nu_{\phi,\psi,\Dset}\kernel{A}{\alphagafout}\;,  
    \]
for some measurable functions $\mm_1,\MM_1:\cS{\Dset^\c}\to(0,\infty)$. 
Then, by the Radon-Nikodym Theorem we will have
    \[
    \mm_2\fbr[\big]{\alphagafout}\abs*{\van{\uz}}^2  
    \;\le\; 
    \frac{\deri\frho{\cdot}{\alphagafout}}{\deri\el}(\uz) 
    \;\le\; 
    \MM_2\fbr[\big]{\alphagafout}\abs*{\van{\uz}}^2\;,
    \]
for some measurable functions $\mm_2,\MM_2:\cS{\Dset^\c}\to(0,\infty)$. Thus we get \eqref{eq:target2}. Due to Theorem~\ref{thm:abscont}, it is enough to verify the conditions of approximate Gibbsianity for $\xn=\alphagafn$ and $\XX=\alphagaf$ with respect to the probability kernel $\nu_{\Phi,\Psi,\Dset}$. As in the case of the Ginibre ensemble, here also we treat the $\alphagafout$ configurations separately depending on $m=\numberofpoints\fbr[\big]{\alphagafout}$. Since the $\alpha$-GAF zero ensemble is rigid up to order $\rigidity$, the cases $m=0,\dots,\rigidity-1$ are trivial {($\Sigma_{\usm}$ is either the empty set or a singleton.)} So we consider $m\geq\rigidity$. From now on we fix a value of $m\geq\rigidity$.

\paragraph{The three step procedure:} 
We will verify the conditions of approximate Gibbsianity in three steps. 
\begin{itemize}[label=$\blacktriangleright$,leftmargin=*,topsep=0pt]
\item In the first step we define the events $\fbr*{\oj}_{j\geq 1}$ and
also define, for each $j\geq 1$, the sequence $(n_k)_{k=1}^\infty$
and the sequence of events $\fbr*{\onkj}_{k=1}^\infty$. We also verify that 
\begin{itemize}[label=-, leftmargin=*]
\item $\oj\subset\Omega(j+1)$ for all $j$;
\item $\oj\subset\om$;
\item $\oj$ is measurable with respect to $\alphagafout$;
\item $\onkj$ is measurable with respect to $\alphagafnkout$;
\item $\onkj\subset\omnk$;
\item $\oj\subset\liminf_{k\to\infty}\onkj$ a.s.
\end{itemize}

\item In the second step we verify condition~\ref{c:ag:2}-\ref{c:ag:2:5} of the definition of approximate Gibbsianity holds with respect to the probability kernel $\nu_{\Phi,\Psi,\Dset}$ and some $\seq*{\correctionterm{j}{k}}_{j\geq 1,k\geq 1}$ which satisfies condition~\ref{c:ag:2}-\ref{c:ag:2:1}.

\item In the third step we verify $\Prob[\big]{\,\om\,\setminus\,\oj\,}\to 0$.
\end{itemize}

\paragraph{Parameters $M$, $\thetagap$, $\delta$:}
In each step we will define various events and prove relations between them.
These events will involve three parameters $M>3$, $\thetagap\in(0,1)$, and
$\delta\in(0,1)$.
The parameter $M$ is to be thought of as large.
The parameter $\thetagap$ is to be thought of as small.
The parameter $\delta$ is also to be thought of as small. 
Our analysis will proceed by first fixing $M$ and $\thetagap$, 
and then taking $\delta$ sufficiently small. 

\paragraph{Convention of naming events:}
We will name the events in the form $\OMG{\thetagap}{n}{i:j}{M}{\delta}$.
The subscript $n$ indicates that the event is measurable with respect to
the roots of $\falphagafn$. 
Here the subscript $n$ can be either a positive integer or $\infty$.
The superscript $i:j$ indicates that the event is the 
$j$'th event with the subscript $n$ defined in the $i$'th step.
In step 3 we define the event $\OMG{\thetagap}{\infty}{3:2}{M}{\bullet}$
which doesn't depend on the parameter $\delta$. 
The sign $\bullet$ indicates that the parameter $\delta$ is not involved.

\paragraph{$\delta^3$-negligible events and $\delta^3$-inclusion:}
Let us now introduce the notion of
``$\delta^3$-negligible event''
and ``$\delta^3$-inclusion,'' 
where $\delta\in(0,1)$. The 
reason for using $\delta^3$ 
as opposed to just $\delta$ 
will be clear from Theorem~\ref{t:s:2}.

\begin{definition}[$\delta^3$-negligible event]
For $\delta\in(0,1)$, we say an event $\mathcal{E}$ is $\delta^3$-negligible if there exists a constant $C>0$ 
such that 
\[
\PROB\fbr*{\mathcal{E}}\leq C\delta^3.
\]
The constant $C$ can involve $m$ and $\Dset$. 
\end{definition}

\begin{definition}[$\delta^3$-inclusion]
For $\delta\in(0,1)$ we say an event $\mathcal{E}_1$ 
is $\delta^3$-included in an
event $\mathcal{E}_2$ if there exists a $\delta^3$-negligible event $\mathcal{E}_3$ such that
\[
\mathcal{E}_1\setminus\mathcal{E}_3\subset
\mathcal{E}_2.
\]
We denote this relation by
\[
\begin{tikzcd}
\mathcal{E}_1\arrow[r, "\mathcal{E}_3"] & \mathcal{E}_2\;.
\end{tikzcd}
\]
If $\mathcal{E}_2$ is obtained by excluding the $\delta^3$-negligible event $\mathcal{E}_3$ from 
$\mathcal{E}_1$ i.e., $\mathcal{E}_1\setminus\mathcal{E}_3=
\mathcal{E}_2$, then we denote this relation by
\[
\begin{tikzcd}
\mathcal{E}_1\arrow[r, "\mathcal{E}_3", "\supset"'] & \mathcal{E}_2\;.
\end{tikzcd}
\]
\end{definition}

\subsubsection{Outline of Step 1}

In the first step our objectives are: 
to define the sequence of events 
$\fbr*{\oj}_{j=1}^\infty$; 
for each $j\geq 1$ define the sequence $(n_k)_{k=1}^\infty$; 
for each $j$ and $k$ define the event $\onkj$; 
show the conditions on these events as stated in
Theorem~\ref{abscond} are satisfied;
show that $\oj\subset\liminf_{k\to\infty}\onkj$. 
\begin{enumerate}[label = Step~1.\arabic*:, 
font = \normalfont\bfseries, 
labelsep = \parindent,
itemindent = 3\parindent,
leftmargin = 3\parindent,
topsep = 0pt
]
\item In Definition~\ref{d:i:1:1}
we introduce the event
$\OMG{\thetagap}{\infty}{1:1}{M}{\delta}$. 
This event is measurable with respect to the 
roots of $\falphagaf$ in the annulus 
\begin{equation}\label{eq:def-annulus}
\mathrm{An}(\delta)
\coloneqq\Set[\Big]{z\in\CC\given r_0<|z|<\Rdel}\;,
\end{equation}
where we define $\Rdel$ appropriately in Section~\ref{ss:Rdkd}.
Recall that $r_0$ is the radius of $\Dset$. 
Hence $\OMG{\thetagap}{\infty}{1:1}{M}{\delta}$
is measurable with respect to the roots of $\falphagaf$
outside $\Dset$.

\item In Definition~\ref{d:n:1:1} 
we define the event 
$\OMG{\thetagap}{\nd}{1:1}{M}{\delta}$.
This event is measurable with respect to the roots of  
$\fand$ in the annulus $\mathrm{An}(\delta)$, 
where $\nd$ is defined in Section~\ref{subsec:parameters}.
Thus, this event is measurable with 
respect to the roots of $\fand$ outside $\Dset$.
The parameter $\nd$ depends $\thetagap$,
but in the notation we suppress the dependence on $\thetagap$
since our analysis proceeds by first 
fixing $M$ and $\thetagap$ and then 
taking $\delta$ sufficiently small.

\item In Proposition~\ref{p:1:1} 
we show that for all $M>3$, $\thetagap\in(0,1)$ and for $\delta>0$ 
sufficiently small depending on $M$, the event 
$\OMG{\thetagap}{\infty}{1:1}{M}{\delta}$ 
is $\delta^3$-included in the event $\OMG{\thetagap}{\nd}{1:1}{M}{\delta}$.
So there is a $\delta^3$-negligible event $\eneg{1:1}$ such that 
\[
\OMG{\thetagap}{\infty}{1:1}{M}{\delta}
\,\setminus\,
\eneg{1:1}
\;\subset\;
\OMG{\thetagap}{\nd}{1:1}{M}{\delta}\;.
\]
Note that, the event $\eneg{1:1}$ depends 
on $\theta$. But as in the case of $\nd$, 
we refrain from writing it explicitly. 
Although the event $\eneg{1:1}$ doesn't 
depend on $M$, the relationship between the 
events as above holds for $\delta$ small
enough depending on $M$.

\item Let $(M_j)_{j=1}^\infty$ be a sequence of positive real numbers
which is monotonically increasing and diverges to $\infty$. 
Let $(\thetagap_j)_{j=1}^\infty$ be a sequence of positive real numbers 
which is monotonically decreasing and converges to $0$. 
Let $(\delta_k)_{k=1}^\infty$ be a sequence of positive real numbers 
which is monotonically decreasing, converges to $0$, and 
\begin{equation}\label{eq:deltakchoice}
    \sum_{k}\delta_k<\infty\;.
\end{equation}
For each $j\geq 1$ let 
\begin{equation}\label{eq:omegajdef} 
\oj
\;\coloneqq\;
\liminf_{k \to \infty}\, 
\OMG{\thetagap_j}{\infty}{1:1}{M_j}{\delta_k}\;.
\end{equation}
For each $j\geq 1$, let $(n_k)_{k=1}^\infty$ be the sequence
$(\ndk)_{k=1}^\infty$ where we construct $\nd$ using $\theta_j$.
Finally, for each $j\geq 1$ and $k\geq 1$ let 
\begin{equation}\label{eq:omegajdef2} 
\onkj
\;\coloneqq\;
\OMG{\thetagap_j}{\ndk}{1:1}{M_j}{\delta_k}\;.
\end{equation}
Recall from the statement of Theorem~\ref{thm:abscont} that 
we need $\oj$ to be measurable with respect to $\alphagafout$.
This is true because for each $j$ and $k$ the event 
$\OMG{\thetagap_j}{\infty}{1:1}{M_j}{\delta_k}$
is measurable with respect to $\alphagafout$.
Similarly, we need $\onkj$ to be measurable with respect to 
$\alphagafnkout$. 
This is true because for each $j$ and $k$ the event 
$\OMG{\thetagap_j}{\ndk}{1:1}{M_j}{\delta_k}$
is measurable with respect to $\alphagafnkout$.
Each $\oj$ is a subset of $\om$. 
And each $\onkj$ is a subset of $\omnk$.

\item In Theorem~\ref{t:s:1} we show that $\oj\subset\liminf_{k\to\infty}\onkj$ a.s. 
\end{enumerate}

The relationship between the events defined in this step is
\[
\begin{tikzcd}
\OMG{\thetagap}{\infty}{1:1}{M}{\delta}
\arrow{r}{\eneg{1:1}}
&\OMG{\thetagap}{\nd}{1:1}{M}{\delta}\;.
\end{tikzcd}
\]

\subsubsection{Outline of Step 2}

In this step our objective is to show that 
\eqref{abscond} holds 
for some $\fbr*{\vartheta(j,k)}_{j\geq 1, k\geq 1}$
satisfying $\lim_{k\to\infty}\vartheta(j,k)=0$ for all $j\geq 1$.
\begin{enumerate}[
label = Step~2.\arabic*:, 
font = \normalfont\bfseries, 
labelsep = \parindent,
itemindent = 3\parindent,
leftmargin = 3\parindent,
topsep = 0pt
]

\item 
In Definition~\ref{d:n:2:1}
we introduce an event 
$\OMG{\thetagap}{\nd}{2:1}{M}{\delta}$. 
This event is measurable with respect to the roots 
of $\fand$ outside $\Dset$.

\item 
In Proposition~\ref{p:2:1}
we show that
there exists a $\delta^3$-negligible event 
$\eneg{2:1}$ such that 
\[
\OMG{\thetagap}{\nd}{1:1}{M}{\delta}
\,\setminus\,
\eneg{2:1}
\;\subset\;
\OMG{\thetagap}{\nd}{2:1}{M}{\delta}\;. \quad\text{Thus }\;
\begin{tikzcd}
\OMG{\thetagap}{\nd}{1:1}{M}{\delta}
\arrow[r, "\eneg{2:1}"]&
\OMG{\thetagap}{\nd}{2:1}{M}{\delta}\;.
\end{tikzcd}
\]
Both events 
$\OMG{\thetagap}{\nd}{1:1}{M}{\delta}$
and 
$\OMG{\thetagap}{\nd}{2:1}{M}{\delta}$
are measurable with respect to the roots of $\fand$ outside $\Dset$.
But the crucial difference between the events is that
the event $\OMG{\thetagap}{\nd}{1:1}{M}{\delta}$
involves the roots of $\fand$ in the annulus $\mathrm{An}(\delta)$ (defined in \eqref{eq:def-annulus}), 
whereas the event $\OMG{\thetagap}{\nd}{2:1}{M}{\delta}$
involves all the roots of $\fand$ outside $\Dset$.

\item 
In Proposition~\ref{p:2:2} we define the event 
$\OMG{\thetagap}{\nd}{2:2}{M}{\delta}$
as a subset of the event 
$\OMG{\thetagap}{\nd}{2:1}{M}{\delta}$
obtained by removing a $\delta^3$-negligible set $\eneg{2:2}$:
\[
\OMG{\thetagap}{\nd}{2:2}{M}{\delta}
\;=\;
\OMG{\thetagap}{\nd}{2:1}{M}{\delta}
\,\setminus\,\eneg{2:2}\;.
 \quad\text{Thus }\;
\begin{tikzcd}
\OMG{\thetagap}{\nd}{2:2}{M}{\delta}
\arrow[r, "\eneg{2:2}", "\supset"'] &
\OMG{\thetagap}{\nd}{2:1}{M}{\delta}\;.
\end{tikzcd}
\]

\item In Proposition~\ref{p:2:3} we define the event
$\OMG{\thetagap}{\nd}{2:3}{M}{\delta}$ 
as a subset of the event 
$\OMG{\thetagap}{\nd}{2:2}{M}{\delta}$ 
obtained by removing a $\delta^3$-negligible set $\eneg{2:3}$:
\[
\OMG{\thetagap}{\nd}{2:3}{M}{\delta}
\;=\;
\OMG{\thetagap}{\nd}{2:2}{M}{\delta}
\,\setminus\,
\eneg{2:3}\;.
 \quad\text{Thus }\;
\begin{tikzcd}
\OMG{\thetagap}{\nd}{2:2}{M}{\delta}
\arrow[r, "\eneg{2:3}", "\supset"'] &
\OMG{\thetagap}{\nd}{2:3}{M}{\delta}\;.
\end{tikzcd}
\]

\item In Proposition~\ref{p:2:4} we define 
$\OMG{\thetagap}{\nd}{2:4}{M}{\delta}$ 
as a subset of 
$\OMG{\thetagap}{\nd}{2:3}{M}{\delta}$ 
obtained by removing a $\delta^3$-negligible set $\eneg{2:4}$:
\[
\OMG{\thetagap}{\nd}{2:4}{M}{\delta}
\;=\;
\OMG{\thetagap}{\nd}{2:3}{M}{\delta}
\,\setminus\,
\eneg{2:4}\;.
 \quad\text{Thus }\;
\begin{tikzcd}
\OMG{\thetagap}{\nd}{2:3}{M}{\delta}
\arrow[r, "\eneg{2:4}", "\supset"'] &
\OMG{\thetagap}{\nd}{2:4}{M}{\delta}\;.
\end{tikzcd}
\]

\item 
In Proposition~\ref{p:2:5} we show that on the event $\OMG{\thetagap}{\nd}{2:4}{M}{\delta}$ the following holds.
Consider the conditional density $\drho{\uo\,,\,\us}{\nd}{\cdot}$, 
where $\uo$ is a vector consisting of the points of $\alphagafndout$  
and $\us$ is the vector of power sums up to order $\rigidity-1$ of the points of $\alphagafndin$. 
We show, in \eqref{eq:Bound-target-alternative}, that for every $A\in\Borel{\Sigma_{\usm}}$ the ratio of 
\[
\int_A \drho{\uo\,,\,\us}{\nd}{\uz}\deri\leb(\uz) \quad\mbox{and}\quad 
\frac{\displaystyle\int_A \abs{\van{\uzp}}^2 \deri\el_{\usm}(\uzp) }{\displaystyle\int\abs{\van{\uzp}}^2 \deri\el_{\usm}(\uzp)}
\]
is bounded by positive functions of $M$ and $\thetagap$, uniformly in $\nd$ and $A$. 

\item In Theorem~\ref{t:s:2} we establish 
that \eqref{abscond} is satisfied.
\end{enumerate}

The relation between the events can be summarized as follows:
\[
\begin{tikzcd}
\OMG{\thetagap}{\nd}{1:1}{M}{\delta} \arrow[r, "\eneg{2:1}"] & 
\OMG{\thetagap}{\nd}{2:1}{M}{\delta} \arrow[r, "\eneg{2:2}", "\supset"'] & 
\OMG{\thetagap}{\nd}{2:2}{M}{\delta} \arrow[r, "\eneg{2:3}", "\supset"'] & 
\OMG{\thetagap}{\nd}{2:3}{M}{\delta} \arrow[r, "\eneg{2:4}", "\supset"'] & 
\OMG{\thetagap}{\nd}{2:4}{M}{\delta}\,. 
\end{tikzcd}
\]

\subsubsection{Outline of Step 3}

In this step our objective is to show that 
$\PROB\fbr[\big]{\om\setminus\oj}\to 0$.

\begin{enumerate}[label = Step~3.\arabic*:, 
font = \normalfont\bfseries, 
labelsep = \parindent,
itemindent = 3\parindent,
leftmargin = 3\parindent,
topsep = 0pt
]
\item In Definition~\ref{d:n:3:1} we define the event $\OMG{\thetagap}{\nd}{3:1}{M}{\delta}$. 

\item In Proposition~\ref{p:3:1} 
we show that there is a $\delta^3$-negligible 
event $\eneg{3:1}$ such that
\[
\OMG{\thetagap}{\nd}{3:1}{M}{\delta}
\,\setminus\,\eneg{3:1}
\;\subset\;
\OMG{\thetagap}{\infty}{1:1}{M}{\delta}\;.
 \quad\text{Thus }\;
\begin{tikzcd}
\OMG{\thetagap}{\nd}{3:1}{M}{\delta}
\arrow[r, "\eneg{3:1}"]&
\OMG{\thetagap}{\infty}{1:1}{M}{\delta}\;.
\end{tikzcd}
\]

\item In Definition~\ref{d:i:3:1} we define the event $\OMG{\thetagap}{\infty}{3:1}{M}{\delta}$. 

\item In Proposition~\ref{p:3:2} 
we show that there is a
$\delta^3$-negligible event $\eneg{3:2}$ such that
\[
\OMG{\thetagap}{\infty}{3:1}{M}{\delta}
\,\setminus\,
\eneg{3:2}
\;\subset\;
\OMG{\thetagap}{\nd}{3:1}{M}{\delta}\;.
 \quad\text{Thus }\;
\begin{tikzcd}
\OMG{\thetagap}{\infty}{3:1}{M}{\delta}
\arrow[r, "\eneg{3:2}"]&
\OMG{\thetagap}{\nd}{3:1}{M}{\delta}\;.
\end{tikzcd}
\]

\item In Definition~\ref{d:i:3:2} we define the event $\OMG{\thetagap}{\infty}{3:2}{M}{\bullet}$.

\item In Proposition~\ref{p:3:3}
we show that there is a 
$\delta^3$-negligible event $\eneg{3:3}$ such that
\[
\OMG{\thetagap}{\infty}{3:2}{M}{\bullet}
\,\setminus\,
\eneg{3:3}
\;\subset\;
\OMG{\thetagap}{\infty}{3:1}{M}{\delta}\;.
 \quad\text{Thus }\;
\begin{tikzcd}
\OMG{\thetagap}{\infty}{3:2}{M}{\bullet}
\arrow[r, "\eneg{3:3}"]&
\OMG{\thetagap}{\infty}{3:1}{M}{\delta}\;.
\end{tikzcd}
\]
\item In Theorem~\ref{t:s:3} we show that 
$\PROB\fbr[\big]{\om\setminus\oj}\to 0$.
\end{enumerate}

The relationship between the events in 
this step is
\[
\begin{tikzcd}
\OMG{\thetagap}{\infty}{3:2}{M}{\bullet} 
\arrow[r, "\eneg{3:2}"]& 
\OMG{\thetagap}{\infty}{3:1}{M}{\delta}
\arrow[r, "\eneg{3:1}"]& 
\OMG{\thetagap}{\nd}{3:1}{M}{\delta}
\arrow[r, "\eneg{3:4}"]& 
\OMG{\thetagap}{\infty}{1:1}{M}{\delta}\;.
\end{tikzcd}
\]

The relationship between the events defined in all the steps is:
\[
\begin{tikzcd}
&\OMG{\thetagap}{\infty}{3:2}{M}{\bullet} \arrow[r, "\eneg{3:3}"]& 
\OMG{\thetagap}{\infty}{3:1}{M}{\delta} \arrow[r, "\eneg{3:2}"]& 
\OMG{\thetagap}{\nd}{3:1}{M}{\delta} \arrow[r, "\eneg{3:1}"]& 
\OMG{\thetagap}{\infty}{1:1}{M}{\delta} \arrow[d, "\eneg{1:1}"]\\
\OMG{\thetagap}{\nd}{2:4}{M}{\delta}&
\OMG{\thetagap}{\nd}{2:3}{M}{\delta}\arrow[l, "\eneg{2:4}"', "\subset"]& 
\OMG{\thetagap}{\nd}{2:2}{M}{\delta}\arrow[l, "\eneg{2:3}"', "\subset"]& 
\OMG{\thetagap}{\nd}{2:1}{M}{\delta}\arrow[l, "\eneg{2:2}"', "\subset"]& 
\OMG{\thetagap}{\nd}{1:1}{M}{\delta}\arrow[l, "\eneg{2:1}"']\;.
\end{tikzcd}
\]

\subsection{The parameters}\label{subsec:parameters}

\subsubsection{The function \texorpdfstring{$h$}{h}} 

Let $h:\NN\to\NN$ be a function such that 
\[
\lim_{L\to\infty}\sum_{l=L}^{L+h(L)}\frac{1}{l^{\alpha/8}}=0\;.
\]
For the sake of definiteness, we take
\[
h(L)\coloneqq\lfloor L^{\alpha/16}\rfloor\;.
\]

\subsubsection{The constant \texorpdfstring{$C_0$}{Czero}} 

Let $C_0$ be a positive real constant such that for $l\geq 1$ 
\begin{equation}\label{d:czero}
\sum_{r=1}^\infty 
\frac{1}{l^{r\alpha/8}\;(r!)^{\alpha/4}}\leq\frac{C_0}{l^{\alpha/8}}\;.
\end{equation}

\subsubsection{The function \texorpdfstring{$\Ld$}{Ld}} 

Let $\Ld$ be an integer satisfying the following conditions: 
\begin{enumerate}[(i),font=\normalfont\bfseries,topsep=0pt]
\item \label{c:Ld:1}
\[
\sum_{l=\Ld}^{\Ld+\hLd}
\frac{1}{l^{\alpha/8}}<\frac{\delta^3}{C_0}\;;
\]

\item\label{c:Ld:2} 
\[
\Prob*{\,
\frac{1}{2}<\frac{1}{\hLd}
\sum_{l=\Ld}^{\Ld+\hLd}
\abs*{\xi_l}^2 <\frac{3}{2}\,}>1-\delta^3\;. 
\]
\end{enumerate}

Clearly, $\Ld\to\infty$ as $\delta\to 0$.

\subsubsection{The function \texorpdfstring{$\Cdel$}{Cdel}}

Let $\Cdel$ be a positive integer such that the following conditions hold:
\begin{enumerate}[(i),font=\normalfont\bfseries,topsep=0pt]
\item\label{c:Cd:1} $\Cdel>\Ld+\hLd$.

\item\label{c:Cd:2} 
For all 
$0\leq i\leq m$, 
$\rigidity\leq j\leq m$, 
$0\leq r_1\leq m$, 
$0\leq r_2\leq m$:
\begin{align*}
& \sum_{l=\Cdel+1}^{\infty} 
\frac{1}{\fbr[\big]{\ffa{l}{i+r_1}\cdot
\ffa{l}{j+r_2}}^{2}}
\leq\delta^5\;,\\
& \sum_{l=\Cdel+1}^{\infty} 
\frac{1}
{l^{\lev\alpha/2}\cdot\ffa{l}{i}
\cdot\ffa{l}{j+r_2}}
\leq\delta^4\;,\\
& \sum_{l=\Cdel+1}^{\infty} 
\frac{1}{l^{\lev\alpha/2}\cdot\ffa{l}{j}\cdot\ffa{l}{i+r_1}}
\leq\delta^4\;,\\
& \sum_{l=\Cdel+1}^{\infty} 
\frac{1}{l^{\lev\alpha}\cdot\ffa{l}{i}\cdot\ffa{l}{j}}
\leq\delta^4\;.
\end{align*}
\end{enumerate}

Clearly, $\Cdel\to\infty$ as $\delta\to 0$.

\subsubsection{The function \texorpdfstring{$\gdel$}{gdel}} 

Let $\gdel$ be such that the following conditions hold: 
\begin{enumerate}[(i),font=\normalfont\bfseries,topsep=0pt]
\item\label{c:gd:1} for $1\leq l\leq \Cdel$ and
for $\bbF$ either $\falphagafn$ for some $n\in\NN$ or $\falphagaf$ 
\[
\PROB\fbr*{\sum_{j=0}^{\infty}
\absinv{\bbF}{l}{j}{j+1}>\gdel-1
}<\frac{\delta^3}{\Cdel}\;,
\]
\item\label{c:gd:2} for $\bbF$ either $\falphagafn$ for some $n\in\NN$ or $\falphagaf$ 
\[
\PROB\fbr*{\invabs{\bbF}{\moment}{0}{\infty}>\gdel-1}<\delta^3\;.
\]
\end{enumerate}
The uniform $L^1$ bounds obtained in Propositions~\ref{p:inv:3} and
\ref{p:inv:4} implies the existence of $\gdel$ satisfying these
conditions.

\subsubsection{The functions \texorpdfstring{$\QFIop$}{QFIop} and \texorpdfstring{$\QFZop$}{QFZop}}

For $0\leq i\leq m$ and $\rigidity\leq j\leq m$ define the function
$\QFIop:\CC^{\Cdel}\to\RR$ such that
\begin{equation}\label{eq:fijdef}
\QFI{x_1,\ldots,x_{\Cdel}}
\coloneqq
\abs*{\sum_{l=\mij}^{\Cdel}\overline{x_{l+i-m}} \cdot x_{l+j-m} \cdot \fbr*{l!}^{\alpha}}\;.
\end{equation}
Also define $\QFZop:\CC^{\Cdel}\to\RR$ such that
\begin{equation}\label{eq:fzdef}
\QFZ{x_1,\cdots,x_{\Cdel}}\coloneqq
\frac{1}{\hLd} 
\sum_{l=\Ld}^{\Ld+\hLd}
\;\abs*{x_{l-m}}^2 \cdot \fbr*{l!}^{\alpha}\;.
\end{equation}

\subsubsection{The function \texorpdfstring{$\edel$}{edel}} 

\begin{notation}%\label{d:newton}
\text{}
\begin{enumerate}[(i),font=\normalfont\bfseries,topsep=0pt] 
\item Let $P_k$ be the $k$-th Newton 
polynomial expressing the elementary 
symmetric function of order $k$ in 
terms of power-sums of order 
$1,2,\cdots,k$. That is, for 
complex numbers 
$x_1,\cdots,x_n$, let 
$s_k\coloneqq\sum_{j=1}^n x_j^k$
and $e_k\coloneqq\sum_{i_1<\cdots<i_k} 
x_{i_1}x_{i_2}\cdots x_{i_k}$ for $1\leq k\leq n$.
Then $e_k=P_k(s_1,\cdots,s_k)$ for $1\leq k\leq n$.
As a polynomial of $k$ variables, $P_k$ does not depend on $n$. 

\item For a vector $\uw=(\mathrm{w}_1,\dots,\mathrm{w}_N)\in\CC^N$ 
and for $1\leq k\leq N$, let $\uw^{\tn{k}}$ denote the 
vector $(w_1,\dots,w_k)$. 

\item Let $\linf{\cdot}$ denote the $L^\infty$ norm.
\end{enumerate}
\end{notation}
For $\delta\in(0,1)$ let $\edel\in(0,1)$ be such that the following conditions hold:
\begin{enumerate}[(i),font=\normalfont\bfseries,topsep=0pt]
\item\label{c:ed:1}
for all $\uw_1,\uw_2\in\CC^{\Cdel}$ satisfying 
$\linf{\uw_1-\uw_2}<\edel$ and $\linf{\uw_1}\vee\linf{\uw_2}<\gdel$ 
we have: 
\[
\max_{0\leq i\leq m}\;
\max_{\rigidity\leq j\leq m}\;
\abs*{\QFI{\fbr*{ P_k\fbr*{\uw_1^{\tn{k}}}}_{k=1}^{\Cdel}}
-\QFI{\fbr*{ P_k\fbr*{\uw_2^{\tn{k}}}}_{k=1}^{\Cdel}}}
\;<\; \delta\;;
\]
\item\label{c:ed:2} for all $\uw_1,\uw_2\in\CC^{\Cdel}$ satisfying 
$\linf{\uw_1-\uw_2}<\edel$ and $\linf{\uw_1}\vee\linf{\uw_2}<\gdel$ 
we have: 
\[
\abs*{\QFZ{\fbr*{P_k\fbr*{\uw_1^{\tn{k}}}}_{k=1}^{\Cdel}}
-\QFZ{\fbr*{P_k\fbr*{\uw_2^{\tn{k}}}}_{k=1}^{\Cdel}}}\;<\;\delta\;.
\]
\end{enumerate}

\subsubsection{The functions \texorpdfstring{$\Rdel$}{Rdel} and \texorpdfstring{$\kdel$}{kdel}}\label{ss:Rdkd}

Let $\kdel$ a positive integer such that: 
\begin{enumerate}[(i),font=\normalfont\bfseries,topsep=0pt] 
    \item\label{c:kd:1} 
    \[
    \sum_{l\geq 1} \exp\fbr*{-\kdel\frac{l}{2}}
    \;\leq\;
    \delta^3\;;
    \]
    \item\label{c:kd:2}
    \[
    \max_{1\leq l\leq \Cdel}\;
    \fbr[\big]{ \usec{p43} \vee \usec{p44} }
    \cdot l^2 \cdot \frac{1}{r_0^l} \cdot
    \exp\fbr*{-\kdel\frac{l}{2}}
    \;\leq\;
    \delta\wedge\edel\;,
    \]
    where $\usec{p43}$ is the constant introduced in 
    Proposition~\ref{p:inv:3}, 
    and $\usec{p44}$ is the constant introduced in 
    Proposition~\ref{p:inv:4}.
\end{enumerate}
Let $\Rdel \coloneqq r_0\; \exp\fbr*{\kdel}$, where recall that $r_0$ is the radius of $\Dset$.

\subsubsection{The function \texorpdfstring{$\nd$}{nd}} 

\begin{notation}\label{n:product}
Let $\InProd$ and $\InProdNd$ be the product of the roots inside $\Dset$ of $\falphagaf$ and $\fand$ respectively.
\end{notation}

Let $\nd$ be such that the following hold:

\begin{enumerate}[(i),font=\normalfont\bfseries,topsep=0pt]
\item\label{c:nd:1}
In the complement of a $\delta^3$-negligible event we have 
\begin{gather*}
\max_{l\leq\Cdel}\;\max_{0\leq k\leq \kdel}\;
\abs[\Bigg]{\inv{\falphagaf}{l}{k}{k+1}-
\inv{\fand}{l}{k}{k+1}}
\;<\;
\frac{\delta\wedge\edel}{\kdel+1}\;;\\
\max_{0\leq k\leq \kdel}\;
\abs[\Bigg]{\invabs{\falphagaf}{\moment}{k}{k+1}
-\invabs{\fand}{\moment}{k}{k+1}}
\;<\;
\frac{\delta\wedge\edel}{\kdel+1}\;.
\end{gather*}

\item\label{c:nd:2}
In the complement of a $\delta^3$-negligible event we have  
\[
\frac{1}{2}
\;\leq\; 
\frac{1}{\nd}\sum_{l=1}^{\nd}\abs*{\xi_l}^2
\;<\;
\frac{3}{2}\;.
\]

\item\label{c:nd:3}In the complement of a $\delta^3$-negligible event we have 
\[
\frac{1}{\abs*{\xi_0}^2}\abs*{\;\InProdSqNd-\InProdSq\;}\;<\;\delta\;.
\]

\item\label{c:nd:4}
\[
\Prob[\Big]{\,\OGm{\infty}{\thetagap}\,\symmdiff\,\OGm{\nd}{\thetagap}\,}\;<\;\delta^3\;.
\]
\end{enumerate}

\subsubsection{The vectors \texorpdfstring{$\zzv$}{zzv}, \texorpdfstring{$\zpv$}{zpv}, \texorpdfstring{$\zppv$}{zppv}}

For $1\leq l\leq \Cdel$ define
\begin{align*}
\zzl &\;\coloneqq\; P_l\fbr*{\inv{\falphagaf}{1}{0}{\kdel}
,\dots,\inv{\falphagaf}{l}{0}{\kdel}}\;,\\
\zpl &\;\coloneqq\; P_l\fbr*{\inv{\fand}{1}{0}{\kdel}\;,
\dots,\inv{\fand}{l}{0}{\kdel}},\\
\zppl &\;\coloneqq\; P_l\fbr*{\inv{\fand}{1}{0}{\infty},
\dots,\inv{\fand}{l}{0}{\infty}}\;.
\end{align*}

Let 
\[
\zzv\;\coloneqq\;\fbr[\Big]{\zzl}_{l=1}^{\Cdel}\,,\quad
\zpv\;\coloneqq\;\fbr[\Big]{\zpl}_{l=1}^{\Cdel}\,,\quad
\zppv\;\coloneqq\;\fbr[\Big]{\zppl}_{l=1}^{\Cdel}\;.
\]

\begin{remark} 
Suppose $m$ is the number of roots of $\fand$ inside $\Dset$
and $\uo$ is a vector of roots of $\fand$ outside $\Dset$.
Then for $m\leq l\leq\Cdel$ we have
\[
\zpp{l-m}=\frac{\sym{\nd-l}{\uo}}{\sym{\nd-m}{\uo}}\;.
\]
Therefore 
\begin{equation}\label{eq:tailavg}
\QFZ{\zppv} = 
\frac{1}{\hLd}
\sum_{l=\Ld}^{\Ld+\hLd}
\abs*{\frac{\sym{\nd-l}{\uo}}{\sym{\nd-m}{\uo}}}^2 \fbr[\big]{l!}^{\alpha}\;.
\end{equation}
Here we use condition~\ref{c:Cd:1} defining $\Cdel$.
Similarly, for $0\leq i\leq m$ and $\rigidity\leq j\leq m$ we have
\begin{equation}\label{eq:qij}
\QFI{\zppv}=
\abs*{\sum_{k=\nd-\Cdel}^{\nd} 
\frac{\overline{\sym{k-i}{\uo}}}{\overline{\sym{\nd-m}{\uo}}}
\frac{\sym{k-j}{\uo}}{\sym{\nd-m}{\uo}}\fbr[\big]{(\nd-k)!}^{\alpha}}
\;.
\end{equation}    
\end{remark}

\subsection{The details of the three step procedure}

\subsubsection{Step 1}

First, we define the event $\OMG{\thetagap}{\infty}{1:1}{M}{\delta}$.

\begin{definition}[The event $\OMG{\thetagap}{\infty}{1:1}{M}{\delta}$]

For $M>3$, $\thetagap\in(0,1)$, and $\delta\in(0,1)$, let $\OMG{\thetagap}{\infty}{1:1}{M}{\delta}$ 
be the event in which 
all of the following conditions are satisfied:
    \begin{enumerate}[(i),font=\normalfont\bfseries,topsep=0pt]
        \item\label{c:i:1:1:1}
            $ \OGm{\infty}{\thetagap} $ occurs\;;
        \item\label{c:i:1:1:2}
            $ \max_{1\leq s<\moment} \absinv{\falphagaf}{s}{0}{\kdel} \le  M $\;;
        \item\label{c:i:1:1:3} 
            $ \invabs{\falphagaf}{\moment}{0}{\kdel} \le  M $\;;
        \item\label{c:i:1:1:4}
            $ \max_{0\leq i\leq m}\; 
            \max_{\rigidity\leq j\leq m}\; 
            \QFI{\zzv} \leq  M $\;;
        \item\label{c:i:1:1:5} 
            $  M^{-1} \le \QFZ{\zzv} \le  M $\;.
    \end{enumerate}
    \label{d:i:1:1}
\end{definition}

Next, we define the event $\OMG{\thetagap}{\nd}{1:1}{M}{\delta}$.

\begin{definition}[The event $\OMG{\thetagap}{\nd}{1:1}{M}{\delta}$]

For $M>3$, $\thetagap\in(0,1)$, and $\delta\in(0,1)$, let $\OMG{\thetagap}{\nd}{1:1}{M}{\delta}$ 
be the event in which 
all of the following conditions are satisfied: 
    \begin{enumerate}[(i), 
font = \normalfont\bfseries,topsep=0pt]
        \item\label{c:n:1:1:1}
            $ \OGm{\nd}{\theta} $ occurs\;;
        \item\label{c:n:1:1:2}
            $ \max_{1\leq s<\moment} 
            \absinv{\fand}{s}{0}{\kdel} \le  M+1 $\;; 
        \item\label{c:n:1:1:3} 
            $ \invabs{\fand}{\moment}{0}{\kdel} \le  M+1 $\;;
        \item\label{c:n:1:1:4}
            $ \max_{0\leq i\leq m}\;
            \max_{\rigidity\leq j\leq m}\; \QFI{\zpv} \le  M+1 $\;;
        \item\label{c:n:1:1:5}
            $ ( M+1)^{-1} \le \QFZ{\zpv} \le  M+1 $\;.
    \end{enumerate}
    \label{d:n:1:1}
\end{definition}

Next, we show that 
\textit{the event $\OMG{\thetagap}{\infty}{1:1}{M}{\delta}$
is $\delta^3$-included in 
the event $\OMG{\thetagap}{\nd}{1:1}{M}{\delta}$.}

\begin{proposition}

For $M>3$, $\thetagap\in(0,1)$, and $\delta\in(0,1)$ 
sufficiently small depending on $M$,
there exists an event $\eneg{1:1}$ such that
$\pneg{1:1}\leq\cneg{1:1}\delta^3$ 
for some constant $\cneg{1:1}>0$ and
\[
\OMG{\thetagap}{\infty}{1:1}{M}{\delta}
\,\setminus\,
\eneg{1:1}
\;\subset\;
\OMG{\thetagap}{\nd}{1:1}{M}{\delta}\;,
 \quad\text{i.e.},\;
\begin{tikzcd}
\OMG{\thetagap}{\infty}{1:1}{M}{\delta}
\arrow{r}{\eneg{1:1}}
&\OMG{\thetagap}{\nd}{1:1}{M}{\delta}\;.
\end{tikzcd}
\]
\label{p:1:1}
\end{proposition}

We prove Proposition~\ref{p:1:1} in Section~\ref{ss:pf:p:1:1}. 
Next, we show that 
$\oj\subset\liminf_{k\to\infty}\onkj$, 
which is the end goal of Step 1.

\begin{theorem}
For each $j\geq 1$, $\oj\subset\liminf_{k\to\infty}\onkj$ a.s.
\label{t:s:1}
\end{theorem}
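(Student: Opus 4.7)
The plan is to combine Proposition~\ref{p:1:1}, applied termwise along the sequence $(\delta_k)_{k\geq 1}$, with a Borel--Cantelli argument made possible by the summability condition \eqref{eq:deltakchoice}. Concretely, fix $j\geq 1$. For each $k\geq 1$, apply Proposition~\ref{p:1:1} with parameters $M=M_j$, $\thetagap=\thetagap_j$, and $\delta=\delta_k$ (taking $k$ large enough so that $\delta_k$ is sufficiently small for the proposition's hypothesis). This produces an event $\eneg{1:1}(k,j)$ with
\[
\Prob{\eneg{1:1}(k,j)} \le \cneg{1:1}\,\delta_k^3,
\]
and satisfying
\[
\OMG{\thetagap_j}{\infty}{1:1}{M_j}{\delta_k}\,\setminus\,\eneg{1:1}(k,j)
\;\subset\;
\OMG{\thetagap_j}{\ndk}{1:1}{M_j}{\delta_k}
\;=\;
\onkj.
\]

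Since $\delta_k\in(0,1)$ and $\sum_k\delta_k<\infty$ by \eqref{eq:deltakchoice}, we have $\sum_k \delta_k^3 \le \sum_k \delta_k < \infty$, so
\[
\sum_{k=1}^\infty \Prob{\eneg{1:1}(k,j)} \;<\;\infty.
\]
By the Borel--Cantelli lemma, the event $\mathcal{N}_j \coloneqq \limsup_{k\to\infty} \eneg{1:1}(k,j)$ is a $\PROB$-null set. On its complement, only finitely many of the $\eneg{1:1}(k,j)$ occur, so for all sufficiently large $k$ we have the inclusion $\OMG{\thetagap_j}{\infty}{1:1}{M_j}{\delta_k}\subset\onkj$.

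Combining this with the definition $\oj = \liminf_{k\to\infty}\OMG{\thetagap_j}{\infty}{1:1}{M_j}{\delta_k}$ from \eqref{eq:omegajdef}: for any $\omega\in\oj\setminus\mathcal{N}_j$, there exists $K_1(\omega)$ such that $\omega\in\OMG{\thetagap_j}{\infty}{1:1}{M_j}{\delta_k}$ for all $k\ge K_1(\omega)$, and $K_2(\omega)$ such that $\omega\notin\eneg{1:1}(k,j)$ for all $k\ge K_2(\omega)$. For $k\ge K_1\vee K_2$ we therefore have $\omega\in\onkj$, showing $\omega\in\liminf_{k\to\infty}\onkj$. Hence $\oj\subset\liminf_{k\to\infty}\onkj$ modulo the null set $\mathcal{N}_j$, which is exactly what is needed by virtue of Remark~\ref{rmabs-1} (redefining $\oj$ on a null set does not affect the hypotheses of Theorem~\ref{thm:abscont}).

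There is no serious obstacle beyond verifying that the $\delta^3$-quantitative bound in Proposition~\ref{p:1:1} is summable along $(\delta_k)$; this is the sole reason for requiring $\sum_k \delta_k<\infty$ in \eqref{eq:deltakchoice} (any summability of $\delta_k^3$ would do, but the stronger hypothesis will be needed elsewhere in Step~2 and Step~3). The argument is essentially a ``diagonal'' Borel--Cantelli combination of the single-scale comparison in Proposition~\ref{p:1:1}.
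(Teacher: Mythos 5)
Your proof is correct and follows essentially the same route as the paper: apply Proposition~\ref{p:1:1} at each scale $\delta_k$ with $M=M_j$, $\thetagap=\thetagap_j$, use the summability condition \eqref{eq:deltakchoice} to make the exceptional probabilities $O(\delta_k^3)$ summable, and conclude via the first Borel--Cantelli lemma. Your explicit treatment of the residual null set $\mathcal{N}_j$ and the appeal to Remark~\ref{rmabs-1} is a fine point that the paper leaves implicit, but it does not alter the argument.
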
 

We prove Theorem~\ref{t:s:1} 
in Section~\ref{ss:pf:t:s:1}.
This concludes Step 1.

\subsubsection{Step 2}

First, we define the event $\OMG{\thetagap}{\nd}{2:1}{M}{\delta}$.

\begin{definition}[The event $\OMG{\thetagap}{\nd}{2:1}{M}{\delta}$]

For $M>3$, $\thetagap\in(0,1)$, and $\delta\in(0,1)$, let 
$\OMG{\thetagap}{\nd}{2:1}{M}{\delta}$ 
be the event in which 
all of the following conditions are satisfied: 
\begin{enumerate}[(i),font=\normalfont\bfseries,topsep=0pt]
    \item\label{c:n:2:1:1} 
        $ \OGm{\nd}{\thetagap} $ occurs\;;
    \item\label{c:n:2:1:2} 
        $ \max_{1\leq s<\moment} 
        \absinv{\fand}{s}{0}{\infty} \le  M+2 $\;;
    \item\label{c:n:2:1:3}
        $ \invabs{\fand}{\moment}{0}{\infty} \le  M+2 $\;;
    \item\label{c:n:2:1:4} 
        $ \max_{0\le i\le m}\; 
          \max_{\rigidity \le j\le m}\; 
          \QFI{\zppv} \le  M+2 $\;;
    \item\label{c:n:2:1:5} 
        $ (M+2)^{-1} \le \QFZ{\zppv} \le  M+2 $\;.
\end{enumerate}
\label{d:n:2:1}
\end{definition}

Next, we show that \textit{the event
$\OMG{\thetagap}{\nd}{1:1}{M}{\delta}$ 
is $\delta^3$-included in
$\OMG{\thetagap}{\nd}{2:1}{M}{\delta}$.}

\begin{proposition}

For all $M>3$, $\thetagap\in(0,1)$, 
and $\delta>0$ sufficiently small depending on $M$, 
there exists an event $\eneg{2:1}$ such that 
$\pneg{2:1}<\cneg{2:1}\delta^3$ 
for some constant $\cneg{2:1}>0$ and
\[
\OMG{\thetagap}{\nd}{1:1}{M}{\delta}
\,\setminus\,
\eneg{2:1}
\;\subset\;
\OMG{\thetagap}{\nd}{2:1}{M}{\delta}\;,
 \quad\text{i.e.},\;
\begin{tikzcd}
\OMG{\thetagap}{\nd}{1:1}{M}{\delta}
\arrow[r,"\eneg{2:1}"] & 
\OMG{\thetagap}{\nd}{2:1}{M}{\delta}\;.
\end{tikzcd}
\]
\label{p:2:1}\end{proposition}

We prove this Section~\ref{ss:pf:p:2:1}. Next, we define the event 
$\OMG{\thetagap}{\nd}{2:2}{M}{\delta}$ 
by subtracting a $\delta^3$-negligible event from
$\OMG{\thetagap}{\nd}{2:1}{M}{\delta}$.
Thus, \textit{$\OMG{\thetagap}{\nd}{2:1}{M}{\delta}$
is $\delta^3$-included in 
$\OMG{\thetagap}{\nd}{2:2}{M}{\delta}$}
by construction.

\begin{proposition}

For all $M>3$, $\thetagap\in(0,1)$, and $\delta\in(0,1)$,
there exists an event $\eneg{2:2}$ 
such that $\pneg{2:2}<\cneg{2:2}\delta^3$ for some constant $\cneg{2:2}>0$
and on $\omnd\setminus\eneg{2:2}$ we have the following:
Let $\uz$ be a vector consisting of roots of 
$\fand$ inside $\Dset$.
Let $\uo$ be a vector consisting of roots of 
$\fand$ outside $\Dset$.
Then 
\[
\frac{1}{2} 
\abs*{\sym{k}{\uz\cc\uo}} 
\le \abs*{\sym{k}{\uo}} 
\le \frac{3}{2} 
\abs*{\sym{k}{\uz\cc\uo}}
\]
for all $\nd-\Ld-\hLd\le k\le\nd-\Ld$. 
Let 
\[
\OMG{\thetagap}{\nd}{2:2}{ M}{\delta}
\;\coloneqq\;
\OMG{\thetagap}{\nd}{2:1}{ M}{\delta}
\,\setminus\,\eneg{2:2}\;,
 \quad\text{i.e.},\;
\begin{tikzcd}
\OMG{\thetagap}{\nd}{2:1}{M}{\delta} 
\arrow[ r , " \eneg{2:2} ", " \supset"' ] & \OMG{\thetagap}{\nd}{2:2}{M}{\delta}\;.
\end{tikzcd}
\]
\label{p:2:2}\end{proposition}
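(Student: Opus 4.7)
The plan is to build $\eneg{2:2}$ as the union of two explicit Gaussian-tail events, and to verify the comparison on $\omnd\setminus\eneg{2:2}$ through a direct polynomial identity for the coefficients of $\fand$. Writing $\uw \coloneqq \uz\cc\uo$ for the vector of all $\nd$ roots and matching the coefficient of $z^k$ in $\fand(z) = (\xi_\nd/(\nd!)^{\alpha/2})\prod_{w\in\uw}(z-w)$ against its power-series form yields the closed form
\[
\sym{k}{\uw} \;=\; (-1)^k\,\frac{\xi_{\nd-k}}{\xi_\nd}\,\bigg(\frac{\nd!}{(\nd-k)!}\bigg)^{\alpha/2}.
\]
Factoring the generating function as $E_\uw(x) = E_\uz(x)\,E_\uo(x)$ and expanding $\prod_{z\in\uz}(1+zx)^{-1} = \sum_{r\ge 0}(-1)^r h_r(\uz)x^r$, with $h_r$ the complete homogeneous symmetric polynomials, gives the classical identity $\sym{k}{\uo} = \sum_{r=0}^{k}(-1)^r h_r(\uz)\sym{k-r}{\uw}$. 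Setting $l \coloneqq \nd-k \in [\Ld,\Ld+\hLd]$ and substituting, all signs cancel and one obtains
\[
\frac{\sym{k}{\uo}}{\sym{k}{\uw}} \;=\; 1 \;+\; \sum_{r=1}^{k} h_r(\uz)\,\frac{\xi_{l+r}}{\xi_l\,\ffa{l}{r}}.
\]
It thus suffices to force the modulus of the tail sum to be at most $1/2$ uniformly in $l$, outside a $\delta^3$-negligible event.

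I would choose $\eneg{2:2} \coloneqq \eneg{\mathrm{low}}\cup\eneg{\mathrm{high}}$, where $\eneg{\mathrm{low}} \coloneqq \bigcup_{l\in[\Ld,\Ld+\hLd]}\{|\xi_l|<\eta(\delta)\}$ with $\eta(\delta)\coloneqq \delta^{3/2}/\sqrt{\hLd+1}$, and $\eneg{\mathrm{high}} \coloneqq \bigcup_{s\ge 1}\{|\xi_s|>R_s\}$ with $R_s\coloneqq A\sqrt{\log s + \log(1/\delta^3)}$ for a sufficiently large absolute constant $A$. The Rayleigh tail $\PROB(|\xi_l|<\eta)\le \eta^2$ and a union bound give $\PROB(\eneg{\mathrm{low}})\le(\hLd+1)\eta(\delta)^2 = \delta^3$, while the complex-Gaussian tail $\PROB(|\xi_s|>R_s)=e^{-R_s^2}=s^{-A^2}\delta^{3A^2}$ summed over $s\ge 1$ gives $\PROB(\eneg{\mathrm{high}})\le \delta^{3A^2}\sum_{s\ge 1}s^{-A^2}$, which is at most a constant times $\delta^3$ as soon as $A\ge 2$. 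Hence $\PROB(\eneg{2:2})\le \cneg{2:2}\delta^3$ for a suitable $\cneg{2:2}>0$.

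On $\omnd\setminus\eneg{2:2}$, the bound $|h_r(\uz)|\le \binom{m+r-1}{r}r_0^r$ (valid because $\uz\in\Dset^m$ with $\Dset$ of radius $r_0$) combined with $\ffa{l}{r}\ge l^{r\alpha/2}$ yields
\[
\bigg|\sum_{r=1}^{k} h_r(\uz)\,\frac{\xi_{l+r}}{\xi_l\ffa{l}{r}}\bigg|
\;\le\; \frac{1}{\eta(\delta)}\sum_{r\ge 1}\binom{m+r-1}{r} r_0^r\, R_{l+r}\, l^{-r\alpha/2}.
\]
Since $R_{l+r}$ grows only logarithmically in $r$ while $(r_0/l^{\alpha/2})^r$ decays geometrically once $l^{\alpha/2}\ge 2r_0$, the sum is dominated by its $r=1$ term and bounded by $C(m,r_0)\,R_{l+1}\sqrt{\hLd+1}/(l^{\alpha/2}\delta^{3/2})$. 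The defining conditions on $\Ld$ in Section~\ref{subsec:parameters} force $\Ld\ge(C_0/\delta^3)^{16/\alpha}$, while $\hLd=\Ld^{\alpha/16}$ and $R_{l+1}=O(\sqrt{\log(1/\delta)})$, so this bound tends to $0$ at a polynomial rate in $\delta$ and is at most $1/2$ for $\delta\le \delta_0(m,r_0)$. For larger $\delta$ the conclusion is vacuous, since one may take $\cneg{2:2}$ large enough that $\cneg{2:2}\delta^3\ge 1$.

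The principal technical obstacle is reconciling the necessarily small pointwise lower bound $\eta(\delta)\sim\delta^{3/2}/\sqrt{\hLd}$ on the denominator $|\xi_l|$ against the growth of the many numerators $|\xi_{l+r}|$; confirming that $\eta(\delta)\cdot\Ld^{\alpha/2}\to\infty$ is forced by the polynomial-in-$1/\delta$ growth of $\Ld$ from Section~\ref{subsec:parameters}, but does require careful exponent bookkeeping to check that the dominant $r=1$ term alone suffices and that the higher-$r$ terms are absorbed by the geometric factor.
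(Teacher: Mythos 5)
Your proof is correct in substance, and its algebraic half coincides with the paper's: the identity $\sym{k}{\uo}=\sum_{r}(-1)^r h_r(\uz)\sym{k-r}{\uz\cc\uo}$ is exactly the paper's expansion from Proposition~\ref{p:rec}, whose coefficients $\g_r$ are precisely $(-1)^r h_r(\uz)$ (the paper derives the bound $\abs{\g_r}\le C^r$ by a transfer-matrix argument rather than the explicit estimate $\abs{h_r(\uz)}\le\binom{m+r-1}{r}r_0^r$, but these are interchangeable). Where you genuinely diverge is the probabilistic step. The paper controls the ratios $\abs{\xi_{l+r}/\xi_l}$ directly, using that $\abs{\xi_{l+r}}^2/\abs{\xi_l}^2$ is a ratio of independent exponentials and hence $\Prob{\abs{\xi_{l+r}/\xi_l}>x}\le x^{-2}$; this gives a per-$l$ failure probability of order $C_0 l^{-\alpha/8}$, and condition~(i) on $\Ld$ is engineered so that the union bound over the window $[\Ld,\Ld+\hLd]$ is exactly $\delta^3$. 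You instead decouple numerator and denominator, paying $\delta^3$ for a uniform lower bound $\abs{\xi_l}\ge\eta(\delta)$ on the window and $O(\delta^3)$ for uniform logarithmic upper bounds on all $\abs{\xi_s}$, and then rely on the fact that condition~(i) forces $\Ld$ to grow polynomially in $1/\delta$ to absorb the resulting loss of $\eta(\delta)^{-1}\sqrt{\hLd}$. Both routes are valid; the paper's is sharper and uses condition~(i) for exactly what it was designed for, while yours is more elementary but requires the careful exponent bookkeeping you flag at the end.

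One caveat on that bookkeeping: your argument needs $\Ld^{15\alpha/32}\gg\delta^{-3/2}$ up to logarithms, and you justify this by the lower bound on $\Ld$ forced by condition~(i). That forcing gives $\Ld\gtrsim\delta^{-48/\alpha}$ only when $\hLd\le\Ld$ (i.e.\ $\alpha\le 16$); for larger $\alpha$ the forced exponent degrades to $384/(\alpha(\alpha-8))$ and the margin eventually disappears for very large $\alpha$. This regime is degenerate for the theorem (rigidity level $1$ already for $\alpha>1$) and is trivially repaired by imposing a further harmless lower bound in the choice of $\Ld$, but as written your proof inherits a dependence on the minimal admissible $\Ld$ that the paper's ratio-tail argument avoids entirely.
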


We prove this in Section~\ref{ss:pf:p:2:2}.
Next, we define $\OMG{\thetagap}{\nd}{2:3}{M}{\delta}$
by subtracting a $\delta^3$-negligible event from
$\OMG{\thetagap}{\nd}{2:2}{M}{\delta}$. Thus
\textit{$\OMG{\thetagap}{\nd}{2:2}{M}{\delta}$
is $\delta^3$-included in 
$\OMG{\thetagap}{\nd}{2:3}{M}{\delta}$}
by construction.

\begin{proposition}

For all $M>3$, $\thetagap\in(0,1)$, and $\delta\in(0,1)$,
there exists an event $\eneg{2:3}$ such that 
$\pneg{2:3}<\cneg{2:3}\delta^3$ 
for some constant $\cneg{2:3}>0$ and 
on $\Omega_{\nd}^m\setminus\eneg{2:3}$ we have
\begin{equation}\label{c:p:2:3:1}
\frac{ 8 }{ 27 }\cdot\QFZ{ \zppv }
\,\le\, 
\frac{ \InProdSqNd }{ \abs*{\xi_0}^2 }
\,\le\, 
8\cdot\QFZ{ \zppv }\;. 
\end{equation} 
Moreover, on the event 
$\OMG{\thetagap}{\nd}{2:2}{ M}{\delta}\setminus\eneg{2:3}$
we have 
\begin{equation}\label{c:p:2:3:2}
\frac{1}{\nd}
    \sum_{ k=0 }^{ \nd }
    \abs*{ \frac{ \sym{k}{\uz\cc\uo} }{ \sym{\nd-m}{\uo} } }^2
    \fbr[\big]{ (\nd-k)! }^{\alpha} 
    \;\geq\; 
    \frac{4}{27} \cdot \frac{1}{M+2}\;, 
\end{equation}
where $\uz$ is a vector consisting of the roots of $\fand$ inside 
$\Dset$, and $\uo$ is a vector consisting of the roots of $\fand$ outside
$\Dset$. Let 
\[
\OMG{\thetagap}{\nd}{2:3}{M}{\delta} 
\;\coloneqq\; 
\OMG{\thetagap}{\nd}{2:2}{M}{\delta} 
\,\setminus\,
\eneg{2:3}\;,
 \quad\text{i.e.},\;
\begin{tikzcd}
    \OMG{\thetagap}{\nd}{2:2}{M}{\delta} 
    \arrow[ r, "\eneg{2:3}", " \supset "'] & \OMG{\thetagap}{\nd}{2:3}{M}{\delta}\;.
\end{tikzcd}
\]
\label{p:2:3}
\end{proposition}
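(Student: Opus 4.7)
The strategy is to build the bad event $\eneg{2:3}$ by adjoining two elementary variance estimates to $\eneg{2:2}$, and then to reduce both displayed inequalities to a single Vieta identity for $\fand$ combined with the symmetric-function replacement already supplied by Proposition~\ref{p:2:2}. Concretely, set $\eneg{2:3}$ to be the union of $\eneg{2:2}$, the exceptional event of condition~\ref{c:Ld:2} on which $\tfrac{1}{\hLd}\sum_{l=\Ld}^{\Ld+\hLd}|\xi_l|^2\notin(\tfrac{1}{2},\tfrac{3}{2})$, and the exceptional event of condition~\ref{c:nd:2} on which $\tfrac{1}{\nd}\sum_{l=1}^{\nd}|\xi_l|^2\notin(\tfrac{1}{2},\tfrac{3}{2})$. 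Each contributes at most a constant multiple of $\delta^3$, giving $\pneg{2:3}\le\cneg{2:3}\delta^3$.

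\textbf{The Vieta identity.} On $\omnd$, equating the two expressions $\fand(z)=\sum_{k=0}^{\nd}\tfrac{\xi_k}{(k!)^{\alpha/2}}z^k=\tfrac{\xi_\nd}{(\nd!)^{\alpha/2}}\prod_i(z-z_i)$ gives, for every $0\le k\le\nd$,
\[
|\sym{k}{\uz\cc\uo}|^{2}\,((\nd-k)!)^{\alpha}\;=\;\frac{|\xi_{\nd-k}|^{2}\,(\nd!)^{\alpha}}{|\xi_\nd|^{2}}\,.
\]
The case $k=\nd$ together with $\sym{\nd}{\uz\cc\uo}=\InProdNd\cdot\sym{\nd-m}{\uo}$ yields the pivotal identity
$\InProdSqNd/|\xi_0|^{2}=(\nd!)^{\alpha}/\bigl(|\xi_\nd|^{2}|\sym{\nd-m}{\uo}|^{2}\bigr)$, which is the bridge between the product of inside roots and the tail symmetric functions appearing in $\QFZ{\zppv}$.

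\textbf{Proof of \eqref{c:p:2:3:1}.} Substituting the Vieta identity at index $\nd-l$ into each summand of \eqref{eq:tailavg}, after first using Proposition~\ref{p:2:2} to replace $|\sym{\nd-l}{\uo}|^{2}$ by $|\sym{\nd-l}{\uz\cc\uo}|^{2}$ (valid since $l\in[\Ld,\Ld+\hLd]$ corresponds to the admissible range $\nd-l\in[\nd-\Ld-\hLd,\nd-\Ld]$ for Proposition~\ref{p:2:2}), one arrives at
\[
\QFZ{\zppv}\;\in\;\Bigl[\tfrac{1}{4},\tfrac{9}{4}\Bigr]\cdot\frac{\InProdSqNd}{|\xi_0|^{2}}\cdot\frac{1}{\hLd}\sum_{l=\Ld}^{\Ld+\hLd}|\xi_l|^{2}\;.
\]
The window $[\tfrac{1}{4},\tfrac{9}{4}]$ comes from squaring the $[\tfrac{1}{2},\tfrac{3}{2}]$ bounds of Proposition~\ref{p:2:2}, and the third factor lies in $[\tfrac{1}{2},\tfrac{3}{2}]$ by condition~\ref{c:Ld:2} off $\eneg{2:3}$. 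Multiplying the extreme constants gives $\tfrac{1}{8}\cdot\InProdSqNd/|\xi_0|^{2}\le\QFZ{\zppv}\le\tfrac{27}{8}\cdot\InProdSqNd/|\xi_0|^{2}$, which is exactly \eqref{c:p:2:3:1}.

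\textbf{Proof of \eqref{c:p:2:3:2}.} Summing the Vieta identity over $0\le k\le\nd$ collapses the left-hand side of \eqref{c:p:2:3:2} to
\[
\frac{\InProdSqNd}{|\xi_0|^{2}}\cdot\frac{1}{\nd}\sum_{j=0}^{\nd}|\xi_j|^{2}\,.
\]
Off $\eneg{2:3}$, condition~\ref{c:nd:2} forces the second factor to be at least $\tfrac{1}{2}$; \eqref{c:p:2:3:1} (just established) yields $\InProdSqNd/|\xi_0|^{2}\ge\tfrac{8}{27}\QFZ{\zppv}$; and on $\OMG{\thetagap}{\nd}{2:2}{M}{\delta}\subset\OMG{\thetagap}{\nd}{2:1}{M}{\delta}$ condition~\ref{c:n:2:1:5} of Definition~\ref{d:n:2:1} delivers $\QFZ{\zppv}\ge(M+2)^{-1}$. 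Multiplying the three bounds produces $\tfrac{8}{27}\cdot\tfrac{1}{M+2}\cdot\tfrac{1}{2}=\tfrac{4}{27(M+2)}$, as required. The main obstacle is purely bookkeeping: verifying that the index range of Proposition~\ref{p:2:2} exactly matches the summation window in \eqref{eq:tailavg}, and that all three components of $\eneg{2:3}$ are $\delta^3$-negligible uniformly in $\nd$; once these are in place the proof reduces to algebraic manipulation.
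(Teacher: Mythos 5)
Your proposal is correct and follows essentially the same route as the paper's proof: the Vieta identity you derive is exactly \eqref{eq:al:1:1}--\eqref{eq:al:1:4} of Lemma~\ref{l:al:1}, and the rest of the argument (replacing $\sym{\nd-l}{\uo}$ by $\sym{\nd-l}{\uz\cc\uo}$ via Proposition~\ref{p:2:2}, invoking condition~\ref{c:Ld:2} for the windowed Gaussian average, condition~\ref{c:nd:2} for the full average, and condition~\ref{c:n:2:1:5} for the lower bound on $\QFZ{\zppv}$) matches the paper step for step, with the same constants.
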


We prove this in Section~\ref{ss:pf:p:2:3}. Before 
presenting the next result let us introduce a notation.

\begin{notation} 
For $\underline{v}=(v_0,\dots,v_N)\in\CC^{N+1}$
and $S\subset\nat$ we denote by 
$\underline{v}\odot\mathbbm{1}_S$ the vector 
$\uw=(\mathrm{w}_0,\dots,\mathrm{w}_N)\in\CC^{N+1}$
such that $\mathrm{w}_i=0$ if $i\not\in S$, $\mathrm{w}_i=v_i$ 
if $i\in S$. For $k\in\NN$ let $[0:k)$ denote the
set $\Set*{0,1,\dots,k-1}$. For $k,n\in\nat$ with 
$n\geq k$ let $[k:n]$ denote the set 
$\Set*{ k, k+1, \dots, n}$.

Therefore, if $\uo$ is a vector consisting of the roots of $\fand$ outside $\Dset$, and if the event $\omnd$ occurs, then for all 
$0\leq i\leq m$ and $\rigidity\leq j\leq m$ we have
\begin{gather*}
\sum_{k=0}^{\nd-\Cdel-1}
\frac{\overline{\sym{k-i}{\uo}}}{\ndckat}
\frac{\sym{k-j}{\uo}}{\ndckat} 
\;=\; \frontsum
\;;\\
\sum_{k=\nd-\Cdel}^{\nd}
\frac{\overline{\sym{k-i}{\uo}}}{\ndckat}
\frac{\sym{k-j}{\uo}}{\ndckat} 
\;=\;
\tailsum\;.
\end{gather*}
\label{n:hadamard}
\end{notation}

Now we define the event $\OMG{\thetagap}{\nd}{2:4}{M}{\delta}$
by subtracting a $\delta^3$-negligible event from
$\OMG{\thetagap}{\nd}{2:3}{M}{\delta}$. Thus, 
by construction \textit{$\OMG{\thetagap}{\nd}{2:3}{M}{\delta}$
is $\delta^3$-included in $\OMG{\thetagap}{\nd}{2:4}{M}{\delta}$.}

\begin{proposition}

For all $M>3$, $\thetagap\in(0,1)$, and sufficiently small $\delta\in(0,1)$, 
there exists an event $\eneg{2:4}$ such that: 
$\pneg{2:4}<\cneg{2:4}\delta^3$ for some constant $\cneg{2:4}>0$, 
and on the event $\omnd\setminus\eneg{2:4}$ we have
\[
\max_{0\leq i\leq m}\;
\max_{\rigidity\leq j\leq m}\;
\frac{\displaystyle\abs*{\frontsum}}{\displaystyle\totalsumdenom}
\;\le\;
\frac{2\delta}{\nd}\;, 
\]
where $\uz$ is a vector consisting of the roots of $\fand$ inside $\Dset$,
and $\uo$ is a vector consisting of the roots of $\fand$ outside $\Dset$.
Let
\[
\OMG{\thetagap}{\nd}{2:4}{M}{\delta}
\;\coloneqq\;
\OMG{\thetagap}{\nd}{2:3}{M}{\delta}
\,\setminus\,
\eneg{2:4}\;,
\quad\text{i.e.},\;
\begin{tikzcd}
\OMG{\thetagap}{\nd}{2:3}{M}{\delta} 
\arrow[r,"\eneg{2:4}", "\supset"'] & \OMG{\thetagap}{\nd}{2:4}{M}{\delta}\;.
\end{tikzcd}
\]
\label{p:2:4}
\end{proposition}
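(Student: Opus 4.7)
The plan is to reduce bounding the ratio to bounding the numerator $\abs*{\frontsum}$, then estimate the numerator via Proposition~\ref{p:rec}. By \eqref{c:p:2:3:2} in Proposition~\ref{p:2:3}, on $\OMG{\thetagap}{\nd}{2:3}{M}{\delta}$ the denominator satisfies $\totalsumdenom \ge \frac{4\nd}{27(M+2)}$. Hence it suffices to exhibit a $\delta^3$-negligible event outside of which $\abs*{\frontsum} \le \frac{8\delta}{27(M+2)}$ uniformly over the finitely many pairs $(i,j)$; dividing then yields the required bound $\frac{2\delta}{\nd}$.

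I would next reindex via $k = \nd - l$. Using the identity $\ndckat = (\nd!/l!)^{\alpha/2}$, one rewrites
\[
\frontsum
\;=\;
\sum_{l=\Cdel+1}^{\nd}
\frac{\overline{T_{l+i}}\,T_{l+j}}{\ffa{l}{i}\,\ffa{l}{j}}\,,
\qquad
T_{\lambda}\,\coloneqq\,\frac{\sym{\nd-\lambda}{\uo}}{[\binom{\nd}{\nd-\lambda}(\nd-\lambda)!]^{\alpha/2}}\,,
\]
under the convention that $T_\lambda = 0$ for $\lambda>\nd$. On $\omnd$, Proposition~\ref{p:rec} decomposes $T_\lambda$ as a finite ``main'' Gaussian sum (over $r\in[0,\lev-1]$ of terms of the form $g_r\,\xi_{\lambda+r}/\ffa{\lambda}{r}$, with signs) plus a tail contribution proportional to $\tail{\lambda}{\nd}/\xi_{\nd}$. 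Expanding $\overline{T_{l+i}}\,T_{l+j}$ yields four classes of contributions: (main)(main), (main)(tail), (tail)(main), and (tail)(tail).

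The mixed and (tail)(tail) contributions I would bound via Cauchy--Schwarz using the uniform bound $\abs*{g_r}\le(\usekdm{471})^r$ on $\omnd$ from Proposition~\ref{p:rec}(ii) and the $L^2$ tail bound $\Exp\tbr*{\abs*{\tail{\lambda}{\nd}}^2\indicator{\omnd}}\le(\usekdm{473})^{2\lev}/\lambda^{\lev\alpha}$ from Proposition~\ref{p:rec}(iv). After simplifying $\ffa{l+a}{b}\cdot\ffa{l}{a}=\ffa{l}{a+b}$, the relevant expectations are controlled by sums of the form $\sum_{l>\Cdel}[l^{\lev\alpha/2}\ffa{l}{i}\ffa{l}{j+r_2}]^{-1}$, $\sum_{l>\Cdel}[l^{\lev\alpha/2}\ffa{l}{j}\ffa{l}{i+r_1}]^{-1}$, and $\sum_{l>\Cdel}[l^{\lev\alpha}\ffa{l}{i}\ffa{l}{j}]^{-1}$, each bounded by $\delta^4$ by the second, third and fourth inequalities of condition~(ii) defining $\Cdel$ in Section~\ref{subsec:parameters}. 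For the (main)(main) contributions, the summand is a bilinear expression in the independent standard complex Gaussians $\overline{\xi_{l+i+r_1}}\,\xi_{l+j+r_2}$; applying Cauchy--Schwarz inside the expectation and exploiting orthogonality of Gaussians at distinct indices reduces the estimate to $\sum_{l>\Cdel}[\ffa{l}{i+r_1}\ffa{l}{j+r_2}]^{-2}\le\delta^5$, which is the first inequality of the same condition. The factor $\abs*{\xi_\nd}^{-2}$ arising from the Proposition~\ref{p:rec} normalization is handled by absorbing an auxiliary $\delta^3$-negligible event $\Set*{\abs*{\xi_\nd}<\delta}$ into the exceptional set.

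Combining these estimates yields $\Exp\tbr*{\abs*{\frontsum}\indicator{\omnd}}\le C_m\,\delta^4$ uniformly in the finitely many pairs $(i,j)$. Markov's inequality at threshold $\frac{8\delta}{27(M+2)}$ together with a union bound over $(i,j)$ produces the desired $\delta^3$-negligible event $\eneg{2:4}$, enlarged to include the auxiliary $\xi_\nd$-smallness event. The main obstacle is organizing the $\lev^2$ cross terms in the (main)(main) expansion and matching each summand with exactly one of the summability conditions defining $\Cdel$; once this bookkeeping is in place, the Gaussian second-moment estimates and the $\Cdel$-conditions combine cleanly to produce the $\delta^4$ bound on the expectation.
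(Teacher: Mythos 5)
Your treatment of the numerator is essentially the paper's: expand $\sym{k-i}{\uo}/\ndckat$ via Proposition~\ref{p:rec} into a main Gaussian sum plus a tail, split the product into the four classes (main)(main), (main)(tail), (tail)(main), (tail)(tail), and control each by Chebyshev/Markov using exactly the four summability conditions in the definition of $\Cdel$, with a union bound over $(i,j,r_1,r_2)$. However, there are two genuine gaps in how you pass from that estimate to the stated inequality. First, your lower bound on the denominator is wrong as stated: \eqref{c:p:2:3:2} bounds $\frac{1}{\nd}\sum_k\abs{\sym{k}{\uz\cc\uo}/\sym{\nd-m}{\uo}}^2((\nd-k)!)^\alpha$, and by \eqref{eq:al:1:2} this translates into $\totalsumdenom\ge\frac{\abs{\sym{\nd-m}{\uo}}^2}{(\nd!)^{\alpha}}\cdot\frac{4\nd}{27(M+2)}$, not $\totalsumdenom\ge\frac{4\nd}{27(M+2)}$; the prefactor $\abs{\sym{\nd-m}{\uo}}^2/(\nd!)^{\alpha}=\abs{\xi_0}^2/(\InProdSqNd\abs{\xi_\nd}^2)$ is not bounded below. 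Moreover \eqref{c:p:2:3:2} is only available on $\OMG{\thetagap}{\nd}{2:3}{M}{\delta}$, whereas the proposition asserts the inequality on all of $\omnd\setminus\eneg{2:4}$.

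Second, your handling of the $\abs{\xi_\nd}^{-2}$ normalization does not work. The event $\Set{\abs{\xi_\nd}<\delta}$ has probability $1-e^{-\delta^2}\asymp\delta^2$, so it is \emph{not} $\delta^3$-negligible; and even on its complement you only get $\abs{\xi_\nd}^{-2}\le\delta^{-2}$, which turns your $\delta^4$ expectation bound into $\delta^2$ and then Markov at threshold $\asymp\delta$ yields an exceptional event of probability $\asymp\delta$, far from $\delta^3$. (Note also $\Exp[\abs{\xi_\nd}^{-2}]=\infty$, so the claimed bound $\Exp[\abs{\frontsum}\indicator{\omnd}]\le C_m\delta^4$ cannot hold with the $\abs{\xi_\nd}^{-2}$ factor included.) The paper sidesteps both issues at once: it bounds the bracket $\bT{1}+\bT{2}+\bT{3}+\bT{4}$ by $\delta$ outside a $\delta^3$-negligible event, so that $\abs{\frontsum}\le\delta/\abs{\xi_\nd}^2$, and then uses the identity \eqref{eq:al:1:3}, $\totalsumdenom=\abs{\xi_\nd}^{-2}\sum_{l=0}^{\nd}\abs{\xi_l}^2$, together with condition~\ref{c:nd:2} defining $\nd$ to get $\totalsumdenom>\frac{\nd}{2}\abs{\xi_\nd}^{-2}$ on $\omnd$ minus a $\delta^3$-negligible event. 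The factor $\abs{\xi_\nd}^{-2}$ then cancels in the ratio, giving exactly $2\delta/\nd$ on $\omnd$ minus negligible events, with no appeal to Proposition~\ref{p:2:3}. You need this cancellation; bounding $\abs{\xi_\nd}^{-2}$ separately cannot produce the required constants or the required event.
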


We present the proof of Proposition~\ref{p:2:4} in Section~\ref{ss:pf:p:2:4}. Before proceeding to the next result let us introduce a notation.

\begin{notation}\label{n:con:int}
For a vector of points $\uz=(\z_1,\dots,\z_m)\in\Dset^m$ let $\constraintinternal(\uz)$ be the vector $\us=(s_1,\dots,s_{\rigidity-1})\in\CC^{\rigidity-1}$ such that
$\sum_{i=1}^m \z_i^j = s_j$ for all $1\leq j\leq \rigidity-1$. 
For a configuration of points $\Upsilon\in\cS{\Dset}$ with $|\Upsilon|=m$, let $\constraintinternalpc(\Upsilon)$ be $\constraintinternal(\uz)$ where $\aspc{\uz}=\Upsilon$.
\end{notation}

Now we show that on the event $\OMG{\thetagap}{\nd}{2:4}{M}{\delta}$
the conditional density of the roots of $\fand$ inside $\Dset$ given
the roots of $\fand$ outside $\Dset$ is well-behaved.

\begin{proposition}[Uniform bound on the ratio of the conditional densities on $\OMG{\thetagap}{\nd}{2:4}{M}{\delta}$]

Suppose $\OMG{\thetagap}{\nd}{2:4}{M}{\delta}$ occurs 
for some $M>3$, $\thetagap\in(0,1)$, and $\delta\in(0,1)$. 
Let $\uz\in\Dset^m$ be a vector consisting of the roots of $\fand$ 
inside $\Dset$. Let $\uo$ be a vector consisting of the roots of $\fand$ outside $\Dset$. Let $\us\coloneqq\constraintinternal(\uz)$. Then for a.e.\ $\uzp$ with respect to the Lebesgue measure $\el_{\usm}$ on $\Sigma_{\usm}$ we have 
\begin{equation}\label{eq:Bound-target}
\exp\fbr[\Big]{-f(M,\thetagap)}
\abs*{\frac{\van{\uzp}}{\van{\uz}}}^2
\leq
\frac{\drho{\uo\,,\,\us}{\nd}{\uzp}}{\drho{\uo\,,\,\us}{\nd}{\uz}}
\leq 
\exp\fbr[\Big]{f(M,\thetagap)}
\abs*{\frac{\van{\uzp}}{\van{\uz}}}^2\;,
\end{equation}
where $f(M,\thetagap)=\usekdm{571}\fbr*{ M^2+M\thetagap^{-1}}$ for some constant $\usekdm{571}>0$.
Equivalently, 
\begin{equation}\label{eq:Bound-target-alternative}  
\exp\fbr[\Big]{-f(M,\thetagap)}
\int_{A}\drho{\uo}{\nd}{\uzp}\deri\el_{\usm}(\uzp)
\leq 
\frac{\displaystyle\int_A \abs{\van{\uzp}}^2 \deri\el_{\usm}(\uzp) }{\displaystyle\int_{\Sigma_{\usm}}\abs{\van{\uzp}}^2 \deri\el_{\usm}(\uzp)}
\leq
\exp\fbr[\Big]{f(M,\thetagap)}
\int_{A}\drho{\uo}{\nd}{\uzp}\deri\el_{\usm}(\uzp)
\end{equation}
for all $A\subset\Borel{\Sigma_{\usm}}$.
\label{p:2:5}
\end{proposition}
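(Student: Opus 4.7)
The plan is to combine Proposition~\ref{p:van} (bounding the Vandermonde ratio) and Proposition~\ref{p:sym} (bounding the ratio of symmetric sums) with the quantitative information built into the events $\OMG{\thetagap}{\nd}{2:1}{M}{\delta},\dots,\OMG{\thetagap}{\nd}{2:4}{M}{\delta}$. Starting from the exact formula \eqref{eq:413}, I would write
\[
\frac{\drho{\uo\,,\,\us}{\nd}{\uzp}}{\drho{\uo\,,\,\us}{\nd}{\uz}}
=
\abs*{\frac{\van{\uzp\,,\,\uo}}{\van{\uz\,,\,\uo}}}^2
\cdot
\fbr*{\frac{\fD{\uzp}{\uo}}{\fD{\uz}{\uo}}}^{-(\nd+1)}
\]
and treat the two factors separately.

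For the Vandermonde ratio, since $\OMG{\thetagap}{\nd}{2:4}{M}{\delta}\subset\OMG{\thetagap}{\nd}{2:1}{M}{\delta}\subset\OGm{\nd}{\thetagap}$, conditions \ref{c:n:2:1:2} and \ref{c:n:2:1:3} in Definition~\ref{d:n:2:1} give $\X_\nd\le C(M+2)$ in the sense of Notation~\ref{n:gafsum}. Applying Proposition~\ref{p:van} directly yields
\[
\abs*{\frac{\van{\uzp\,,\,\uo}}{\van{\uz\,,\,\uo}}}^2
\;\overset{M,\thetagap}{\scaleto{\asymp}{8pt}}\;
\exp\fbr[\big]{\pm 2m\usekd{p45}\thetagap^{-1}(M+2)}
\abs*{\frac{\van{\uzp}}{\van{\uz}}}^2,
\]
contributing a factor $\exp(O(M\thetagap^{-1}))$ of the desired form.

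For the symmetric-function ratio, Proposition~\ref{p:sym} reduces the task to showing $\fDhat{\uz}{\uo}=O(M^2/\nd)$ on $\OMG{\thetagap}{\nd}{2:4}{M}{\delta}$. I would split each numerator in \eqref{eq:dijdef} at $k=\nd-\Cdel$ using Notation~\ref{n:hadamard}. The front piece $\abs{\frontsum}$ divided by $\totalsumdenom$ is bounded by $2\delta/\nd$ by Proposition~\ref{p:2:4}. For the tail piece, identifying $\ndckat^{-2}=((\nd-k)!)^{\alpha}/(\nd!)^{\alpha}$ and comparing with \eqref{eq:qij} gives
\[
\abs*{\tailsum}
=
\frac{\abs*{\sym{\nd-m}{\uo}}^2}{(\nd!)^{\alpha}}\cdot\QFI{\zppv}.
\]
Condition \ref{c:n:2:1:4} controls $\QFI{\zppv}\le M+2$, and \eqref{c:p:2:3:2} in Proposition~\ref{p:2:3} rearranges to
\[
\frac{\abs*{\sym{\nd-m}{\uo}}^2/(\nd!)^{\alpha}}{\fD{\uz}{\uo}}
\;\le\;
\frac{27(M+2)}{4\nd}.
\]
Combining the two contributions yields $\fDhat{\uz}{\uo}\le C\,(M+2)^2/\nd$ uniformly in $\nd$, hence by Proposition~\ref{p:sym}
\[
\fbr*{\frac{\fD{\uzp}{\uo}}{\fD{\uz}{\uo}}}^{-(\nd+1)}
\;\le\;
\fbr[\Big]{1-\usekdm{ratio:sym:agaf}\cdot C(M+2)^2/\nd}^{-(\nd+1)}
\;\le\;
\exp\fbr[\big]{C' M^2}
\]
for $\nd$ large enough (which is automatic once $\delta$ is small). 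A matching lower bound comes from the left inequality in Proposition~\ref{p:sym}.

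Multiplying the two bounds gives \eqref{eq:Bound-target} with $f(M,\thetagap)=\usekdm{571}(M^2+M\thetagap^{-1})$ for a suitable constant $\usekdm{571}>0$. The equivalent integral form \eqref{eq:Bound-target-alternative} is obtained by integrating the pointwise bound against $\leb$ on $\Sigma_{\usm}$ and observing that $\drho{\uo,\us}{\nd}{\cdot}$ is a probability density with respect to $\leb$, so the ratio of any partial integral of the density and the corresponding partial integral of $|\van{\cdot}|^2$ is pinned between the same constants. The main obstacle is the clean bookkeeping that yields $\fDhat{\uz}{\uo}=O(1/\nd)$ with the correct $M$-dependence; in particular, ensuring that the constant inside the tail estimate is independent of $\nd$ and of the particular choice of $\uz,\uzp\in\Sigma_{\usm}$ (since only $\uz\mapsto\uzp$-invariant quantities appear in the denominator) is the step where the design of the events $\OMG{\thetagap}{\nd}{2:1}{M}{\delta},\dots,\OMG{\thetagap}{\nd}{2:4}{M}{\delta}$ is crucial.
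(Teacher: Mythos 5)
Your proposal follows essentially the same route as the paper's proof: the same factorization via \eqref{eq:413}, the same application of Proposition~\ref{p:van} using conditions~\ref{c:n:2:1:2}--\ref{c:n:2:1:3}, and the same front/tail splitting of $\fDij{\uz}{\uo}$ handled by Proposition~\ref{p:2:4}, Lemma~\ref{l:tailsum}, condition~\ref{c:n:2:1:4}, and \eqref{c:p:2:3:2}, feeding into Proposition~\ref{p:sym}. The argument and the resulting dependence $f(M,\thetagap)=\usekdm{571}(M^2+M\thetagap^{-1})$ match the paper's.
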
%BACK

This is proved in Section~\ref{ss:pf:p:2:5}. 
Now we show that \eqref{eq:main} in condition~\ref{c:ag:2}-\ref{c:ag:2:5} is satisfied
with respect to the probability kernel $\nu_{\phi,\Psi,\Dset}$ and some $\seq*{\correctionterm{j}{k}}_{k\geq 1,j\geq 1}$ which satisfies condition~\ref{c:ag:2}-\ref{c:ag:2:1}.
This is the end goal of Step 2.

\begin{theorem}\label{t:s:2}
There exists $\seq*{\correctionterm{j}{k}}_{k\geq 1, j\geq 1}$ such that for 
$A\in\FUBasisInsidempts$, 
$B\in\FUBasisOutside$, 
$j\geq 1$, 
$k\geq 1$, we have:
\begingroup
\addtolength{\jot}{0.25em}
\begin{align*}
&\Prob[\Big]{
\,\fbr[\big]{\,\xinnk\in A\,}\,
\cap
\,\fbr[\big]{\,\xoutnk \in B\,}\,
\cap
\,\onkj\,}\\
\overset{j}{\scaleto{\asymp}{8pt}}\;
& \fbr[\Big]{\int_{\xoutnk^{-1}(B)\;\cap\;\onkj}
\nu_{\Phi,\Psi,\Dset}\kernel{A}{\xout}\deri\PROB
}
+ \correctionterm{j}{k}\;,
\end{align*}
\endgroup
and for each $j\geq 1$, $\lim_{k\to\infty}\correctionterm{j}{k}=0$.
\end{theorem}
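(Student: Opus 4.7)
The plan is to concatenate the chain of $\delta_k^3$-inclusions from Propositions~\ref{p:2:1}--\ref{p:2:4} and then apply the slicewise Vandermonde ratio bound of Proposition~\ref{p:2:5}. Setting $\mathcal{E}_k:=\eneg{2:1}\cup\eneg{2:2}\cup\eneg{2:3}\cup\eneg{2:4}$, evaluated at $M=M_j$, $\thetagap=\thetagap_j$, $\delta=\delta_k$, we have $\PROB(\mathcal{E}_k)\leq C\delta_k^3$ and $\onkj\setminus\mathcal{E}_k\subset\OMG{\thetagap_j}{\ndk}{2:4}{M_j}{\delta_k}$, so the mass of $\onkj\cap\mathcal{E}_k$ contributes an $O(\delta_k^3)$ additive term to both sides of \eqref{eq:main} which is absorbed into $\correctionterm{j}{k}$; since $\delta_k\to 0$, this part of the correction vanishes as $k\to\infty$.

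On the reduced event, decompose the LHS via the tower property: $\Prob[\Big]{(\xinnk\in A)\cap(\xoutnk\in B)\cap\onkj}=\Exp\tbr[\big]{\bigindicator{\xoutnk\in B}\,\bigindicator{\onkj}\,\PROB^{(\ndk)}_{\Dset|\Dset^\c}[A\mid\xoutnk]}$. For each $\uo=\xoutnk$ on $\OMG{\thetagap_j}{\ndk}{2:4}{M_j}{\delta_k}$, Proposition~\ref{p:2:5} (applied slicewise and then disintegrated along the smooth map $\constraintinternal:\Dset^m\to\CC^{\rigidity-1}$) supplies two-sided bounds with multiplicative constants $e^{\pm f(M_j,\thetagap_j)}$ depending only on $j$, between $\PROB^{(\ndk)}_{\Dset|\Dset^\c}[A\mid\uo]$ and the slice-averaged expression $\int\nu^{\us}_{\Phi,\Psi,\Dset}(\aspc{A}\cap\Sigma_{\usm})\,p_T(\us\mid\uo)\,d\us$, where $\nu^{\us}_{\Phi,\Psi,\Dset}(\,\cdot\,)=Z(\us)^{-1}\int_{(\cdot)}|\van{\uzp}|^2\,d\el_{\usm}$ is the Vandermonde slice kernel and $p_T(\cdot\mid\uo)$ is the conditional density of $\constraintinternalpc(\xinnk)$ given $\xoutnk=\uo$. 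It remains to replace the slice-averaged quantity by the point value at $\us=\us(\xout)$, which equals the target $\nu_{\Phi,\Psi,\Dset}(A\mid\xout)$.

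Since $\xn\to\XX$ almost surely, $\constraintinternalpc(\xinnk)\to\constraintinternalpc(\xin)=\constraintexternal(\xout)=\us(\xout)$ a.s., so by bounded convergence (using $|\nu^{\us}|\le 1$ together with the continuity of $\us\mapsto\nu^{\us}_{\Phi,\Psi,\Dset}(\aspc{A}\cap\Sigma_{\usm})$ that follows from the smooth parametrization of $\Sigma_{\usm}$ in $\us$), for each fixed $A\in\FUBasisInsidempts$ and $B\in\FUBasisOutside$ the expectation $\Exp\tbr[\big]{\bigindicator{\xoutnk\in B}\bigindicator{\onkj}\fbr[\big]{\nu^{\constraintinternalpc(\xinnk)}_{\Phi,\Psi,\Dset}(A)-\nu^{\us(\xout)}_{\Phi,\Psi,\Dset}(A)}}$ tends to zero as $k\to\infty$. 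Defining $\correctionterm{j}{k}$ to dominate the supremum of this quantity plus $\PROB(\mathcal{E}_k)$ over the countable family $(A,B)\in\FUBasisInsidempts\times\FUBasisOutside$ yields \eqref{eq:main}. The main obstacle is ensuring $\correctionterm{j}{k}\to 0$ uniformly across $\FUBasisInsidempts\times\FUBasisOutside$; this is handled via equicontinuity of the family $\{\us\mapsto\nu^{\us}_{\Phi,\Psi,\Dset}(\aspc{A}\cap\Sigma_{\usm})\}_A$ near $\us(\xout)$, itself ultimately a consequence of the uniform boundary-gap $\thetagap_j$ enforced on the outside configurations by $\onkj$ and the summability $\sum_k\delta_k<\infty$ from \eqref{eq:deltakchoice}.
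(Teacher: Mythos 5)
Your proposal follows essentially the same route as the paper's proof: the paper likewise concatenates the $\delta_k^3$-inclusions of Propositions~\ref{p:2:1}--\ref{p:2:4}, disintegrates by conditioning successively on $\xoutnk$ and on $\constraintinternalpc(\xinnk)$ (this is packaged as Proposition~\ref{p:9.1}, with explicit good events $\goodomega$ and good slices $\pssg$ playing the role of your reduced event), applies Proposition~\ref{p:2:5} slicewise to get the $e^{\pm f(M_j,\thetagap_j)}$ comparison with the Vandermonde slice kernel $\h{A}{\cdot}$, and finally replaces $\h{A}{\usnk}$ by $\h{A}{\constraintexternal(\xout)}=\nu_{\Phi,\Psi,\Dset}\kernel{A}{\xout}$ using the a.s.\ convergence $\usnk\to\constraintexternal(\xout)$ and continuity of $\us\mapsto\h{A}{\us}$. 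The uniformity-in-$(A,B)$ issue you flag is treated no more explicitly in the paper than in your sketch, so your proposal matches the paper's argument in both structure and level of detail.
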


This is proved in Section~\ref{ss:pf:t:s:2}.
This concludes Step 2.

\subsubsection{Step 3}

First, we define the event $\OMG{\thetagap}{\nd}{3:1}{M}{\delta}$.

\begin{definition}[The event $\OMG{\thetagap}{\nd}{3:1}{M}{\delta}$]

For $M>3$, $\thetagap\in(0,1)$, and $\delta\in(0,1)$,    
let $\OMG{\thetagap}{\nd}{3:1}{M}{\delta}$ 
be the event in which all of the following 
conditions are satisfied: 
\begin{enumerate}[(i),font=\normalfont\bfseries,topsep=0pt]
    \item\label{c:n:3:1:1}
        $ \OGm{\nd}{\theta} $ occurs\;;
    \item\label{c:n:3:1:2}
        $ \max_{1\leq s<\moment} 
        \absinv{\fand}{s}{0}{\kdel} \le  M-1 $\;; 
    \item\label{c:n:3:1:3} 
        $ \invabs{\fand}{\moment}{0}{\kdel} \le  M-1 $\;;
    \item\label{c:n:3:1:4}
        $ \max_{0\leq i\leq m}\; 
        \max_{\rigidity\leq j\leq m} \QFI{\zpv} \le  M-1 $\;;
    \item\label{c:n:3:1:5}
        $ ( M-1)^{-1} \le \QFZ{\zpv} \le  M-1 $\;.
\end{enumerate}
\label{d:n:3:1}
\end{definition}

Now we show \textit{$\OMG{\thetagap}{\nd}{3:1}{M}{\delta}$
is $\delta^3$-included in 
$\OMG{\thetagap}{\infty}{1:1}{M}{\delta}$.}

\begin{proposition}

For $M>3$, $\thetagap\in(0,1)$, and $\delta\in(0,1)$ 
sufficiently small depending on $M$, there exists an event 
$\eneg{3:1}$ such that $\pneg{3:1}<\cneg{3:1}\delta^3$ for some
constant $\cneg{3:1}>0$ and
\[
\OMG{\thetagap}{\nd}{3:1}{M}{\delta}
\,\setminus\,
\eneg{3:1}
\;\subset\;
\OMG{\thetagap}{\infty}{1:1}{M}{\delta}\;,
\quad\text{i.e.},\;
\begin{tikzcd}
\OMG{\thetagap}{\nd}{3:1}{M}{\delta}
\arrow[r,"\eneg{3:1}"]
& \OMG{\thetagap}{\infty}{1:1}{M}{\delta}\;.
\end{tikzcd}
\]
\label{p:3:1}
\end{proposition}

We prove Proposition~\ref{p:3:1} in Section~\ref{ss:pf:p:3:1}.
Now we define the event 
$\OMG{\thetagap}{\infty}{3:1}{M}{\delta}$.

\begin{definition}[The event $\OMG{\thetagap}{\infty}{3:1}{M}{\delta}$]
    
For $M>3$, $\thetagap\in(0,1)$, $\delta\in(0,1)$    
let $ \OMG{\thetagap}{\infty}{3:1}{M}{\delta} $ 
be the event in which all of the following
conditions are satisfied:
\begin{enumerate}[(i),font=\normalfont\bfseries,topsep=0pt]
    \item\label{c:i:3:1:1} 
        $ \OGm{\infty}{\thetagap} $ occurs;
    \item\label{c:i:3:1:2}
        $ \max_{1\leq s<\moment} 
        \absinv{\falphagaf}{s}{0}{\kdel} \le  M - \frac{5}{2} \;;$
    \item\label{c:i:3:1:3} 
            $ \invabs{\falphagaf}{\moment}{0}{\kdel} \le  M-\frac{5}{2} \;;$
    \item\label{c:i:3:1:4}
        \[
        \frac{8}{M-2}+\delta
        \le \frac{\InProdSq}{\abs*{\xi_0}^2}
        \le \fbr[\big]{ M-2 }^{1/2}-\delta\;,
        \]
    \item\label{c:i:3:1:5}
        For all $ 0 \leq i \leq m $, 
            $ \rigidity \leq j \leq m $, 
            $ 0 \leq r_1 \leq \lev $, 
            $ 0 \leq r_2 \leq \lev $: 
\begingroup
\addtolength{\jot}{0.25em}
\begin{align*}
\abs*{\sum_{l=\mij}^{\Cdel} 
      \frac{\overline{\xi_{l+i+r_1}}\cdot
      \xi_{l+j+r_2}}{\ffa{l}{i+r_1}\cdot\ffa{l}{j+r_2}}
      }
      & \leq\frac{\fbr[\big]{ M-2 }^{1/2}}{4\lev^2\fbr[\big]{\usekdm{471}}^{2\lev}}\;,\\
\abs*{\sum_{ l = \mij }^{ \Cdel } 
    \frac{ \overline{ \tail{ l + i }{ \nd } } \cdot 
    \xi_{ l + j + r_2 } }{ \ffa{ l }{ i } \cdot \ffa{ l }{ j + r_2 } } 
} 
& \leq 
\frac{ ( M-2)^{ 1 / 4 } }{ 4 \lev \fbr[\big]{\usekdm{471}}^{ \lev } }\;,\\
\abs*{ \sum_{ l = \mij }^{ \Cdel } 
    \frac{ \xi_{ l + i + r_1 } \cdot \overline{ \tail{ l + j }{ \nd } } }{ \ffa{ l }{ j } \cdot \ffa{ l }{ i + r_1 } } 
}
& \leq \frac{ \fbr[\big]{M-2}^{ 1 / 4 } }{ 4 \lev \fbr[\big]{\usekdm{471}}^{ \lev } }\;,
\\
\abs*{ \sum_{ l=\mij }^{ \Cdel } 
    \frac{ \overline{ \tail{ l + i }{ \nd } } \cdot \tail{ l + j }{ \nd } }{ \ffa{ l }{ i } \cdot \ffa{ l }{ j } } 
} 
& \leq \frac{ \fbr[\big]{M-2}^{ 1 / 4 } }{ 4 }\;.
\end{align*}
\endgroup
The constant $\usekdm{471}$ is defined in Proposition~\ref{p:rec}.
\end{enumerate}
\label{d:i:3:1}
\end{definition}

Now we show
\textit{$\OMG{\thetagap}{\infty}{3:1}{M}{\delta}$
is $\delta^3$-included in 
$\OMG{\thetagap}{\nd}{3:1}{M}{\delta}$.}

\begin{proposition}

For $M>3$, $\thetagap\in(0,1)$, and $\delta\in(0,1)$ 
sufficiently small depending on $M$, 
there exists an event $\eneg{3:2}$ such that $\pneg{3:2}<\cneg{3:2}\delta^3$ for 
some constant $\cneg{3:2}>0$ and 
\[
\OMG{\thetagap}{\infty}{3:1}{M}{\delta}
\,\setminus\,
\eneg{3:2}
\;\subset\;
\OMG{\thetagap}{\nd}{3:1}{M}{\delta}\;,
\quad\text{i.e.},\;
\begin{tikzcd}
\OMG{\thetagap}{\infty}{3:1}{M}{\delta} 
\arrow[r,"\eneg{3:2}"] & 
\OMG{\thetagap}{\nd}{3:1}{M}{\delta}\;.
\end{tikzcd}
\]
\label{p:3:2}
\end{proposition}

We prove Proposition~\ref{p:3:2} in Section~\ref{ss:pf:p:3:2}.
Next, we define the event $\OMG{\thetagap}{\infty}{3:2}{M}{\bullet}$.
The $\bullet$ symbol indicates that this event does not involve the 
parameter $\delta$.

\begin{definition}[The event $\OMG{\thetagap}{\infty}{3:2}{M}{\bullet}$]

For $M>3$, $\thetagap\in(0,1)$, 
let $ \OMG{\thetagap}{\infty}{3:2}{M}{\bullet} $ 
be the event in which all of the following conditions 
are satisfied:
\begin{enumerate}[(i),font=\normalfont\bfseries,topsep=0pt]
\item\label{c:i:3:2:1} 
$ \OGm{\infty}{\theta} $ occurs\;;
\item\label{c:i:3:2:2} 
$\max_{1\leq s<\moment} 
\absinv{\falphagaf}{s}{0}{\infty} \le M-3 $\;; 
\item\label{c:i:3:2:3} 
$ \invabs{\falphagaf}{\moment}{0}{\infty} \le M-3 $\;;
\item\label{c:i:3:2:4}
\[
\frac{8}{M-3}
\le \frac{\InProdSq}{\abs*{\xi_0}^2}
\le \fbr[\big]{ M-2 }^{1/2}-1\;;
\]
\item\label{c:i:3:2:5}
    For $0\leq i\leq m$, 
    $\rigidity\leq j\leq m$, 
    $0\leq r_1\leq \lev$, 
    $0\leq r_2\leq \lev$: 
\begingroup
\addtolength{\jot}{0.25em}
\begin{align*}
\abs*{
\sum_{l=\mij}^{\infty} 
\frac{\overline{\xi_{l+i+r_1}} \cdot \xi_{l+j+r_2}}
{\ffa{l}{i+r_1} \cdot \ffa{l}{j+r_2}}
} 
& \leq \frac{\fbr[\big]{ M-2 }^{1/2}}{4\lev^2\fbr[\big]{\usekdm{471}}^{2\lev}}-1\;,\\
\abs*{
\sum_{l=\mij}^{\infty} 
\frac{\overline{\tail{l+i}{\nd}} \cdot \xi_{l+j+r_2}}
{\ffa{l}{i} \cdot \ffa{l}{j+r_2}}
}
& \leq\frac{\fbr[\big]{ M-2 }^{1/4}}{4\lev\fbr[\big]{\usekdm{471}}^{\lev}}-1\;,\\
\abs*{\sum_{l=\mij}^{\infty} 
\frac{\xi_{l+i+r_1} \cdot \overline{\tail{l+j}{\nd}}}
{\ffa{l}{j} \cdot \ffa{l}{i+r_1}}}
& \leq\frac{\fbr[\big]{ M-2 }^{1/4}}{4\lev\fbr[\big]{\usekdm{471}}^{\lev}}-1\;,\\
\abs*{
\sum_{l=\mij}^{\infty} 
\frac{\overline{\tail{l+i}{\nd}} \cdot \tail{l+j}{\nd}}
{\ffa{l}{i} \cdot \ffa{l}{j}}
}
& \leq \frac{\fbr[\big]{ M-2 }^{1/4}}{4}-1\;.
\end{align*}
\endgroup
The constant $\usekdm{471}$ is defined in Proposition~\ref{p:rec}.
\end{enumerate}
\label{d:i:3:2}
\end{definition}

Now we show
\textit{$\OMG{\thetagap}{\infty}{3:2}{M}{\bullet}$
is $\delta^3$-included in 
$\OMG{\thetagap}{\infty}{3:1}{M}{\delta}$.}

\begin{proposition}

For $M>3$, $\thetagap\in(0,1)$, and 
$\delta\in(0,1)$ 
sufficiently small depending on $M$, 
there exists an event 
$\eneg{3:3}$ 
such that $\pneg{3:3}\leq\cneg{3:3}\delta^3$ for some constant $\cneg{3:3}>0$ and 
\[
\OMG{\thetagap}{\infty}{3:2}{M}{\bullet}
\,\setminus\,
\eneg{3:3}
\;\subset\;
\OMG{\thetagap}{\infty}{3:1}{M}{\delta}\;,
\quad\text{i.e.},\;
\begin{tikzcd}
\OMG{\thetagap}{\infty}{3:2}{M}{\bullet}
\arrow[r,"\eneg{3:3}"] & 
\OMG{\thetagap}{\infty}{3:1}{M}{\delta}\;.
\end{tikzcd}
\]
\label{p:3:3}
\end{proposition}

We prove Proposition~\ref{p:3:3} in Section~\ref{ss:pf:p:3:3}. 
Now, we show that $\PROB\fbr[\big]{\om\setminus\oj}\to 0$
which is the end goal of Step 3.

\begin{theorem}
$\lim_{j\to\infty}
\PROB\fbr[\big]{\,\om\setminus\oj\,}\to 0\;.$ 
\label{t:s:3}
\end{theorem}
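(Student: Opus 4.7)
The plan is to estimate
$$\Prob(\om \setminus \oj) \le \Prob\bigl(\om \setminus \OMG{\thetagap_j}{\infty}{3:2}{M_j}{\bullet}\bigr) + \Prob\bigl(\OMG{\thetagap_j}{\infty}{3:2}{M_j}{\bullet} \setminus \oj\bigr),$$
and drive each term to zero independently: the first by sending $j\to\infty$ (which makes $M_j\uparrow\infty$ and $\thetagap_j\downarrow 0$), the second by a Borel--Cantelli argument in $k$.

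For the second summand, I chain the three $\delta^3$-inclusions proved in Propositions~\ref{p:3:3}, \ref{p:3:2}, and \ref{p:3:1}: for each fixed $j$ and every $k\ge 1$,
$$\OMG{\thetagap_j}{\infty}{3:2}{M_j}{\bullet}\;\setminus\;\bigl(\eneg{3:3}\cup\eneg{3:2}\cup\eneg{3:1}\bigr)\;\subset\;\OMG{\thetagap_j}{\infty}{1:1}{M_j}{\delta_k},$$
where each of the three exceptional events (evaluated at $\delta=\delta_k$) has probability at most a constant multiple of $\delta_k^3$. Since $0<\delta_k<1$ and $\sum_k\delta_k<\infty$ by \eqref{eq:deltakchoice}, also $\sum_k\delta_k^3<\infty$, so Borel--Cantelli implies that almost surely only finitely many of these exceptional events occur. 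Consequently, almost every point of $\OMG{\thetagap_j}{\infty}{3:2}{M_j}{\bullet}$ lies in $\OMG{\thetagap_j}{\infty}{1:1}{M_j}{\delta_k}$ for all sufficiently large $k$, hence in $\liminf_{k\to\infty}\OMG{\thetagap_j}{\infty}{1:1}{M_j}{\delta_k}=\oj$ by \eqref{eq:omegajdef}. This yields $\Prob(\OMG{\thetagap_j}{\infty}{3:2}{M_j}{\bullet}\setminus\oj)=0$.

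For the first summand, I will show that the events $\OMG{\thetagap_j}{\infty}{3:2}{M_j}{\bullet}$ exhaust $\om$ as $j\to\infty$. Definition~\ref{d:i:3:2} presents this event as the intersection of $\OGm{\infty}{\thetagap_j}$ with finitely many inequalities whose right-hand sides diverge to $+\infty$ (and whose lower bounds decay to $0$) as $M_j\to\infty$, while the left-hand sides are almost surely finite. Concretely, Propositions~\ref{p:inv:3} and \ref{p:inv:4} give the a.s.\ finiteness of $\absinv{\falphagaf}{s}{0}{\infty}$ for $1\le s<\moment$ and of $\invabs{\falphagaf}{\moment}{0}{\infty}$; the quantity $\InProdSq/|\xi_0|^2$ is a.s.\ finite and positive on $\om$ since $\xi_0\neq 0$ a.s.\ and $|\alphagafin|=m$ is finite there; and the Gaussian series in condition~\ref{c:i:3:2:5} converge almost surely because $\sum_l\ffa{l}{k}^{-2}<\infty$ for $k\ge\rigidity$ (as recorded in Notation~\ref{n:factorial}), combined with Cauchy--Schwarz and the uniform $L^2$-bound on $\tail{l}{\nd}$ from Proposition~\ref{p:rec}(iv). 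Finally, $\OGm{\infty}{\thetagap_j}\uparrow\om$ as $\thetagap_j\downarrow 0$ because the first intensity of $\alphagaf$ is absolutely continuous with respect to Lebesgue measure, so almost surely no zero of $\falphagaf$ lies on the Lebesgue-null boundary $\partial\Dset$. Monotone convergence then delivers $\Prob(\om\setminus\OMG{\thetagap_j}{\infty}{3:2}{M_j}{\bullet})\to 0$, and combining the two bounds concludes the proof.

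The main subtlety is handling the $\tail{l}{\nd}$-terms appearing in condition~\ref{c:i:3:2:5}, since $\nd=n_\delta$ depends implicitly on a parameter not present in the label of $\OMG{\thetagap}{\infty}{3:2}{M}{\bullet}$: this is resolved by invoking Proposition~\ref{p:rec}(iv), whose bound is uniform in the approximation index, and interpreting the relevant series through the limiting Gaussian coefficients $\xi_l$ of $\falphagaf$.
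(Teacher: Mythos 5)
Your proof is correct and follows essentially the same route as the paper: Borel--Cantelli applied to the chained $\delta_k^3$-negligible exceptional events of Propositions~\ref{p:3:1}--\ref{p:3:3} to get $\OMG{\thetagap_j}{\infty}{3:2}{M_j}{\bullet}\subset\oj$ up to a null set, followed by the observation that the events $\OMG{\thetagap_j}{\infty}{3:2}{M_j}{\bullet}$ exhaust $\om$ because the random variables in Definition~\ref{d:i:3:2} are a.s.\ finite, $\InProdSq/\abs{\xi_0}^2$ has no atom at $0$, and $\OGm{\infty}{\thetagap_j}$ exhausts $\om$. The only (harmless) imprecision is asserting the chained inclusion for every $k\ge 1$ rather than for $k$ large enough that $\delta_k$ is sufficiently small depending on $M_j$, which is all the propositions guarantee and all the Borel--Cantelli argument needs.
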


We prove Theorem~\ref{t:s:3} in Section~\ref{ss:pf:t:s:3}. 
This concludes Step 3.
This also concludes 
the description of the three step procedure
of verifying the conditions of Theorem~\ref{thm:abscont}
in the context of the $\alphagaf$ ensemble.
The rest of the article is devoted to 
proving the results we have stated thus far.

\section{Proofs of the results in Section~\ref{sec:finitegaf}}
\label{sec:proof1}

\subsection{Proof of Proposition~\ref{p:inv:1}}
\label{ss:pf:p:inv:1}

\paragraph{Proof of \eqref{eq:invroot1}:}

We start with the identity (c.f. Section~2.4.1 in \cite{HKPV})
\[ 
\int\frac{1}{z^l}\Phi\fbr*{\frac{z}{R}}\deri[\alphagafn](z) 
= \int\frac{1}{z^l}\Phi\fbr*{\frac{z}{R}}\laplacian\log|\falphagafn(z)|\deri\el(z)\;,
\]
where $\laplacian$ is the operator
\[
\laplacian=\frac{1}{2\pi}
\fbr*{\frac{\partial^2}{\partial x^2}+
\frac{\partial^2}{\partial y^2}}\;.
\]
Since $\log\fbr*{\Exp\abs*{\falphagafn(z)}^2}^{1/2}$ is a radial 
function and Laplacian of a radial function is also radial, we have  
\[
\int\frac{1}{z^l}\Phi\fbr*{\frac{z}{R}}
\laplacian\log\fbr[\Big]{\Exp\abs*{\falphagafn(z)}^2}^{1/2}\deri\el(z)
=0\;.
\]
Let 
\[
\widehat{\mathcal{F}}_{\alpha,n}(z)\coloneqq
\frac{\falphagafn(z)}{\fbr*{\Exp\abs*{\falphagafn(z)}^2}^{1/2}}\;.
\]
Then the above argument implies
\[
\int\frac{1}{z^l}\Phi\fbr*{\frac{z}{R}}\deri[\alphagafn](z) 
=\int\frac{1}{z^l}\Phi\fbr*{\frac{z}{R}} 
\laplacian
\log\abs*{\widehat{\mathcal{F}}_{\alpha,n}(z)}\deri\el(z)\;.
\]
Integrating by parts the right hand side we have
\[
\abs*{\int\frac{1}{z^l}\Phi\fbr*{\frac{z}{R}}\deri[\alphagafn](z)} 
\leq\int
\abs*{\laplacian\fbr*{\frac{1}{z^l}\Phi\fbr*{\frac{z}{R}}}}
\abs*{\log|\widehat{\mathcal{F}}_{\alpha,n}(z)|} 
\deri\el(z)\;.
\]
Therefore
\[
\Exp\tbr*{\abs*{\int\frac{1}{z^l}
\Phi\fbr*{\frac{z}{R}}\deri[\alphagafn](z)}} 
\leq
\int
\abs*{\laplacian\fbr*{\frac{1}{z^l}\Phi\fbr*{\frac{z}{R}}}}
\fbr*{\Exp\tbr*{\abs*{\log|\widehat{\mathcal{F}}_{\alpha,n}(z)|}}} 
\deri\el(z)\;.
\]
Since $\Phi$ is supported in the annulus 
$\Set*{ z\in\CC \given r_0\leq \abs*{z} \leq x_3 r_0}$, 
we have
\[
\abs*{\laplacian\fbr*{\frac{1}{z^l}\Phi\fbr*{\frac{z}{R}}}}
\le 
\usec{pf:p:inv:1:1}(\Phi) \cdot l^2
\cdot \frac{1}{r_0^{l+2}}
\cdot \frac{1}{R^{l+2}}
\]
for some constant $\usec{pf:p:inv:1:1}(\Phi)>0$.
Further, 
$\Exp\abs*{\log|\widehat{\mathcal{F}}_{\alpha,n}(z)|}$
is a constant because $\widehat{\mathcal{F}}_{\alpha,n}$ is $N_{\CC}(0,1)$.
Therefore 
\[
\int
\abs*{
\laplacian\fbr*{\frac{1}{z^l}\Phi\fbr*{\frac{z}{R}}}
}
\fbr*{\Exp\tbr*{\abs*{
\log|\widehat{\mathcal{F}}_{\alpha,n}(z)|
}}}
\deri\el(z) 
\leq 
\usec{pf:p:inv:1:2}(\Phi)\cdot 
l^2\cdot 
\frac{1}{r_0^l}\cdot
\frac{1}{R^l}\;,
\]
for some constant $\usec{pf:p:inv:1:1}(\Phi)>0$.
This proves \eqref{eq:invroot1}.

\paragraph{Proof of \eqref{eq:invroot2}:}

Let $\bbK_n$ be the covariance kernel of $\falphagafn$, that is $\bbK_n(z,w)=\sum_{k=0}^n\frac{(z\overline{w})^k}{(k!)^\alpha}$.
Since $\Phi$ is a radial function on $\CC$,
there exists a function $\widetilde{\Phi}$ on 
$\RR_{\ge 0}$ such that $\Phi(z)=\widetilde{\Phi}(|z|)$. 
Therefore 
\[
\Exp\tbr*{\int\frac{1}{|z|^l}\Phi\fbr*{\frac{z}{R}} \deri[\alphagafn](z)}
=\usec{pf:p:inv:1:3}
\int\frac{1}{r^l}\widetilde{\Phi}\fbr*{\frac{r}{R}}
\laplacian\log\fbr*{\bbK_n(r,r)}^{1/2} r \deri r\;,
\]
for some constant $\usec{pf:p:inv:1:3}>0$.
Integrating by parts we get
\[
\abs*{\int\frac{1}{r^l}\widetilde{\Phi}\fbr*{\frac{z}{R}}
\laplacian\log\fbr*{\bbK_n(r,r)}^{1/2} 
r \deri r } 
\leq 
\int
\abs*{\laplacian\fbr*{\frac{1}{r^l}
\widetilde{\Phi}\fbr*{\frac{z}{R}}}}
\log\fbr*{\bbK_n(r,r)}^{1/2} r \deri r\;. 
\]
For $r/R$ in the support of $\widetilde{\phi}$ we have 
\begingroup
\addtolength{\jot}{0.26em}
\begin{align*}
\log \fbr*{\bbK_n(r,r)}^{1/2}
\leq{} & \log\fbr*{\bbK(r,r)}^{1/2}\\
\leq{} & \usec{pf:p:inv:1:4}(\alpha)\fbr*{ r^{2/\alpha}+
\abs*{\log r}}
\leq{} \usec{pf:p:inv:1:5}(\alpha)\fbr*{ r_0^{2/\alpha} +
\abs*{\log r_0} + \usec{pf:p:inv:1:6}} 
R^{2/\alpha}
\end{align*}
\endgroup
and 
\[
\abs*{\laplacian\fbr*{\frac{1}{r^l}\Phi\fbr*{\frac{r}{R}}}}\le \usec{pf:p:inv:1:7}(\Phi)\cdot l^2 \cdot \frac{1}{R^{l+2}} \cdot \frac{1}{r_0^{l+2}}\;.
\]
Therefore  
\[
\int
\abs*{\laplacian\fbr*{\frac{1}{r^l}\widetilde{\Phi}\fbr*{\frac{r}{R}}}}\log\fbr*{\bbK_n(z,z)} r \deri r 
\leq \usec{pf:p:inv:1:8}(\Phi)
\cdot l^2
\cdot \frac{1}{R^{l-\frac{2}{\alpha}}}
\cdot \frac{1}{r_0^{l-\frac{2}{\alpha}}}\;.
\]
This proves \eqref{eq:invroot2} and concludes the proof of Proposition~\ref{p:inv:1}. 

\subsection{Proof of Proposition~\ref{p:inv:2}}
\label{ss:pf:p:inv:2}

We have 
\[
\Exp\tbr*{\abs*{\;
\int\frac{\widetilde{\varphi}(z)}{z^l}
\deri[\alphagafn](z)}
\;}
\leq
\Exp\tbr*{\;
\int\frac{\widetilde{\varphi}(z)}{|z|^l}
\deri[\alphagafn](z)
\;}
=\usec{pf:p:inv:2:1}
\int\frac{\widetilde{\varphi}(z)}{|z|^l}
\laplacian\log\bbK_n(z,z)\deri\el(z)
\]
for some constant $\usec{pf:p:inv:2:1}>0$. 
Recall $0\leq\widetilde{\varphi}\leq 1$.
Using the uniform convergence of 
the continuous functions 
$\laplacian\log\bbK_n(z,z)\to\laplacian\log\bbK(z,z)<\infty$
on the (compact) support of 
$\widetilde{\varphi}$, we deduce that
\[
\int\frac{\widetilde{\varphi}(z)}{|z|^l}
\laplacian\log\bbK_n(z,z)\deri\el(z)\leq 
\usec{pf:p:inv:2:2}\cdot l^2 \cdot \frac{1}{r_0^l}
\]
for some constant $\usec{pf:p:inv:2:2}>0$. 
Letting $n\to\infty$ we obtain the same bound for $\alphagaf$.

\subsection{Proof of Proposition~\ref{p:inv:3}}
\label{ss:pf:p:inv:3}

Consider $n\in\NN$ and $l\geq 1$. 
Using Propositions~\ref{p:inv:1} and 
\ref{p:inv:2} we get:
\[
\Exp\tbr*{\absinv{\falphagafn}{l}{j}{j+1}}
=\Exp\tbr*{\abs*{\int\frac{\phi_j(z)}{z^l}
\deri[\alphagafn]}}
< \usec{pf:p:inv:3:1}\cdot 
l^2\cdot 
\frac{1}{r_0^l}\cdot 
\exp\fbr*{ - j l }  
\]
for all $j\geq 0$. For $j=0$, the upper bound is given by Proposition~\ref{p:inv:2}. For $j>0$, the upper bound is given by (\ref{eq:invroot2}) with $R=e^{j}$ and $\Phi=\phi_j$. Hence for all $j\geq 0$ we have 
\[
\sum_{j^\prime=j}^{\infty}
\Exp\tbr*{\absinv{\falphagafn}{l}{j^\prime}{j^\prime+1}}
< \usec{pf:p:inv:3:2} \cdot 
l^2 \cdot 
\frac{1}{r_0^l} \cdot 
\exp\fbr*{ - j l }\;.   
\]
Hence we get that the infinite sum in the statement 
is well-defined and the bound on the expectation 
also follows. The bound on the probability follows
simply from Markov's Inequality.

\subsection{Proof of Proposition~\ref{p:van}}\label{ss:pf:p:van}

Suppose the event $\OGm{n}{\thetagap}$ occurs.
Let $\uo$ be a vector consisting of points of $\alphagafnout$.
For $\uz,\uzp\in\Dset^m$ we have
\[
\frac{\van{\uzp\cc\uo}}{\van{\uz\cc\uo}}
\;=\;
\frac{\van{\uzp}}{\van{\uz}}\cdot
\frac{\vancross{\uzp}{\uo}}{\vancross{\uz}{\uo}}\;. 
\]
To bound $\abs*{\vancross{\uzp}{\uo}/\vancross{\uz}{\uo}}$ 
from above and below uniformly in $\uz,\uzp\in\Dset^m$, 
it is sufficient to bound 
$\abs*{\vancross{\uzp}{\uo}/\vancross{\underline{0}}{\uo}}$ 
from above and below uniformly in $\uz\in\Dset^m$. Here 
$\underline{0}\in\Dset^m$ is the vector of all zeros.
Observe that
\[
\abs*{\frac{\vancross{\uzp}{\uo}}{\vancross{\underline{0}}{\uo}}}=
\prod_{i=1}^m
\prod_{j=1}^{n-m}
\abs*{\frac{\zeta_i-\omega_j}{\omega_j}}\;.
\]
Therefore, to bound 
$\abs*{\vancross{\uzp}{\uo}/\vancross{\underline{0}}{\uo}}$
it suffices to bound 
$\prod_{j=1}^{n-m}\abs*{\frac{\zeta_0-\omega_j}{\omega_j}}$ 
uniformly for $\zeta_0\in\Dset$. 
Therefore, let us fix a $\zeta_0\in\Dset$.
Due to the $\thetagap$-separation between $\partial\Dset$ and $\uo$,
the ratio $\theta_j\coloneqq\frac{\zeta_0}{\omega_j}$ satisfies 
\[
|\theta_j|\leq\frac{r_0}{r_0+\thetagap}<1\;.
\]
Thus 
\[
0 < \frac{\thetagap}{r_0+\thetagap} \leq 1-|\theta_j|.
\]
Let $\log$ be the branch of complex logarithm
given by the power series 
\[
\log\fbr*{ 1-z} = - \sum_{k=1}^\infty\frac{z^k}{k}
\]
for $|z|<1$. Then we have 
\[
\log|1-\theta_j|
= \mathfrak{R}\log(1-\theta_j)
= - \mathfrak{R}\fbr*{\sum_{k=1}^{\moment-1}\frac{\theta_j^k}{k}}
+f(\theta_j)\;,
\]
where 
\[
f(\theta_j) = -\mathfrak{R}\fbr*{\sum_{k=\moment}^{\infty}
\frac{\theta_j^k}{k}}\;.
\]
Then
\[
\abs*{f(\theta_j)}
\leq \frac{|\theta_j|^{\moment}}{1-|\theta_j|}
\leq \frac{r_0+\thetagap}{\thetagap}|\theta_j|^{\moment}\;.
\]
Hence for some constant $\usekd{pf:p:van}>0$ we have:
%\begingroup
%\addtolength{\jot}{0.26em}
\begin{align*}
\abs*{\log\fbr*{\prod_{j=1}^{n-m}\abs*{\frac{\zeta_0-\omega_j}{\omega_j}}}}
= & \abs*{\sum_{j=1}^{n-m}\log\abs*{\frac{\zeta_0-\omega_j}{\omega_j}}}\\
= & \abs*{\mathfrak{R}
\fbr*{\sum_{k=1}^{\moment-1}\frac{1}{k}\sum_{j=1}^{n-m}\theta_j^k}
+\fbr*{\sum_{j=1}^{n-m}f(\theta_j)}}\\
\leq & \sum_{k=1}^{\moment-1}\frac{1}{k}\abs*{\sum_{j=1}^{n-m}\theta_j^k}
+\frac{r_0+\thetagap}{\thetagap}\sum_{j=1}^{n-m}|\theta_j|^{\moment}\\
< & \sum_{k=1}^{\moment-1}
\frac{r_0^k}{k}
\abs*{\sum_{j=1}^{n-m}\frac{1}{\omega_j^k}}
+\frac{(r_0+1)r_0^{\moment}}{\thetagap}\sum_{j=1}^{n-m}\frac{1}{|\omega_j|^{\moment}}\\
\leq{} & \usekd{pf:p:van}\thetagap^{-1}\X_n\;.
\end{align*}
%\endgroup
Here we have used the fact $\theta<1$ and the definition (c.f. Notation \ref{n:gafsum} for GAF-s)
\[\X_n\coloneqq{} 
\sum_{k=1}^{\moment-1} 
\abs*{\sum_{\omega_j\in\alphagafnout} 
\frac{1}{\omega_j^k}}
+ \sum_{\omega_j\in\alphagafnout} 
\frac{1}{|\omega_j|^{\moment}}.\]
Thus, we get a bound not involving $\zeta_0$. 
This completes the proof of Proposition~\ref{p:van}.

\subsection{Proof of Proposition~\ref{p:sym}}
\label{ss:pf:p:sym}

Consider 
$n$, $m$, $\us$, $\uo$, $\uz$, $\uzp$ 
as in the statement
of this proposition.
We start with some simple observations:
\begin{gather}
\mbox{for all $0\leq k\leq n\,,$}
\quad
\sym{k}{\uz\cc\uo} = 
\sum_{i=0}^{m}\sym{i}{\uz}\sym{k-i}{\uo},
\;\mbox{ and }\;
\sym{k}{\uzp\cc\uo} = 
\sum_{i=0}^{m}\sym{i}{\uzp}\sym{k-i}{\uo}\;;\label{eq:p:sym:p:1}\\
\mbox{for all $1\le k\le\rigidity-1\,,$}
\quad
\sym{k}{\uz}=\sym{k}{\uzp}=s_k\,;
\qquad
\sym{0}{\uz}=\sym{0}{\uzp}=1\;;\label{eq:p:sym:p:2}\\
\mbox{for all $1\le i\le m\,,$}
\quad
\abs*{\sym{i}{\uz}}<{\binom{m}{i}}r_0^i\,,
\;\mbox{ and }
\abs*{\sym{i}{\uzp}}<{\binom{m}{i}}r_0^i\;.
\label{eq:p:sym:p:3}
\end{gather}
Using \eqref{eq:p:sym:p:1} and
\eqref{eq:p:sym:p:2}
we get for all $0\leq k\leq n$
\[
\sym{k}{\uzp\cc\uo}
=\sym{k}{\uz\cc\uo}
+\sum_{j=\rigidity}^{m}\fbr*{\sym{j}{\uzp}-\sym{j}{\uz}}\cdot
\sym{k-j}{\uo}\;.
\]
Using the identity
\[
\abs*{\sum_p a_p}^2 
= \sum_p\abs*{a_p}^2 
+ 2\sum_{p<q} \Re\fbr*{ a_p \overline{a_q}}\;, 
\]
we get
\begingroup
\addtolength{\jot}{0.25em}
\begin{align*}  
\abs*{\sym{k}{\uzp\cc\uo}}^2 ={} 
& \abs*{\sym{k}{\uz\cc\uo}}^2 \\ 
& + \sum_{j=\rigidity}^m 
\abs*{\sym{j}{\uzp}-\sym{j}{\uz}}^2
\cdot\abs*{\sym{k-j}{\uo}}^2\\ 
& + 
\sum_{j=\rigidity}^m 
2\Re\fbr*{\fbr[\big]{\sym{j}{\uzp}-\sym{j}{\uz}}\cdot
\overline{\sym{k}{\uz\cc\uo}}\cdot 
\sym{k-j}{\uo}}\\ 
& + 
\sum_{\rigidity\leq i<j\leq m}
2\Re\fbr*{\overline{\fbr[\big]{\sym{i}{\uzp}-\sym{i}{\uz}}}\cdot
\fbr[\big]{\sym{j}{\uzp}-\sym{j}{\uz}}\cdot 
\overline{\sym{k-i}{\uo}}
\cdot\sym{k-j}{\uo}}\;. 
\end{align*}
Dividing throughout by $\ncka$ and then summing the above over 
$0\leq k\leq n$ we get
\begingroup
\addtolength{\jot}{0.25em}
\begin{align*}
\vecsymn{n}{0}{(\uzp\cc\uo)} 
={} & \vecsymn{n}{0}{(\uz\cc\uo)}\\
& + \sum_{j=\rigidity}^{m}
\abs*{\sym{j}{\uzp}-\sym{j}{\uz}}^2\cdot
\Big\langle\vecsymb{n}{j}{\uo},
\vecsym{n}{j}{\uo}\Big\rangle
\\
& +  
\sum_{j=\rigidity}^{m} 
2\Re\fbr*{\sym{j}{\uzp}-\sym{j}{\uz}} 
\cdot \Big\langle\vecsymb{n}{0}{(\uz\cc\uo)}
,\vecsym{n}{j}{\uo}\Big\rangle
\\
 & + 
\sum_{\rigidity\leq i<j\leq m}
2\Re\fbr*{\overline{\fbr[\big]{\sym{i}{\uzp}-\sym{i}{\uz}}}\cdot 
\fbr[\big]{\sym{j}{\uzp}-\sym{j}{\uz}}\cdot
\Big\langle\vecsymb{n}{i}{\uo}
\cdot\vecsym{n}{j}{\uo}\Big\rangle}\;.
\end{align*}
\endgroup
Using triangle inequality we get  
\begin{equation}\label{eq:p:sym:p:4}
\fD{\uz}{\uo} 
- \FA{\uz}{\uzp}{\uo}  
\le \fD{\uzp}{\uo} 
\le \fD{\uz}{\uo} 
+ \FA{\uz}{\uzp}{\uo}\;,
\end{equation}
where 
\begingroup
\addtolength{\jot}{0.25em}
\begin{align*} 
\FA{\uz}{\uzp}{\uo}\coloneqq{} 
& \sum_{j=\rigidity}^m 
\abs*{\sym{j}{\uzp}-\sym{j}{\uz}}^2 
\cdot\abs*{\Big\langle\vecsymb{n}{j}{\uo},
\vecsym{n}{j}{\uo}\Big\rangle}
\\
& + 
2 \sum_{j=\rigidity}^{m} 
\abs*{\sym{j}{\uzp}-\sym{j}{\uz}}\cdot 
\abs*{\Big\langle\vecsymb{n}{0}{(\uz\cc\uo)},
\vecsym{n}{j}{\uo}\Big\rangle}\\
& + 
2 \sum_{\rigidity\leq i<j\leq m} 
\abs*{\sym{i}{\uzp}-\sym{i}{\uz}}\cdot
\abs*{\sym{j}{\uzp}-\sym{j}{\uz}}\cdot 
\abs*{\Big\langle\vecsymb{n}{i}{\uo},
\vecsym{n}{j}{\uo}\Big\rangle}\;.
\end{align*}
\endgroup
Dividing throughout by $\fD{\uz}{\uo}$ in \eqref{eq:p:sym:p:4}
we get
\begin{equation}\label{eq:p:sym:p:5}
1-\FB{\uz}{\uzp}{\uo}
\leq
\frac{\fD{\uzp}{\uo}}{\fD{\uz}{\uo}}
\leq
1+\FB{\uz}{\uzp}{\uo}\;,
\end{equation}
where
\begingroup
\addtolength{\jot}{0.25em}
\begin{align*} 
\FB{\uz}{\uzp}{\uo}\coloneqq{} 
& \sum_{j=\rigidity}^m 
\abs*{\sym{j}{\uzp}-\sym{j}{\uz}}^2\cdot 
\frac{\abs*{\Big\langle\vecsymb{n}{0}{\uo},
\vecsym{n}{j}{\uo}\Big\rangle}}
{\vecsymn{n}{0}{(\uz\cc\uo)}}
\\
& + 
2 \sum_{j=\rigidity}^{m} 
\abs*{\sym{j}{\uzp}-\sym{j}{\uz}}\cdot 
\frac{\abs*{\Big\langle\vecsymb{n}{0}{(\uz\cc\uo)},
\vecsym{n}{j}{\uo}\Big\rangle}}
{\vecsymn{n}{0}{(\uz\cc\uo)}}\\
& + 
2 \sum_{\rigidity\leq i<j\leq m} 
\abs*{\sym{i}{\uzp}-\sym{i}{\uz}}\cdot
\abs*{\sym{j}{\uzp}-\sym{j}{\uz}}\cdot 
\frac{\abs*{\Big\langle\vecsymb{n}{i}{\uo},
\vecsym{n}{j}{\uo}\Big\rangle}}
{\vecsymn{n}{0}{(\uz\cc\uo)}}\;.
\numberthis\label{eq:p:sym:p:6}\end{align*}
\endgroup
Using \eqref{eq:p:sym:p:1} we get 
    \begin{align*}
        \abs*{\Big\langle\vecsymb{n}{0}{(\uz\cc\uo)},
        \vecsym{n}{j}{\uo}\Big\rangle} & = \abs*{\sum_{k=0}^n\frac{\overline{\sigma_k(\uz\cc\uo)}\sigma_{k-j}(\uo)}{\fbr*{\binom{n}{k}k!}^{\alpha}}}\\
        & = \abs*{\sum_{k=0}^n\sum_{i=0}^m\frac{\overline{\sigma_i(\uz)}\overline{\sigma_{k-i}(\uo)}\sigma_{k-j}(\uo)}{\fbr*{\binom{n}{k}k!}^{\alpha}}}\\
        & = \abs*{\sum_{i=0}^m\overline{\sigma_i(\uz)}\sum_{k=0}^n\frac{\overline{\sigma_{k-i}(\uo)}\sigma_{k-j}(\uo)}{\fbr*{\binom{n}{k}k!}^{\alpha}}}\\
        & \leq \sum_{i=0}^m \abs*{\sigma_i(\uz)}\cdot\abs*{\Big\langle\vecsymb{n}{i}{(\uo)},
        \vecsym{n}{j}{\uo}\Big\rangle}, 
    \end{align*}
    and thus
\begingroup
\addtolength{\jot}{0.25em}
\begin{align*}
& \sum_{j=\rigidity}^{m} 
\abs*{\sym{j}{\uzp}-\sym{j}{\uz}}\cdot
\frac{\abs*{\Big\langle\vecsymb{n}{0}{(\uz\cc\uo)},
\vecsym{n}{j}{\uo}\Big\rangle}}
{\vecsymn{n}{0}{(\uz\cc\uo)}}\\ 
\leq{} &
\sum_{i=0}^{m}
\sum_{j=\rigidity}^{m}
\abs*{\sym{j}{\uzp}-\sym{j}{\uz}}\cdot
\abs*{\sym{i}{\uz}}\cdot
\frac{\abs*{\Big\langle\vecsymb{n}{i}{\uo},
\vecsym{n}{j}{\uo}\Big\rangle}}
{\vecsymn{n}{0}{(\uz\cc\uo)}}\;.
\numberthis\label{eq:p:sym:p:7}
\end{align*}
\endgroup
Combining \eqref{eq:p:sym:p:6}, \eqref{eq:p:sym:p:7} and using \eqref{eq:Dhatdef} and \eqref{eq:p:sym:p:3} we get
\[
\FB{\uz}{\uzp}{\uo}
\leq \usekdm{pf:p:sym} \cdot \fDhat{\uz}{\uo}
\]
for some $\usekdm{pf:p:sym}$.
Therefore, from \eqref{eq:p:sym:p:5} we get
\[
1 - \usekdm{pf:p:sym}\cdot\fDhat{\uz}{\uo}
\leq \frac{\fD{\uzp}{\uo}}{\fD{\uz}{\uo}}
\leq 1 + \usekdm{pf:p:sym}\cdot\fDhat{\uz}{\uo}\;.
\]
This concludes the proof of Proposition~\ref{p:sym}.

\subsection{Proof of Proposition~\ref{p:rec}}
\label{ss:pf:p:rec}

Suppose the event $\omn$ happens.
Let $\uz$ be a vector consisting 
of the roots of $\falphagafn$ 
inside $\Dset$.
Fix $1\leq k\leq n-m$. 
Then we have
\[
\sym{k}{\uz\cc\uo}=
\sum_{r=0}^{\min\{k,m\}}
\sym{r}{\uz}
\sym{k-r}{\uo}\;.
\]
It follows that
\[
\sym{k}{\uo}
=\sym{k}{\uz\cc\uo}-
\sum_{r=1}^{\min\{k,m\}}
\sym{r}{\uz}\sym{k-r}{\uo}\;. 
\]
We can similarly expand each of the lower order term
$\sym{k-r}{\uo}$ in terms of 
$\sym{j}{\uz\cc\uo}$ and obtain an expansion of $\sym{k}{\uo}$
in terms of $\sym{j}{\uz\cc\uo}$, $j=1,\dots,k$.
In this way we get
\begin{equation}\label{eq:pf:p:rec:1}
\sym{k}{\uo}=\sym{k}{\uz\cc\uo}
+\sum_{r=1}^k\g_r\;\sym{k-r}{\uz\cc\uo}\;.
\end{equation}
The coefficient of $\sym{k}{\uz\cc\uo}$ is $1$. 
The rest of the coefficients are polynomials in $\sym{j}{\uz}$, 
$j=1,\dots,m$.
They satisfy the recurrence relation
\begin{equation}\label{eq:pf:p:rec:2}
\begin{pmatrix}
\g_i\\
\g_{i-1}\\
\vdots\\
\g_{i-m+1}
\end{pmatrix}
= A 
\begin{pmatrix}
\g_{i-1}\\
\g_{i-2}\\
\vdots \\
\g_{i-m}
\end{pmatrix}
\end{equation}
where 
\[
A\coloneqq\begin{pmatrix}
-\sym{1}{\uz} & -\sym{2}{\uz} & \cdots & -\sym{m}{\uz}\\
1 & 0 & \cdots & 0\\
\vdots & \vdots & \ddots & \vdots\\
0 & 0 & 1 & 0
\end{pmatrix}
\]
with the boundary conditions $\g_i=-\sym{i}{\uz}$ 
for $i=0,\dots,m-1$ and $\g_i=0$ for $i<0$. The 
eigenvalues of $A$ are precisely the negatives of 
the inside zeros $\zeta_1,\dots,\zeta_m$. Due to 
the recursive structure in \eqref{eq:pf:p:rec:2}, $\g_k$ is 
an element of $A^k$ applied to the vector 
$\fbr*{-\sym{m-1}{\uz},\dots,-\sym{0}{\uz}}$. This 
implies $\g_k$ is a linear combination of the 
entries of $A^k$ (with the coefficients in the 
linear combination being independent of $k$, but 
depending on $\Dset$ and $m$). If the eigenvalues 
of a matrix $A$ have modulus $<\rho^\prime$, then 
the entries of the matrix $A^k$ are $o(\rho^{\prime k})$. 
Now the eigenvalues of $A$ are $\zeta_i$'s and for 
each $i$ we have $|\zeta_i|\leq r_0$. 
Therefore, for $1\leq r\leq n$ we have 
$\abs*{\g_r} \leq \fbr*{\usekdm{pfrec1}}^r$
for some constant $\usekdm{pfrec1}>0$.

Switching variable to $l=n-k$ in \eqref{eq:pf:p:rec:1} 
we get  
\[
\frac{\sym{n-l}{\uo}}
{\fbr*{\binom{n}{n-l}(n-l)!}^{\alpha/2}} = 
\sum_{r=0}^{n-l}(-1)^{n-l-r}\cdot  
\g_r\cdot 
\frac{\xi_{l+r}}{\xi_n}\cdot
\frac{1}{\ffa{l}{r}}
\]
with $\g_0\coloneqq1$. 

Using $\ffa{l}{r} \geq l^{r\alpha/2}$ 
and the fact that $\Exp\abs*{\xi}^2=1$
if $\xi$ is a complex Gaussian random variable, 
we get 
\[
\fbr*{\Exp\fbr*{\abs*{\tail{l}{n}}^2
\indicator{\omn}}}^{1/2}
\leq 
\sum_{r=\lev}^{n-l}
\frac{\fbr*{\usekdm{pfrec1}}^r}{l^{r\alpha/2}}
\leq 
\frac{\fbr[\big]{\usekdm{pfrec2}}^{\lev}}{l^{\lev\alpha/2}}
\]
for $l\geq\usekdm{pfrec3}$. 
This concludes the proof of 
Proposition~\ref{p:rec}.

\section{Proofs of the results in Step 1 of Section~\ref{sec:limitinggaf}}

\subsection{Some auxiliary lemmas}

\begin{lemma}\label{l:kd/i:1}
Let $\bbF$ be either $\falphagaf$ or $\falphagafn$ for some $n\in\NN$. 
Then, except on a $\delta^3$-negligible event, we have:
\begingroup
\addtolength{\jot}{0.26em}
\begin{align*}
\max_{1\leq l\leq \Cdel}
\abs*{\inv{\bbF}{l}{0}{\infty}-\inv{\bbF}{l}{0}{\kdel}}
\leq{} & \delta\wedge\edel\;;\\
\abs*{\invabs{\bbF}{\moment}{0}{\infty}-
\invabs{\bbF}{\moment}{0}{\kdel}}
\leq{} & \delta\wedge\edel\;.
\end{align*}
\endgroup
\end{lemma}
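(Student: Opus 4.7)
The plan is to observe that the quantities to be controlled are exactly the tail sums $\inv{\bbF}{l}{\kdel}{\infty}$ and $\invabs{\bbF}{\moment}{\kdel}{\infty}$ (by definition of the segment sums), and then to apply the probabilistic tail bounds from Propositions~\ref{p:inv:3} and \ref{p:inv:4} at the threshold $j=\kdel$, combined with the two defining conditions \ref{c:kd:1} and \ref{c:kd:2} for $\kdel$.

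More concretely, first I would note that
\[
\inv{\bbF}{l}{0}{\infty}-\inv{\bbF}{l}{0}{\kdel}
\;=\;\inv{\bbF}{l}{\kdel}{\infty}\;,
\qquad
\invabs{\bbF}{\moment}{0}{\infty}-\invabs{\bbF}{\moment}{0}{\kdel}
\;=\;\invabs{\bbF}{\moment}{\kdel}{\infty}\;.
\]
For each $1\leq l\leq\Cdel$, Proposition~\ref{p:inv:3} with $j=\kdel$ gives
\[
\Prob*{\,\absinv{\bbF}{l}{\kdel}{\infty}>\usec{p43}\cdot l^2\cdot\tfrac{1}{r_0^l}\cdot\exp\fbr*{-\kdel\tfrac{l}{2}}\,}
\;\leq\;\exp\fbr*{-\kdel\tfrac{l}{2}}\;,
\]
and condition~\ref{c:kd:2} on $\kdel$ ensures that the deterministic threshold on the right is at most $\delta\wedge\edel$ uniformly in $1\leq l\leq\Cdel$. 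Thus for each such $l$ the event $\{\,\absinv{\bbF}{l}{\kdel}{\infty}>\delta\wedge\edel\,\}$ has probability at most $\exp(-\kdel l/2)$.

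Taking a union bound over $1\leq l\leq\Cdel$ yields
\[
\Prob*{\,\max_{1\leq l\leq\Cdel}\absinv{\bbF}{l}{\kdel}{\infty}>\delta\wedge\edel\,}
\;\leq\;\sum_{l=1}^{\Cdel}\exp\fbr*{-\kdel\tfrac{l}{2}}
\;\leq\;\sum_{l\geq 1}\exp\fbr*{-\kdel\tfrac{l}{2}}
\;\leq\;\delta^3\;,
\]
where the last inequality is precisely condition~\ref{c:kd:1}. The same argument, using Proposition~\ref{p:inv:4} in place of Proposition~\ref{p:inv:3}, bounds $\invabs{\bbF}{\moment}{\kdel}{\infty}$ with exceptional probability at most $\exp(-\kdel\moment/2)\leq\delta^3$. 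Combining these two exceptional events (still $\delta^3$-negligible up to a multiplicative constant absorbed into the definition) produces the desired conclusion.

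There is essentially no obstacle here; the lemma is a bookkeeping statement whose entire content is encoded in how $\kdel$ was chosen in Section~\ref{ss:Rdkd}. The only mild subtlety is making sure the bounds of Propositions~\ref{p:inv:3} and \ref{p:inv:4} apply uniformly for both the finite-$n$ processes $\falphagafn$ and the infinite process $\falphagaf$, which is already part of those propositions' statements, so no additional work is required.
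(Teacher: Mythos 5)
Your proposal is correct and follows essentially the same route as the paper: identify the differences as the tail sums past level $\kdel$, apply Propositions~\ref{p:inv:3} and \ref{p:inv:4} at $j=\kdel$, use condition~\ref{c:kd:1} for the union bound giving a $\delta^3$-negligible exceptional event, and condition~\ref{c:kd:2} for the deterministic bound $\delta\wedge\edel$. No gaps.
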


\begin{proof}
From Propositions~\ref{p:inv:3} 
and condition~\ref{c:kd:1}
defining $\kdel$ we have 
\begingroup
\addtolength{\jot}{0.26em}
\begin{align*}
& \sum_{1\leq l\leq \Cdel}
    \PROB \fbr*{ 
    \absinv{ \bbF }{ l }{ \kdel }{ \infty }
    > \usec{p43}\cdot l^2\cdot \frac{1}{r_0^l}\cdot 
    \exp \fbr*{ - \kdel \frac{l}{2} }
    } \\
\leq{} & \sum_{1\leq l\leq \Cdel} \exp \fbr*{ - \kdel \frac{l}{2} } 
\;\leq\; \delta^3\;.
\end{align*}
\endgroup
Therefore, using condition~\ref{c:kd:2} defining $\kdel$,
we get that except on a $\delta^3$-negligible event
\begingroup
\addtolength{\jot}{0.26em}
\begin{align*}
& \max_{1\leq l\leq \Cdel}
\abs*{\inv{\bbF}{l}{0}{\infty}-\inv{\bbF}{l}{0}{\kdel}}\\
\leq{} & \max_{1\leq l\leq \Cdel} \usec{p43}\cdot l^2\cdot \frac{1}{r_0^l}\cdot \exp\fbr*{-\kdel\frac{l}{2}}
\;\leq\; \delta\wedge\edel\;.
\end{align*}
\endgroup
Using Propositions~\ref{p:inv:4} and 
condition~\ref{c:kd:1} defining $\kdel$,
we have
\begingroup
\addtolength{\jot}{0.26em}
\begin{align*}
& \PROB \fbr*{ 
\invabs{ \bbF }{ \moment }{ \kdel }{ \infty }
> \usec{p44}\cdot \sqmoment\cdot \frac{1}{r_0^{\moment}}
\cdot \exp\fbr*{-\kdel\frac{\moment}{2}} 
} \\
\leq{} & \exp\fbr*{-\kdel\frac{\moment}{2}} 
\;\leq\; \delta^3\;.
\end{align*}
\endgroup
Therefore, using condition~\ref{c:kd:2} defining $\kdel$,
we get that except on a $\delta^3$-negligible event 
\begingroup
\addtolength{\jot}{0.26em}
\begin{align*}
& \abs*{\invabs{\bbF}{\moment}{0}{\infty}-
\invabs{\bbF}{\moment}{0}{\kdel}}\\
\leq{} &  
\usec{p44} \cdot \sqmoment \cdot \frac{1}{r_0^{\moment}}
\cdot \exp\fbr*{-\kdel\frac{\moment}{2}} 
\;\leq\; \delta\wedge\edel\;.
\end{align*}
\endgroup
This concludes the proof of Lemma~\ref{l:kd/i:1}.
\end{proof}

\begin{lemma}\label{l:kd/i:2} 
Consider $\thetagap\in(0,1)$ and $\delta\in(0,1)$.
Except on a $\delta^3$-negligible event we have:
\begingroup
\addtolength{\jot}{0.25em}
\begin{align*}
\max_{0\leq i\leq m}\;
\max_{\rigidity\leq j\leq m} 
\abs*{\QFI{\zpv}-\QFI{\zppv}}&<\delta\;;\\ 
\abs*{\QFZ{\zpv}-\QFZ{\zppv}}&<\delta\;.  
\end{align*}
\endgroup
\end{lemma}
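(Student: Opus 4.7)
The plan is to reduce the claim to the quantitative continuity properties built into the defining conditions of $\edel$ (namely conditions \ref{c:ed:1} and \ref{c:ed:2}), using Lemma~\ref{l:kd/i:1} for $\bbF = \fand$ as the key input. The two displayed inequalities have exactly the same structure: $\QFI{\zpv}$ and $\QFI{\zppv}$ (respectively $\QFZ{\zpv}$ and $\QFZ{\zppv}$) are the values of the same function applied to $\fbr*{P_k\fbr*{\uw^{\tn{k}}}}_{k=1}^{\Cdel}$ for the two vectors
\[
\uw_1 \coloneqq \fbr[\big]{\inv{\fand}{l}{0}{\kdel}}_{l=1}^{\Cdel}, \qquad \uw_2 \coloneqq \fbr[\big]{\inv{\fand}{l}{0}{\infty}}_{l=1}^{\Cdel},
\]
so the task is to verify the hypotheses of conditions \ref{c:ed:1} and \ref{c:ed:2} off a $\delta^3$-negligible event.

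First I would establish $\linf{\uw_1 - \uw_2} < \edel$. This is immediate from Lemma~\ref{l:kd/i:1} applied to $\bbF = \fand$, which gives, outside a $\delta^3$-negligible event $\calE_1$,
\[
\max_{1\leq l\leq \Cdel} \abs*{\inv{\fand}{l}{0}{\infty} - \inv{\fand}{l}{0}{\kdel}} \leq \delta\wedge\edel.
\]

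Next I would show $\linf{\uw_1} \vee \linf{\uw_2} < \gdel$ off a $\delta^3$-negligible event. Both truncations are dominated pointwise by the absolute tail sum $\absinv{\fand}{l}{0}{\infty}$, and condition~\ref{c:gd:1} defining $\gdel$ gives $\PROB\fbr{\absinv{\fand}{l}{0}{\infty} > \gdel - 1} < \delta^3/\Cdel$ for each $1\leq l\leq\Cdel$; a union bound over the $\Cdel$ values of $l$ yields, off a $\delta^3$-negligible event $\calE_2$,
\[
\max_{1\leq l\leq \Cdel}\fbr*{\abs*{\inv{\fand}{l}{0}{\kdel}} \vee \abs*{\inv{\fand}{l}{0}{\infty}}} \leq \gdel - 1 < \gdel.
\]

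Finally, on the complement of the $\delta^3$-negligible event $\calE_1\cup\calE_2$, both hypotheses of conditions~\ref{c:ed:1} and \ref{c:ed:2} in the definition of $\edel$ are satisfied by the vectors $\uw_1, \uw_2$. Since $\zpv = \fbr*{P_k\fbr*{\uw_1^{\tn{k}}}}_{k=1}^{\Cdel}$ and $\zppv = \fbr*{P_k\fbr*{\uw_2^{\tn{k}}}}_{k=1}^{\Cdel}$, applying condition~\ref{c:ed:1} gives $\max_{0\leq i\leq m}\max_{\rigidity\leq j\leq m} \abs*{\QFI{\zpv} - \QFI{\zppv}} < \delta$ and applying condition~\ref{c:ed:2} gives $\abs*{\QFZ{\zpv} - \QFZ{\zppv}} < \delta$, as required. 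No serious obstacle is expected — the whole point of choosing $\gdel$ via Propositions~\ref{p:inv:3}-\ref{p:inv:4} and of choosing $\edel$ to depend on $\gdel$ is to make exactly this bookkeeping step immediate.
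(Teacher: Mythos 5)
Your proof is correct and follows essentially the same route as the paper's: Lemma~\ref{l:kd/i:1} supplies the $\edel$-closeness of the two inverse-power-sum vectors, condition~\ref{c:gd:1} defining $\gdel$ (with a union bound over $l$) supplies the $\gdel$-boundedness, and then conditions~\ref{c:ed:1} and \ref{c:ed:2} defining $\edel$ give the two displayed inequalities. If anything you are slightly more explicit than the paper in checking the $\gdel$ bound for both the truncated and the full sums, which is the correct bookkeeping.
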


\begin{proof}
Condition~\ref{c:gd:1} defining $\gdel$ implies that
except on a $\delta^3$-negligible event we have 
\[
\max_{1 \leq l \leq \Cdel}
\absinv{\fand}{l}{0}{\kdel}
\leq \gdel-1\;. 
\]
By Lemma~\ref{l:kd/i:1} we have that except on a $\delta^3$-negligible event
\[
\max_{1\leq l\leq\Cdel}
\bigg|\inv{\fand}{l}{0}{\infty}-
\inv{\fand}{l}{0}{\kdel}\bigg|
<\edel\;.
\]
Therefore, using condition~\ref{c:ed:1} defining $\edel$ 
we get that except on a $\delta^3$-negligible event 
\[
\max_{0\leq i\leq m}\;
\max_{\rigidity\leq j\leq m}
\abs*{\QFI{\zpv}-\QFI{\zppv}}<\delta\;.
\]
Similarly, from condition~\ref{c:ed:2} defining $\edel$ 
we get that except on a $\delta^3$-negligible event  
\[
\abs*{\QFZ{\zpv}-\QFZ{\zppv}}<\delta\;.
\]
This concludes the proof of Lemma~\ref{l:kd/i:2}.
\end{proof}

\begin{lemma}\label{l:nd/i:1}
Consider $\thetagap\in(0,1)$ and $\delta\in(0,1)$.
Except on a $\delta^3$-negligible we have:
\begingroup
\addtolength{\jot}{0.25em}
\begin{align*}
\max_{1\leq l\leq \Cdel}
\bigg|\inv{\falphagaf}{l}{0}{\kdel}-
\inv{\fand}{l}{0}{\kdel}\bigg|
\leq\delta\wedge\edel\;;\numberthis\label{eq:nd/i:1:1}\\
\bigg|\invabs{\falphagaf}{\moment}{0}{\kdel}-
\invabs{\fand}{\moment}{0}{\kdel}\bigg|
\leq\delta\wedge\edel\;.\numberthis\label{eq:nd/i:1:2}
\end{align*}
\endgroup
\end{lemma}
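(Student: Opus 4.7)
The plan is to reduce both inequalities in the statement directly to the per-shell comparison estimates furnished by condition~\ref{c:nd:1} in the definition of $\nd$. Recall that, by construction,
\[
\inv{\falphagaf}{l}{0}{\kdel} - \inv{\fand}{l}{0}{\kdel}
\;=\; \sum_{k=0}^{\kdel-1}
\fbr[\Big]{\,\inv{\falphagaf}{l}{k}{k+1} - \inv{\fand}{l}{k}{k+1}\,}\;,
\]
and an analogous identity holds for the absolute-value sums $\invabs{\cdot}{\moment}{0}{\kdel}$. Thus both statements have the shape of a telescoping difference of at most $\kdel$ shell-wise differences, to which the triangle inequality applies term by term.

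Condition~\ref{c:nd:1} in the choice of $\nd$ provides a single event, of probability at most a constant multiple of $\delta^3$, outside of which one has the uniform bounds
\[
\max_{1\leq l\leq\Cdel}\;\max_{0\leq k\leq\kdel}\;
\abs*{\inv{\falphagaf}{l}{k}{k+1} - \inv{\fand}{l}{k}{k+1}}
\;<\; \frac{\delta\wedge\edel}{\kdel+1}\;,
\]
together with the corresponding estimate for the absolute-value shell sums with $l=\moment$. On the complement of this $\delta^3$-negligible event, I apply the triangle inequality to the telescoping identities above and obtain, for every $1\leq l\leq\Cdel$,
\[
\abs*{\inv{\falphagaf}{l}{0}{\kdel} - \inv{\fand}{l}{0}{\kdel}}
\;\leq\; \kdel\cdot\frac{\delta\wedge\edel}{\kdel+1}
\;<\; \delta\wedge\edel\;,
\]
which yields \eqref{eq:nd/i:1:1} after taking the maximum over $l$ (the single good event works uniformly in $l\leq\Cdel$). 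The same argument, specialized to $l=\moment$ and using the second bound in condition~\ref{c:nd:1}, delivers \eqref{eq:nd/i:1:2} on the same $\delta^3$-negligible exceptional set.

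There is essentially no obstacle here: the entire content of the lemma has already been absorbed into the design of $\nd$ via condition~\ref{c:nd:1}. The only care required is to ensure the exceptional event is chosen \emph{uniformly} in $l\leq\Cdel$ and $k\leq\kdel$ (rather than separately for each pair), which is already how condition~\ref{c:nd:1} is stated, and to note that the prefactor $\kdel/(\kdel+1)<1$ so the sum of $\kdel$ terms of size $(\delta\wedge\edel)/(\kdel+1)$ fits inside $\delta\wedge\edel$. The same exceptional event handles both inequalities, so the $\delta^3$-negligibility carries over without inflation.
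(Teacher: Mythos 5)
Your proposal is correct and follows exactly the paper's own argument: invoke condition~(i) in the definition of $\nd$ for the uniform shell-wise bounds off a single $\delta^3$-negligible event, then telescope $\inv{\bbF}{l}{0}{\kdel}=\sum_{k=0}^{\kdel-1}\inv{\bbF}{l}{k}{k+1}$ and apply the triangle inequality, with the factor $\kdel/(\kdel+1)<1$ closing the estimate. No differences worth noting.
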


\begin{proof}
From condition~\ref{c:nd:1} defining $\nd$ we have except on a $\delta^3$-negligible event 
\[
\max_{1\leq l\leq \Cdel}
\max_{0\leq k\leq \kdel}
\bigg|\inv{\falphagaf}{l}{k}{k+1}-
\inv{\fand}{l}{k}{k+1}\bigg|
\leq\frac{\delta\wedge\edel}{\kdel+1}\;.
\]
Using triangle inequality we get \eqref{eq:nd/i:1:1}.
From condition~\ref{c:nd:1} defining $\nd$ we have
except on a $\delta^3$-negligible event
\[
\max_{0\leq k\leq \kdel}
\bigg|\invabs{\falphagaf}{\moment}{k}{k+1}-
\invabs{\fand}{\moment}{k}{k+1}\bigg|
\leq\frac{\delta\wedge\edel}{\kdel+1}\;.
\]
Using triangle inequality we get \eqref{eq:nd/i:1:2}.
\end{proof}

\begin{lemma}\label{l:nd/i:2} 
Consider $\thetagap\in(0,1)$ and $\delta\in(0,1)$.
Except on a $\delta^3$-negligible event we have:
\begingroup
\addtolength{\jot}{0.25em}
\begin{align*}
\max_{0\leq i\leq m}\;
\max_{\rigidity\leq j\leq m} 
\abs*{\QFI{\zzv}-\QFI{\zpv}}<{}&\delta\;;\\ 
\abs*{\QFZ{\zzv}-\QFZ{\zpv}}<{}&\delta\;.  
\end{align*}
\endgroup
\end{lemma}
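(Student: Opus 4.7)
The plan is to mimic the proof of Lemma~\ref{l:kd/i:2} almost verbatim, replacing Lemma~\ref{l:kd/i:1} (which compared $\falphagaf$ power sums truncated at $\kdel$ to the full infinite tails) by Lemma~\ref{l:nd/i:1} (which compares $\falphagaf$ to $\fand$ on the truncated range). The key idea is that conditions~\ref{c:ed:1} and \ref{c:ed:2} were designed to express uniform continuity of $\QFI$ and $\QFZ$ on the Newton-polynomial image of the $\linf{\cdot}$-ball of radius $\gdel$, with modulus of continuity $\edel$; so it suffices to verify boundedness by $\gdel$ and closeness within $\edel$ for the two vectors of inverse power sums.

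First I would use condition~\ref{c:gd:1} defining $\gdel$: applied to $\bbF=\falphagaf$ and then to $\bbF=\falphagafn$ with $n=\nd$, together with a union bound over $1\leq l\leq\Cdel$, it gives that outside a $\delta^3$-negligible event
\[
\max_{1\leq l\leq\Cdel}\abs*{\inv{\falphagaf}{l}{0}{\kdel}}
\;\leq\;\max_{1\leq l\leq\Cdel}\sum_{j=0}^{\infty}\absinv{\falphagaf}{l}{j}{j+1}
\;\leq\;\gdel-1\;,
\]
and similarly for $\fand$. Setting
\[
\uw_1\coloneqq\fbr[\big]{\inv{\falphagaf}{l}{0}{\kdel}}_{l=1}^{\Cdel}\;,\qquad
\uw_2\coloneqq\fbr[\big]{\inv{\fand}{l}{0}{\kdel}}_{l=1}^{\Cdel}\;,
\]
we therefore have $\linf{\uw_1}\vee\linf{\uw_2}<\gdel$ off a $\delta^3$-negligible event.

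Next I would invoke Lemma~\ref{l:nd/i:1}, whose display~\eqref{eq:nd/i:1:1} directly gives, off a further $\delta^3$-negligible event,
\[
\linf{\uw_1-\uw_2}\;=\;\max_{1\leq l\leq\Cdel}\abs[\big]{\inv{\falphagaf}{l}{0}{\kdel}-\inv{\fand}{l}{0}{\kdel}}\;\leq\;\delta\wedge\edel\;\leq\;\edel\;.
\]
On the intersection of the good events above, the hypotheses of conditions~\ref{c:ed:1} and \ref{c:ed:2} defining $\edel$ are satisfied by the pair $(\uw_1,\uw_2)$. Observing that by construction $\zzv=\fbr*{P_k(\uw_1^{\tn{k}})}_{k=1}^{\Cdel}$ and $\zpv=\fbr*{P_k(\uw_2^{\tn{k}})}_{k=1}^{\Cdel}$, conditions~\ref{c:ed:1} and \ref{c:ed:2} yield exactly the two claimed bounds
\[
\max_{0\leq i\leq m}\,\max_{\rigidity\leq j\leq m}\abs*{\QFI{\zzv}-\QFI{\zpv}}<\delta\;,\qquad
\abs*{\QFZ{\zzv}-\QFZ{\zpv}}<\delta\;.
\]
A final union bound over the $O(1)$ many $\delta^3$-negligible exceptional events (still $\delta^3$-negligible up to an absolute constant) completes the proof.

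There is no genuine obstacle: all the analytic work is absorbed into the definitions of $\gdel$, $\edel$, $\kdel$, $\nd$, and into the tail/Gaussian estimates of Section~\ref{sec:finitegaf}. The only point that deserves a moment's care is the bookkeeping that guarantees the quantitative $L^\infty$ closeness $\delta\wedge\edel$ from Lemma~\ref{l:nd/i:1} actually plugs into the specific modulus of continuity $\edel$ built into conditions~\ref{c:ed:1}--\ref{c:ed:2}; this is precisely why $\nd$ in condition~\ref{c:nd:1} was chosen with tolerance $(\delta\wedge\edel)/(\kdel+1)$ so that the telescoping sum over annular pieces still lands below $\edel$ on the whole range $1\leq l\leq\Cdel$.
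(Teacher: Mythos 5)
Your proposal is correct and follows essentially the same route as the paper's proof: bound both truncated inverse-power-sum vectors by $\gdel$ via condition (i) defining $\gdel$, obtain $L^\infty$-closeness within $\edel$ from Lemma~\ref{l:nd/i:1}, and then invoke the uniform-continuity conditions (i)--(ii) defining $\edel$ applied to the Newton-polynomial images $\zzv$ and $\zpv$. The extra bookkeeping you spell out (the union bound over $l$ and over the finitely many exceptional events) is implicit in the paper's version but adds nothing new.
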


\begin{proof}
Condition~\ref{c:gd:1} defining $\gdel$ implies that
except on a $\delta^3$-negligible event we have 
\begingroup 
\addtolength{\jot}{0.25em}
\begin{align*}
\max_{1\leq l\leq \Cdel}
\absinv{\fand}{l}{0}{\kdel}
& \leq \gdel-1\;,\\ 
\max_{1\leq l\leq \Cdel}
\absinv{\falphagaf}{l}{0}{\kdel}
& \leq \gdel-1\;. 
\end{align*}
\endgroup
From Lemma~\ref{l:nd/i:1} we have that 
except on a $\delta^3$-negligible event
\[
\max_{1\leq l\leq \Cdel}
\abs*{\inv{\falphagaf}{l}{0}{\kdel}-
\inv{\fand}{l}{0}{\kdel}}
<\edel\;.
\]
Therefore, from condition~\ref{c:ed:1} defining $\edel$ we get that  
except on a $\delta^3$-negligible event 
\[
\max_{0\leq i\leq m}\;
\max_{\rigidity\leq j\leq m}
\abs*{\QFI{\zzv}-\QFI{\zpv}} < \delta\;.
\]
Similarly, from condition~\ref{c:ed:2} defining $\edel$ we get that 
except on a $\delta^3$-negligible event 
\[
\abs*{\QFZ{\zzv}-\QFZ{\zpv}} < \delta\;.
\]
This concludes the proof of Lemma~\ref{l:nd/i:2}.
\end{proof}

\subsection{Proof of Proposition~\ref{p:1:1}}
\label{ss:pf:p:1:1}

Consider $M>3$, $\thetagap\in(0,1)$. 
Our objective is to verify that 
for $\delta\in(0,1)$ sufficiently small
depending on $M$, the conditions defining 
$ \OMG{\thetagap}{\nd}{1:1}{M}{\delta} $
hold on the event 
$ \OMG{\thetagap}{\infty}{1:1}{M}{\delta} $ 
except on a $\delta^3$-negligible event. 
\begin{enumerate}[(i),font=\normalfont\bfseries,topsep=0pt]
\item From condition~\ref{c:i:1:1:1} in the definition of  
the event $\OMG{\thetagap}{\infty}{1:1}{M}{\delta}$ 
we have that 
the event $\OGm{\infty}{\thetagap}$ 
occurs. 
From condition~\ref{c:nd:4} in the definition of $\nd$ we have 
\[
\PROB\fbr[\Big]{\,\OGm{\infty}{\thetagap}\,\symmdiff\,\OGm{\nd}{\thetagap}\,}<\delta^3\;.
\]
Therefore, 
the event $\OGm{\nd}{\thetagap}$ 
occurs
on the event $\OMG{\thetagap}{\infty}{1:1}{M}{\delta}$
except on a $\delta^3$-negligible event. 
Therefore, condition~\ref{c:n:1:1:1} 
in the definition of 
the event $\OMG{\thetagap}{\nd}{1:1}{M}{\delta}$
holds on 
the event $\OMG{\thetagap}{\infty}{1:1}{M}{\delta}$
except on a $\delta^3$-negligible event. 

\item From condition~\ref{c:i:1:1:2} defining $ \OMG{\thetagap}{\infty}{1:1}{M}{\delta} $ we have
\[
\max_{1\leq s<\moment} \absinv{\falphagaf}{s}{0}{\kdel} \leq M\;.
\]
Using Lemma~\ref{l:nd/i:1}, sacrificing a $\delta^3$-negligible event, we get
\[
\max_{1\leq s<\moment}
\absinv{\fand}{s}{0}{\kdel}\leq M+\delta\leq M+1\;,
\]
which is condition~\ref{c:n:1:1:2} defining $\OMG{\thetagap}{\nd}{1:1}{M}{\delta}$. 

\item From condition~\ref{c:i:1:1:3} defining $\OMG{\thetagap}{\infty}{1:1}{M}{\delta}$ we have  
\[
\invabs{\falphagaf}{\moment}{0}{\kdel}\leq M\;.
\]
Using Lemma~\ref{l:nd/i:1}, sacrificing a $\delta^3$-negligible event, we get
\[
\invabs{\fand}{\moment}{0}{\kdel}\leq M+\delta\leq M+1\;,
\]
which is condition~\ref{c:n:1:1:3} defining 
$\OMG{\thetagap}{\nd}{1:1}{M}{\delta}$. 

\item From condition~\ref{c:i:1:1:4} 
    defining $\OMG{\thetagap}{\infty}{1:1}{M}{\delta}$ 
    we have
\[
\adjustlimits \max_{0\leq i\leq m} \max_{\rigidity\leq j\leq m} \QFI{\zzv} \leq M\;.
\]
By Lemma~\ref{l:nd/i:2}, assuming $\delta$ is small enough and 
sacrificing a $\delta^3$-negligible event, we get 
\[
\adjustlimits\max_{0\leq i\leq m} \max_{\rigidity\leq j\leq m} 
\QFI{\zpv}\leq M+\delta \leq M+1\;,
\]
which is condition~\ref{c:n:1:1:4} defining $\OMG{\thetagap}{\nd}{1:1}{M}{\delta}$.

\item From condition~\ref{c:i:1:1:5} 
    defining $\OMG{\thetagap}{\infty}{1:1}{M}{\delta}$
    we have
    \[
    M^{-1}<\QFZ{\zzv}<M\;.
    \]
By Lemma~\ref{l:nd/i:2}, 
    assuming $\delta<M^{-1}-(M+1)^{-1}$
    and sacrificing a $\delta^3$-negligible event, 
    we get
    \[
    (M+1)^{-1}<\QFZ{\zpv}<M+1\;,
    \]
    which is condition~\ref{c:n:1:1:5}
    defining $\OMG{\thetagap}{\nd}{1:1}{M}{\delta}$.
\end{enumerate} 
This concludes the proof of Proposition~\ref{p:1:1}.

\subsection{Proof of Theorem~\ref{t:s:1}}
\label{ss:pf:t:s:1}

Recall from \eqref{eq:omegajdef} and \eqref{eq:omegajdef2} that 
\[
\oj =
\liminf_{k \to \infty}\,
\OMG{\thetagap_j}{\infty}{1:1}{M_j}{\delta_k}
\quad\text{and}\quad 
\onkj = 
\OMG{\thetagap_j}{\ndk}{1:1}{M_j}{\delta_k}\;.
\]
From Proposition~\ref{p:1:1} we get   
\[
\OMG{\thetagap}{\infty}{1:1}{M_j}{\delta_k}
\;\setminus\;\mathcal{E}^{1:1}_{\delta_k} 
\;\subset\;\OMG{\thetagap}{\ndk}{1:1}{M_j}{\delta_k}\;,
\]
and 
\[
\PROB\fbr*{\mathcal{E}^{1:1}_{\delta_k}} < \cneg{1:1} \delta_k^3\;.
\]
Using \eqref{eq:deltakchoice} we get
\[
\sum_k\PROB\fbr*{\mathcal{E}^{1:1}_{\delta_k}}<\infty\;.
\]
Therefore, using the first 
Borel-Cantelli lemma we get 
\[
\oj\;\subset\;\liminf_{k \to \infty}\onkj\quad\mbox{a.s.}
\]
This concludes the proof of Theorem~\ref{t:s:1}.

\section{Proofs of the results in Step 2 of Section~\ref{sec:limitinggaf}}%\label{sec:step2}

\subsection{Some auxiliary lemmas}

\begin{lemma} 
Suppose the event $\omnd$ occurs. 
Let $\uz$ be a vector consisting of the roots of $\fand$ inside $\Dset$. 
Let $\uo$ be a vector consisting of the roots of $\fand$ outside $\Dset$. 
Then we have:
\begingroup
\addtolength{\jot}{0.25em}
\begin{align}  
\abs*{\frac{\sym{k}{\uz\cc\uo}}{\sym{\nd-m}{\uo}}}^2
\fbr[\big]{(\nd-k)!}^{\alpha}
={} & \abs*{\xi_{\nd-k}}^2
\frac{\InProdSqNd}{\abs*{\xi_0}^2}\mbox{ for }0\leq k\leq\nd\;;
\label{eq:al:1:1}\\
\totalsumdenom 
={} &
\frac{\abs*{\sym{\nd-m}{\uo}}^2}{(\nd!)^{\alpha}}
\sum_{k=0}^{\nd}
\abs*{\frac{\sym{k}{\uz\cc\uo}}{\sym{\nd-m}{\uo}}}^2
\fbr[\big]{(\nd-k)!}^{\alpha}\;;
\label{eq:al:1:2}\\
\totalsumdenom 
={} &
\frac{1}{\abs*{\xi_{\nd}}^2}
\sum_{l=0}^{\nd}\abs*{\xi_l}^2\;;\label{eq:al:1:3}\\
\sum_{k=0}^{\nd}\abs*{\frac{\sym{k}{\uz\cc\uo}}{\sym{\nd-m}{\uo}}
}^2
\fbr[\big]{(\nd-k)!}^{\alpha} 
={} & \frac{\InProdSqNd}{\abs*{\xi_0}^2}
\sum_{k=0}^{\nd}\abs*{\xi_k}^2\;.
\label{eq:al:1:4}
\end{align}
\endgroup
\label{l:al:1}
\end{lemma}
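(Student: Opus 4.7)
The heart of the matter is that $\fand$ is simply a polynomial of degree $\nd$, namely $\fand(z)=\sum_{k=0}^{\nd}\xi_k z^k/(k!)^{\alpha/2}$, with leading coefficient $\xi_{\nd}/(\nd!)^{\alpha/2}$ and root multiset $\uz\cc\uo$ (with $|\uz|=m$ and $|\uo|=\nd-m$ on $\omnd$). The plan is therefore to extract all four identities directly from Vieta's formulas applied to this factorization. Matching the coefficient of $z^k$ in $\fand(z)=\bigl(\xi_{\nd}/(\nd!)^{\alpha/2}\bigr)\prod_{i=1}^{\nd}(z-r_i)$ gives
\[
\frac{\xi_k}{(k!)^{\alpha/2}}=\frac{\xi_{\nd}}{(\nd!)^{\alpha/2}}\,(-1)^{\nd-k}\,\sym{\nd-k}{\uz\cc\uo}\,,
\]
so that after replacing $k\mapsto \nd-k$ and taking modulus squared, the master identity
\[
\abs*{\sym{k}{\uz\cc\uo}}^2=\frac{\abs*{\xi_{\nd-k}}^2}{\abs*{\xi_{\nd}}^2}\cdot\frac{(\nd!)^{\alpha}}{((\nd-k)!)^{\alpha}}
\]
holds a.s.\ (the a.s.\ nonvanishing of $\xi_{\nd}$, $\xi_0$ is automatic for complex Gaussians).

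To obtain \eqref{eq:al:1:1} I would specialize this identity at $k=\nd$ to get $\sym{\nd}{\uz\cc\uo}=(-1)^{\nd}(\xi_0/\xi_{\nd})(\nd!)^{\alpha/2}$, and then use the elementary factorization $\sym{\nd}{\uz\cc\uo}=\sym{m}{\uz}\cdot\sym{\nd-m}{\uo}=\InProdNd\cdot\sym{\nd-m}{\uo}$, yielding
\[
\abs*{\sym{\nd-m}{\uo}}^2=\frac{\abs*{\xi_0}^2\,(\nd!)^{\alpha}}{\abs*{\xi_{\nd}}^2\,\InProdSqNd}\,.
\]
Dividing the master identity by this expression and multiplying through by $((\nd-k)!)^{\alpha}$ causes the $(\nd!)^{\alpha}/((\nd-k)!)^{\alpha}$ factor and the $|\xi_{\nd}|^2$ factor to cancel, leaving precisely $|\xi_{\nd-k}|^2\,\InProdSqNd/|\xi_0|^2$.

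The remaining three identities are then purely mechanical. For \eqref{eq:al:1:2}, I will unfold $\vecsymn{\nd}{0}{(\uz,\uo)}$ via Notation~\ref{n:vecsym}, note that $\ndcka=(\nd!)^{\alpha}/((\nd-k)!)^{\alpha}$, and factor $|\sym{\nd-m}{\uo}|^2/(\nd!)^{\alpha}$ out of the sum. For \eqref{eq:al:1:3}, I substitute the master identity into this expression for $\totalsumdenom$; the factorial powers telescope and reindexing $l=\nd-k$ in the Gaussian sum produces $\abs*{\xi_{\nd}}^{-2}\sum_{l=0}^{\nd}|\xi_l|^2$. For \eqref{eq:al:1:4}, I simply sum \eqref{eq:al:1:1} over $0\le k\le \nd$ and reindex $l=\nd-k$. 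There is no genuine obstacle here: once the Vieta identity is pinned down, everything reduces to bookkeeping of factorials and an index flip, which is why this lemma is placed as an auxiliary tool before the more substantive estimates of Step~2.
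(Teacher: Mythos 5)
Your proposal is correct and follows essentially the same route as the paper: the Vieta coefficient-root relation $\sym{k}{\uz\cc\uo}/\ndckat=\pm\,\xi_{\nd-k}/\xi_{\nd}$, specialization at $k=\nd$ combined with $\sym{\nd}{\uz\cc\uo}=\InProdNd\cdot\sym{\nd-m}{\uo}$ to convert between normalizations by $\sym{\nd-m}{\uo}$ and by $\xi_0$, and then routine expansion of the norm and reindexing for the remaining three identities. No gaps.
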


\begin{proof}
From the relation between the roots and the coefficients of the 
polynomial $\fand$ we have  
\begin{equation}\label{eq:al:1:5}
\frac{\sym{k}{\uz\cc\uo}}{\ndckat}
=(-1)^{k}\frac{\xi_{\nd-k}}{\xi_{\nd}}
\end{equation}
for all $0\leq k\leq \nd$. Therefore for all $0\leq k\leq \nd$
\[
\frac{1}{\fbr*{{\binom{\nd}{k}}k!}^{\alpha}}
\abs*{\frac{\sym{k}{\uz\cc\uo}}{\sym{\nd}{\uz\cc\uo}}}^2
=\frac{1}{(\nd!)^\alpha}\frac{\abs*{\xi_{\nd-k}}^2}{\abs*{\xi_0}^2}\;.
\]
Multiplying both sides by $\InProdSqNd$ and using 
\[
 \frac{\InProdNd}{\sym{\nd}{\uz\cc\uo}}
=\frac{1}{\sym{\nd-m}{\uo}}
\]
we get \eqref{eq:al:1:1}. The equality in 
\eqref{eq:al:1:2} follows simply by expanding the norm:
\[
  \totalsumdenom 
= \sum_{k=0}^{\nd}\abs*{\frac{\sym{k}{\uz\cc\uo}}{\ndckat}}^2
= \frac{\abs*{\sym{\nd-m}{\uo}}^2}{(\nd!)^{\alpha}}
  \sum_{k=0}^{\nd}
     \abs*{\frac{\sym{k}{\uz\cc\uo}}{\sym{\nd-m}{\uo}}}^2
     \fbr[\big]{(\nd-k)!}^{\alpha}\;.
\]
Similarly, expanding the norm and and using \eqref{eq:al:1:5} we get
\[
\totalsumdenom =
\sum_{k=0}^{\nd} 
        \abs*{\frac{\sym{k}{\uz\cc\uo}}{\ndckat}}^2 
    = \frac{1}{\abs*{\xi_{\nd}}^2}
    \sum_{l=0}^{\nd}\abs*{\xi_l}^2\;. 
\]
Equation~\eqref{eq:al:1:4} follows by taking sum over $k$ in \eqref{eq:al:1:1}:
\[
\sum_{k=0}^{\nd}
\abs*{\frac{\sym{k}{\uz\cc\uo}}{\sym{\nd-m}{\uo}}}^2
\fbr*{(\nd-k)!}^{\alpha}
=\frac{\InProdSqNd}{\abs*{\xi_0}^2}
\sum_{k=0}^{\nd}\abs*{\xi_k}^2\;.
\]
This concludes the proof of Lemma~\ref{l:al:1}.
\end{proof}

\begin{lemma}\label{l:tailsum}
On the event $\omnd$ we have
\[
\abs*{\tailsum} = 
\frac{\abs*{\sym{\nd-m}{\uo}}^2}{(\nd!)^{\alpha}}\QFI{\zppv}
\]
for all $0\leq i\leq m$ and $\rigidity\leq j\leq m$, 
where $\uo$ is a vector consisting of the roots of $\fand$ outside $\Dset$.
(See Notation~\ref{n:hadamard} for explanation of the term in the l.h.s.\ 
of the above equation.) 
\end{lemma}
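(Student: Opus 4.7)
The plan is that this lemma is essentially a direct identification between the two sides, assembling ingredients that have already been laid out in the paper. First, I would unpack the left hand side using Notation~\ref{n:hadamard} and Notation~\ref{n:vecsym}: the Hadamard product with $\mathbbm{1}_{[\nd-\Cdel,\nd]}$ restricts the inner product to indices $k$ with $\nd-\Cdel\leq k\leq \nd$, so
\[
\tailsum \;=\; \sum_{k=\nd-\Cdel}^{\nd}\frac{\overline{\sym{k-i}{\uo}}}{\ndckat}\cdot\frac{\sym{k-j}{\uo}}{\ndckat}\;.
\]

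Next I would use the elementary identity $\binom{\nd}{k}k!=\nd!/(\nd-k)!$ to rewrite the binomial factors in the denominator, obtaining
\[
\tailsum \;=\; \frac{1}{(\nd!)^{\alpha}}\sum_{k=\nd-\Cdel}^{\nd}\overline{\sym{k-i}{\uo}}\cdot\sym{k-j}{\uo}\cdot\bigl((\nd-k)!\bigr)^{\alpha}\;.
\]
Taking absolute values and factoring $|\sym{\nd-m}{\uo}|^2$ outside, we get
\[
\abs*{\tailsum} \;=\; \frac{\abs*{\sym{\nd-m}{\uo}}^2}{(\nd!)^{\alpha}}\cdot\abs*{\sum_{k=\nd-\Cdel}^{\nd}\frac{\overline{\sym{k-i}{\uo}}}{\overline{\sym{\nd-m}{\uo}}}\cdot\frac{\sym{k-j}{\uo}}{\sym{\nd-m}{\uo}}\bigl((\nd-k)!\bigr)^{\alpha}}\;.
\]

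To finish, I would invoke the identity \eqref{eq:qij} established in the paper, which asserts exactly that the second factor on the right equals $\QFI{\zppv}$ on the event $\omnd$ (this is where the event $\omnd$ enters: it guarantees that $\sym{\nd-m}{\uo}\neq 0$ and that the number of outside roots equals $\nd-m$, so that the reindexing relating $\zpp{l-m}=\sym{\nd-l}{\uo}/\sym{\nd-m}{\uo}$ makes sense). Substituting yields the claimed equality. There is no real obstacle here; the lemma is a bookkeeping reformulation of \eqref{eq:qij} in the Hadamard-product notation introduced in Notation~\ref{n:hadamard}, and its role in the broader argument is to translate the tail inner-product that controls $\fDij{\uz}{\uo}$ via Proposition~\ref{p:sym} into a quantity directly bounded on the good events $\OMG{\thetagap}{\nd}{2:1}{M}{\delta}$.
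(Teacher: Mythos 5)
Your proposal is correct and follows essentially the same route as the paper: both unpack the Hadamard-restricted inner product, use $\binom{\nd}{k}k!=\nd!/(\nd-k)!$ to pull out the factor $\abs{\sym{\nd-m}{\uo}}^2/(\nd!)^{\alpha}$, and then identify the remaining sum with $\QFI{\zppv}$ via \eqref{eq:qij}. No gaps.
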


\begin{proof}
Observe that for $0\leq i\leq m$ and $i\leq k\leq \nd-m$
\[
\frac{\sym{k-i}{\uo}}{\sym{\nd-m}{\uo}}=
P_q\fbr*{\inv{\fand}{1}{0}{\infty},\cdots,\inv{\fand}{q}{0}{\infty}}=\zpp{q}\;,
\]
with $q=(\nd-m)-(k-i)$. Therefore, for $0\leq i\leq m$ and 
$\rigidity\leq j\leq m$ we have
\begingroup
\addtolength{\jot}{0.25em}
\begin{align*}
 & \abs*{\tailsum} \\ 
%%%%%%%%%%%%%%%%%%%%%%%%%%%%%%%%%%%%%%%%%
={} & \abs*{\sum_{k=\nd-\Cdel}^{\nd}
    \frac{\overline{\sym{k-i}{\uo}}}{\ndckat}
    \frac{\sym{k-j}{\uo}}{\ndckat}} \\
%%%%%%%%%%%%%%%%%%%%%%%%%%%%%%%%%%%%%%%%%%    
={} & \frac{\abs*{\sym{\nd-m}{\uo}}^2}{(\nd!)^{\alpha}}
    \abs*{\sum_{k=\nd-\Cdel}^{\nd} 
    \frac{\overline{\sym{k-i}{\uo}}}{\overline{\sym{\nd-m}{\uo}}}
    \frac{\sym{k-j}{\uo}}{\sym{\nd-m}{\uo}}
    \fbr[\big]{(\nd-k)!}^{\alpha}}\\
={} & \frac{\abs*{\sym{\nd-m}{\uo}}^2}
{(\nd!)^{\alpha}}\QFI{\zppv},&&\text{(using \eqref{eq:qij})}\;.    
\end{align*}
\endgroup
This concludes the proof of Lemma~\ref{l:tailsum}.
\end{proof}

\subsection{Proof of Proposition~\ref{p:2:1}}
\label{ss:pf:p:2:1}

Consider $M>3$ and $\thetagap\in(0,1)$. 
Our objective is to verify that 
for $\delta\in(0,1)$ sufficiently small 
depending on $M$, 
the conditions defining 
$\OMG{\thetagap}{\nd}{2:1}{M}{\delta}$
hold on the event 
$\OMG{\thetagap}{\nd}{1:1}{M}{\delta}$
except on a $\delta^3$-negligible event.
\begin{enumerate}[(i),font=\normalfont\bfseries,topsep=0pt]
\item Condition~\ref{c:n:1:1:1} defining 
$ \OMG{\thetagap}{\nd}{1:1}{M}{\delta} $ is 
same as condition~\ref{c:n:2:1:1} defining 
$ \OMG{\thetagap}{\nd}{2:1}{M}{\delta} $.

\item From condition~\ref{c:n:1:1:2} defining $ \OMG{\thetagap}{\nd}{1:1}{M}{\delta} $ we have
\[
\max_{1\leq s<\moment} \absinv{\fand}{s}{0}{\kdel} \le M+1\;.
\]
Using Lemma~\ref{l:kd/i:1}, sacrificing a $\delta^3$-negligible event, we get  
\[
\max_{1\leq s<\moment} \absinv{\fand}{s}{0}{\infty} \le M+1+\delta \le M+2\;,
\]
which is condition~\ref{c:n:2:1:2} defining $\OMG{\thetagap}{\nd}{2:1}{M}{\delta}$.

\item By condition~\ref{c:n:1:1:3} defining $\OMG{\thetagap}{\nd}{1:1}{M+1}{\delta}$ we have
\[
\invabs{\fand}{\moment}{0}{\kdel}\le M+1\;.
\]
Using Lemma~\ref{l:kd/i:1}, sacrificing a $\delta^3$-negligible event, we get  
\[
\invabs{\fand}{\moment}{0}{\infty}\le M+1+\delta \le M+2\;,
\]
which is condition~\ref{c:n:2:1:3} defining $\OMG{\thetagap}{\nd}{2:1}{M}{\delta}$.

\item By condition~\ref{c:n:1:1:4} defining $\OMG{\thetagap}{\nd}{1:1}{M+1}{\delta}$ we have
\[
\max_{0\leq i\leq m}\;
\max_{\rigidity\leq j\leq m}
\QFI{\zpv}\leq M+1\;.
\]
Using Lemma~\ref{l:kd/i:2}, sacrificing a $\delta^3$-negligible event, we get
\[
\max_{0\leq i\leq m}\;
\max_{\rigidity\leq j\leq m}
\QFI{\zppv}\leq M+1+\delta \le M+2\;,
\]
which is condition~\ref{c:n:2:1:4} defining $\OMG{\thetagap}{\nd}{2:1}{M}{\delta}$.

\item By condition~\ref{c:n:1:1:5} defining $\OMG{\thetagap}{\nd}{1:1}{M+1}{\delta}$ we have
\[
(M+1)^{-1}\leq \QFZ{\zpv} \leq M+1\;.
\]
Using Lemma~\ref{l:kd/i:2}, taking $\delta < (M+1)^{-1}-(M+2)^{-1}$ and sacrificing a $\delta^3$-negligible event, we get 
\[
(M+2)^{-1}\leq\QFZ{\zppv}\leq M+2\;,
\]
which is condition~\ref{c:n:2:1:5} defining $\OMG{\thetagap}{\nd}{2:1}{M}{\delta}$.
\end{enumerate}

This concludes the proof of Proposition~\ref{p:2:1}.

\subsection{Proof of Proposition~\ref{p:2:2}}
\label{ss:pf:p:2:2}

Consider $M>3$, $\thetagap\in(0,1)$, and $\delta\in(0,1)$.
Suppose the event $\omnd$ occurs.
Let $\uz$ be a vector consisting of the roots 
of $\fand$ inside $\Dset$. 
Let $\uo$ be a vector consisting of the roots
of $\fand$ outside $\Dset$. 
From the relationship between the roots and 
the coefficients of the polynomial $\fand$ we 
get
\[
\frac{\sym{k}{\uz\cc\uo}}{\ndckat}
=(-1)^{k}\frac{\xi_{\nd-k}}{\xi_{\nd}}
\]
for all $0\leq k\leq \nd$. 
By Proposition~\ref{p:rec} we have
for $0\leq k\leq\nd-m$
\[
\frac{\sym{k}{\uo}}{\ndckat}
= \sum_{r=0}^{k} 
(-1)^{k-r} \cdot \g_r \cdot \frac{\xi_{l+r}}{\ffa{l}{r}}\;.
\]
Therefore, for $0\leq k\leq \nd-m$ we have  
\[
\frac{\sym{k}{\uo}}{\sym{k}{\uz\cc\uo}} 
= 1 + \sum_{r=1}^{k} (-1)^{k-r}\cdot \g_r \cdot  
\frac{\xi_{\nd-k+r}}{\xi_{\nd-k}}\cdot
\frac{1}{\ffa{\nd-k}{r}}\;.
\]
Thus, our objective is to show that 
except on a $\delta^3$-negligible event we have
\[
\abs*{\sum_{r=1}^{\nd-l}(-1)^{\nd-l-r}\cdot\g_r\cdot
\frac{\xi_{l+r}}{\xi_l}\cdot\frac{1}{\ffa{l}{r}}}
\leq\frac{1}{2}
\]
for all $l$ in the range $\Ld\leq l\leq \Ld+\hLd$. 
By the inequality $1+x\geq\sqrt{x}$ we have 
\[
\frac{1}{\ffa{l}{r}}
\leq\frac{1}{l^{r\alpha/4}(r!)^{\alpha/4}}\;. 
\]
By Proposition~\ref{p:rec} we have 
$|\g_r|\le \fbr*{\usekdm{471}}^r$.
Therefore, for $\Ld\leq l\leq \Ld+\hLd$
\[
\abs*{\sum_{r=1}^{\nd-l} 
(-1)^{\nd-l-r}\cdot\g_r\cdot
\frac{\xi_{l+r}}{\xi_l}
\cdot\frac{1}{\ffa{l}{r}}} 
\leq 
\sum_{r=1}^{\nd-l}
\frac{\fbr*{\usekdm{471}}^r}{l^{r\alpha/4}(r!)^{\alpha/4}}
\abs*{\frac{\xi_{l+r}}{\xi_l}}\;.
\]
Therefore, 
assuming $\delta$ is small enough so that 
$\Ld^{\alpha/8}>\usekdm{471}$,
it is enough to show that  
\begin{equation}\label{eq:pf:p:2:2:1}
\PROB\fbr*{\;\sum_{r=1}^{\nd-l}
\frac{1}{l^{r\alpha/8}(r!)^{\alpha/4}}
\abs*{\frac{\xi_{l+r}}{\xi_l}}\leq\frac{1}{2}
\mbox{ for all }\Ld \leq l \leq \Ld + \hLd\;} 
\leq\delta^3\;.
\end{equation}
Using the fact that if $\xi$ is 
complex Gaussian then $\abs*{\xi}^2$ is an exponential random 
variable with mean $1$, we can deduce that 
\[
\PROB\fbr*{\abs*{\frac{\xi_{l+r}}{\xi_l}}>x} 
\le\frac{1}{x^2}\;,
\]
for $1\leq r\leq \nd-l$ and $\Ld\leq l\leq \Ld+\hLd$. 
Therefore  
\begin{equation}\label{eq:pf:p:2:2:2}
\PROB\fbr*{\abs*{\frac{\xi_{l+r}}{\xi_l}}
>l^{r\alpha/16}(r!)^{\alpha/8}}
\le\frac{1}{l^{r\alpha/8}(r!)^{\alpha/4}}\;.
\end{equation}
For $\Ld\leq l\leq\Ld+\hLd$ 
let $\mathcal{E}(l)$ be the 
event that for all $r\geq 1$
\[
\abs*{\frac{\xi_{l+r}}{\xi_l}}
\le l^{r\alpha/16}(r!)^{\alpha/8}\;.
\]
On this event, 
assuming $\delta$-is small enough so that 
$\Ld^{-\alpha/16}<1/3$, 
we have
\[
\sum_{r=1}^{\nd-l}
\frac{1}{l^{r\alpha/8}(r!)^{\alpha/4}}
\abs*{\frac{\xi_{l+r}}{\xi_l}}
\leq\sum_{r=1}^{\infty}
\frac{1}{l^{r\alpha/16}(r!)^{\alpha/8}}
\leq\sum_{r=1}^{\infty}\frac{1}{l^{r\alpha/16}}
\leq \frac{1}{2}\;.
\]
For each $l$ in the range 
$\Ld\leq l\leq\Ld+\hLd$ using \eqref{eq:pf:p:2:2:2} we get
\[
\PROB\fbr*{\mathcal{E}(l)^c} 
= \PROB\fbr*{\abs*{\frac{\xi_{l+r}}{\xi_l}}
>l^{r\alpha/16}(r!)^{\alpha/8}\quad\mbox{for some }r}
\leq
\sum_{r=1}^\infty
\frac{1}{l^{r\alpha/8}(r!)^{\alpha/4}}
\leq
\frac{C_0}{l^{\alpha/8}}\;, 
\]
where $C_0$ is defined in \eqref{d:czero}. 
By a union over $l$ and using condition~\ref{c:Ld:1} defining $\Ld$ we get   
\begingroup 
\addtolength{\jot}{0.25em}
\begin{align*}
& \PROB\fbr*{\;
\sum_{r=1}^{\nd-l}
\frac{1}{l^{r\alpha/8}(r!)^{\alpha/4}}
\abs*{\frac{\xi_{l+r}}{\xi_l}}
>\frac{1}{2}\mbox{ for some } \Ld\leq l\leq \Ld+\hLd\;}\\ 
\leq{} & \sum_{l=\Ld}^{\Ld+\hLd} \PROB\fbr*{\mathcal{E}(l)^c}
\;\leq\; \sum_{l=\Ld}^{\Ld+\hLd}
\frac{C_0}{l^{\alpha/8}}
\;\leq\; 
\delta^3\;.
\end{align*}
\endgroup
Thus we get \eqref{eq:pf:p:2:2:1}. This concludes the proof of Proposition~\ref{p:2:2}.

\subsection{Proof of Proposition~\ref{p:2:3}}
\label{ss:pf:p:2:3}

Consider $M>3$, $\thetagap\in(0,1)$, and $\delta\in(0,1)$.
Suppose the event $\omnd$ occurs.
Let $\uz$ be a vector consisting of the roots of $\fand$ inside $\Dset$.
Let $\uo$ be a vector consisting of the roots of $\fand$ outside $\Dset$.
By Proposition~\ref{p:2:2}, we have, except on a $\delta^3$ negligible event, 
\[
\frac{2}{3} \abs*{\sym{k}{\uo}}
\le \abs*{\sym{k}{\uz\cc\uo}} 
\le 2 \abs*{\sym{k}{\uo}}\;,
\]
for $\nd-\Ld-\hLd\le k\le \nd-\Ld$. 
Changing the variable from $k$ to $l=\nd-k$ we get
\begingroup
\addtolength{\jot}{0.25em}
\begin{align*}
\frac{4}{9}\cdot\frac{1}{\hLd}
\sum_{l=\Ld}^{\Ld+\hLd}
\abs*{\frac{\sym{\nd-l}{\uo}}{\sym{\nd-m}{\uo}}}^2 
\fbr*{ l!}^{\alpha} 
\le{} &  
\frac{1}{\hLd}
\sum_{l=\Ld}^{\Ld+\hLd}
\abs*{\frac{\sym{\nd-l}{\uz\cc\uo}}{\sym{\nd-m}{\uo}}}^2
\fbr*{ l!}^{\alpha}
\\
\le{} & 
4\cdot\frac{1}{\hLd}
\sum_{l=\Ld}^{\Ld+\hLd}
\abs*{\frac{\sym{\nd-l}{\uo}}{\sym{\nd-m}{\uo}}}^2 
\fbr*{ l!}^{\alpha}\;.
\end{align*}
\endgroup
We can rewrite the term in the middle of the inequality above,
using \eqref{eq:al:1:1} from Lemma~\ref{l:al:1}, as
\[
\frac{1}{\hLd}
\sum_{l=\Ld}^{\Ld+\hLd}
\abs*{\frac{\sym{\nd-l}{\uz\cc\uo}}{\sym{\nd-m}{\uo}}}^2
\fbr*{ l!}^{\alpha}
= 
\frac{\InProdSqNd}{\abs*{\xi_0}^2}
\frac{1}{\hLd}
\sum_{l=\Ld}^{\Ld+\hLd}\abs*{\xi_l}^2\;.
\]
By condition \ref{c:Ld:2} defining $\Ld$, 
we have, except on a $\delta^3$-negligible event,
\[
 \frac{1}{2} 
<\frac{1}{\hLd}\sum_{l=\Ld}^{\Ld+\hLd}\abs*{\xi_l}^2 
<\frac{3}{2}\;.
\]
Hence
\begingroup
\addtolength{\jot}{0.25em}
\begin{align*}
\frac{8}{27}\cdot\frac{1}{\hLd}
\sum_{l=\Ld}^{\Ld+\hLd}
\abs*{\frac{\sym{\nd-l}{\uo}}{\sym{\nd-m}{\uo}}}^2
\fbr*{ l!}^{\alpha}
\leq{} & \frac{\InProdSqNd}{\abs*{\xi_0}^2}\\
\leq{} & 8\cdot\frac{1}{\hLd}
\sum_{l=\Ld}^{\Ld+\hLd}
\abs*{\frac{\sym{\nd-l}{\uo}}{\sym{\nd-m}{\uo}}}^2 
\fbr*{ l!}^{\alpha}\;. 
\end{align*}
\endgroup
From here, using \eqref{eq:tailavg}
we get \eqref{c:p:2:3:1}.   

Now suppose the event 
$\OMG{\thetagap}{\nd}{2:2}{M}{\delta}$
occurs. 
We want to establish \eqref{c:p:2:3:2}.
From Proposition~\ref{p:2:2} we have 
$\OMG{\thetagap}{\nd}{2:2}{M}{\delta}$
is a subset of 
$\OMG{\thetagap}{\nd}{2:1}{M}{\delta}$,
and from condition~\ref{c:n:2:1:5} defining
$\OMG{\thetagap}{\nd}{2:1}{M}{\delta}$ we have
\[
(M+2)^{-1} \leq \QFZ{\zppv} \leq M+2\;.
\]
So, \eqref{c:p:2:3:1} implies 
\[
\frac{\InProdSqNd}{\abs*{\xi_0}^2}\ge
\frac{8}{27}\cdot\frac{1}{M+2}\;.
\]
By condition~\ref{c:nd:2} defining $\nd$ we have 
\[
\sum_{k=0}^{\nd}\abs*{\xi_k}^2>\frac{1}{2}\nd
\]
except on a $\delta^3$-negligible event. 
Therefore, using \eqref{eq:al:1:4} from Lemma~\ref{l:al:1}, we get 
\[
\frac{1}{\nd}
\sum_{k=0}^{\nd}
\abs*{\frac{\sym{k}{\uz\cc\uo}}{\sym{\nd-m}{\uo}}}^2
\fbr[\big]{(\nd-k)!}^{\alpha}
\geq\frac{4}{27}\cdot\frac{1}{M+2}\;.
\]
Hence we get \eqref{c:p:2:3:2}. 
This concludes the proof of Proposition~\ref{p:2:3}.

\subsection{Proof of Proposition~\ref{p:2:4}}
\label{ss:pf:p:2:4}

Consider $M>3$, $\thetagap\in(0,1)$, and $\delta\in(0,1)$.
Suppose that the event $\omnd$ occurs.
Let $\uz$ be a vector consisting of the roots $\fand$ inside $\Dset$.
Let $\uo$ be a vector consisting of the roots $\fand$ outside $\Dset$.
Consider $i$ and $j$ satisfying $0\leq i\leq m$ and $\rigidity\leq j\leq m$.
From Proposition~\ref{p:rec} it follows that for all $0\leq l\leq\nd-m$
\[
\frac{\sym{\nd-l}{\uo}}
{\fbr*{\binom{\nd}{\nd-l}(\nd-l)!}^{\alpha/2}}
=\frac{(-1)^{\nd-l}}{\xi_{\nd}}
\tbr*{
\sum_{r=0}^{(\nd-l)\wedge(\lev-1)}
(-1)^{r}
\frac{\g_r\cdot\xi_{l+r}}{\ffa{l}{r}}
+\tail{l}{\nd}}\;.
\]
Therefore, for $j\leq k\leq\nd-m+j$ and $l=\nd-k$ (so that $k-j=\nd-(l+j)$) we have
\begingroup
\addtolength{\jot}{0.25em}
\begin{align*}
\frac{\sym{k-j}{\uo}}
{\fbr*{\binom{\nd}{k}k!}^{\alpha/2}}
={} & \frac{1}{\ffa{\nd-k}{j}} \frac{\sym{k-j}{\uo}}
{\fbr*{\binom{\nd}{k-j}(k-j)!}^{\alpha/2}}\\
={} & \frac{1}{\ffa{l}{j}}
\frac{(-1)^{k-j}}{\xi_{\nd}}
\tbr*{
\sum_{r=0}^{(\nd-l-j)\wedge(\lev-1)}
(-1)^{r}
\frac{\g_r\cdot\xi_{l+j+r}}{\ffa{l+j}{r}}
+\tail{l+j}{\nd}}\\
={} & 
\frac{(-1)^{k-j}}{\xi_{\nd}}
\tbr*{
\sum_{r=0}^{(\nd-l-j)\wedge(\lev-1)}
(-1)^{r}
\frac{\g_r\cdot\xi_{l+j+r}}{\ffa{l}{j+r}}
+\frac{\tail{l+j}{\nd}}{\ffa{l}{j}}}\;.
\end{align*}
\endgroup
Using this and a similar expansion for $\overline{\sym{k-i}{\uo}}$ we get
\begingroup
\addtolength{\jot}{0.25em}
\begin{align*}
  & \frontsum  \\
={} & \sum_{k=0}^{\nd-\Cdel-1}
\frac{\overline{\sym{k-i}{\uo}}}{\ndckat}
\frac{\sym{k-j}{\uo}}{\ndckat} \\
={} & \sum_{k=i\vee j}^{\nd-\Cdel-1}
\frac{\overline{\sym{k-i}{\uo}}}{\ndckat}
\frac{\sym{k-j}{\uo}}{\ndckat} \\
={} & \frac{(-1)^{i+j}}{\abs*{\xi_{\nd}}^2}
\sum_{l=\Cdel+1}^{\nd-(i\vee j)}
\tbr*{
\sum_{r_1=0}^{(\nd-l-i)\wedge(\lev-1)}
(-1)^{r_1}
\frac{\overline{\g_{r_1}}\cdot\overline{\xi_{l+i+r_1}}}{\ffa{l}{i+r_1}}
+\frac{\overline{\tail{l+i}{\nd}}}{\ffa{l}{i}}}\\
& \qquad \cdot
\tbr*{
\sum_{r_2=0}^{(\nd-l-j)\wedge(\lev-1)}
(-1)^{r_2}
\frac{\g_{r_2}\cdot\xi_{l+j+r_2}}{\ffa{l}{j+r_2}}
+\frac{\tail{l+j}{\nd}}{\ffa{l}{j}}}\\
={} & \frac{(-1)^{i+j}}{\abs*{\xi_{\nd}}^2}
\fbr[\Big]{\bT{1}+\bT{2}+\bT{3}+\bT{4}}\;,    
\numberthis\label{eq:p:2:4:1}
\end{align*}
\endgroup
where
\begingroup
\addtolength{\jot}{0.25em}
\begin{align*}
\bT{1} \coloneqq{} &
\sum_{r_1=0}^{\lev-1}
\sum_{r_2=0}^{\lev-1}
(-1)^{r_1+r_2}\cdot
\overline{\g_{r_1}}\cdot
\g_{r_2}\cdot
\fbr*{\sum_{l=\Cdel+1}^{\nd-(i+r_1)\vee(j+r_2)}
\frac{\overline{\xi_{l+i+r_1}}\cdot
\xi_{l+j+r_2}}
{\ffa{l}{i+r_1}\cdot
\ffa{l}{j+r_2}}}\;,\\
\bT{2} \coloneqq{} &
\sum_{r_1=0}^{\lev-1}
(-1)^{r_1}\cdot
\overline{\g_{r_1}}\cdot
\fbr*{\sum_{l=\Cdel+1}^{\nd-(i+r_1)\vee(j+r_2)}
\frac{\overline{\xi_{l+i+r_1}}\cdot
\tail{l+j}{\nd}}
{\ffa{l}{i+r_1}\cdot
\ffa{l}{j}}}\;,\\
\bT{3} \coloneqq{} &
\sum_{r_2=0}^{\lev-1}
(-1)^{r_2}\cdot
\g_{r_2}\cdot
\fbr*{\sum_{l=\Cdel+1}^{\nd-(i+r_1)\vee(j+r_2)}
\frac{\overline{\tail{l+i}{\nd}}
\cdot
\xi_{l+j+r_2}}
{\ffa{l}{i}\cdot
\ffa{l}{j+r_2}}}\;,\\
\bT{4} \coloneqq{} &
\sum_{l=\Cdel+1}^{\nd-(i+r_1)\vee(j+r_2)}
\frac{\overline{\tail{l+i}{\nd}}\cdot\tail{l+j}{\nd}}{\ffa{l}{i}\cdot\ffa{l}{j}}\;.
\end{align*}
Also from Proposition~\ref{p:rec} we have 
\begin{equation}\label{eq:p:2:4:4}
\abs*{\g_r}\leq\fbr[\big]{\usekdm{471}}^{r}
\end{equation} 
for $0\leq r \leq\nd$, and for $ l \geq \usekdm{472} $ 
\begin{equation}\label{eq:p:2:4:5}
\fbr*{\Exp\fbr*{
\abs*{\tail{l}{\nd}}^2
\indicator{\omnd}}}^{1/2}
\leq
\frac{\fbr[\big]{\usekdm{473}}^{\lev}}{l^{\lev\alpha/2}}\;.
\end{equation}
Fix $0\leq r_1\leq \rigidity$ and $0\leq r_2\leq \rigidity$.
Assume $\delta$ is small enough so that $\Cdel\geq\usekdm{472}$.
By condition~\ref{c:Cd:2} defining $\Cdel$,
and the Chebyshev inequality, we get 
\begin{equation}\label{eq:p:2:4:6}
\abs*{\sum_{l=\Cdel+1}^{\nd-(i+r_1)\vee(j+r_2)}
\frac{\overline{\xi_{l+i+r_1}}\cdot\xi_{l+j+r_2}}
{\ffa{l}{i+r_1}\cdot\ffa{l}{j+r_2}}}<
\frac{\delta}{4\fbr[\big]{\usekdm{471}}^{2\rigidity}\rigidity^2}
\end{equation}
in the complement of a $\delta^3$-negligible event.
By condition~\ref{c:Cd:2} defining $\Cdel$, 
equation~\eqref{eq:p:2:4:5},
and using a Markov bound, 
we get 
\begingroup
\addtolength{\jot}{0.25em}
\begin{align*}
\abs*{\sum_{l=\Cdel+1}^{\nd-(i+r_1)\vee(j+r_2)}
\frac{\overline{\xi_{l+i+r_1}}\cdot\tail{l+j}{\nd}}
{\ffa{l}{i+r_1}\cdot\ffa{l}{j}}}
&<\frac{\delta}{4\fbr[\big]{\usekdm{471}}^{\lev}\rigidity}\;,
\numberthis\label{eq:p:2:4:7}\\
\abs*{\sum_{l=\Cdel+1}^{\nd-(i+r_1)\vee(j+r_2)}
\frac{\overline{\tail{l+i}{\nd}}\cdot\xi_{l+j+r_2}}
{\ffa{l}{i}\cdot\ffa{l}{j+r_2}}}
&<\frac{\delta}{4\fbr[\big]{\usekdm{471}}^{\lev}\rigidity}\;,
\numberthis\label{eq:p:2:4:8}\\
\abs*{\sum_{l=\Cdel+1}^{\nd-(i+r_1)\vee(j+r_2)}
\frac{\overline{\tail{l+i}{\nd}}\cdot\tail{l+j}{\nd}}
{\ffa{l}{i}\cdot\ffa{l}{j}}}
&<\frac{\delta}{4}\;,
\numberthis\label{eq:p:2:4:9}
\end{align*}
\endgroup
in the complement of a $\delta^3$-negligible event.
{%
Using \eqref{eq:p:2:4:4}, \eqref{eq:p:2:4:6}-\eqref{eq:p:2:4:9}, 
and a union bound over choices of $r_1$ and $r_2$, we get
\[
\big|\bT{1}+\bT{2}+\bT{3}+\bT{4}\big|<\delta
\]
in the complement of a $\delta^3$-negligible event.}
{%
Therefore, from \eqref{eq:p:2:4:1} we get 
\begin{equation}\label{eq:p:2:4:10}
\Big|\frontsum\Big|\leq\frac{\delta}{\abs*{\xi_{\nd}}^2}\;.
\end{equation}
}
{%
Using condition~\ref{c:nd:2} in the definition of $\nd$ 
and \eqref{eq:al:1:3} we get
\begin{equation}\label{eq:p:2:4:3}
\totalsumdenom>\frac{\nd}{2}\cdot\frac{1}{\abs*{\xi_{\nd}}^2}
\end{equation}
except on a $\delta^3$-negligible event.} 
{%
Combining \eqref{eq:p:2:4:10} and \eqref{eq:p:2:4:3} we get
\[
\frac{\Big|\frontsum\Big|}{\totalsumdenom}\leq\frac{2\delta}{\nd}\;. 
\]
}
This concludes the proof of Proposition~\ref{p:2:4}.

\subsection{Proof of Proposition~\ref{p:2:5}}\label{ss:pf:p:2:5}

Consider $M>3$, $\thetagap\in(0,1)$, and $\delta\in(0,1)$.
Suppose that the event $\OMG{\thetagap}{\nd}{2:4}{M}{\delta}$ occurs.
Let $\uz$ be a vector consisting of the roots of $\fand$ inside $\Dset$.
Let $\uo$ be a vector consisting of the roots of $\fand$ outside $\Dset$.
Let $\us=\constraintinternal(\uz)$. 
Let $\uzp\in\Sigma_{\usm}$.
Recall from \eqref{eq:413}
\begin{equation}\label{eq:413recall}
\frac{\drho{\uo\,,\,\us}{\nd}{\uzp}}
{\drho{\uo\,,\,\us}{\nd}{\uz}} 
=\abs*{\frac{\van{\uzp\,,\,\uo}}{\van{\uz\,,\,\uo}}}^2
\fbr*{\frac{\fD{\uzp}{\uo}}{\fD{\uz}{\uo}}}^{-(\nd+1)}.
\end{equation}
Therefore, 
to bound the ratio of conditional densities, 
it is sufficient to bound 
the ratio of the Vandermonde terms 
and the ratio of the symmetric functions. 
To bound the ratio of the Vandermonde terms 
we use Proposition~\ref{p:van}. 
To bound the ratio of the symmetric functions 
we use Proposition~\ref{p:sym}.
First we proceed to bound the ratio of the Vandermonde terms.

Since the event $\OMG{\thetagap}{\nd}{2:4}{M}{\delta}$ 
is a subset of the event $\OMG{\thetagap}{\nd}{2:1}{M}{\delta}$, 
from condition~\ref{c:n:2:1:1} defining $\OMG{\thetagap}{\nd}{2:1}{M}{\delta}$ 
we have that the event $\OGm{\nd}{\thetagap}$ occurs. 
And, from conditions~\ref{c:n:2:1:2} and \ref{c:n:2:1:3} 
defining $\OMG{\thetagap}{\nd}{2:1}{M}{\delta}$
we have
\[
\max_{1\leq s<\moment} \absinv{\fand}{s}{0}{\infty} \le M,
\quad\mbox{and}\quad 
\invabs{\fand}{\moment}{0}{\infty} \le M\;.
\]
Therefore, by Proposition~\ref{p:van} we get: 
\begin{equation}\label{eq:vanbound}
\exp\fbr[\Big]{-6 m \usekd{p45}\moment\thetagap^{-1}M}
\abs*{\frac{\van{\uzp}}{\van{\uz}}}^2
\leq
\abs*{\frac{\van{\uzp\,,\,\uo}}{\van{\uz\,,\,\uo}}}^2
\leq 
\abs*{\frac{\van{\uzp}}{\van{\uz}}}^2
\exp\fbr[\Big]{6 m \usekd{p45}\thetagap^{-1}\moment M}.
\end{equation}
 
Now we proceed to bound the ratio $\fD{\uzp}{\uo}/\fD{\uz}{\uo}$.
We will use Proposition~\ref{p:sym}. 
We need to bound $\fDij{\uz}{\uo}$ for $0\leq i\leq m$ and $\rigidity\leq j\leq m$.
To bound $\fDij{\uz}{\uo}$ we divide it as
\begin{align*}
\fDij{\uz}{\uo} ={} &
\frac{\abs*{\totalsumnum}}{\totalsumdenom} \\
\leq{} & \frac{\Big|\frontsum\Big|}{\totalsumdenom}\\
& + \frac{\abs*{\tailsum}}{\totalsumdenom}\;.\numberthis\label{eq:split}
\end{align*}
By Proposition~\ref{p:2:4} we have
\begin{equation}\label{eq:split1}
\frac{\Big|\frontsum\Big|}{\totalsumdenom}\leq\frac{2\delta}{\nd}\;.
\end{equation}
Using Lemma~\ref{l:tailsum} and condition~\ref{c:n:2:1:4} defining 
$\OMG{\thetagap}{\nd}{2:1}{M}{\delta}$, which holds since  
$\OMG{\thetagap}{\nd}{2:4}{M}{\delta}$ is a subset of $\OMG{\thetagap}{\nd}{2:1}{M}{\delta}$,
we get
\begin{equation}\label{eq:split2-1}
\abs*{\tailsum} 
\leq \frac{\abs*{\sym{\nd-m}{\uo}}^2}{\fbr*{\nd!}^{\alpha}} (M+2)\;.
\end{equation}
Using \eqref{eq:al:1:2} from Lemma~\ref{l:al:1} and \eqref{c:p:2:3:2} from Proposition~\ref{p:2:3} we get
\begin{equation}\label{eq:split2-2}
\totalsumdenom 
\geq 
\frac{\abs*{\sym{\nd-m}{\uo}}^2}{(\nd!)^{\alpha}}
\frac{4}{27}\frac{\nd}{M+2}\;.
\end{equation}
Combining \eqref{eq:split2-1} and \eqref{eq:split2-2} we get
\begin{equation}\label{eq:split2}
\frac{\abs*{\tailsum}}{\totalsumdenom}
\leq\frac{1}{\nd}\frac{27}{4} (M+2)^2\;.
\end{equation}
Combining \eqref{eq:split}, \eqref{eq:split1}, \eqref{eq:split2}
we get
\[
\fDij{\uz}{\uo}\leq
\usekd{pf:p:2:5:1}\frac{M^2}{\nd}\;,
\]
for some constant $\usekd{pf:p:2:5:1}>0$
Therefore, using Proposition~\ref{p:sym} we get 
\[
1-\usekdm{pf:p:2:5:2}\frac{M^2}{\nd}
\leq\frac{\fD{\uzp}{\uo}}{\fD{\uz}{\uo}}
\leq 1+\usekdm{pf:p:2:5:2}\frac{M^2}{\nd}\;, 
\]
for some constant $\usekdm{pf:p:2:5:2}>0$.
Therefore
\begin{equation}\label{eq:symbound}
\exp \fbr[\Big]{ - \usekdm{pf:p:2:5:3} M^2 } 
\leq \fbr*{\frac{\fD{\uzp}{\uo}}{\fD{\uz}{\uo}}}^{\nd+1}
\leq \exp\fbr[\Big]{ \usekdm{pf:p:2:5:3} M^2}\;, 
\end{equation}
for some constant $\usekdm{pf:p:2:5:3}>0$.
Combining \eqref{eq:413recall},
\eqref{eq:vanbound} and \eqref{eq:symbound}
we get
\begin{equation}\label{eq:Bound-target-repeat}  
\exp\fbr[\Big]{-f(M,\thetagap)} 
\abs*{\frac{\van{\uzp}}{\van{\uz}}}^2
\le\frac{\drho{\uo}{\nd}{\uzp}}{\drho{\uo}{\nd}{\uz}} 
\le\exp\fbr[\Big]{ f(M,\thetagap)}
\abs*{\frac{\van{\uzp}}{\van{\uz}}}^2\;. 
\end{equation}
where $f(M,\theta)=\usekdm{pf:p:2:5:4}\fbr*{ M^2+ M \thetagap^{-1}}$, 
for some constant $\usekdm{pf:p:2:5:4}>0$. 
Thus we get \eqref{eq:Bound-target}. 
This concludes the proof of Proposition~\ref{p:2:5}.

\begin{remark}
In the definition of the event  
$\OMG{\thetagap}{\infty}{1:1}{M}{\delta}$
(Definition~\ref{d:i:1:1})
we have used the same bound $M$ in all the compactness conditions.
Suppose we use: 
$a_M$ in conditions \ref{c:i:1:1:2} and \ref{c:i:1:1:3} in place of $M$;
$b_M$ in condition \ref{c:i:1:1:4} in place of $M$;
$c_M$ in condition \ref{c:i:1:1:5} in place of $M$;
where $a_M$, $b_M$, and $c_M$ are some functions of 
$M$ diverging to $\infty$ as $M\to\infty$. 
Suppose we also modify the definitions of all the other events 
in Step 2 accordingly. 
Then we will get \eqref{eq:Bound-target-repeat} 
with $f(M,\thetagap)=\usekdm{pf:p:2:5:5}\fbr*{ b_M C_M + a_m\thetagap^{-1}}$, for some constant $\usekdm{pf:p:2:5:5}>0$.
\end{remark}

\subsection{Proof of Theorem~\ref{t:s:2}}
\label{ss:pf:t:s:2}

Our objective is to show that there exists $\seq*{\correctionterm{j}{k}}_{j\geq 1,k\geq 1}$ such that for
$A\in\FUBasisInsidempts$, $B\in\FUBasisOutside$, $j\geq 1$, $k\geq 1$
\begin{align*}
& \Prob[\Big]{\,
\fbr[\big]{\,\zinnk\in A\,}
\,\cap\,
\fbr[\big]{\,\zoutnk\in B\,}
\,\cap\,
\onkj\,}\\
\overset{j}{\scaleto{\asymp}{8pt}}\;& 
\int_{\zoutnk^{-1}(B)\;\cap\;\onkj}
\nu_{\Phi,\Psi,\Dset}\kernel{A}{\zout}\deri\PROB + \vartheta(j,k)\;,
\numberthis\label{eq:abscondrepeat}
\end{align*}                      
and $\lim_{k\to\infty}\correctionterm{j}{k}=0$ for each $j\geq 1$.

\begin{notation}\text{}
\begin{enumerate}[(i),font=\normalfont\bfseries,topsep=0pt]
\item Let $\pss$ be the range of the function $\constraintinternal:\Dset^m\to\CC^{\rigidity-1}$ (defined in Notation~\ref{n:con:int}).
Thus, $\pss$ is a bounded open set. 

\item For $\us\in\pss$ and $A\in\BorelInsidempts$ let
\[
\h{A}{\us}\coloneqq\frac{\displaystyle\int_{\asvector{A}}
\abs*{\van{\uz}}^2 \deri\leb(\uz)}
{\displaystyle\int_{\Sigma_{\usm}}\abs*{\van{\uz}}^2\deri\leb(\uz)}\;.
\]

\item Let 
\[
\usnd\coloneqq\constraintinternalpc\fbr*{\zinnd}\;,  
\]
i.e., for $1\leq k\leq\rigidity-1$, the $k$-th coordinate of $\usnd$
is the $k$-th power sum of the roots of $\fand$ inside $\Dset$.

\end{enumerate}
\end{notation}
Recall from \eqref{eq:candidate}
\[
\nu_{\Phi,\Psi,\Dset}\kernel{A}{\zout}=\h{A}{\constraintexternal\fbr*{\zout}}\;.
\]
Since $A\in\FUBasisInsidempts$, we get that $\h{A}{\us}$ is continuous in $\us$. 
Observe that 
\[
\lim_{k\to\infty}
\usnk
=
\constraintinternalpc\fbr*{\zin}=
\constraintexternal\fbr*{\zout}\quad\mbox{a.s.}
\]
Therefore
\begin{equation}\label{n1}
\lim_{k\to\infty} 
\h{A}{\usnk}
=
\h{A}{\constraintexternal\fbr*{\zout}}
=
\nu_{\Phi,\Psi,\Dset}\kernel{A}{\zout}\quad\mbox{a.s.}
\end{equation}
Let us now record an observation as a proposition and finish the proof of Theorem~\ref{t:s:2} using this proposition.
After that we prove this proposition.

\begin{proposition}
For $A\in\FUBasisInsidempts$, 
$B\in\FUBasisOutside$, 
$M>3$, $\theta\in(0,1)$, $\delta\in(0,1)$
\begingroup
\addtolength{\jot}{0.25em}
\begin{align*} 
& \PROB\fbr[\Big]{\,
    \fbr[\big]{\,\zinnd\in A\,}
        \,\cap\,
    \fbr[\big]{\,\zoutnd\in B\,}
        \,\cap\,
    \OMG{\thetagap}{\nd}{1:1}{M}{\delta}
    \,}\\
\overset{M,\thetagap}{\scaleto{\asymp}{8pt}}\; & 
\int_{(\zoutnd)^{-1}(B)\;\cap\;\OMG{\thetagap}{\nd}{1:1}{M}{\delta}}
\h{A}{\usnd} 
\deri\PROB+O(\delta)\;.
\numberthis\label{f1} 
\end{align*}
\endgroup
\label{p:9.1}
\end{proposition}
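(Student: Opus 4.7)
\textbf{Proof plan for Proposition~\ref{p:9.1}.} The plan is to chain the $\delta^3$-inclusions from Propositions~\ref{p:2:1}--\ref{p:2:4} to pass from $\OMG{\thetagap}{\nd}{1:1}{M}{\delta}$ to $\OMG{\thetagap}{\nd}{2:4}{M}{\delta}$, and then to invoke Proposition~\ref{p:2:5} as a pointwise almost sure bound on the $(\zoutnd,\usnd)$-conditional probability of $\{\zinnd\in A\}$. Let $\mathcal{G}\coloneqq\eneg{2:1}\cup\eneg{2:2}\cup\eneg{2:3}\cup\eneg{2:4}$, so that $\PROB(\mathcal{G})=O(\delta^3)$ and $\OMG{\thetagap}{\nd}{1:1}{M}{\delta}\setminus\mathcal{G}\subseteq\OMG{\thetagap}{\nd}{2:4}{M}{\delta}$.

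Set $\mathcal{I}(\us,\uo)\coloneqq\int_{\asvector{A}\cap\Sigma_{m,\us}}\drho{\uo,\us}{\nd}{\uz}\deri\leb(\uz)$, which by the definition of $\drho{\uo,\us}{\nd}{\cdot}$ recalled in Section~\ref{ss:agaf:ratio} equals $\PROB(\zinnd\in A\mid\zoutnd=\uo,\usnd=\us)$. Since $\OMG{\thetagap}{\nd}{1:1}{M}{\delta}\cap\{\zoutnd\in B\}$ is $\zoutnd$-measurable, the tower property with respect to $\sigma(\zoutnd,\usnd)$ rewrites the LHS of \eqref{f1} as $\Exp\bigl[\bigindicator{\OMG{\thetagap}{\nd}{1:1}{M}{\delta}\cap\{\zoutnd\in B\}}\,\mathcal{I}(\usnd,\zoutnd)\bigr]$. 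On $\OMG{\thetagap}{\nd}{2:4}{M}{\delta}$, Proposition~\ref{p:2:5} in the form \eqref{eq:Bound-target-alternative} supplies the pointwise sandwich
\[
\exp\fbr[\big]{-f(M,\thetagap)}\,\h{A}{\usnd}\,\leq\,\mathcal{I}(\usnd,\zoutnd)\,\leq\,\exp\fbr[\big]{f(M,\thetagap)}\,\h{A}{\usnd},
\]
and, crucially, both bounding random variables here are $(\zoutnd,\usnd)$-measurable. This allows the sandwich to be multiplied through by any nonnegative indicator \emph{regardless} of whether that indicator is $(\zoutnd,\usnd)$-measurable, without losing the inequality pointwise or in expectation.

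Splitting $\bigindicator{\OMG{\thetagap}{\nd}{1:1}{M}{\delta}}=\bigindicator{\OMG{\thetagap}{\nd}{1:1}{M}{\delta}\setminus\mathcal{G}}+\bigindicator{\OMG{\thetagap}{\nd}{1:1}{M}{\delta}\cap\mathcal{G}}$, multiplying the sandwich by $\bigindicator{\OMG{\thetagap}{\nd}{1:1}{M}{\delta}\setminus\mathcal{G}}\bigindicator{\zoutnd\in B}$ (which is supported inside $\OMG{\thetagap}{\nd}{2:4}{M}{\delta}$, where the sandwich is valid), and taking expectation, I obtain the multiplicative comparison between LHS and RHS of \eqref{f1} after restricting each side to $\OMG{\thetagap}{\nd}{1:1}{M}{\delta}\setminus\mathcal{G}$. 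Since $\mathcal{I}(\usnd,\zoutnd)\in[0,1]$ and $\h{A}{\usnd}\in[0,1]$, the contributions from $\OMG{\thetagap}{\nd}{1:1}{M}{\delta}\cap\mathcal{G}$ to either side of \eqref{f1} are each bounded by $\PROB(\mathcal{G})=O(\delta^3)\leq O(\delta)$; assembling these pieces yields \eqref{f1}, with the amplification of the additive $O(\delta^3)$ by the multiplicative factor $\exp(f(M,\thetagap))$ only inflating the $M,\thetagap$-dependent constant hidden in the $O(\delta)$ term.

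The main technical step I expect to encounter is verifying that $\drho{\uo,\us}{\nd}{\cdot}$ is genuinely the density of the $(\zoutnd,\usnd)$-conditional distribution of $\zinnd$ on $\Sigma_{m,\us}$ with respect to the surface measure $\leb$, rather than with respect to some other natural reference measure. This is a disintegration-of-Lebesgue-measure statement on $\Dset^m$ across the foliation by $\{\Sigma_{m,\us}\}_{\us\in\pss}$, and it is easy to misplace a Jacobian factor between the global Lebesgue measure and the surface measures on the fibres. Once this identification is correctly in place, the rest of the argument is routine indicator-splitting combined with the $\delta^3$-negligibility bookkeeping outlined above; in particular, one never needs $\OMG{\thetagap}{\nd}{2:4}{M}{\delta}$ or $\mathcal{G}$ to be $(\zoutnd,\usnd)$-measurable.
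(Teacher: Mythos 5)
Your argument is correct, and it reaches \eqref{f1} by a genuinely more streamlined route than the paper. Both proofs rest on the same two inputs: the chain of $\delta^3$-inclusions from Propositions~\ref{p:2:1}--\ref{p:2:4}, which reduce $\OMG{\thetagap}{\nd}{1:1}{M}{\delta}$ to $\OMG{\thetagap}{\nd}{2:4}{M}{\delta}$ modulo an $O(\delta^3)$ event, and the density comparison of Proposition~\ref{p:2:5}. The difference is in how the two proofs cope with the fact that $\OMG{\thetagap}{\nd}{2:4}{M}{\delta}$ is not $\zoutnd$-measurable. The paper disintegrates: by successive conditioning it manufactures a $\zoutnd$-measurable good event $\goodomega$ with $\Prob{\OMG{\thetagap}{\nd}{1:1}{M}{\delta}\setminus\goodomega}\le\delta$, good power-sum sets $\pssg$ with $\gammacnd{\pss\setminus\pssg}{\uond}\le\delta$, and good fibre sets $H(\us\boldsemicolon\uond)$ with $\mucnd{\Sigma_{\usm}\setminus H}{\us}{\uond}\le\delta$, and then chains four integrals $\intcnd{A}{B}{1}\to\cdots\to\intcnd{A}{B}{4}$, applying Proposition~\ref{p:2:5} to the restricted sets $\asvector{A}\cap H(\us\boldsemicolon\uond)$. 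You sidestep all of this by applying the tower property first, with the indicator of the $\zoutnd$-measurable set $\OMG{\thetagap}{\nd}{1:1}{M}{\delta}\cap\{\zoutnd\in B\}$, and only afterwards splitting off the non-measurable bad event $\mathcal{G}$ inside the expectation; since \eqref{eq:Bound-target-alternative} holds for \emph{arbitrary} Borel subsets of $\Sigma_{\usm}$ and is a pointwise statement on $\OMG{\thetagap}{\nd}{2:4}{M}{\delta}\supseteq\OMG{\thetagap}{\nd}{1:1}{M}{\delta}\setminus\mathcal{G}$, no restriction to $H$ and no Markov-type extraction of $\goodomega$, $\pssg$ is needed. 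Your version is shorter and yields an $O(\delta^3)$ rather than $O(\delta)$ additive error (either suffices downstream, since the error only needs to vanish as $k\to\infty$ for fixed $j$ in Theorem~\ref{t:s:2}); the paper's version makes the conditional structure along the foliation $\{\Sigma_{\usm}\}$ explicit, which it reuses conceptually elsewhere but which is not required here. The single hypothesis you flag --- that $\drho{\uo,\us}{\nd}{\cdot}$ is genuinely the density, with respect to $\leb$ on $\Sigma_{\usm}$, of the conditional law of $\zinnd$ given $(\zoutnd,\usnd)$ --- is exactly the identification the paper takes as given in \eqref{eq:411} and uses equally in defining $\mucnd{\cdot}{\us}{\uond}$, so it is a shared assumption rather than a gap in your argument.
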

We set $M=M_j$, $\thetagap=\thetagap_j$, and $\delta=\delta_k$ 
in \eqref{f1} and get
\begingroup
\addtolength{\jot}{0.25em}
\begin{align*}
& \PROB\fbr[\Big]{\;
\fbr[\big]{\,\zinnk\in A\,}
\,\cap\,
\fbr[\big]{\,\zinnk\in B\,}
\,\cap\,
\onkj\;} \\
\overset{j}{\scaleto{\asymp}{8pt}} \;
& \int_{(\zoutnk)^{-1}(B)\;\cap\;\onkj}
\h{A}{\usnk} 
\deri\PROB + \ok\;,\numberthis\label{f2}
\end{align*}
\endgroup 
where $\ok$ is a quantity which goes to $0$ as $k\to\infty$.
Using \eqref{n1} we get
\begingroup
\addtolength{\jot}{0.25em}
\begin{align*}
& \int_{(\zoutnk)^{-1}(B)\;\cap\;\onkj}
\h{A}{\usnk} 
\deri\PROB\\ 
={} & \int_{(\zoutnk)^{-1}(B)\;\cap\;\onkj}
\h{A}{\constraintexternal\fbr*{\zout}} 
\deri\PROB+\ok\\
={} & \int_{(\zoutnk)^{-1}(B)\;\cap\;\onkj}
\nu_{\Phi,\Psi,\Dset}
\kernel{A}{\zout}\deri\PROB + \ok\;,
\numberthis\label{f3}
\end{align*}
\endgroup
Combining \eqref{f2} and \eqref{f3} we get
\begingroup
\addtolength{\jot}{0.25em}
\begin{align*}
& 
\PROB\fbr[\Big]{\;
\fbr[\big]{\,\zinnk\in A\,}
\,\cap\, 
\fbr[\big]{\,\zoutnk\in B\,}
\,\cap\,
\onkj\;}\\ 
\overset{j}{\scaleto{\asymp}{8pt}} \; & 
\int_{(\zoutnk)^{-1}(B)\;\cap\;\onkj}
\nu_{\Phi,\Psi,\Dset}\kernel{A}{\zout}\deri\PROB+\ok\;.\numberthis\label{f4}
\end{align*}
\endgroup
Recall from \eqref{eq:omegajdef2}
\[
\onkj=\OMG{\thetagap_j}{\ndk}{1:1}{M_j}{\delta_k}\;.
\]
Therefore we get \eqref{eq:abscondrepeat}.
Now we focus on proving the Proposition~\ref{p:9.1}.

\begin{notation}\text{}
\begin{enumerate}[(i),font=\normalfont\bfseries,topsep=0pt]
\item For $\uond\in(\Dset^\c)^{\nd-m}$ let $\gammacnd{\cdot}{\uond}$ denote the conditional measure of the first $\rigidity-1$ power sums of the zeros of
$\fand$ inside $\Dset$ given that the zeros of $\fand$ outside $\Dset$ are the coordinates of $\uond$. In other words,
$\gammacnd{\cdot}{\uond}$ is the conditional density of $\constraintinternalpc(\zinnd)$ given $\aspc{\uond}=\zoutnd$. 

\item Let $\mucnd{\cdot}{\us}{\uond}$ be the conditional measure of the points of $\zinnd$ taken in uniform random order and considered as a vector, given that the points of $\zoutnd$ are the coordinates of $\uond$ (i.e., $\aspc{\uond}=\zoutnd$) and the power sum vector $\constraintinternalpc(\zinnd)=\us$. 
\end{enumerate}
\end{notation}

\begin{proof}[\textbf{Proof of Proposition~\ref{p:9.1}}]

From Propositions~\ref{p:2:1}-\ref{p:2:4} we have that 
$\OMG{\thetagap}{\nd}{1:1}{M}{\delta}$ is $\delta^3$-included in
$\OMG{\thetagap}{\nd}{2:4}{M}{\delta}$. 
Conditioning successively on $\zoutnd$ and then on $\constraintinternalpc(\zinnd)$ yields the following:

There exists an event $\goodomega$ such that: 
\begin{enumerate}[(1),font=\normalfont\bfseries,topsep=0pt]
\item $\goodomega$ is measurable with respect to $\zoutnd$;
\item $\goodomega\subset\OMG{\thetagap}{\nd}{1:1}{M}{\delta}$;
\item $\Prob{\OMG{\thetagap}{\nd}{1:1}{M}{\delta}\setminus\goodomega}\leq\delta$;
\item for each $\uond$ for which $\goodomega$ holds (i.e., $\aspc{\uond}\in\zoutnd\fbr{\goodomega}$) there exists a Borel measurable set $\pssg\subset\pss$ such that 
\begin{enumerate}[(i),font=\normalfont\bfseries,topsep=0pt]
\item $\gammacnd{\pss\setminus\pssg}{\uond}\leq\delta$;
\item for each $\us\in\pssg$ we have 
\begin{equation}\label{eq:1}
    \mucnd{\Sigma_{\usm}\setminus H\fbr[\big]{\us\boldsemicolon\uond}}{\us}{\uond}
\leq\delta\;,
\end{equation}
where 
\[
H\fbr[\big]{\us\boldsemicolon\uond}\coloneqq\Set*{\uz\in\Sigma_{\usm}\given 
\aspc{\uz\cc\uond}\in\znd\fbr*{\OMG{\thetagap}{\nd}{2:4}{M}{\delta}}}\;.
\]
\end{enumerate}
\end{enumerate}
For $A\in\FUBasisInsidempts$,
$B\in\FUBasisOutside$ let  
\begin{align} 
&\intcnd{A}{B}{1}\coloneqq
\int_{(\zoutnd)^{-1}(B)\;\cap\;\goodomega} 
\int_{\pssg} 
\int_{\asvector{A}}
\deri\mucnd{\uz}{\us}{\uond}
\deri\gammacnd{\us}{\uond}
\deri\PROB\;,\label{I1}\\
&\intcnd{A}{B}{2}\coloneqq
\int_{(\zoutnd)^{-1}(B)\;\cap\;\goodomega} 
\int_{\pssg}
\int_{\asvector{A}\;\cap\;H(\us\boldsemicolon\uond)}
\deri\mucnd{\uz}{\us}{\uond}
\deri\gammacnd{\us}{\uond}
\deri\PROB\;.\label{I2}
\end{align}
Using \eqref{eq:1} we get
\begin{equation}\label{I12} 
\intcnd{A}{B}{1} = \intcnd{A}{B}{2}+O(\delta)\;. 
\end{equation}
Let 
\begin{align*}
& \intcnd{A}{B}{3}\\
& \coloneqq 
\int_{(\zoutnd)^{-1}(B)\;\cap\;\goodomega}
\int_{\pssg}
\frac{\displaystyle\int_{\asvector{A}\;\cap\;H(\us\boldsemicolon\uond)}
|\van{\uz}|^2\deri\leb(\uz)}
{\displaystyle\int_{\Sigma_{\usm}}|\van{\uzp}|^2\deri\leb(\uzp)}
\deri\gammacnd{\us}{\uond}
\deri\PROB\;.\numberthis\label{I3}
\end{align*}
By Proposition~\ref{p:2:5} we have
\begin{equation}\label{I23}
\intcnd{A}{B}{2} 
\;\overset{M,\thetagap}{\scaleto{\asymp}{8pt}}\;
\intcnd{A}{B}{3}\;.
\end{equation}
Let 
\begin{equation}\label{I4}
\intcnd{A}{B}{4} \coloneqq 
\int_{(\zoutnd)^{-1}(B)\,\cap\,\goodomega}\;
\int_{\pssg}\; 
\frac{\displaystyle\int_{\asvector{A}\,\cap\,\Sigma_{\usm}}\abs{\van{\uz}}^2\deri\leb(\uz)}
{\displaystyle \int_{\Sigma_{\usm}}\abs{\van{\uzp}}^2\deri\leb(\uzp)}
\deri\gammacnd{\us}{\uond}
\deri\PROB\;.
\end{equation}
Using \eqref{eq:1} we get
\begin{equation}\label{I34}
\intcnd{A}{B}{3} = \intcnd{A}{B}{4} + O(\delta)\;. 
\end{equation}
Therefore
\[ 
\intcnd{A}{B}{1} 
\;\overset{M,\thetagap}{\scaleto{\asymp}{8pt}}\;
\int_{(\zoutnd)^{-1}(B)\,\cap\,\goodomega} 
\int_{\pssg} 
\h{A}{\us} \deri\gammacnd{\us}{\uond}
\deri\PROB[\uond] + O(\delta)\;. 
\]
Using $0\leq \h{A}{\us}\leq 1$ and  $\gammacnd{\pss\setminus\pssg}{\uond}<\delta$
for $\uond\in\goodomega$, we have 
\begin{equation}\label{eq:above}
\intcnd{A}{B}{1} 
\;\overset{M,\thetagap}{\scaleto{\asymp}{8pt}}\;
\int_{(\zoutnd)^{-1}(B)\,\cap\,\goodomega}
\int_{\pss}\h{A}{\us}
\deri\gammacnd{\us}{\uond}
\deri\PROB[\uond] 
+ O(\delta)\;. 
\end{equation}
Then the integral in the right hand side of 
\eqref{eq:above} can be written as
\[
\int_{(\zoutnd)^{-1}(B)\,\cap\,\goodomega}
\h{A}{\usnd}\deri\PROB\;. 
\]
Using $\PROB\fbr*{\OMG{\thetagap}{\nd}{1:1}{M}{\delta}
\setminus\goodomega}<\delta$ we have
\[
\intcnd{A}{B}{1} 
\;\overset{j}{\scaleto{\asymp}{8pt}}\;
\int_{(\zoutnd)^{-1}(B)\,\cap\,\OMG{\thetagap}{\nd}{1:1}{M}{\delta}}
\h{A}{\usnd}\deri\PROB+O(\delta)\;.
\]
Using $\Prob*{\OMG{\thetagap}{\nd}{1:1}{M}{\delta}\setminus\goodomega}<\delta$ 
and \eqref{I1} we also get 
\begingroup 
\addtolength{\jot}{0.25em}
\begin{align*}
  & \intcnd{A}{B}{1} \\ 
={} & \int_{(\zoutnd)^{-1}(B)\,\cap\,\OMG{\thetagap}{\nd}{1:1}{M}{\delta}}
\int_{\pss}
\int_{A\,\cap\,\Sigma_{\usm}}
\deri\mucnd{\uz}{s}{\uond}
\deri\gammacnd{s}{\uond}
\deri\PROB
+ O(\delta) \\
={} &  \Prob[\Big]{
\,\fbr[\big]{\,\zinnd\in A\,}\,
\cap
\,\fbr[\big]{\,\zoutnd\in B\,}\,
\cap
\OMG{\thetagap}{\nd}{1:1}{M}{\delta}\,} + O(\delta)\;. 
\end{align*}
\endgroup
Therefore
\begingroup 
\addtolength{\jot}{0.25em}
\begin{align*} 
& \Prob[\Big]{
\,\fbr[\big]{\,\zinnd\in A\,}\,
\cap
\,\fbr[\big]{\,\zoutnd\in B\,}\,
\cap
\OMG{\thetagap}{\nd}{1:1}{M}{\delta}}\\
\overset{M,\thetagap}{\scaleto{\asymp}{8pt}}\; & 
\int\h{A}{\usnd} 
\bigindicator{\zoutnd\in B}
\Bigindicator{\OMG{\thetagap}{\nd}{1:1}{M}{\delta}}
\deri\PROB + O(\delta)\;.
\end{align*}
\endgroup
This concludes the proof of Proposition~\ref{p:9.1}.
\end{proof}

\begin{remark}
From the proof of Proposition~\ref{p:9.1}, 
it follows that there exists $\seq*{\correctionterm{j}{k}}_{j\geq 1, k\geq 1}$
such that $\lim_{k\to\infty}\correctionterm{j}{k}=0$ for each $j\geq 1$,
and for 
$A\in\FUBasisInsidempts$, 
$B\in\FUBasisOutside$, 
$j\geq 1$, 
$k\geq 1$
\[
\exp\fbr[\Big]{-f(M_j,\thetagap_j)}
\;\leq\; 
\frac{\displaystyle\Prob[\Big]{
\,\fbr[\big]{\,\zinnk\in A\,}\,
\cap 
\,\fbr[\big]{\,\zoutnk \in B\,}\, 
\cap 
\onkj
\;}}
{\displaystyle\fbr[\Big]{
\int_{\zoutnk^{-1}(B)\;\cap\;\onkj}
\nu_{\Phi,\Psi,\Dset}\kernel{A}{\zout}\deri\PROB}
+ \correctionterm{j}{k}}
\;\leq\;
\exp\fbr[\Big]{f(M_j,\thetagap_j)}\;,
\]
where the function $f$ is the function defined in Proposition~\ref{p:2:5}.
\end{remark}

\section{Proofs of the results in Step 3 of Section~\ref{sec:limitinggaf}}
\label{sec:step3}

\subsection{Proof of Proposition~\ref{p:3:1}}
\label{ss:pf:p:3:1}

Observe that, 
    the event $\OMG{\thetagap}{\nd}{3:1}{M}{\delta}$ 
    is same as the event 
    $\OMG{\thetagap}{\nd}{1:1}{M}{\delta}$
    with $M$ replaced by $M-1$.
So Proposition~\ref{p:3:1} can be proved
    simply by reversing the arguments of 
    Proposition~\ref{p:1:1}. 
Therefore, we skip the details.

\subsection{Proof of Proposition~\ref{p:3:2}}
\label{ss:pf:p:3:2}

Consider $M>3$, $\thetagap\in(0,1)$. 
Our objective is to verify that 
for $\delta\in(0,1)$ sufficiently small
depending on $M$, the conditions defining 
the event 
$\OMG{\thetagap}{\nd}{3:1}{M}{\delta}$ 
also hold on the event 
$\OMG{\thetagap}{\infty}{3:1}{M}{\delta}$ 
except on a $\delta^3$-negligible event.
\begin{enumerate}[(i),font=\normalfont\bfseries,topsep=0pt]
\item Condition~\ref{c:i:3:1:1} defining
$\OMG{\thetagap}{\infty}{3:1}{M}{\delta}$ 
is that the event $\OGm{\infty}{\thetagap}$ 
occurs. Condition~\ref{c:nd:4} defining 
$\nd$ is 
\[
\PROB\fbr[\Big]{\,\OGm{\infty}{\thetagap}\,\symmdiff\,
\OGm{\nd}{\thetagap}\,}\,<\,\delta^3\;.
\]
Therefore, the event $\OGm{\nd}{\thetagap}$ occurs on 
$\OMG{\thetagap}{\infty}{3:1}{M}{\delta}$
except on a $\delta^3$-negligible event. 
Thus condition~\ref{c:n:3:1:1} defining 
$\OMG{\thetagap}{\nd}{3:1}{M}{\delta}$ is satisfied
on $\OMG{\thetagap}{\infty}{3:1}{M}{\delta}$ except on
a $\delta^3$-negligible event.

\item From condition~\ref{c:i:3:1:2} defining $\OMG{\thetagap}{\infty}{3:1}{M}{\delta}$ we have 
\[
\max_{1\leq s<\moment}
\absinv{\falphagaf}{s}{0}{\kdel}
\leq M-\frac{5}{2}\;.
\]
Using Lemma~\ref{l:nd/i:1} we get, sacrificing a $\delta^3$-negligible event, 
\[
\max_{1\leq s<\moment}
\absinv{\fand}{s}{0}{\kdel}
\leq M-\frac{5}{2}+\delta 
\leq M-1\;,
\]
which is condition~\ref{c:n:3:1:2} defining $\OMG{\thetagap}{\nd}{3:1}{M}{\delta}$.

\item From condition~\ref{c:i:3:1:3} defining $\OMG{\thetagap}{\infty}{3:1}{M}{\delta}$ we have \[
\invabs{\falphagaf}{\moment}{0}{\kdel}<M-\frac{5}{2}\;.
\]
Using Lemma~\ref{l:nd/i:1} we get, sacrificing a $\delta^3$-negligible event, 
\[
\invabs{\fand}{\moment}{0}{\kdel}<M-1\;,
\]
which is condition~\ref{c:n:3:1:3} defining $\OMG{\thetagap}{\nd}{3:1}{M}{\delta}$.

\item Using Lemma~\ref{l:tailsum} we have
\[
\QFI{\zppv} = \abs*{\tailsum} \frac{(\nd!)^{\alpha}}{\abs*{\sym{\nd-m}{\uo}}^2}\;.
\]
From \eqref{eq:al:1:1} in Lemma~\ref{l:al:1} we get 
\[
\frac{(\nd!)^{\alpha}}{\abs*{\sym{\nd-m}{\uo}}^2}
=\frac{\InProdSqNd}{\abs*{\xi_0}^2}\abs*{\xi_{\nd}}^2\;.
\]
Also recall the Notation~\ref{n:hadamard}, since $\underline{\omega}\in\big(\Dset^\c \big)^{n_\delta-m}$, we get
\[
\tailsum=\sum_{k=\nd-\Cdel}^{\ndmij}
\frac{\overline{\sym{k-i}{\uo}}}{\ndckat}
\frac{\sym{k-j}{\uo}}{\ndckat}\;.
\]
Here we use the fact that $\overline{\sigma_{k-i}(\underline{\omega})}\sigma_{k-j}(\underline{\omega})=0$ when $k>n_\delta-m+(i\wedge j)$.
Thus
\begin{equation}\label{eq:pf:p:3:2:1}
\QFI{\zppv} = 
\frac{\InProdSqNd}{\abs*{\xi_0}^2}
\abs*{\sum_{k=\nd-\Cdel}^{\ndmij}
\frac{\overline{\sym{k-i}{\uo}}}{\ndckat}
\frac{\sym{k-j}{\uo}}{\ndckat}}
\abs*{\xi_{\nd}}^2\;.
\end{equation}
From condition~ \ref{c:i:3:1:4} defining $\OMG{\thetagap}{\infty}{3:1}{M}{\delta}$ we have 
\[
\frac{\InProdSq}{\abs*{\xi_0}^2}\leq \fbr[\big]{ M-2 }^{1/2}-\delta\;.
\]
From condition~\ref{c:nd:3} defining $\nd$ we have 
\[
\abs*{\frac{\InProdSqNd}{\abs*{\xi_0}^2} 
-\frac{\InProdSq}{\abs*{\xi_0}^2}}<\delta
\]
except on a $\delta^3$-negligible event. Thus we get
\begin{equation}\label{eq:pf:p:3:2:2}%refcheck ok
\frac{\InProdSqNd}{\abs*{\xi_0}^2}
\leq \fbr[\big]{ M-2 }^{1/2}\;.
\end{equation}
Using Propositions~\ref{p:rec} we get 
\[
\sum_{k=\nd-\Cdel}^{\ndmij}
\frac{\overline{\sym{k-i}{\uo}}}{\ndckat}
\frac{\sym{k-j}{\uo}}{\ndckat}
=\frac{(-1)^{i+j}}{\abs*{\xi_{\nd}}^2}
\fbr[\Big]{\bTp{1}+\bTp{2}+\bTp{3}+\bTp{4}}\;,
\]
where
{
\begingroup
\addtolength{\jot}{0.25em}
\begin{align*}
\bTp{1} \coloneqq{} &
\sum_{r_1=0}^{\lev-1}
\sum_{r_2=0}^{\lev-1}
(-1)^{r_1+r_2}\cdot\overline{\g_{r_1}}\cdot\g_{r_2}\cdot
\fbr*{\sum_{l=\mij}^{\Cdel} 
\frac{\overline{\xi_{l+i+r_1}}\cdot\xi_{l+j+r_2}}
{\ffa{l}{i+r_1}\cdot\ffa{l}{j+r_2}}}\;,\\
\bTp{2} \coloneqq{} & 
\sum_{r_2=0}^{\lev-1}
(-1)^{r_2}\cdot\g_{r_2}\cdot
\fbr*{\sum_{l=\mij}^{\Cdel} 
\frac{\overline{\tail{l+i}{\nd}}\cdot\xi_{l+j+r_2}}
{\ffa{l}{i}\cdot\ffa{l}{j+r_2}}}\;,\\
\bTp{3} \coloneqq{} &
\sum_{r_1=0}^{\lev-1}
(-1)^{r_1}\cdot\overline{\g_{r_1}}\cdot
\fbr*{\sum_{l=\mij}^{\Cdel} 
\frac{\overline{\xi_{l+i+r_1}}\cdot\tail{l+j}{\nd}}
{\ffa{l}{j}\cdot\ffa{l}{i+r_1}}}\;,\\
\bTp{4} \coloneqq{} &  
\sum_{l=\mij}^{\Cdel} 
\frac{\overline{\tail{l+i}{\nd}}\cdot\tail{l+j}{\nd}}
{\ffa{l}{i}\cdot\ffa{l}{j}}\;.
\end{align*}
\endgroup}
\noindent By condition~\ref{c:i:3:1:5} defining 
$\OMG{\thetagap}{\infty}{3:1}{M}{\delta}$ we have
$\abs*{\bTp{p}}\leq \fbr[\big]{ M-2 }^{1/2}/4$ for each 
$1\leq p\leq 4$. 
Therefore 
\begin{equation}\label{eq:pf:p:3:2:3}
\abs*{\sum_{k=\nd-\Cdel}^{\ndmij}
\frac{\overline{\sym{k-i}{\uo}}}{\ndckat}
\frac{\sym{k-j}{\uo}}{\ndckat}}
\leq \frac{1}{\abs*{\xi_{\nd}}^2} \fbr[\big]{ M-2 }^{1/2}\;.
\end{equation}
Combining \eqref{eq:pf:p:3:2:1} - \eqref{eq:pf:p:3:2:3} we get
\[
    \QFI{\zppv}\le M-2\;.
\]
Using Lemma~\ref{l:kd/i:2}, sacrificing a $\delta^3$-negligible event, we get 
\[
\QFI{\zpv}\leq M-2+\delta\leq M-1\;,
\]
which is condition~\ref{c:n:3:1:4} defining 
$\OMG{\thetagap}{\nd}{3:1}{M}{\delta}$.

\item By condition~\ref{c:nd:3} defining $\nd$ we have
\[
\abs*{\frac{\InProdSq}{\abs*{\xi_0}^2}
-\frac{\InProdSqNd}{\abs*{\xi_0}^2}}
<\delta
\]
except on a $\delta^3$-negligible event.
Therefore, using (\ref{c:p:2:3:1}) and Lemma~\ref{l:kd/i:2} we get 
that the following is true except on a 
$\delta^3$-negligible event:
\begingroup
\addtolength{\jot}{0.25em}
\begin{align*}
     \QFZ{\zpv}
\geq{} & \QFZ{\zppv} 
- \abs*{\QFZ{\zpv} - \QFZ{\zppv}}\\ 
\geq{} & \frac{1}{8}\frac{\InProdSqNd}{\abs*{\xi_0}^2} -\delta\\ 
\geq{} & \frac{1}{8}\frac{\InProdSq}{\abs*{\xi_0}^2}
- \frac{1}{8}\abs*{\frac{\InProdSqNd}{\abs*{\xi_0}^2} 
 - \frac{\InProdSq}{\abs*{\xi_0}^2}} -\delta\\ 
\geq{} & \frac{1}{8}\frac{\InProdSq}{\abs*{\xi_0}^2} -
\frac{9}{8}\delta\;.
\numberthis\label{exh1}
\end{align*}
\endgroup
We can apply (\ref{c:p:2:3:1}) here since we have shown that $\OGm{\nd}{\thetagap}\subset\Omega_{\nd}^m$ occurs except on a $\delta^3$-negligible event.  
Similarly, 
except on a $\delta^3$-negligible event, 
we have:
\begin{equation}\label{exh2} 
\QFZ{\zpv}\leq
\frac{27}{8}\frac{\InProdSq}{\abs*{\xi_0}^2}+\frac{35}{8}\delta\;. 
\end{equation}
Equations~\eqref{exh1} and \eqref{exh2},
along with condition~\ref{c:i:3:1:4} defining 
$\OMG{\thetagap}{\infty}{3:1}{M}{\delta}$ gives 
us condition~\ref{c:n:3:1:5} defining 
$\OMG{\thetagap}{\nd}{3:1}{M}{\delta}$. 
This concludes the proof of Proposition~\ref{p:3:2}.
\end{enumerate}

\subsection{Proof of Proposition~\ref{p:3:3}}
\label{ss:pf:p:3:3}

We verify that the conditions defining the event 
$\OMG{\thetagap}{\infty}{3:2}{M}{\bullet}$ 
are also satisfied on the event 
$\OMG{\thetagap}{\infty}{3:1}{M}{\delta}$ 
except on a $\delta^3$-negligible event. 
\begin{enumerate}[(i),font=\normalfont\bfseries,topsep=0pt]
\item Condition~\ref{c:i:3:1:1} defining 
$\OMG{\thetagap}{\infty}{3:1}{M}{\delta}$ 
is same as condition~\ref{c:i:3:2:1} defining 
$\OMG{\thetagap}{\infty}{3:2}{M}{\bullet}$.

\item From condition~\ref{c:i:3:2:2} defining $\OMG{\thetagap}{\infty}{3:2}{M}{\bullet}$ we have
\[
\max_{1\leq s\leq\moment}\;
\absinv{\falphagaf}{s}{0}{\infty}
\leq M-3\;.
\]
Using Lemma~\ref{l:kd/i:1} and assuming $\delta$ is small enough
we get that except on a $\delta^3$-negligible event
\[
\max_{1\leq s\leq\moment}\;
\absinv{\falphagaf}{s}{0}{\kdel}
\leq M-3+\delta 
\leq M-\frac{5}{2}\;,
\]
which is condition~\ref{c:i:3:1:2} 
defining $\OMG{\thetagap}{\infty}{3:1}{M}{\delta}$.

\item From condition~\ref{c:i:3:2:3} defining $\OMG{\thetagap}{\infty}{3:2}{M}{\bullet}$ we have \[
\invabs{\falphagaf}{\moment}{0}{\infty}
\leq M-3\;.
\]
Using Lemma~\ref{l:kd/i:1} and assuming $\delta$ is small enough
we get that except on a $\delta^3$-negligible event
\[
\invabs{\falphagaf}{\moment}{0}{\kdel}
\leq M-3+\delta
\leq M-\frac{5}{2}\;,
\]
which is condition~\ref{c:i:3:1:3} defining 
$\OMG{\thetagap}{\infty}{3:1}{M}{\delta}$.

\item Condition~\ref{c:i:3:2:4} defining 
$\OMG{\thetagap}{\infty}{3:2}{M}{\bullet}$
implies condition~\ref{c:i:3:1:4} defining 
$\OMG{\thetagap}{\infty}{3:1}{M}{\delta}$
when $\delta$ is small enough depending on $M$.

\item For $0\leq i\leq m$, 
$\rigidity\leq j\leq m$, 
$0\leq r_1\leq \lev$, 
$0\leq r_2\leq \lev$, 
using condition~\ref{c:Cd:2} defining $\Cdel$ 
and the Chebyshev inequality we get
\[
\abs*{
\sum_{l=\Cdel+1}^{\infty} 
\frac{\overline{\xi_{l+i+r_1}}\cdot
\xi_{l+j+r_2}}{\ffa{l}{i+r_1}\cdot\ffa{l}{j+r_2}}
}
\leq\delta 
\]
except on a $\delta^3$-negligible event. 
Similarly using Proposition~\ref{p:rec} and
a Markov bound we get
\begingroup
\addtolength{\jot}{0.25em}
\begin{align*}
\abs*{
\sum_{l=\Cdel+1}^{\infty} 
\frac{\overline{\tail{l+i}{\nd}}\cdot\xi_{l+j+r_2}}
{\ffa{l}{i}\cdot\ffa{l}{j+r_2}}
}
\leq{} & \delta\;,\\
\abs*{\sum_{l=\Cdel+1}^{\infty} 
\frac{\xi_{l+i+r_1}\cdot\overline{\tail{l+j}{\nd}}}
{\ffa{l}{j}\cdot\ffa{l}{i+r_1}}}
\leq{} & \delta\;,\\
\abs*{
\sum_{l=\Cdel+1}^{\infty} 
\frac{\overline{\tail{l+i}{\nd}}\cdot\tail{l+j}{\nd}}
{\ffa{l}{i}\cdot\ffa{l}{j}}
}
\leq{} & \delta\;,
\end{align*}
\endgroup
except on a $\delta^3$-negligible event. 
Therefore condition~\ref{c:i:3:1:5} defining 
$\OMG{\thetagap}{\infty}{3:1}{M}{\delta}$ is 
implied by condition~\ref{c:i:3:2:5} 
defining $\OMG{\thetagap}{\infty}{3:2}{M}{\bullet}$.
\end{enumerate}
This concludes the proof of Proposition~\ref{p:3:3}.

\subsection{Proof of Theorem~\ref{t:s:3}}\label{ss:pf:t:s:3}

Recall from \eqref{eq:omegajdef} that 
    $\oj=\liminf_{k\to\infty}\; 
    \OMG{\thetagap_j}{\infty}{1:1}{M_j}{\delta_k}$.
From Propositions~\ref{p:3:1}-\ref{p:3:3} we get the following. 
For each $j\geq 1$, and large enough $k$
    (so that $\delta_k$ is small enough)
    $\OMG{\thetagap_j}{\infty}{3:2}{M_j}{\bullet}$ 
    is $\delta_k^3$-included in 
    $\OMG{\thetagap_j}{\infty}{1:1}{M_j}{\delta_k}$ 
    i.e., there exists an event 
    $\mathcal{E}_{\delta_k}$ 
    such that 
    \[
    \OMG{\thetagap_j}{\infty}{3:2}{M_j}{\bullet}
    \;\setminus\;
    \mathcal{E}_{\delta_k}
    \;\subset\;
    \OMG{\thetagap_j}{\infty}{1:1}{M_j}{\delta_k}
    \]
    and $\PROB\fbr*{\mathcal{E}_{\delta_k}}\leq C\delta_k^3$ 
    for some constant $C>0$. 
    From Definition~\ref{d:i:1:1} we get 
    \[
    \OMG{\thetagap_j}{\infty}{1:1}{M_j}{\delta_k}
    \;\subset\;
    \OGm{\infty}{\thetagap_j}
    \;\subset\;
    \om\;.
    \]
Therefore, using the Borel-Cantelli Lemma 
    and \eqref{eq:deltakchoice} we get
    \[
    \OMG{\thetagap_j}{\infty}{3:2}{M_j}{\bullet}
    \;\subset\;
    \oj
    \;\subset\;
    \OGm{\infty}{\thetagap_j}
    \;\subset\;
    \om\;.
    \]
Thus, it is enough to show that 
    $\seq*{\OMG{\thetagap}{\infty}{3:2}{M_j}{\bullet}}_{j=1}^\infty$
    exhausts $\om$. 
To this end, 
    we observe that the random variables 
    appearing in the definition of 
    $\OMG{\thetagap}{\infty}{3:2}{M}{\bullet}$ 
    (Definition~\ref{d:i:3:2}) have no mass at $\infty$. 
Also the random variable $\InProdSq/\abs*{\xi_0}^2$, 
    appearing in condition~\ref{c:i:3:2:2}, 
    has no atom at $0$. 
From the definition of the event 
    $\OGm{\infty}{\thetagap}$ (Definition~\ref{d:gaf:gap}) 
    it is clear that 
    $\seq*{\OGm{\infty}{\thetagap_j}}_{j=1}^\infty$ exhausts $\om$. 
Therefore, $\seq*{\OMG{\thetagap}{\infty}{3:2}{M_j}{\bullet}}_{j=1}^\infty$ exhausts $\om$. 
Therefore, $\seq*{\oj}_{j=1}^\infty$ exhausts $\om$.
This concludes the proof of Theorem~\ref{t:s:3}.

\section*{Acknowledgments}
\addcontentsline{toc}{section}{Acknowledgments}
S.G. was supported in part by the Singapore Ministry of Education grants R-146- 000-250-133, R-146-000-312-114 and MOE-T2EP20121-0013. S.G. is very grateful to Fedor Nazarov for many insightful discussions on the zeros of the standard planar Gaussian analytic function. The authors thank the anonymous referee for his/her suggestions for the improvement of the paper.

\addcontentsline{toc}{section}{References}
\bibliographystyle{plain}
%\bibliography{bib.bib}

\begin{thebibliography}{10}

\bibitem{GL2}
Kartick Adhikari, Subhroshekhar Ghosh, and Joel~L Lebowitz.
\newblock Fluctuation and entropy in spectrally constrained random fields.
\newblock {\em Communications in Mathematical Physics}, 386(2):749--780, 2021.

\bibitem{KlLaYo}
Michael Andreas~Klatt, G{\"u}nter Last, and D~Yogeshwaran.
\newblock Hyperuniform and rigid stable matchings.
\newblock {\em Random Structures \& Algorithms}, 57(2):439--473, 2020.

\bibitem{Buf1}
Alexander~I Bufetov.
\newblock Rigidity of determinantal point processes with the airy, the bessel
  and the gamma kernel.
\newblock {\em Bulletin of Mathematical Sciences}, 6(1):163--172, 2016.

\bibitem{Buf2}
Alexander~I Bufetov.
\newblock Quasi-symmetries of determinantal point processes.
\newblock {\em The Annals of Probability}, 46(2):956--1003, 2018.

\bibitem{BuDeQi}
Alexander~I Bufetov, Yoann Dabrowski, and Yanqi Qiu.
\newblock Linear rigidity of stationary stochastic processes.
\newblock {\em Ergodic Theory and Dynamical Systems}, 38(7):2493--2507, 2018.

\bibitem{BuFaQi}
Alexander~I Bufetov, Shilei Fan, and Yanqi Qiu.
\newblock Equivalence of palm measures for determinantal point processes
  governed by bergman kernels.
\newblock {\em Probability Theory and Related Fields}, 172(1):31--69, 2018.

\bibitem{Buf3}
Alexander~I Bufetov and Yanqi Qiu.
\newblock Determinantal point processes associated with hilbert spaces of
  holomorphic functions.
\newblock {\em Communications in Mathematical Physics}, 351(1):1--44, 2017.

\bibitem{BuQiSha}
Alexander~I Bufetov, Yanqi Qiu, and Alexander Shamov.
\newblock Kernels of conditional determinantal measures and the lyons--peres
  completeness conjecture.
\newblock {\em Journal of the European Mathematical Society}, 23(5):1477--1519,
  2021.

\bibitem{BufShi}
Alexander~I Bufetov and Tomoyuki Shirai.
\newblock Quasi-symmetries and rigidity for determinantal point processes
  associated with de branges spaces.
\newblock {\em Proceedings of the Japan Academy, Series A, Mathematical
  Sciences}, 93(1):1--5, 2017.

\bibitem{Cha}
Sourav Chatterjee.
\newblock Rigidity of the three-dimensional hierarchical Coulomb gas.
\newblock {\em Probability Theory and Related Fields}, 175(3):1123--1176, 2019.

\bibitem{Cos}
Simon Coste.
\newblock Order, fluctuations, rigidities.
\newblock {\url{https://scoste.fr/assets/survey_hyperuniformity.pdf}}.

\bibitem{DV-2}
Daryl~J Daley and David Vere-Jones.
\newblock {\em An Introduction to the Theory of Point Processes. Volume II:
  General Theory and Structure}.
\newblock Springer, 2008.

\bibitem{DV-1}
Daryl~J Daley, David Vere-Jones, et~al.
\newblock {\em An introduction to the theory of point processes: volume I:
  elementary theory and methods}.
\newblock Springer, 2003.

\bibitem{Dereudre2019IntroductionProcesses}
David Dereudre.
\newblock {Introduction to the Theory of Gibbs Point Processes}.
\newblock In {\em Stochastic Geometry: Modern Research Frontiers}, pages
  181--229. Springer International Publishing, Cham, 2019.

\bibitem{Leble}
David Dereudre, Adrien Hardy, Thomas Lebl{\'e}, and Myl{\`e}ne Ma{\"\i}da.
\newblock {DLR} equations and rigidity for the sine-beta process.
\newblock {\em Communications on Pure and Applied Mathematics}, 74(1):172--222,
  2021.

\bibitem{Dereudre2020ExistenceInteraction}
David Dereudre and Thibaut Vasseur.
\newblock {Existence of Gibbs point processes with stable infinite range
  interaction}.
\newblock {\em Journal of Applied Probability}, 57(3):775--791, 9 2020.

\bibitem{DeVa}
David Dereudre and Thibaut Vasseur.
\newblock Number-rigidity and beta-circular Riesz gas.
\newblock {\em arXiv preprint arXiv:2104.09408}, 2021.

\bibitem{GaSa}
Shirshendu Ganguly and Sourav Sarkar.
\newblock Ground states and hyperuniformity of the hierarchical Coulomb gas in
  all dimensions.
\newblock {\em Probability Theory and Related Fields}, 177(3):621--675, 2020.

\bibitem{Geo}
Hans-Otto Georgii.
\newblock Gibbs measures and phase transitions.
\newblock In {\em Gibbs Measures and Phase Transitions}. de Gruyter, 2011.

\bibitem{Quantitative-Estimates}
Subhro Ghosh.
\newblock Rigidity and tolerance in Gaussian zeroes and Ginibre eigenvalues:
  quantitative estimates, 2012.

\bibitem{GL0}
Subhro Ghosh and Joel Lebowitz.
\newblock Number rigidity in superhomogeneous random point fields.
\newblock {\em Journal of Statistical Physics}, 166(3):1016--1027, 2017.

\bibitem{G1}
Subhroshekhar Ghosh.
\newblock Determinantal processes and completeness of random exponentials: the
  critical case.
\newblock {\em Probability Theory and Related Fields}, 163(3):643--665, 2015.

\bibitem{G2}
Subhroshekhar Ghosh.
\newblock Palm measures and rigidity phenomena in point processes.
\newblock {\em Electronic Communications in Probability}, 21:1--14, 2016.

\bibitem{GK}
Subhroshekhar Ghosh and Manjunath Krishnapur.
\newblock Rigidity {H}ierarchy in {R}andom {P}oint {F}ields: {R}andom
  {P}olynomials and {D}eterminantal {P}rocesses.
\newblock {\em Comm. Math. Phys.}, 388(3):1205--1234, 2021.

\bibitem{GKP}
Subhroshekhar Ghosh, Manjunath Krishnapur, and Yuval Peres.
\newblock Continuum percolation for Gaussian zeroes and Ginibre eigenvalues.
\newblock {\em The Annals of Probability}, 44(5):3357--3384, 2016.

\bibitem{GLsurvey}
Subhroshekhar Ghosh and Joel~L Lebowitz.
\newblock Fluctuations, large deviations and rigidity in hyperuniform systems:
  a brief survey.
\newblock {\em Indian Journal of Pure and Applied Mathematics}, 48(4):609--631,
  2017.

\bibitem{GL1}
Subhroshekhar Ghosh and Joel~L Lebowitz.
\newblock Generalized stealthy hyperuniform processes: Maximal rigidity and the
  bounded holes conjecture.
\newblock {\em Communications in Mathematical Physics}, 363(1):97--110, 2018.

\bibitem{GNi2}
Subhroshekhar Ghosh and Alon Nishry.
\newblock {Point Processes, Hole Events, and Large Deviations: Random Complex
  Zeros and Coulomb Gases}.
\newblock {\em Constructive Approximation}, 48(1):101--136, 8 2018.

\bibitem{GNi1}
Subhroshekhar Ghosh and Alon Nishry.
\newblock Gaussian complex zeros on the hole event: the emergence of a
  forbidden region.
\newblock {\em Communications on Pure and Applied Mathematics}, 72(1):3--62,
  2019.

\bibitem{GP}
Subhroshekhar Ghosh and Yuval Peres.
\newblock Rigidity and tolerance in point processes: {G}aussian zeros and
  {G}inibre eigenvalues.
\newblock {\em Duke Math. J.}, 166(10):1789--1858, 2017.

\bibitem{GRi}
Subhroshekhar Ghosh and Philippe Rigollet.
\newblock Gaussian determinantal processes: A new model for directionality in
  data.
\newblock {\em Proceedings of the National Academy of Sciences},
  117(24):13207--13213, 2020.

\bibitem{GSa}
Subhroshekhar Ghosh and Kumarjit Saha.
\newblock Transmission and navigation on disordered lattice networks, directed
  spanning forests and Brownian web.
\newblock {\em Journal of Statistical Physics}, 180(1):1167--1205, 2020.

\bibitem{GoSho}
Vadim Gorin and Mykhaylo Shkolnikov.
\newblock Stochastic airy semigroup through tridiagonal matrices.
\newblock {\em The Annals of Probability}, 46(4):2287--2344, 2018.

\bibitem{HKPV}
J.~Ben Hough, Manjunath Krishnapur, Yuval Peres, and B\'{a}lint Vir\'{a}g.
\newblock {\em Zeros of {G}aussian analytic functions and determinantal point
  processes}, volume~51 of {\em University Lecture Series}.
\newblock American Mathematical Society, Providence, RI, 2009.

\bibitem{Kallenberg2017RandomApplications}
Olav Kallenberg.
\newblock {\em {Random Measures, Theory and Applications}}, volume~77.
\newblock Springer International Publishing, Cham, 2017.

\bibitem{Kallenberg2021FoundationsProbability}
Olav Kallenberg.
\newblock {\em {Foundations of Modern Probability}}, volume~99.
\newblock Springer International Publishing, Cham, 2021.

\bibitem{KaKa}
Machiko Katori and Makoto Katori.
\newblock Spreading and suppression of infection clusters on the Ginibre
  continuum percolation clusters.
\newblock {\em arXiv preprint arXiv:2105.04142}, 2021.

\bibitem{Ka}
Makoto Katori.
\newblock Hyperuniformity of the determinantal point processes associated with
  the heisenberg group.
\newblock {\em arXiv preprint arXiv:2203.09062}, 2022.

\bibitem{KiNi}
Avner Kiro and Alon Nishry.
\newblock Rigidity for zero sets of Gaussian entire functions.
\newblock {\em Electronic Communications in Probability}, 24:1--9, 2019.

\bibitem{KlLa}
Michael~A Klatt and G{\"u}nter Last.
\newblock On strongly rigid hyperfluctuating random measures.
\newblock {\em arXiv preprint arXiv:2008.10907}, 2020.

\bibitem{Ghosal2}
Pierre Yves~Gaudreau Lamarre, Promit Ghosal, Wenxuan Li, and Yuchen Liao.
\newblock Rigidity of the stochastic airy operator.
\newblock {\em arXiv preprint arXiv:2112.10607}, 2021.

\bibitem{Ghosal1}
Pierre Yves~Gaudreau Lamarre, Promit Ghosal, and Yuchen Liao.
\newblock Spectral rigidity of random {S}chr\"{o}dinger operators via
  {F}eynman-{K}ac formulas.
\newblock {\em Ann. Henri Poincar\'{e}}, 21(7):2259--2299, 2020.

\bibitem{LeSe}
Thomas Lebl{\'e} and Sylvia Serfaty.
\newblock Large deviation principle for empirical fields of log and Riesz
  gases.
\newblock {\em Inventiones mathematicae}, 210(3):645--757, 2017.

\bibitem{Lew}
Mathieu Lewin.
\newblock Coulomb and Riesz gases: The known and the unknown.
\newblock {\em Journal of Mathematical Physics}, 63(6):061101, 2022.

\bibitem{KaShi}
Takato Matsui, Makoto Katori, and Tomoyuki Shirai.
\newblock Local number variances and hyperuniformity of the heisenberg family
  of determinantal point processes.
\newblock {\em Journal of Physics A: Mathematical and Theoretical},
  54(16):165201, 2021.

\bibitem{Os12}
Hirofumi Osada.
\newblock Infinite-dimensional stochastic differential equations related to
  random matrices.
\newblock {\em Probability Theory and Related Fields}, 153(3):471--509, 2012.

\bibitem{Os13-1}
Hirofumi Osada.
\newblock Interacting Brownian motions in infinite dimensions with logarithmic
  interaction potentials.
\newblock {\em The Annals of Probability}, 41(1):1--49, 2013.

\bibitem{Os13-2}
Hirofumi Osada.
\newblock Interacting Brownian motions in infinite dimensions with logarithmic
  interaction potentials ii: Airy random point field.
\newblock {\em Stochastic Processes and their applications}, 123(3):813--838,
  2013.

\bibitem{Os21}
Hirofumi Osada.
\newblock Ginibre interacting Brownian motion in infinite dimensions is
  sub-diffusive.
\newblock {\em arXiv preprint arXiv:2109.14833}, 2021.

\bibitem{Os21survey}
Hirofumi Osada.
\newblock Stochastic geometry and dynamics of infinitely many particle
  systems—random matrices and interacting Brownian motions in infinite
  dimensions.
\newblock {\em Sugaku Expositions}, 34(2):141--173, 2021.

\bibitem{OsSh}
Hirofumi Osada and Tomoyuki Shirai.
\newblock Absolute continuity and singularity of palm measures of the Ginibre
  point process.
\newblock {\em Probability Theory and Related Fields}, 165(3):725--770, 2016.

\bibitem{Qi}
Yanqi Qiu.
\newblock Rigid stationary determinantal processes in non-archimedean fields.
\newblock {\em Bernoulli}, 25(1):75--88, 2019.

\bibitem{Ruelle}
David Ruelle.
\newblock {\em Statistical mechanics}.
\newblock World Scientific Publishing Co., Inc., River Edge, NJ; Imperial
  College Press, London, 1999.
\newblock Rigorous results, Reprint of the 1989 edition.

\end{thebibliography}

\begin{appendix}

\section{Reduction of \texorpdfstring{$\Dset$}{Dset} from a general domain to a disk}\label{a:disk}

Here we show that in order to prove Theorems~\ref{thm:ginibre-main} and \ref{thm:alphagaf-main}, it is enough to prove them when the domain $\Dset$ is a disk. 
First we will show this in the context of Theorem~\ref{thm:ginibre-main}.
The proof in the context of Theorem~\ref{thm:alphagaf-main} is essentially the same.
So we will only present a rough sketch and point out the main differences.

Let us suppose that Theorem~\ref{thm:ginibre-main} holds in the case $\Dset$ is a disk. 
Consider $\Dset$ to be a bounded open set in $\CC$ whose boundary has zero Lebesgue measure. 
By the translation invariance of $\GIF$, we assume that the origin is in the interior of $\Dset$. 
Let $\Dset_0$ be a disk centered at the origin containing the closure of $\Dset$ in its interior.

\begin{figure}[H]
\centering
\begin{tikzpicture}
\draw[fill=gray!30,draw=gray!30] (-3.56,2.2) rectangle (3.56,-2.2);
\draw[very thick, fill=gray!20](0,0) circle (2);
\draw[thick, fill=gray!10] plot [smooth cycle] coordinates {(-1.0,-1.4)(-0.5,-1.3)(0.6,-1.0)(0.7,-0.0)(0.6,1.3)(-0.6,1.0)(-1.5,-1.0)}; 
\draw[fill=black] (0,0) circle (2pt);
\draw node at (0.2,0.2) {$\boldsymbol{0}$};
\draw node at (-0.6,0.2) {$\Dset$};
\draw node at (1.3,0.2) {$\Dset_0\setminus\Dset$};
\draw node at (-2.5,0.2) {$\Dset_0^c$};
\end{tikzpicture}
\caption{The domain $\Dset$ is contained inside the disk $\Dset_0$. 
The point configuration $\Upsilon$ is a configuration outside $\Dset$,
i.e., it is supported on $\Dset^\c = (\Dset_0\setminus\Dset)\sqcup\Dset_0^\c$. 
The point configuration $\Upsilon_0$ is restriction of $\Upsilon$ on $\Dset_0^\c$.}
\end{figure}

Consider a point configuration $\Upsilon\in\cS{\Dset^\c}$. 
Let $\Upsilon_0\coloneqq\Upsilon\cap\Dset_0^\c\in\cS{\Dset_0^\c}$ be the restriction of $\Upsilon$ on $\Dset_0^\c$.
Consider the event $\GIFOUT=\Upsilon$.
Since the Ginibre ensemble is number-rigid (see Theorem~\ref{t:1.1} for reference),
the point configuration $\Upsilon_0$ determines the number 
$N_0$ of points of $\GIF$ inside $\Dset_0$. 
From $\Upsilon$ the number of points in 
$\Dset_0\setminus\Dset$ is also determined.
Thus we can determine the number $N$ of points inside 
$\Dset$ from $\Upsilon$.

By our assumption, Theorem~\ref{thm:ginibre-main} holds for $\Dset_0$. Thus, the conditional distribution of the vector consisting of points of $\GIF$ inside $\Dset_0$ taken in uniform random order, given $\GIF\cap\Dset_0^\c=\Upsilon_0$, is supported on $\Dset_0^{N_0}$. Moreover, this distribution has a density, say $f_0$, with respect to the Lebesgue measure. This density satisfies
\begin{equation}\label{eq:ap:1.1}
\mm_0\fbr*{\Upsilon_0}\abs*{\van{\uz}}^2 \le f_0\fbr*{\uz} \le \MM_0\fbr*{\Upsilon_0}\abs*{\van{\uz}}^2 
\end{equation}
for some measurable functions $\mm_0,\MM_0:\cS{\Dset_0^\c}\to(0,\infty)$. Let $\rho\kernel{\cdot}{\Upsilon_\out}$ be the conditional distribution of the vector consisting of the points of $\GIF$ inside $\Dset$ taken in uniform random order, given $\GIFOUT=\Upsilon$. Then for all Borel subset $A\subset\Dset^N$ we have \[ \rho\kernel{A}{\Upsilon}=\frac{\displaystyle\int_A f_0\fbr*{\uz^{[1]},\uz^{[2]}}\deri\el\fbr*{\uz^{[1]}}}{\displaystyle\int_{\Dset^N} f_0\fbr*{\uz^{[1]},\uz^{[2]}}\deri\el\fbr*{\uz^{[1]}}}\;,\] where $\uz^{[2]}$ is a vector consisting of the points of $\Upsilon$ in $\Dset_0\setminus\Dset$, $\uz^{[1]},\uz^{[2]}$ denotes the concatenated vector, and the integrals are taken with respect to $\uz^{[1]}\in\Dset^N$. The denominator is a measurable function of $\Upsilon$. To get an upper bound of $\rho\kernel{A}{\Upsilon}$, we use the upper bound of $f_0$ from \eqref{eq:ap:1.1}. The quantity $\MM_0(\Upsilon_0)$ is measurable with respect to $\Upsilon$ because $\Upsilon_0$ is the restriction of $\Upsilon$ on $\Dset_0^\c$. We can split the Vandermonde term as \[ \van{\uz}=\van{\uz^{[1]}}\cdot\van{\uz^{[2]}}\cdot\vancross{\uz^{[1]}}{\uz^{[2]}}\;. \] The term $\van{\uz^{[2]}}$ is measurable with respect to $\Upsilon$. The term $\vancross{\uz^{[1]}}{\uz^{[2]}}$ is bounded above by $(2r_0)^{N N_0}$ where $r_0$ is the radius $\Dset_0$, and it is bounded below by $\thetagap^{N N_0}$ where $\thetagap$ is the gap between points in $\Upsilon$ and the boundary of $\Dset$. This gap is almost surely positive and measurable with respect to $\Upsilon$. Hence we get that the ratio $\rho\kernel{A}{\Upsilon}/\int_A\abs{\van{\uz^{[1]}}}^2\deri\el\fbr{\uz^{[1]}}$ is bounded above and below by quantities measurable with respect to $\Upsilon$. Then we get \eqref{eq:target2} by the Radon-Nikodym Theorem.

Now let us show that in the context of Theorem~\ref{thm:alphagaf-main} it is enough to assume that $\Dset$ is a disk. As mentioned before, we only point out the key differences. 
Suppose Theorem~\ref{thm:alphagaf-main} is true when $\Dset$ is a disk.
Now consider $\Dset$ to be a bounded open set in $\CC$ whose boundary has zero Lebesgue measure.
By the translation invariance of $\alphagaf$,
we take the origin to be in the interior of 
$\Dset$. Let $\Dset_0$ be a disk centered at origin
containing $\closure{\Dset}$ in its interior. 
Consider the event $\alphagafout=\Upsilon$. 
Let $\Upsilon_0\coloneqq\Upsilon\cap\Dset_0^\c\in\cS{\Dset_0^\c}$ be the restriction of $\Upsilon_0$ on $\Dset_0^\c$.
By Theorem~\ref{thm:app:alphagaf}, the point configuration $\Upsilon_0$ determines the number $N_0$, and the power sums up to order $\rigidity-1$ $\us^{(0)}=(s_1^{(0)},\dots,s_{\rigidity-1}^{(0)})$ of points inside $\Dset_0$. 
Similarly, the point configuration $\Upsilon$ determines the number $N$, and the power sums up to order $\rigidity-1$, say $\us=(s_1,\dots,s_{\rigidity-1})$ of the points inside $\Dset$. 
Define
\[
\Sigma\coloneqq\Set*{(\zeta_1,\dots,\zeta_N)\in\Dset^{N}\given\sum_{i=1}^N\zeta_i^j=s_j\mbox{ for all }1\leq j\leq \rigidity-1}\;,
\]
and 
\[
\Sigma^{(0)}\coloneqq\Set*{(\zeta_1,\dots,\zeta_{N_0})\in\Dset_0^{N_0}\given\sum_{i=1}^N\zeta_i^j=s_j^{(0)}\mbox{ for all }1\leq j\leq \rigidity-1}\;.
\]
By the assumption that Theorem~\ref{thm:alphagaf-main} is valid for $\mD_0$ we get that the conditional distribution of the vector of points inside $\Dset_0$, given $\alphagaf\cap\Dset_0^\c=\Upsilon_0$, lives on $\Sigma^{(0)}$, and has a density with respect to the Lebesgue measure on $\Sigma^{(0)}$, say $f_0$, which satisfies
\[
\mm_0\fbr*{\Upsilon_0}\abs*{\van{\uz}}^2\le f_0(\uz)\le \MM_0\fbr*{\Upsilon_0}\abs*{\van{\uz}}^2 
\]
for some measurable functions $\mm_0,\MM_0:\cS{\Dset_0^\c}\to(0,\infty)$. The rest of the proof is same as in the context of Theorem~\ref{thm:ginibre-main}. The only difference is, instead $\Dset^N$ we now have $\Sigma$, and instead of $\Dset_0^{N_0}$ we have $\Sigma^{(0)}$.

\section{Ginibre Ensemble: Definition, Rigidity, Tolerance}
\label{a:ginibre}

Consider a $n\times n$ matrix $M^n$ whose entries are i.i.d.\ 
standard complex Gaussian random variables. 
The vector of its eigenvalues, in uniform random order, 
has a joint density with respect to the Lebesgue measure on 
$\CC^n$ given by
\[
p\fbr*{z_1,\ldots,z_n}
=\fbr*{\pi^n\prod_{k=1}^n k!}^{-1}\cdot
\exp\fbr*{-\sum_{k=1}^n\abs*{z_k}^2}\cdot
\abs*{\van{z_1,\ldots,z_n}}^2\;.
\]
The set of eigenvalues of $M^n$, 
considered as a random point configuration, 
is called the $n$-dimensional Ginibre ensemble.
We denote it by $\Ginin$.
As $n\to\infty$, $\Ginin$ 
converges in distribution to a point process 
called the (infinite) Ginibre
ensemble, denoted by $\GIF$. 
Both $\Ginin$ and $\GIF$ are determinantal point processes.
Recall that a determinantal point process on the 
Euclidean space $\RR^d$ with kernel $\bbK$ and
background measure $\mu$ is a point process on $\RR^d$ 
whose $k$-point intensity functions with respect to the 
measure $\mu^{(k)}$ are given
by
\[
p_k\fbr*{x_1,\ldots,x_k} \coloneqq
\det\tbr*{\seq[\big]{\bbK(x_i,x_j)}_{1\leq i,j\leq k}}\;. 
\]
The finite Ginibre ensemble $\Ginin$ is
a determinantal point process with kernel 
$\bbK_n(z,w)\coloneqq\sum_{k=0}^{n-1}\frac{(z\Bar{w})^k}{k!}$
with respect to the background measure 
$\deri\gamma(z)=\frac{1}{\pi}e^{-|z|^2}\deri\mathcal{L}$, 
where $\mathcal{L}$ denotes the Lebesgue measure on $\CC$.
The Ginibre ensemble $\GIF$ is a determinantal point process 
with kernel $\bbK_{\infty}(z,w)\coloneqq\sum_{k=0}^{\infty}\frac{(z\Bar{w})^k}{k!}$ with
respect to the measure $\gamma$. We can define $\seq*{\Ginin}$ and $\GIF$ on the same 
probability space such that $\Ginin\to\GIF$ a.s. We refer to \cite{GP} for a brief explanation. The following rigidity and tolerance properties were established in \cite{GP}.

\begin{theorem}[Rigidity of the Ginibre ensemble: Theorem~1.1 in \cite{GP}]
Let $\Dset\subset\CC$ be a bounded open set whose boundary has zero Lebesgue measure.
For the Ginibre ensemble $\GIF$, there is a measurable 
function $\numberofpoints:\cS{\Dset^\c}\to\nat$ such that 
the number of points in $\GIF\cap\Dset$ is $\numberofpoints\fbr[\big]{\GIF\cap\Dset^\c}$ a.s.
\label{t:1.1}
\end{theorem}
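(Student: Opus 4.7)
The plan is to prove number rigidity for the Ginibre ensemble via the standard variance-of-linear-statistics argument for determinantal point processes, exploiting the Gaussian decay of the correlation kernel. Recall that $\GIF$ is determinantal on $\CC$ with kernel $\bbK_\infty(z,w) = \sum_{k\ge 0}(z\bar{w})^k/k! = e^{z\bar{w}}$ with respect to the Gaussian background $\deri\gamma(z) = \pi^{-1}e^{-|z|^2}\deri\el(z)$. Since $\bbK_\infty$ is the reproducing kernel of the Bargmann--Fock space and hence a projection kernel, for any smooth compactly supported $\phi:\CC\to\RR$ the determinantal variance identity gives
\[
\mathrm{Var}\fbr*{\sum_{z\in\GIF}\phi(z)}
\;=\; \frac{1}{2}\iint\fbr[\big]{\phi(z)-\phi(w)}^2 \abs*{\bbK_\infty(z,w)}^2\deri\gamma(z)\deri\gamma(w)
\;=\; \frac{1}{2\pi^2}\iint\fbr[\big]{\phi(z)-\phi(w)}^2 e^{-|z-w|^2}\deri\el(z)\deri\el(w),
\]
where we use the crucial factorization $\abs*{\bbK_\infty(z,w)}^2 e^{-|z|^2-|w|^2} = \pi^{-2}e^{-|z-w|^2}$. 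Writing $\phi(z)-\phi(w) = \int_0^1\nabla\phi(w+t(z-w))\cdot(z-w)\,dt$, applying Cauchy--Schwarz, and translating out the inner integral with $u = z-w$ yields the clean Dirichlet--energy bound $\mathrm{Var}\fbr*{\sum_{z\in\GIF}\phi(z)} \le \frac{1}{2\pi}\int_\CC\abs*{\nabla\phi}^2\deri\el$.

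Next I would construct a sequence of smooth compactly supported test functions $\phi_n:\CC\to[0,1]$ satisfying $\phi_n\equiv 1$ on $\Dset$ and $\int|\nabla\phi_n|^2\to 0$. Fix $R_0>0$ with $\overline{\Dset}\subset B(0,R_0)$ and take the radial logarithmic cutoff $\phi_n(z) \coloneqq \chi\fbr[\big]{\log(|z|/R_0)/\log n}$ for $|z|\ge R_0$, extended by $1$ on $B(0,R_0)$, where $\chi:\RR\to[0,1]$ is smooth with $\chi\equiv 1$ on $(-\infty,0]$ and $\chi\equiv 0$ on $[1,\infty)$. A direct polar-coordinates computation with the substitution $u = \log(r/R_0)/\log n$ gives $\int_\CC|\nabla\phi_n|^2\deri\el = 2\pi\|\chi'\|_{L^2}^2/\log n \to 0$, and hence $\mathrm{Var}\fbr*{\sum_{z\in\GIF}\phi_n(z)} \to 0$ as $n\to\infty$.

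To conclude, since $\phi_n\equiv 1$ on $\Dset$, the decomposition
\[
\sum_{z\in\GIF}\phi_n(z) \;=\; \abs*{\GIF\cap\Dset} + \sum_{z\in\GIF\cap\Dset^\c}\phi_n(z)
\]
expresses the count $\abs*{\GIF\cap\Dset}$ as a global linear statistic minus a $\sigma(\GIF\cap\Dset^\c)$-measurable random variable. Setting $\mu_n \coloneqq \Exp\fbr*{\sum_{z\in\GIF}\phi_n(z)}$ and rearranging,
\[
\abs*{\GIF\cap\Dset} \;-\; \fbr*{\mu_n - \sum_{z\in\GIF\cap\Dset^\c}\phi_n(z)} \;=\; \sum_{z\in\GIF}\phi_n(z) - \mu_n,
\]
whose $L^2$-norm equals $\sqrt{\mathrm{Var}\fbr*{\sum_{z}\phi_n(z)}} \to 0$. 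Hence $\abs*{\GIF\cap\Dset}$ is an $L^2$-limit of $\sigma(\GIF\cap\Dset^\c)$-measurable random variables, and therefore is itself $\sigma(\GIF\cap\Dset^\c)$-measurable. Choosing any measurable representative defines the desired function $\numberofpoints:\cS{\Dset^\c}\to\nat$.

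The main obstacle is the Dirichlet--energy bound $\mathrm{Var} \le \frac{1}{2\pi}\int|\nabla\phi|^2$, which hinges on the short-range Gaussian factorization of $\abs*{\bbK_\infty(z,w)}^2\deri\gamma(z)\deri\gamma(w)$; once this is in place the logarithmic cutoff handles any bounded $\Dset$ at once, so no geometric finesse is required and the argument is robust to the shape of $\Dset$.
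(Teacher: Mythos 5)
Your proof is correct: the projection-kernel variance identity, the Gaussian factorization $\abs{\bbK_\infty(z,w)}^2 e^{-|z|^2-|w|^2}=e^{-|z-w|^2}$, the Dirichlet-energy bound, and the logarithmic cutoff with $\int|\nabla\phi_n|^2 = 2\pi\|\chi'\|_{L^2}^2/\log n \to 0$ all check out, and the $L^2$-approximation of $\abs{\GIF\cap\Dset}$ by $\sigma(\GIF\cap\Dset^\c)$-measurable variables correctly yields the claim. The paper itself gives no proof (it quotes the result from \cite{GP}), and your argument is essentially the standard one from that reference, so there is nothing further to compare.
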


\begin{theorem}[Tolerance of the Ginibre ensemble: Theorem~1.2 in \cite{GP}]\label{thm:Gini:tol}
Let $\Dset\subset\CC$ be a bounded open set whose boundary has zero Lebesgue measure.
Let $\frho{\cdot}{\GIF\cap\Dset^\c}$ be the conditional measure of $\GIF\cap\Dset$ given $\GIF\cap\Dset^\c$ where we identify $\GIF\cap\Dset$ with a vector in $\Dset^{\numberofpoints(\GIF\cap\Dset^\c)}$ by taking the points of $\GIF\cap\Dset$ in uniform random order. Then the measure $\frho{\cdot}{\GIF\cap\Dset^\c}$ and the Lebesgue measure on $\Dset^{\numberofpoints(\GIF\cap\Dset^\c)}$ are mutually absolutely continuous a.s.
\end{theorem}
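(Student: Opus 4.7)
The plan is to obtain Theorem~\ref{thm:Gini:tol} as an immediate consequence of the quantitative Gibbs-type comparison in Theorem~\ref{thm:ginibre-main}. That theorem provides measurable functions $\mm, \MM : \cS{\Dset^\c} \to (0,\infty)$ with
\[
\mm(\GIFOUT)\,\abs*{\van{\uz}}^2 \,\leq\, \frac{\deri\frho{\cdot}{\GIFOUT}}{\deri\el}(\uz) \,\leq\, \MM(\GIFOUT)\,\abs*{\van{\uz}}^2
\]
for $\el$-a.e.\ $\uz \in \Dset^{\numberofpoints(\GIFOUT)}$. Since $\abs*{\van{\uz}}^2 = \prod_{i<j}\abs*{\zeta_i-\zeta_j}^2$ is a polynomial in $2\numberofpoints(\GIFOUT)$ real variables whose zero set is the diagonal $\bigcup_{i \neq j}\{\zeta_i = \zeta_j\}$ -- a proper real-analytic subvariety of real codimension two, hence $\el$-null -- the squared Vandermonde is strictly positive $\el$-a.e. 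The two-sided sandwich above then forces $\deri\frho{\cdot}{\GIFOUT}/\deri\el$ to be positive and finite $\el$-a.e., which is precisely the mutual absolute continuity of $\frho{\cdot}{\GIFOUT}$ with the Lebesgue measure on $\Dset^{\numberofpoints(\GIFOUT)}$ asserted in Theorem~\ref{thm:Gini:tol}.

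The derivation of Theorem~\ref{thm:ginibre-main} itself, which is laid out in Section~\ref{sec:ginibre}, proceeds by applying the abstract passage Theorem~\ref{thm:abscont} with the Vandermonde-weighted probability kernel of \eqref{eq:ginicandidate}, corresponding to the pair potentials $(\Phi,\Psi) = (0, -2\log\abs{\cdot})$. Verifying the approximate Gibbs property is carried out via the three-step procedure of Section~\ref{ss:ginilimiting}: first, one constructs the events $\onkj$ and $\oj$ using the random variables $\ginisumone(n), \ginisumtwo(n), \ginisumthree(n)$ controlling the inverse-power sums of $\Gininout$, combined with the boundary-gap event $\OGm{n}{\thetagap_j}$; second, one verifies the comparison inequality \eqref{eq:main} by exploiting the explicit Ginibre joint density to obtain the pointwise ratio bound $\exp(\pm m\,C\,\thetagap_j^{-1}M_j)$ on $\onkj$, which permits the additive error $\vartheta(j,k)$ to be taken identically zero; third, one shows $\oj$ exhausts $\om$ through a Borel--Cantelli argument that uses the a.s.\ finiteness of the conditionally convergent limits $\ginisumone, \ginisumtwo, \ginisumthree$ and the monotone exhaustion $\OGm{\infty}{\thetagap_j}\uparrow\om$ as $\thetagap_j\downarrow 0$.

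The main technical obstacle lies in reconciling steps~(i) and (iii): the sum $\ginisumone(n) = \sum_{\omega \in \Gininout} \omega^{-1}$ is only conditionally convergent, since $\omega^{-1}$ is not integrable against the first intensity measure of $\GIF$. Consequently one must pass to a subsequence $(n_k)$ along which a.s.\ convergence holds, and $\oj$ itself is \emph{not} measurable with respect to $\GIFOUT$, contrary to the literal requirement of Definition~\ref{def:approximategibbs}. To repair this, one interposes between $\oj$ and an inner family of approximations $\calA_\epsilon(j)$ an auxiliary event $\ocj$ that is measurable with respect to $\GIFOUT$ and satisfies $\Prob{\,\ocj\,\symmdiff\,\oj\,}=0$. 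Remark~\ref{rmabs-1} then allows the whole scheme to go through. Choosing the cutoff thresholds $M_j\uparrow\infty$ and $\thetagap_j\downarrow 0$ to avoid atoms of the limiting distributions of $\abs{\ginisumone}, \abs{\ginisumtwo}, \ginisumthree$ completes the exhaustion argument, and the tolerance statement of Theorem~\ref{thm:Gini:tol} follows without further work. The gain afforded by passing through Theorem~\ref{thm:ginibre-main} rather than arguing for tolerance directly is that one obtains, essentially for free, the quantitative short-range $\abs{\van{\uz}}^2$ repulsion structure on $\GIFIN$, which no purely qualitative tolerance argument can produce.
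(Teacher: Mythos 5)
Your derivation is correct, but it is worth noting that the paper does not prove Theorem~\ref{thm:Gini:tol} at all: it is imported verbatim from \cite{GP} (it is Theorem~1.2 there), and the paper explicitly positions its own Theorem~\ref{thm:ginibre-main} as a \emph{strengthening} of that result obtained ``by the same procedure.'' You instead run the logic in reverse, deducing tolerance as a corollary of Theorem~\ref{thm:ginibre-main}. This is legitimate and non-circular, because the chain of deductions behind Theorem~\ref{thm:ginibre-main} --- the abstract passage of Theorem~\ref{thm:abscont} (which works at the level of regular conditional distributions and probability kernels, with no density hypothesis on the infinite-volume conditional law), the finite-ensemble ratio bounds of Proposition~\ref{prop:ginicondratio} (where densities exist trivially from the explicit Ginibre joint law), and the reduction to a disk in Appendix~\ref{a:disk} (which uses only number rigidity) --- nowhere requires the tolerance statement as an input; Section~\ref{sec:ginibre} cites Theorem~\ref{thm:Gini:tol} only to say the density's existence ``is already known,'' while the argument it actually lays out re-derives existence via Radon--Nikodym from the measure-level comparison $\mm\,\nu_{\Phi,\Psi,\Dset}\le\PROB_{\Dset|\Dset^\c}\le\MM\,\nu_{\Phi,\Psi,\Dset}$. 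One small refinement: rather than reading mutual absolute continuity off the density-level sandwich \eqref{eq:target2} (whose formulation already presupposes $\frho{\cdot}{\GIFOUT}\ll\el$), it is cleaner to extract it directly from that measure-level inequality, since $\nu_{\Phi,\Psi,\Dset}\kernel{\cdot}{\GIFOUT}$ of \eqref{eq:ginicandidate} is manifestly mutually absolutely continuous with $\el$ on $\Dset^{\numberofpoints(\GIFOUT)}$ (its density $\abs{\van{\uz}}^2/Z$ vanishes only on the diagonal, a Lebesgue-null set, as you observe). Your summary of the three-step verification, including the role of Remark~\ref{rmabs-1} in repairing the non-measurability of $\oj$ with respect to $\GIFOUT$ and the choice of $M_j$ avoiding atoms, matches Section~\ref{ss:ginilimiting}. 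What your route buys is exactly what you say --- the quantitative $\abs{\van{\uz}}^2$ repulsion comes for free --- at the cost of invoking the full machinery of Theorem~\ref{thm:abscont} for a statement that \cite{GP} obtains by a lighter, purely qualitative argument.
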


\section{Gaussian Analytic Function: Definition, Rigidity, Tolerance}\label{a:gaf}

Let $\seq*{\xi_k}_{k=0}^{\infty}$ be i.i.d.\ standard complex Gaussian random variables. The $n$-dimensional standard Gaussian analytic function and the (infinite dimensional) standard Gaussian analytic function are 
\[
\fgafn(z)\coloneqq\sum_{k=0}^n\frac{\xi_k}{(k!)^{1/2}}z^k
\quad\mbox{and}\quad
\fgaf(z)\coloneqq\sum_{k=0}^\infty\frac{\xi_k}{(k!)^{1/2}}z^k\;,
\]
respectively. Both $\fgafn$ and $\fgaf$ are Gaussian processes on $\CC$     with covariance kernels given by 
\[
\bbK_n(z,w)\coloneqq\sum_{k=0}^n \frac{(z\overline{w})^k}{k!}
\quad\mbox{and}\quad 
\bbK(z,w)\coloneqq\sum_{k=0}^\infty\frac{(z\overline{w})^k}{k!}=e^{z\overline{w}}
\]
respectively. We denote the ensemble of roots of $\fgaf$ by $\gaf$ and
the ensemble of roots of $\fgafn$ by $\gafn$. 
As $n\to\infty$, $\gafn$ converges to $\gaf$ a.s.
The ensemble $\gaf$ satisfies the following rigidity and tolerance properties.

\begin{theorem}[Rigidity of the GAF-zero ensemble: Theorem~1.3 in \cite{GP}]
For the GAF-zero ensemble $\gaf$ we have: 
\begin{enumerate}[(a),font=\normalfont\bfseries,topsep=0pt]
\item there is a measurable function $\numberofpoints:\cS{\Dset^\c}\to\nat$ such that the number of points in $\gaf\cap\Dset$ is $\numberofpoints\fbr[\big]{\gaf\cap\Dset^\c}$ a.s.
\item there is a measurable function 
    $\extsum:\cS{\Dset^\c}\to\CC$ such that
    the sum of the points in $\gaf\cap\Dset$ is $    \extsum\fbr[\big]{\gaf\cap\Dset^\c}$ a.s.
\end{enumerate}
\label{thm:app:gaf:rigidity}
\end{theorem}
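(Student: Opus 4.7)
The plan is to follow the variance-of-linear-statistics strategy of Ghosh-Peres \cite{GP}, exploiting the fact that for the standard planar GAF the normalized covariance kernel decays like a Gaussian, so linear statistics with smooth, slowly varying test functions have vanishingly small variance. Concretely, for $\varphi \in C_c^2(\CC)$, one has the Edelman-Kostlan-type identity
\[
\mathrm{Var}\Bigl(\sum_{z \in \gaf} \varphi(z)\Bigr) \;=\; \frac{1}{4}\int_{\CC}\int_{\CC} \laplacian\varphi(z)\,\laplacian\varphi(w)\,\log\frac{|\bbK(z,w)|^2}{\bbK(z,z)\bbK(w,w)}\,\deri\el(z)\,\deri\el(w),
\]
and for the standard planar GAF $\log|\bbK(z,w)|^2/(\bbK(z,z)\bbK(w,w)) = -|z-w|^2$, which is integrable against Laplacians of smooth bump functions; this is what makes the variance small.

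For part (a), I would construct a sequence $\{\varphi_R\}_{R \geq 1}$ of smooth radial test functions with $\varphi_R \equiv 1$ on a neighbourhood of $\Dset$, supported in a disk of radius $R$, and taking values in $[0,1]$, built so that $\laplacian \varphi_R$ is supported in a thin annular collar where the area grows like $R$ but $\|\laplacian \varphi_R\|_\infty$ decays. Plugging into the variance formula with $\log|\widehat{\bbK}|^2 = -|z-w|^2$ and taking the $R \to \infty$ limit (or averaging over $R$, as in \cite{GP}) gives $\mathrm{Var}(\sum_z \varphi_R(z)) \to 0$. Passing to an a.s.\ convergent subsequence, the linear statistic
\[
\sum_{z \in \gaf} \varphi_R(z) \;=\; |\gaf \cap \Dset| + \sum_{z \in \gaf \cap \Dset^\c} \varphi_R(z)
\]
converges a.s.\ to its mean $\Exp[\sum \varphi_R]$; rearranging and letting $R \to \infty$ along this subsequence expresses $|\gaf \cap \Dset|$ a.s.\ as an explicit measurable functional of the outside configuration, giving the desired $\numberofpoints$.

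For part (b), I would repeat the argument with $\varphi_R(z) = z\, \eta_R(z)$, where $\eta_R$ is a smooth cutoff equal to $1$ on $\Dset$ and supported in the disk of radius $R$. Since $\laplacian(z\eta_R) = z\laplacian\eta_R + 2\partial\eta_R$ (treating $z$ as holomorphic so only mixed derivatives matter) the extra factor of $|z| \leq R$ inflates the variance by at most a polynomial factor in $R$, which is still killed by the Gaussian decay $e^{-|z-w|^2}$ coming from the kernel. The same decomposition $\sum_z z\eta_R(z) = \sum_{z \in \gaf \cap \Dset} z + \sum_{z \in \gaf \cap \Dset^\c} z\eta_R(z)$ and limiting argument yields $\extsum$.

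The main obstacle is the quantitative construction of $\{\varphi_R\}$ and $\{\eta_R\}$ with the right balance: the collar where $\laplacian$ is nonzero must be thick enough that $\|\laplacian \varphi_R\|_\infty$ is small, yet thin enough (relative to $R$) that the total $L^2$ mass, integrated against the $-|z-w|^2$ weight, genuinely tends to zero. For the sum statistic in (b), the linear growth of $|z|$ on the support requires a slightly more delicate choice, and one must also verify that the contribution of the outside points to $\sum_z z\eta_R(z)$ converges absolutely a.s.\ as $R \to \infty$; this follows from the known asymptotic density of the GAF zero process together with a Borel–Cantelli/large-deviation input, but is the step most in need of care. Once these test functions are in hand, the measurability of $\numberofpoints$ and $\extsum$ with respect to $\gaf \cap \Dset^\c$ is automatic from the explicit limiting expressions.
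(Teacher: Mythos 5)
First, note that the paper does not actually prove Theorem~\ref{thm:app:gaf:rigidity}: it is quoted verbatim from \cite{GP} (Theorem~1.3 there) in Appendix~C, so the comparison is against the Ghosh--Peres argument. Your overall strategy — show that suitably smoothed linear statistics have vanishing variance, pass to an a.s.\ convergent subsequence, and read off the inside statistic as a measurable functional of the outside configuration — is exactly the strategy of \cite{GP}. However, two points need repair. The variance identity you start from is not correct: writing $\widehat{\fgaf}(z)=\fgaf(z)/\bbK(z,z)^{1/2}$ and $\theta(z,w)=|\bbK(z,w)|^2/(\bbK(z,z)\bbK(w,w))=e^{-|z-w|^2}$, the kernel appearing in the variance is $\operatorname{Cov}\bigl(\log|\widehat{\fgaf}(z)|,\log|\widehat{\fgaf}(w)|\bigr)=\tfrac14\sum_{k\ge1}k^{-2}\,\theta(z,w)^k$, which is bounded and decays like $e^{-|z-w|^2}$ — not $\tfrac14\log\theta(z,w)=-\tfrac14|z-w|^2$. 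With your kernel the right-hand side vanishes identically for every $\varphi\in C_c^2$ (expand $|z-w|^2$ and use $\int\laplacian\varphi=0$ and $\int z\,\laplacian\varphi\,\deri\el=0$), so it cannot be the variance. This is a formula-level slip; the correct kernel has the qualitative features you rely on.

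The genuine gap is in part (b). The mechanism you invoke — ``the polynomial factor $|z|\le R$ is killed by the Gaussian decay of the kernel'' — proves too much: it would equally yield rigidity of $\sum z^k$ for every $k$, contradicting the fact that the standard GAF is rigid only at level $1$ (indeed this is the whole point of the $\alpha$-GAF hierarchy in this paper). The Gaussian decay of the kernel only localizes the double integral to the near-diagonal; after that the variance is controlled by $\|\laplacian(z\eta_R)\|_{L^2}^2$, and for a single-scale cutoff $\eta_R(z)=\eta(z/R)$ one computes $\|\laplacian(z\eta_R)\|_{L^2}^2=\Theta(1)$ (each of $z\laplacian\eta_R$ and $\bar\partial\eta_R$ is of size $R^{-1}$ on an annulus of area $\sim R^2$), so the variance does \emph{not} tend to zero. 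The correct construction, which is what \cite{GP} actually uses, takes a multiplicatively wide transition region, e.g.\ $\eta_R(z)=\chi\bigl(\log|z|/\log R\bigr)$ interpolating between $1$ at $|z|=R$ and $0$ at $|z|=R^2$; then $|\laplacian\eta_R|\lesssim(|z|^2(\log R)^2)^{-1}$ and $|\bar\partial\eta_R|\lesssim(|z|\log R)^{-1}$, giving $\|\laplacian(z\eta_R)\|_{L^2}^2=O(1/\log R)\to0$, while the same computation for $z^2\eta_R$ diverges — exactly the critical threshold separating the rigid first moment from the non-rigid higher moments. Your part (a) survives with either construction, since $\|\laplacian\varphi_R\|_{L^2}^2\sim R^{-2}$ already for the single-scale bump, but part (b) as written does not close.
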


\begin{theorem}[Tolerance of the GAF-zero ensemble: Theorem~1.4 in \cite{GP}] \label{thm:app:gaf:tol}
Consider the submanifold 
\[
\Sigma_{m,s}\coloneqq\Set*{(\zeta_1,\ldots,\zeta_m)\in\Dset^m\given \zeta_1+\cdots+\zeta_m=s}
\]
where $m=\numberofpoints\fbr[\big]{\gaf\cap\Dset^\c}$, and $s=\extsum\fbr[\big]{\gaf\cap\Dset^\c}$.
Let $\frho{\cdot}{\gaf\cap\Dset^\c}$ be the measure on $\Sigma_{m,s}$ which is the conditional distribution of $\gaf\cap\Dset$ given $\gaf\cap\Dset^\c$ where we identify $\gaf\cap\Dset$ with a vector in $\Dset^m$ by taking the points in uniform random order. Then, the measure $\frho{\cdot}{\gaf\cap\Dset^\c}$ and the Lebesgue measure on $\Sigma_{\usm}$ are mutually absolutely continuous a.s.
\end{theorem}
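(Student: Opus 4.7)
The plan is to derive this tolerance statement as an immediate corollary of the main result Theorem~\ref{thm:alphagaf-main}, rather than as an independent calculation. Observe that the standard planar Gaussian analytic function is precisely the $\alpha$-GAF with parameter $\alpha=1$; in that case the rigidity level is $\rigidity = 1 + \lfloor 1/\alpha\rfloor = 2$, so exactly $\lfloor 1/\alpha\rfloor = 1$ moment is rigid, namely the power sum $s_1 = \sum_{i=1}^m \zeta_i$. Consequently, for $\alpha = 1$ the constraint manifold $\Sigma_{\usm}$ in Notation~\ref{n:manifold} specializes to the affine submanifold $\Sigma_{m,s}$ appearing in the present statement, with $m = \numberofpoints(\gaf\cap\Dset^\c)$ and $s = \extsum(\gaf\cap\Dset^\c)$ provided by Theorem~\ref{thm:app:gaf:rigidity}.

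First I would apply Theorem~\ref{thm:alphagaf-main} at $\alpha = 1$ to conclude that, for $(\gaf\cap\Dset^\c)$-a.e.\ conditioning configuration $\Upsilon$, the conditional density $\deri\rho(\cdot\boldsemicolon\Upsilon)/\deri\leb$ with respect to the Lebesgue measure on $\Sigma_{m,s}$ satisfies
\[
\mm(\Upsilon)\,|\van{\uz}|^2 \;\le\; \frac{\deri\rho(\cdot\boldsemicolon\Upsilon)}{\deri\leb}(\uz) \;\le\; \MM(\Upsilon)\,|\van{\uz}|^2
\]
for $\leb$-a.e.\ $\uz \in \Sigma_{m,s}$, with strictly positive finite functions $\mm(\Upsilon),\MM(\Upsilon)$. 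Since $|\van{\uz}|^2$ vanishes only on the thin diagonal subset $\{\uz : \zeta_i = \zeta_j \text{ for some } i\neq j\}$ of $\Sigma_{m,s}$, which has zero Lebesgue measure on the submanifold, the comparison forces the density to be strictly positive and finite $\leb$-almost everywhere on $\Sigma_{m,s}$. By the Radon-Nikodym theorem this is exactly the assertion that $\rho(\cdot\boldsemicolon\Upsilon)$ and $\leb$ on $\Sigma_{m,s}$ are mutually absolutely continuous.

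Of course, the substantive work is hidden inside Theorem~\ref{thm:alphagaf-main}, whose proof runs through the approximate-to-generalized Gibbs pipeline embodied in Theorem~\ref{thm:abscont}. The expected principal obstacle, and the reason a naive limit-of-finite-systems argument would be delicate, is the mismatch of supports: the finite approximants $\gafn$ admit conditional laws that have densities on the full product $\Dset^N$ with no constraint on the power sum $s_1$, while the limiting conditional law lives on the lower-dimensional submanifold $\Sigma_{m,s}$. A direct pointwise passage of densities is therefore impossible, and the finite-$n$ bounds must instead be formulated as comparisons of event probabilities against the probability kernel $\nu_{\Phi,\Psi,\Dset}$ that is already supported on $\Sigma_{m,s}$ and carries a $|\van|^2$ weight, exactly as in Definitions~\ref{def:gengibbsII} and \ref{def:approximategibbs}.

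The remaining steps one would have to trace through Theorem~\ref{thm:alphagaf-main} are: verifying the approximate Gibbs comparison \eqref{eq:main} for the pair $(\gafn,\gaf)$ with this target kernel on suitable good events $\Omega_n(j)$ exhausting $\Omega^m_\infty$ (this is the content of Steps 1--3 of Section~\ref{sec:limitinggaf}, simplified considerably at $\alpha=1$ since only one power-sum constraint is present); and then invoking Theorem~\ref{thm:abscont} to conclude the generalized Gibbs comparison in infinite volume. For the present tolerance statement one only needs the qualitative conclusion of positivity and finiteness, not the explicit $|\van|^2$ upper/lower bounds, so in principle a softer version of the argument suffices; but the robust route is simply to cite Theorem~\ref{thm:alphagaf-main} in the $\alpha = 1$ specialization.
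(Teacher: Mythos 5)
The paper does not actually prove this statement: it is quoted verbatim as Theorem~1.4 of \cite{GP} and used purely as imported background in Appendix~\ref{a:gaf}. Your proposal instead derives it from the paper's own main result, Theorem~\ref{thm:alphagaf-main}, specialized to $\alpha=1$, which is a genuinely different (and internally self-sufficient) route. The reduction is sound: at $\alpha=1$ one has $\rigidity=1+\lfloor 1/\alpha\rfloor=2$, so the unique rigid moment is the first power sum, and the manifold $\Sigma_{\usm}$ of Notation~\ref{n:manifold} collapses to the $\Sigma_{m,s}$ of the statement; the two-sided comparison of the conditional law with $\abs{\van{\uz}}^2\,\deri\leb$ then yields mutual absolute continuity because the Vandermonde vanishes only on a proper analytic subvariety of $\Sigma_{m,s}$, which is $\leb$-null (the degenerate cases $m\le 1$, where $\Sigma_{m,s}$ is empty or a singleton, being trivial). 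The derivation is also non-circular: the proof of Theorem~\ref{thm:alphagaf-main} uses the rigidity inputs of Theorems~\ref{thm:app:gaf:rigidity} and \ref{thm:app:alphagaf} but nowhere the tolerance statement you are proving, and the authors explicitly remark that their results subsume the standard planar GAF case. The trade-off between the two routes is clear: the original argument in \cite{GP} is a direct, self-contained analysis specific to the GAF, whereas your route invokes the full approximate-to-generalized Gibbs machinery of Theorem~\ref{thm:abscont} and Sections~\ref{sec:finitegaf}--\ref{sec:step3}, and in exchange delivers the strictly stronger quantitative conclusion that the conditional density is comparable to the squared Vandermonde, of which mutual absolute continuity is only the qualitative shadow.
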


\section{\texorpdfstring{$\alpha$}{alpha}-Gaussian Analytic Function: Definition, Rigidity}\label{a:agaf}

Let $\seq*{\xi_k}_{k=0}^\infty$ be i.i.d.\ standard complex Gaussian random variables. 
For $\alpha>0$, the $n$-dimensional $\alpha$-Gaussian analytic function and 
    the (infinite dimensional) $\alpha$-Gaussian analytic function are respectively
    \[
    \falphagafn(z)\coloneqq\sum_{k=0}^n\frac{\xi_k}{(k!)^{\alpha/2}} z^k
    \quad\mbox{and}\quad
    \falphagaf(z):=\sum_{k=0}^\infty\frac{\xi_k}{(k!)^{\alpha/2}} z^k\;. 
    \]
These are Gaussian processes on $\CC$ with covariance kernels given by 
    \[
    \bbK_n(z,w)=\sum_{k=0}^n \frac{(z\overline{w})^k}{(k!)^\alpha}
    \quad\mbox{and}\quad 
    \bbK(z,w)=\sum_{k=0}^\infty\frac{(z\overline{w})^k}{(k!)^{\alpha}}
    \]
respectively. The ensemble of roots of $\falphagafn$ and $\falphagaf$ are denoted by $\alphagafn$ and $\alphagaf$ respectively.
As $n\to\infty$, $\alphagafn$ converges to $\alphagaf$ almost surely. The ensemble $\alphagaf$ satisfies the following rigidity property.

\begin{theorem}[Rigidity of the ensemble of roots of $\alpha$-GAF: Theorem~2.1 in \cite{GK}]
Let $\Dset\subset\CC$ be a bounded open set whose boundary has zero Lebesgue measure.
\begin{enumerate}[(a),font=\normalfont\bfseries,topsep=0pt]
\item There exists a measurable function $\numberofpoints:\cS{\Dset^\c}\to\nat$ such that the number of points of $\alphagaf\cap\Dset$ is $\numberofpoints\fbr[\big]{\alphagaf\cap\Dset^\c}$ a.s.
\item Let $\rigidity\coloneqq 1+\foa$. There is a measurable function 
$\constraintexternal:\cS{\Dset^\c}\to\CC^{\rigidity-1}$ such that, if
$m\coloneqq\numberofpoints\fbr[\big]{\alphagaf\cap\Dset^\c}$
and 
$(\zeta_1,\dots,\zeta_m)$ are the points of
$\alphagaf\cap\Dset$,
then
$\constraintexternal\fbr[\big]{\alphagaf\cap\Dset^\c}\coloneqq(s_1,\dots,s_{\rigidity-1})$, 
where $s_j\coloneqq\sum_{i=1}^m \zeta_i^j$ for all $1\leq j\leq\rigidity-1$.
\end{enumerate} 
\label{thm:app:alphagaf}
\end{theorem}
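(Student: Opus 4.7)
The plan is to prove both parts simultaneously by the variance-of-linear-statistics method introduced in \cite{GP}: for each $j$ with $0 \le j \le \lfloor 1/\alpha \rfloor$, I would construct a sequence of smooth, compactly supported test functions $\Phi_n : \CC \to \CC$ satisfying $\Phi_n(z) = z^j$ on $\Dset$ and $\mathrm{Var}\!\left(\sum_{\zeta \in \alphagaf} \Phi_n(\zeta)\right) \to 0$. Granted such $\Phi_n$, writing
\[
\sum_{\zeta \in \alphagaf} \Phi_n(\zeta) \;=\; \sum_{\zeta \in \alphagaf \cap \Dset} \zeta^j \;+\; \sum_{\zeta \in \alphagaf \cap \Dset^\c} \Phi_n(\zeta),
\]
passing to a subsequence so that the centered statistic converges to $0$ almost surely (Borel--Cantelli), and noting that $\mathbb{E}\sum_\zeta \Phi_n(\zeta)$ is deterministic, yields
\[
\sum_{\zeta \in \alphagaf \cap \Dset} \zeta^j \;=\; \lim_n \Bigl( \mathbb{E}\Big[\textstyle\sum_\zeta \Phi_n(\zeta)\Big] - \sum_{\zeta \in \alphagaf \cap \Dset^\c} \Phi_n(\zeta) \Bigr) \quad \text{a.s.,}
\]
which is measurable with respect to $\alphagaf \cap \Dset^\c$. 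Part (a) is the case $j=0$, and part (b) covers $j = 1, \dots, \rigidity - 1$.

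For the variance control, I would invoke the Edelman--Kostlan identity $\sum_{f(\zeta)=0} \delta_\zeta = (2\pi)^{-1} \Delta \log|f|$ in the distributional sense, and normalize by the pointwise covariance $K(z,z) = \sum_k |z|^{2k}/(k!)^\alpha$ so that $\log|\falphagaf(z)/\sqrt{K(z,z)}|$ is a centered log-Gaussian field. This gives the standard identity
\[
\mathrm{Var}\!\Big( \sum_\zeta \Phi(\zeta) \Big) \;=\; \frac{1}{(2\pi)^2} \iint \Delta \Phi(z)\, \overline{\Delta \Phi(w)}\, G\!\left(\frac{|K(z,w)|^2}{K(z,z)\,K(w,w)}\right) dz\, dw,
\]
where $G(x) = \sum_{k\ge 1} x^k/k^2$. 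For the $\alpha$-GAF, the diagonal blows up as $K(z,z) \asymp \exp(c_\alpha |z|^{2/\alpha})$, so the normalized covariance decays superpolynomially away from the diagonal with a rate tied to $1/\alpha$.

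The construction of $\Phi_n$ is the crux. Choose an annulus $R_n \le |z| \le R_n + t_n$ in which $\Phi_n$ transitions smoothly from $z^j$ (on $\{|z| \le R_n\} \supset \Dset$) to $0$, so that $\|\Delta \Phi_n\|_\infty \lesssim R_n^j / t_n^2$ with support in the annulus. The variance integral is then bounded by the area of the annulus, times $(R_n^j/t_n^2)^2$, times the normalized-kernel factor $G(\cdot)$. The factor $R_n^{2j}$ can be absorbed into the diagonal blow-up $\exp(c_\alpha R_n^{2/\alpha})$, provided $j \le \lfloor 1/\alpha \rfloor$; choosing $R_n \to \infty$ and $t_n$ growing slowly gives the desired vanishing. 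The main obstacle is the uniform saddle-point-type asymptotics for $|K(z,w)|^2/(K(z,z)K(w,w))$ in the near-diagonal regime $|z|,|w|\to\infty$, which is precisely where the threshold $\rigidity = 1 + \lfloor 1/\alpha \rfloor$ emerges: beyond this order the factor $R_n^{2j}$ cannot be dominated by the kernel decay, and the variance no longer vanishes. With this technical estimate in hand, both rigidity of numbers (part (a)) and rigidity of the first $\rigidity - 1$ power sums (part (b)) follow from the general scheme above.
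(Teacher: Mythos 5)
This theorem is not proved in the present paper; it is quoted verbatim from \cite{GK} (Theorem~2.1 there), and the proof in that reference is by exactly the variance-of-linear-statistics scheme you outline, so your overall architecture (Edelman--Kostlan, vanishing variance of a test statistic equal to $z^j$ on $\Dset$, Borel--Cantelli along a subsequence, measurability of the limit with respect to the outside configuration) is the right one and matches the source.

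There is, however, a genuine gap at the borderline moment. Your single-annulus cutoff gives, with $\|\Delta\Phi_n\|_\infty\lesssim R_n^j/t_n^2$ supported on an annulus of area $\sim R_n t_n$ and with the normalized covariance $\theta(z,w)=|K(z,w)|^2/(K(z,z)K(w,w))$ non-negligible only for $|z-w|\lesssim |z|^{1-1/\alpha}$, a variance of order $R_n^{2j+3-2/\alpha}/t_n^3$; this can be made to vanish precisely when $j<1/\alpha$, and taking $t_n\gtrsim R_n$ only relabels $R_n$ by $t_n$, so the exponent cannot be improved. When $1/\alpha\in\NN$ the theorem asserts rigidity of the moment $j=\lfloor 1/\alpha\rfloor=1/\alpha$, which is exactly the borderline case where your construction yields only $O(1)$ variance --- and this includes the flagship instance $\alpha=1$, $j=1$ (rigidity of the centre of mass of the standard planar GAF zeros, the main GAF result of \cite{GP}). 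The known fix, used in \cite{GP} and \cite{GK} and mirrored by the dyadic partition of unity $\{\phi_j\}$ at scales $e^j$ in Section~6.2 of the present paper, is to average the cutoff over $N$ logarithmically spaced scales, i.e.\ take $\Phi_N(z)=z^j\cdot\frac{1}{N}\sum_{k=1}^N\varphi(z/e^k)$; the off-diagonal decay of $\theta$ decouples the distinct scales and the variance becomes $O(1/N)$. Separately, your phrase about absorbing $R_n^{2j}$ into the ``diagonal blow-up'' $\exp(c_\alpha R_n^{2/\alpha})$ misstates the mechanism: $\theta\le 1$ everywhere, and what produces the threshold $1/\alpha$ is the shrinking width $|z|^{1-1/\alpha}$ of the near-diagonal region where $\theta$ is bounded below, not the size of $K(z,z)$ itself. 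With the multi-scale test function substituted for the single annulus at the critical order, the argument closes.
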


\section{Estimates for the finite Ginibre ensemble}\label{a:ginibre-estimates}

Consider the finite Ginibre ensemble $\Gini_n$.
Let $\Dset\subset\CC$ be a bounded open set whose boundary has zero Lebesgue measure.
Consider $1\leq m\leq n$ and $\uo\in(\Dset^\c)^{n-m}$.
The density with respect to the Lebesgue measure on $\Dset^m$ 
of the conditional distribution of a vector consisting of
the points of $\Ginin\cap\Dset$ taken in uniform random order
given that the points of $\Ginin\cap\Dset^\c$ are the coordinates of $\uo$ is given by
\[
\drho{\uo}{n}{\uz} \coloneqq C(\uo)\cdot\abs*{\van{\uz\cc\uo}}^2\cdot\exp\fbr*{-\sum_{k=1}^m\abs*{\z_k}^2}\;,
\]
where $C(\uo)$ is the appropriate normalizing constant. 
Recall the definition of $\ginisumone(n)$, $\ginisumtwo(n)$, $\ginisumthree(n)$, and $\X_n$ 
as defined in Notation~\ref{n:ginisum}. We have the following results from \cite{GP}.

\begin{proposition}[Bounding the fluctuation of conditional densities: Proposition 8.2 in \cite{GP}]\label{prop:ginicondratio}
Consider positive integers $n$, $m$ with $n\geq m$.
Consider $\uo\in(\Dset^\c)^{n-m}$ such that the points given by the coordinates of $\uo$
are at least at a distance $\thetagap>0$ from the boundary of $\Dset$.
Then for all $\uz,\uzp\in\Dset^m$ we have
    \[
    \exp\fbr*{ - m \usekd{pgcr}\thetagap^{-1} \X_n}
    \abs*{\frac{\van{\uzp}}{\van{\uz}}}^2
    \;\leq\; 
    \frac{\drho{\uo}{n}{\uzp}}{\drho{\uo}{n}{\uz}}
    \;\leq\; 
    \abs*{\frac{\van{\uzp}}{\van{\uz}}}^2
    \exp\fbr*{m \usekd{pgcr} \thetagap^{-1} \X_n }\;,
    \]
    where $\usekd{pgcr}>0$ is a constant. 
\end{proposition}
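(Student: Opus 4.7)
The plan is to reduce the bound on conditional-density ratios to a bound on cross-term contributions. Factoring the Vandermonde as $\van{\uz\cc\uo} = \van{\uz}\cdot\van{\uo}\cdot\vancross{\uz}{\uo}$ with $\vancross{\uz}{\uo} = \prod_{i,j}(\z_i - \omega_j)$, the $\van{\uo}$-factor and the normalising constant $C(\uo)$ cancel in the ratio $\drho{\uo}{n}{\uzp}/\drho{\uo}{n}{\uz}$. This reduces the proposition to the uniform estimate
\[
\abs[\bigg]{\sum_{i=1}^{m}\bigl(f(\z'_i) - f(\z_i)\bigr)} \;\le\; m\,\usekd{pgcr}\,\theta^{-1}\,\X_n\,,
\qquad
f(\z) \coloneqq 2\sum_{j}\log\abs*{\z - \omega_j} - \abs*{\z}^{2}.
\]

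The main tool will be a Taylor expansion exploiting the gap condition. Since $\abs*{\z/\omega_j} \leq r_0/(r_0 + \theta) < 1$ for every $\z \in \Dset$, I expand $2\log\abs*{1 - \z/\omega_j} = -2\Re\sum_{k\geq 1}(\z/\omega_j)^k/k$ and sum over $j$, obtaining
\[
f(\z) = \mathrm{const}(\uo) - 2\Re\bigl[\z\,\ginisumone(n)\bigr] - \Re\bigl[\z^{2}\,\ginisumtwo(n)\bigr] - 2\Re\sum_{k\geq 3}\frac{\z^{k}}{k}\sum_{j}\frac{1}{\omega_{j}^{k}} - \abs*{\z}^{2}.
\]
The difference $f(\z') - f(\z)$ then splits into four pieces: a first-order term bounded by $4 r_0 \abs*{\ginisumone(n)}$, a second-order term bounded by $4 r_0^{2}\abs*{\ginisumtwo(n)}$, a Gaussian correction $-(\abs*{\z'}^{2} - \abs*{\z}^{2})$ bounded by $4r_0^{2}$, and a tail starting at $k=3$. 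For the tail I will use $\abs*{\z'^{k} - \z^{k}} \leq 2 r_0^{k}$ together with the geometric bound
\[
\sum_{k\geq 3}\fbr*{\frac{r_0}{\abs*{\omega_j}}}^{k} \;\leq\; \frac{(r_0/\abs*{\omega_j})^{3}}{1 - r_0/\abs*{\omega_j}} \;\leq\; \frac{r_0^{3}}{\abs*{\omega_j}^{3}}\cdot\frac{r_0+\theta}{\theta},
\]
then interchange the sums over $j$ and $k$ to obtain a tail bound of the form $C(r_0)\,\theta^{-1}\,\ginisumthree(n)$. Aggregating and summing over $i=1,\ldots,m$ yields the desired estimate, with the bounded Gaussian contribution absorbed into $\usekd{pgcr}$ (using $\theta<1$).

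The principal subtlety is that the tail must decay at rate $1/\abs*{\omega_j}^{3}$ rather than at $1/\abs*{\omega_j}^{2}$, because in the infinite-Ginibre limit the partial sums $\sum_j 1/\abs*{\omega_j}^{2}$ diverge almost surely. This forces the explicit extraction of the $k=1$ and $k=2$ terms from the Taylor series, producing $\ginisumone(n)$ and $\ginisumtwo(n)$, which survive only because they are conditionally convergent thanks to the sign cancellations encoded in the Taylor coefficients. A secondary minor point is that the Gaussian correction $-(\abs*{\z'}^2 - \abs*{\z}^2)$ does not share the algebraic shape of the power-sum terms, but being uniformly bounded on $\Dset^m$ it is harmlessly absorbed into the constant $\usekd{pgcr}$.
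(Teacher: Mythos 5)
Your argument is essentially the argument this paper uses for the corresponding $\alpha$-GAF estimate (Proposition~\ref{p:van}, proved in Section~\ref{ss:pf:p:van}); the Ginibre statement itself is only quoted here from \cite{GP} without proof. The reduction to the cross term $\vancross{\cdot}{\uo}$, the Taylor expansion of $\log\abs{1-\z/\omega_j}$ justified by the $\thetagap$-gap (so that $\abs{\z/\omega_j}\le r_0/(r_0+\thetagap)<1$), the extraction of the conditionally convergent sums $\ginisumone(n)$ and $\ginisumtwo(n)$, and the geometric bound on the $k\ge 3$ tail by $C(r_0)\,\thetagap^{-1}\ginisumthree(n)$ all coincide with that proof, specialized to the Ginibre exponents.

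The one step that does not survive a literal reading is the absorption of the Gaussian weight. The contribution $\abs{\sum_i(\abs{\z_i'}^2-\abs{\z_i}^2)}\le m r_0^2$ is indeed uniformly bounded, but it is not dominated by $m\usekd{pgcr}\thetagap^{-1}\X_n$ for all admissible $\uo$: if every outside point is very far from the origin, then $\X_n$ is arbitrarily small while the Gaussian ratio remains of order $e^{\pm m r_0^2}$, and invoking $\thetagap<1$ does not repair this. To be airtight you should either replace $\X_n$ by $\X_n+1$ in the exponent (equivalently, allow an extra factor $e^{Cm}$ in the bound), which is harmless for every use made of the estimate in Section~\ref{ss:GiniStep2}, where $\thetagap^{-1}\X_{n_k}$ is in any case replaced by the upper bound $\thetagap_j^{-1}M_j$ on the good events, or restrict attention to configurations for which $\thetagap^{-1}\X_n$ is bounded below. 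Everything else --- the interchange of the sums over $j$ and $k$, and the bounds $4r_0\abs{\ginisumone(n)}$ and $2r_0^2\abs{\ginisumtwo(n)}$ on the low-order terms --- is correct.
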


\begin{proposition}[Uniform $L_1$ bound of inverse power sums: Proposition~8.6 in \cite{GP}]
    There exist constant $\usekd{pginil1}>0$
    such that for all $n\in\NN$ we have 
    $\Exp\tbr{\abs{\ginisumone(n)}}<\usekd{pginil1}$, 
    $\Exp\tbr{\abs{\ginisumtwo(n)}}<\usekd{pginil1}$,
    $\Exp\tbr{\ginisumthree(n)}<\usekd{pginil1}$. 
\end{proposition}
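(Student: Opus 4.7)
The plan is to mirror the approach developed for Propositions~\ref{p:inv:1}--\ref{p:inv:2} in the $\alpha$-GAF setting, substituting the characteristic polynomial $F_n(z) \coloneqq \det(zI - M_n)$ (whose zero set is precisely $\Ginin$) in place of $\falphagafn$. The key potential-theoretic identity
\[
\int f(z)\,\deri[\Ginin](z) = \int \laplacian f(z)\cdot\log\abs{F_n(z)}\,\deri\el(z)
\]
holds for smooth compactly supported $f$ (via $\laplacian\log\abs{F_n} = \sum_\omega \delta_\omega$ in the distributional sense, followed by integration by parts). Since $\Exp\abs{F_n(z)}^2$ is a radial function, subtracting the deterministic radial piece $\int \laplacian f(z)\cdot \log\sqrt{\Exp\abs{F_n(z)}^2}\,\deri\el(z)$ isolates the mean $\int f\rho_n\,\deri\el$, where $\rho_n$ is the (radial) one-point intensity of $\Ginin$. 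Setting $\widehat F_n \coloneqq F_n/\sqrt{\Exp\abs{F_n}^2}$ yields
\[
\Exp\abs{\int f\,\deri[\Ginin] - \int f\,\rho_n\,\deri\el} \leq \int \abs{\laplacian f(z)}\cdot \Exp\abs{\log\abs{\widehat F_n(z)}}\,\deri\el(z).
\]

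For $\ginisumone(n)$ and $\ginisumtwo(n)$, I would decompose each sum through the partition of unity $\seq{\phi_j}_{j \ge 0}$ on $\Dset^{\c}$ constructed in Section~\ref{ss:EIPZ}. For $f(z) = \phi_j(z)/z^l$ with $l \in \{1,2\}$, rotational symmetry of $\rho_n$ forces $\int f\rho_n\,\deri\el = 0$. Since $1/z^l$ is holomorphic away from the origin, $\laplacian f$ only collects derivatives of $\phi_j$, giving $\abs{\laplacian f} \lesssim l^2 (e^j r_0)^{-(l+2)}$ on the supporting annulus of area $\sim e^{2j} r_0^2$. Granting a uniform bound $\Exp\abs{\log\abs{\widehat F_n(z)}} \leq C$, one then obtains $\Exp\abs{\sum_\omega \phi_j(\omega)/\omega^l} \lesssim l^2/(e^j r_0)^l$, and summing the resulting geometric series over $j \ge 0$ produces the desired bound uniform in $n$.

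For $\ginisumthree(n)$ the argument is direct, since the summand is nonnegative:
\[
\Exp[\ginisumthree(n)] = \int_{\Dset^{\c}} \frac{\rho_n(z)}{\abs{z}^3}\,\deri\el(z) \leq \frac{1}{\pi}\int_{\abs{z}\geq r_0}\frac{\deri\el(z)}{\abs{z}^3} = \frac{2}{r_0},
\]
using the elementary uniform pointwise bound $\rho_n(z) = \pi^{-1}e^{-\abs{z}^2}\sum_{k<n}\abs{z}^{2k}/k! \leq 1/\pi$.

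The main obstacle is establishing the uniform-in-$(n,z)$ bound on $\Exp\abs{\log\abs{\widehat F_n(z)}}$ needed to close the first two estimates. This is delicate because the logarithm diverges whenever $z$ lies close to an eigenvalue, but such near-encounters are suppressed by the determinantal repulsion of the Ginibre ensemble (quantified through its two-point correlation), and the bound is a standard input from random matrix theory. As an alternative route avoiding this step, one can bound $\Exp\abs{\sum_\omega \phi_j(\omega)/\omega^l}$ by $\sqrt{\mathrm{Var}\fbr*{\sum_\omega \phi_j(\omega)/\omega^l}}$ via the determinantal variance identity $\mathrm{Var}\fbr*{\sum_\omega f(\omega)} = \tfrac{1}{2}\iint \abs{f(z)-f(w)}^2 \abs{K_n(z,w)}^2\,\deri\gamma(z)\,\deri\gamma(w)$, reducing the estimate to explicit decay properties of the Ginibre kernel in the bulk.
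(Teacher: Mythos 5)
Your proposal is structurally reasonable, and several pieces are complete and correct: the bound for $\ginisumthree(n)$ via the pointwise estimate $\rho_n\le 1/\pi$ is exactly right, and the observation that $\int \phi_j(z)z^{-l}\rho_n(z)\,\deri\el(z)=0$ by radiality of the one-point intensity is the key reduction for $\ginisumone(n)$ and $\ginisumtwo(n)$. Note also that this proposition is not proved in the present paper at all --- it is quoted from \cite{GP}, whose argument is essentially your ``alternative route'': compute the mean (zero by rotational symmetry) and control the second moment through the determinantal structure of the finite Ginibre kernel. So the fallback you mention in your last sentence is in fact the standard proof, not a backup.

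The primary route you propose, transplanting the log-potential argument of Proposition~\ref{p:inv:1}, has a genuine gap at its central step. In the $\alpha$-GAF setting the whole argument closes because $\widehat{\mathcal{F}}_{\alpha,n}(z)$ is \emph{exactly} $N_{\CC}(0,1)$ for each fixed $z$, so $\Exp\abs{\log\abs{\widehat{\mathcal{F}}_{\alpha,n}(z)}}$ is a universal constant with no work. For $F_n(z)=\det(zI-M_n)$ the normalized field $\widehat F_n(z)$ is not Gaussian, and the uniform-in-$(n,z)$ bound on $\Exp\abs{\log\abs{\widehat F_n(z)}}$ is precisely the point where the two models diverge; it requires a separate anti-concentration (small-ball) estimate for $\abs{\det(zI-M_n)}$ near zero --- obtainable, e.g., from the representation of $\abs{\det(zI-M_n)}^2$ as a product of independent random variables, or from determinantal hole-probability estimates --- together with an upper-tail bound. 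Asserting it as ``a standard input from random matrix theory'' without proof leaves the main analytic content of the proposition unestablished on that route. If you pursue the variance route instead, you still need to carry out the kernel estimate: the crude bound $\abs{K_n(z,w)}^2e^{-\abs z^2-\abs w^2}\le e^{-(\abs z-\abs w)^2}$ decays only in the radial separation, so the double integral $\iint\abs{f(x)-f(y)}^2\abs{K_n(x,y)}^2\,\deri\gamma\,\deri\gamma$ for $f=\phi_j z^{-l}$ needs the angular integration (or the projection identity) to be handled explicitly before the sum over $j$ converges; this is routine but should be written out.
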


\begin{proposition}[Convergence of the inverse power sums: Proposition~8.8 in \cite{GP}]
    There exists random variables $\ginisumone$, $\ginisumtwo$, $\ginisumthree$,
    measurable with respect to $\GIFOUT$, 
    such that we have the following convergences in probability 
    $\ginisumone(n)\to\ginisumone$, 
    $\ginisumtwo(n)\to\ginisumtwo$, 
    $\ginisumthree(n)\to\ginisumthree$. 
\end{proposition}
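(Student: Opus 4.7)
The plan is to establish convergence in probability by a truncation/tail argument. For each $R>r_0$ and $k\in\{1,2\}$, define truncated partial sums
\[
S_k^R(n)\coloneqq\sum_{\omega\in\Gini_{n,\out},\,|\omega|\leq R}\frac{1}{\omega^k}\;,\qquad \widetilde{S}_3^R(n)\coloneqq\sum_{\omega\in\Gini_{n,\out},\,|\omega|\leq R}\frac{1}{|\omega|^3}\;,
\]
and similarly $S_k^R(\infty)$ and $\widetilde{S}_3^R(\infty)$ with $\GIFOUT$ in place of $\Gini_{n,\out}$. The candidate limits $\ginisumone,\ginisumtwo,\ginisumthree$ are to be defined as the $R\to\infty$ limits of $S_1^R(\infty)$, $S_2^R(\infty)$, $\widetilde{S}_3^R(\infty)$; these are manifestly measurable with respect to $\GIFOUT$ once existence is verified. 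For each fixed $R$, the truncated sum $S_k^R(n)$ depends only on the finitely many Ginibre points in the annulus $r_0<|z|\leq R$, so the a.s.\ coupling $\Gini_n\to\GIF$, together with the absolute continuity of the Ginibre intensity (guaranteeing a.s.\ no Gini point lies on $\partial B(0,R)$), yields $S_k^R(n)\to S_k^R(\infty)$ and $\widetilde{S}_3^R(n)\to\widetilde{S}_3^R(\infty)$ almost surely.

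The heart of the matter is a uniform-in-$n$ tail estimate
\[
\lim_{R\to\infty}\,\sup_{n\in\NN}\,\Exp\left[\,\left|\,\ginisumone(n)-S_1^R(n)\,\right|\,\right]\;=\;0\;,
\]
together with the analogous statements for $\ginisumtwo$ and $\ginisumthree$; the same bound for the candidate infinite-$n$ limits is then inherited by Fatou, giving well-definedness. This refines Proposition~8.6 of \cite{GP} (uniform $L^1$ bound on the full sum) by tracking explicit decay in $R$, and is established by re-running the integration-by-parts argument behind that proposition, but with smooth radial cutoffs supported on dyadic annular shells $\{R\leq|z|\leq eR\}, \{eR\leq|z|\leq e^2 R\},\ldots$ rather than in one fixed annulus. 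Concretely, writing the Ginibre counting measure as $d[\Gini_n]=\laplacian\log|\det(zI-M_n)|\,d\el$, integrating by parts twice, and subtracting off the radially-symmetric deterministic piece $\Exp\log|\det(zI-M_n)|$ (whose contribution vanishes against $\Phi(z/R)/z^k$ by angular integration, just as in the proof of Proposition~\ref{p:inv:1}), one bounds each dyadic-annular contribution by $\|\laplacian(\Phi(z/R)/z^k)\|_{L^1}\leq C R^{-k}$ times a uniform bound on the fluctuation of $\log|\det(zI-M_n)|$. Summing the resulting geometric series in $R,eR,e^2 R,\ldots$ gives $O(R^{-k})$ tail decay uniform in $n$. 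The $\widetilde{S}_3$ case is easier: the pointwise bound $|\omega|^{-3}\leq R^{-1}|\omega|^{-2}$ on the tail, combined with a uniform intensity estimate, already suffices.

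Given these ingredients, convergence in probability of $\ginisumone(n)\to\ginisumone$ (and likewise $\ginisumtwo,\ginisumthree$) follows by the standard three-piece argument: for $\epsilon,\delta>0$, choose $R$ so large that $\PROB(|\ginisumone(n)-S_1^R(n)|>\epsilon/3)<\delta/3$ uniformly in $n$ and $\PROB(|\ginisumone-S_1^R(\infty)|>\epsilon/3)<\delta/3$, then for this fixed $R$ use the a.s.\ convergence $S_1^R(n)\to S_1^R(\infty)$ to handle the middle piece. The main obstacle is the uniform tail estimate for $k=1$: since $\sum 1/|\omega|$ over Ginibre eigenvalues diverges in $L^1$ (the Ginibre intensity is asymptotically constant on $\CC$), no pointwise absolute bound can work, and one must genuinely exploit the cancellation in the angular variable, which the integration-by-parts representation encodes via the radial symmetry of $\Exp\log|\det(zI-M_n)|$.
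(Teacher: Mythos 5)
Your overall architecture — sharp truncation at radius $R$, almost sure convergence of the bulk from the coupling $\Gini_n\to\GIF$, and a uniform-in-$n$ $L^1$ tail estimate obtained by integrating by parts against smooth radial cutoffs on dyadic annuli and using the radial symmetry of $\Exp\log|\det(zI-M_n)|$ to kill the deterministic part — is the standard one. Note that the paper does not reprove this statement (it quotes Proposition~8.8 of \cite{GP}); its closest in-house analogue is the chain of Propositions~\ref{p:inv:1}--\ref{p:inv:3} for the $\alpha$-GAF, which follows exactly the scheme you describe. Your treatment of $\ginisumthree$ via the bounded one-point intensity, and the final three-piece approximation argument, are fine.

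There is, however, a genuine gap in the key step for $k=1,2$: you bound each annular contribution by $\|\laplacian(\Phi(\cdot/R)\,z^{-k})\|_{L^1}$ times ``a uniform bound on the fluctuation of $\log|\det(zI-M_n)|$'', and no such uniform bound exists. For any fixed $z$ with $|z|>r_0$ (which lies in the bulk of the spectrum once $n$ is large), $\mathrm{Var}\bigl(\log|\det(zI-M_n)|\bigr)$ grows like a constant times $\log n$: the recentered log-characteristic polynomial of the Ginibre ensemble is asymptotically a log-correlated field with no finite pointwise variance. Consequently the crude estimate $\Exp\bigl|\int \laplacian f\cdot X_n\bigr|\le\|\laplacian f\|_{L^1}\,\sup_z\Exp|X_n(z)|$ yields only $O(R^{-k}\sqrt{\log n})$, which cannot be made small uniformly in $n$, and the three-piece argument collapses. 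This is precisely where the Ginibre case differs from the $\alpha$-GAF argument you are transplanting: in the proof of Proposition~\ref{p:inv:1} the normalized function $\widehat{\mathcal{F}}_{\alpha,n}(z)$ is exactly $N_{\CC}(0,1)$ at every point, so $\Exp\bigl|\log|\widehat{\mathcal{F}}_{\alpha,n}(z)|\bigr|$ is a universal constant; the normalized Ginibre characteristic polynomial enjoys no such property. The repair is to avoid pointwise control of the field altogether and bound the linear statistic $\sum_{\omega}\phi_j(\omega)\omega^{-k}$ directly through the determinantal variance formula for the kernel $\bbK_n$ (equivalently, a Rider--Vir\'ag type bound $\mathrm{Var}(\sum f(\lambda_i))\lesssim\|\nabla f\|_{L^2}^2+\cdots$), noting that its mean vanishes by the radial symmetry of the one-point intensity; this gives per-annulus bounds of order $e^{-jk}R^{-k}$ uniformly in $n$, which sum as you intend. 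As written, your central estimate is unproved, and the stated mechanism for it is false.
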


\section{The Quasi-Gibbs property: Definition}\label{a:Osada}

Here we briefly recall the definition of Quasi-Gibbs property.
For more details we refer to \cite{Os13-1}.
Let $\qgC$ be a closed subset of $\mathbb{R}^d$ such that the origin $\vm{0}$ is contained in $\qgC$ and $\overline{\qgC^\circ}=\qgC$.
Denote by $\mathcal{M}$ the set of all discrete measures on $\qgC$ with measure of every compact set finite.
Consider the vague topology on $\mathcal{M}$ making it a Polish space.
For $b>0$ let $\qgB(b)$ be the open ball in $\qgC$ of radius $b$ around $\vm{0}$ i.e.,
\[
\qgB(b)\coloneqq\Set*{\vm{x}\in\qgC\given\lVert\vm{x}\rVert<b}\;.
\]
Let $\mathcal{M}_b^m$ be the set of measures in $\mathcal{M}$ which has $m$ points in $\mathtt{B}(b)$ i.e.,
\[
\mathcal{M}_b^m\coloneqq\Set*{\nu\in\mathcal{M}\given\nu\fbr*{\qgB(b)}=m}\;.
\]
For a subset $\mathtt{A}\subset\mathtt{C}$,
let $\mathbf{\pi}_{\mathtt{A}}:\mathcal{M}\to\mathcal{M}$
be the map given by 
\[
\mathbf{\pi}_{\mathtt{A}}(\nu) = \nu(\mathtt{A}\cap\cdot)\;.
\]
So this maps a measure in $\mathcal{M}$ to its restriction in $\mathtt{A}$.
For a bounded subset $\mathtt{A}\subset\mathtt{C}$ and Borel measurable functions 
$\Phi:\mathtt{C}\to\mathbb{R}\cup\left\{\infty\right\}$
and 
$\Psi:\mathtt{C}\times\mathtt{C}\to\mathbb{R}\cup\left\{\infty\right\}$ 
with 
$\Psi(x,y)=\Psi(y,x)$,
let $\mathcal{H}_{\mathtt{A}}^{\Phi,\Psi}:\mathcal{M}\to\mathcal{M}$ be the map given by
\[
\mathcal{H}_{\mathtt{A}}^{\Phi,\Psi}(\mathtt{x}) 
\coloneqq
\sum_{\mathclap{x_i\in\mathtt{A}}}\Phi(x_i) + 
\sum_{\mathclap{\substack{x_i,x_j\in\mathtt{A}\\i<j}}}
\Psi(x_i,x_j)
\]
where
$\mathtt{x} = \sum_{i} \delta_{x_i}\in\mathcal{M}$.

A probability measure $\mu$ on $\mathcal{M}$ is called a $(\Phi,\Psi)$-quasi Gibbs measure if there exists an increasing sequence $\left(b_r\right)_{r=1}^{\infty}$ of natural numbers and a set of measures $\{\mu_{r,k}^m : r\geq 1, k\geq 1\}$ on $\mathcal{M}$ such that the following hold: 
\begin{enumerate}[(i),font=\normalfont\bfseries,topsep=0pt]
\item For each $r,m\in\mathbb{N}$, $\left(\mu_{r,k}^m\right)_{k=1}^\infty$ and 
$\mu_r^m:=\mu(\cdot\cap\mathcal{M}_{b_r}^m)$
satisfy
$\mu_{r,k}^m \leq \mu_{r,k+1}^m$ for all $k$ and
$\lim_{k\to\infty}\mu_{r,k}^m = \mu_r^m$ weakly.

\item For all $r,m,k\in\mathbb{N}$ and for $\mu^m_{r,k}$-a.e.\ $\nu\in\mathcal{M}$,
\begin{align*}
& C^{-1}\,
\exp\fbr*{-\mathcal{H}_{\qgB(b_r)}^{\Phi,\Psi}(\mathtt{x})}\, 
\indicatorone\tbr[\big]{\mathtt{x}\in\mathcal{M}_{b_r}^m}\,
\Poisson_{\qgC}(\deri\mathtt{x}\,)\\
\leq{} &  
\mu_{r,k,\nu}^m (\deri\mathtt{x}\,)\\
\leq{} &  
C\,
\exp\fbr*{-\mathcal{H}_{\qgB(b_r)}^{\Phi,\Psi}(\mathtt{x})}\, 
\indicatorone\tbr[\big]{\mathtt{x}\in\mathcal{M}_{b_r}^m}\,
\Poisson_{\qgC}(\deri\mathtt{x}\,)\;.
\end{align*}
Here $\Poisson_{\qgC}$ is the Poisson point process whose intensity measure is the Lebesgue measure on $\qgC$,  
$\mu_{r,k,\nu}^m$ is the conditional distribution of $\pi_{\qgB(b_r)}(\nu)$ given $\pi_{\qgB(b_r)^\c}(\nu)$ i.e.,
\[
\mu_{r,k,\nu}^m (\deri\mathtt{x}\,) := 
\mu_{r,k}^m\fbr*{\pi_{\mathtt{B}(b_r)}(\nu)\in\deri\mathtt{x}\,|\,\pi_{\mathtt{B}(b_r)^c}(\nu)}\;,
\]
and $C$ is a constant depending on $r$, $m$, $k$, and $\pi_{\qgB(b_r)^c}(\nu)$.
\end{enumerate}

\section{Glossary of Notations}
Here we list some commonly used symbols with hyperlinks to their definitions. 
\renewcommand*{\arraystretch}{1.1}
\begin{longtable}{ll}
\ensuremath{\prsp} & Section~\vref{sec:technique}.\\ 
\ensuremath{\cS{\cdot}} & Section~\vref{sec:technique}.\\ 
\ensuremath{\Borel{\cdot}} & Section~\vref{sec:technique}.\\ 
\ensuremath{\bfD} & Section~\vref{sec:technique}.\\ 
\ensuremath{\bbX} & Definition~\vref{def:gengibbs}.\\ 
\ensuremath{\bbX_{\inn}} & Definition~\vref{def:gengibbs}.\\ 
\ensuremath{\bbX_{\out}} & Definition~\vref{def:gengibbs}.\\ 
\ensuremath{\BasisInsidempts} & Notation~\vref{n:insidetopology}.\\
\ensuremath{\FUBasisInsidempts} & Notation~\vref{n:insidetopology}.\\
\ensuremath{\BorelInsidempts} & Notation~\vref{n:insidetopology}\\
\ensuremath{\BasisOutside} & Notation~\vref{n:outsidetopology}.\\
\ensuremath{\FUBasisOutside} & Notation~\vref{n:outsidetopology}.\\
\ensuremath{\BorelOutside} & Notation~\vref{n:outsidetopology}.\\
\ensuremath{\asymp} & Notation~\vref{n:asymp}.\\
\ensuremath{\bbX_{\infty}} & Notation~\vref{n:genpp}.\\ 
\ensuremath{\bbX_{\infty,\inn}} & Notation~\vref{n:genpp}.\\ 
\ensuremath{\bbX_{\infty,\out}} & Notation~\vref{n:genpp}.\\ 
\ensuremath{\bbX_{n}} & Notation~\vref{n:genpp}.\\ 
\ensuremath{\bbX_{n,\inn}} & Notation~\vref{n:genpp}.\\ 
\ensuremath{\bbX_{n,\out}} & Notation~\vref{n:genpp}.\\ 
\ensuremath{\om} & Definition~\vref{d:om:ommn}.\\
\ensuremath{\omn} & Definition~\vref{d:om:ommn}.\\
\ensuremath{\GIF} & Section~\vref{ss:ginisetup}.\\
\ensuremath{\GIFIN} & Section~\vref{ss:ginisetup}.\\
\ensuremath{\GIFOUT} & Section~\vref{ss:ginisetup}.\\
\ensuremath{\van{\cdot}} & Notation~\vref{n:vandermonde}.\\
\ensuremath{\vancross{\cdot}{\cdot}} & Notation~\vref{n:vandermonde}.\\
\ensuremath{\numberofpoints} &  Section~\vref{ss:ginisetup} in the context of Ginibre ensemble;\\
& \indent Section~\vref{ss:agafsetup} in the context of $\alpha$-GAF.\\
\ensuremath{\frho{\cdot}{\cdot}} &  Section~\vref{ss:ginisetup} in the context of Ginibre ensemble;\\
& \indent Section~\vref{ss:agafsetup} in the context of $\alpha$-GAF.\\
\ensuremath{\Sigma_{\usm}} & Notation~\vref{n:manifold}.\\
\ensuremath{\falphagaf} & Section~\vref{ss:agafsetup}.\\
\ensuremath{\alphagaf} & Section~\vref{ss:agafsetup}.\\
\ensuremath{\alphagafin} & Section~\vref{ss:agafsetup}.\\
\ensuremath{\alphagafout} & Section~\vref{ss:agafsetup}.\\
\ensuremath{\rigidity} & Section~\vref{ss:agafsetup}.\\
\ensuremath{\constraintexternal} & Section~\vref{ss:agafsetup}.\\
\ensuremath{\Ginin} & Section~\vref{sec:ginibre}.\\
\ensuremath{\Gininin} & Section~\vref{ss:ginilimiting}.\\
\ensuremath{\Gininout} & Section~\vref{ss:ginilimiting}.\\
\ensuremath{\aspc{\cdot}} & Defined in Notation~\vref{n:projection}.\\
\ensuremath{\Uptheta^{m}_{n}[\thetagap]} & Definition~\vref{d:gap:gini} in the context of Ginibre ensemble;\\
& \indent Definition~\vref{d:gaf:gap} in the context of the $\alpha$-GAF.\\
\ensuremath{\X_n} & Notation~\vref{n:ginisum} in the context of the Ginibre ensemble;\\
& \indent Notation~\vref{n:gafsum} in the context of $\alpha$-GAF.\\
\ensuremath{\drho{\cdot}{\cdot}{\cdot}} & Section~\vref{ss:GiniStep2} in the context of the Ginibre ensemble;\\ 
& \indent Section~\vref{ss:agaf:ratio} in the context of the $\alpha$-GAF.\\
\ensuremath{\falphagafn} & Section~\vref{ss:agaf:ratio}.\\
\ensuremath{\alphagafn} & Section~\vref{ss:agaf:ratio}.\\
\ensuremath{\vecsym{\cdot}{\cdot}{\cdot}} & Defined in Notation~\vref{n:vecsym}.\\
\ensuremath{\vecsymb{\cdot}{\cdot}{\cdot}} & Defined in Notation~\vref{n:vecsym}.\\
\ensuremath{\fD{\cdot}{\cdot}} & Defined in equation~\vref{eq:412}.\\
\ensuremath{\inv{\cdot}{\cdot}{\cdot}{\cdot}} & Section~\vref{ss:EIPZ}.\\
\ensuremath{\invabs{\cdot}{\cdot}{\cdot}{\cdot}} & Section~\vref{ss:EIPZ}.\\
\ensuremath{\moment} & Defined in Proposition~\vref{p:inv:4}.\\
\ensuremath{\fDij{\cdot}{\cdot}} & Defined in equation~\vref{eq:dijdef}.\\
\ensuremath{\fDhat{\cdot}{\cdot}} & Defined in equation~\vref{eq:Dhatdef}.\\
\ensuremath{\ffa{\cdot}{\cdot}} & Defined in Notation~\vref{n:factorial}.\\
\ensuremath{\QFIop} & Equation~\vref{eq:fijdef}.\\
\ensuremath{\QFZop} & Equation~\vref{eq:fzdef}.\\
\ensuremath{\InProd} & Notation~\vref{n:product}.\\
\ensuremath{\InProdNd} & Notation~\vref{n:product}.\\
\ensuremath{\OMG{\thetagap}{\infty}{1:1}{M}{\delta}} & Definition~\vref{d:i:1:1}.\\
\ensuremath{\OMG{\thetagap}{\nd}{1:1}{M}{\delta}} & Definition~\vref{d:n:1:1}.\\
\ensuremath{\OMG{\thetagap}{\nd}{2:1}{M}{\delta}} & Definition~\vref{d:n:2:1}.\\
\ensuremath{\OMG{\thetagap}{\nd}{2:2}{M}{\delta}} & Proposition~\vref{p:2:2}.\\
\ensuremath{\OMG{\thetagap}{\nd}{2:3}{M}{\delta}} & Proposition~\vref{p:2:3}.\\ 
\ensuremath{\OMG{\thetagap}{\nd}{2:4}{M}{\delta}} & Proposition~\vref{p:2:4}.\\ 
\ensuremath{\constraintinternal} & Notation~\vref{n:con:int}.\\
\ensuremath{\constraintinternalpc} & Notation~\vref{n:con:int}.\\
\ensuremath{\OMG{\thetagap}{\nd}{3:1}{M}{\delta}} & Definition~\vref{d:n:3:1}.\\
\ensuremath{\OMG{\thetagap}{\infty}{3:1}{M}{\delta}} & Definition~\vref{d:i:3:1}.\\
\ensuremath{\OMG{\thetagap}{\infty}{3:2}{M}{\delta}} & Definition~\vref{d:i:3:2}.\\
\end{longtable}
\end{appendix}
\end{document}